\title{On string topology of classifying spaces}
\date{\today}
\author{Richard Hepworth}
\address[R.~Hepworth]{Institute of Mathematics\\
University of Aberdeen\\
Aberdeen AB24 3UE\\
United Kingdom}
\email{r.hepworth@abdn.ac.uk}
\author{Anssi Lahtinen}
\address[A.~Lahtinen]{Department of mathematics\\
KTH Royal Institute of Technology\\
SE-10044  Stockholm\\
Sweden}
\email{aslahtin@kth.se}
\subjclass[2010]{55P50 (Primary) 81T40, 55R35 (Secondary)}
\begin{document}

\begin{abstract}
Let $G$ be a compact Lie group.
By work of Chataur and Menichi,
the homology of the space of free loops in the classifying space of $G$
is known to be the value on the circle in a homological conformal field theory.
This means in particular that it admits operations parameterized by
homology classes of classifying spaces of diffeomorphism groups of surfaces.
Here we present a radical extension of this result,
giving a new construction in which 
diffeomorphisms are replaced with homotopy equivalences,
and surfaces with boundary are replaced
with arbitrary spaces homotopy equivalent to finite graphs. 
The result is a novel kind of field theory
which is related to both the diffeomorphism groups of surfaces
and the automorphism groups of free groups with boundaries.
Our work shows that the algebraic structures in string 
topology of classifying spaces can be brought into line with, 
and in fact far exceed, those available in string topology of manifolds.
For simplicity, we restrict to the characteristic 2 case.
The generalization to arbitrary characteristic will be addressed
in a subsequent paper.
\end{abstract}

\maketitle

\setcounter{tocdepth}{1}
\tableofcontents


\section{Introduction}

\subsection{Background}

String topology was introduced by Chas and Sullivan in \cite{ChasSullivan}.
They showed that if $M$ is a closed smooth manifold of dimension $d$,
then the homology $H_\ast(LM)$ of the free loop space of $M$ admits the structure of a 
Batalin--Vilkovisky algebra.
After Chas and Sullivan's seminal work, much effort has been made to
understand and extend the structure they discovered, and to obtain similar
structures from objects other than manifolds.  See for example
Cohen and Jones' homotopy-theoretic construction of the string
topology BV-algebra~\cite{CohenJones}, Cohen and Godin's construction
of a topological field theory~\cite{CohenGodin},
Godin's construction of a homological conformal field theory~\cite{Godin},
and Poirier and Rounds' construction of operations parameterized by a space
of string diagrams~\cite{PoirierRounds}, all with input a closed oriented manifold.
For constructions involving objects other than manifolds see
Behrend, Ginot, Noohi and Xu's paper on 
string topology for stacks~\cite{BGNX}, 
Chataur and Menichi's paper on
string topology for classifying spaces~\cite{ChataurMenichi},
Lupercio, Uribe and Xicotencatl's paper on 
orbifold string topology \cite{OrbifoldStringTopology}, 
and
Felix and Thomas' construction of string topology operations
for Gorenstein spaces~\cite{FelixThomas}.
The study of algebraic structures on Hochschild homology of
algebras with appropriate structure has developed in parallel
with string topology, and sometimes in direct connection with it.
See for example the work of  Cohen and Jones~\cite{CohenJones},
Costello~\cite{Costello}, Tradler and Zeinalian~\cite{TradlerZeinalianOne,TradlerZeinalianTwo},
F{\'e}lix, Thomas and Vigu{\'e}-Poirrier~\cite{FelixThomasVigue},
Wahl and Westerland~\cite{WahlWesterland} and Wahl~\cite{Wahl}.

The immediate background for the present paper is the work of
Godin~\cite{Godin} and Chataur and Menichi~\cite{ChataurMenichi}.
Godin showed that if $M$ is a closed oriented
manifold of dimension $d$, then $H_\ast(LM)$ is the value on 
$S^1$ of a homological conformal field
theory (or HCFT) of degree $d$.
Such an HCFT consists of vector spaces $\phi_\ast(X)$, one for each
compact $1$-manifold $X$, and operations
\begin{equation}\label{HCFTop1:eq}
	H_\ast(B\Diff(\Sigma);\partial_\Sigma^{\tensor d})\otimes\phi_\ast(X)
	\to
	\phi_\ast(Y),
\end{equation}
one for each cobordism $\Sigma$ from $X$ to $Y$.
Here $\Diff(\Sigma)$ is the topological 
group of orientation-preserving diffeomorphisms
of $\Sigma$ that fix $X$ and $Y$ pointwise,
and $\partial_\Sigma$ is the determinant coefficient system.
The operations are required to be compatible with the formation
of composites and disjoint unions of cobordisms, and with diffeomorphisms
of cobordisms.
(We note that under modest assumptions on $\Sigma$, 
the space $B\Diff(\Sigma)$ is homotopy equivalent to 
a moduli space of Riemann surfaces modelled on $\Sigma$, 
providing motivation for the word ``conformal'' in the terminology.)
On the other hand, Chataur and Menichi
showed that if $G$ is a compact Lie
group of dimension $d$, either connected or finite, then
$H_\ast(LBG)$ is the value on $S^1$ of a HCFT of degree $-d$.
However, the HCFT they constructed is of a more restricted type
than that constructed by Godin. For example, in their theory
Chataur and Menichi only allow \emph{closed} cobordisms 
$\Sigma\colon X \to Y$, meaning ones where $X$ and $Y$ consist
of circles (``closed strings'')
and together comprise the whole boundary of $\Sigma$;
whereas Godin's theory allows more general \emph{open-closed} 
cobordisms where $X$ and $Y$ consist of circles and intervals 
(``open strings''), and parts of the boundary of $\Sigma$
may be \emph{free}, that is, not belong to either $X$ or $Y$.

\subsection{Statement of the main result}
\label{statement:subs} 
In this paper, working in characteristic 2 throughout,
we extend Chataur and Menichi's  theory to
an entirely new kind of structure that is richer and more complex
than any HCFT.   We call this new structure a
\emph{homological h-graph field theory} or  \emph{HHGFT}. 
Roughly speaking, an HHGFT is a structure analogous to an HCFT, 
but with 1-manifolds and cobordisms replaced 
by spaces homotopy equivalent to finite graphs, and with
diffeomorphisms replaced by homotopy equivalences. 

More precisely, by an \emph{h-graph} we mean a space with the homotopy 
type of a finite graph,
and by an \emph{h-graph cobordism} $S\colon X\hto Y$ 
between h-graphs $X$ and $Y$ we mean a diagram
\[X \longincl S \longrevincl Y\]
satisfying certain conditions:
the inclusion of $X\sqcup Y$ into $S$ 
must be a closed cofibration,
and it must be possible to obtain $S$ up to homotopy equivalence 
by adding points and arcs to $Y$.
More generally, by a \emph{family} of h-graph cobordisms
$S/B\colon X\hto Y$ over a base space $B$ we mean 
a fibration $S\to B$ equipped with 
closed fibrewise cofibrations over $B$
\[ B\times X\longincl S \longrevincl B\times Y\]
restricting to an h-graph cobordism $X\hto Y$ in each fibre over $B$.
An h-graph cobordism $S\colon X\hto Y$ is called \emph{positive}
if the image of $X$ in $S$ meets every component,
and a family of h-graph cobordisms $S/B\colon X\hto Y$ is called \emph{positive}
if it restricts to a positive h-graph cobordism in every fibre over $B$.
A \emph{(positive) homological h-graph field theory} or \emph{HHGFT} $\Phi$
of degree $d$ over a field $\F$ of characteristic 2 
now consists of an $\F$-vector space $\Phi_\ast(X)$ for each h-graph $X$,
together with an operation
\begin{equation}\label{HHGFTop:eq}
	\Phi(S/B) 
	\colon
	H_{\ast-d\chi(S,X)}(B;\,\F)\otimes\Phi_\ast(X)
	\longto
	\Phi_{\ast}(Y)
\end{equation}
for each (positive) family of h-graph cobordisms $S/B\colon X\hto Y$.
Here $\chi(S,X)$ is given by the Euler characteristics of the fibres
of $S$ relative to $X$.
The operations \eqref{HHGFTop:eq}
are required to be compatible with compositions and disjoint unions,
and also with base-change of families of h-graph cobordisms.
These requirements are made precise in Definition~\ref{def:HHGFT}.
We can now state our main result.

\begin{theorem}\label{thm:main}
Let $G$ be a compact Lie group, and let $\F$ be a field of characteristic 2.
Then there is a positive homological h-graph field theory 
$\Phi^G$ over $\F$ whose value on
an h-graph $X$ is $\Phi^G_\ast(X)=H_\ast(BG^X;\,\F)$.
Here $BG^X$ is the space of maps $X\to BG$.
\end{theorem}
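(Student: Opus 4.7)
The plan is to construct $\Phi^G$ via a pull--push construction using umkehr maps for fibrations of mapping spaces. Given a positive family $S/B\colon X\hto Y$, form the fibrewise mapping space $\bbE(S/B) := \Map_B(S,\ B\times BG)$, whose fibre over $b\in B$ is the ordinary mapping space $BG^{S_b}$, and observe that restriction along the fibrewise inclusions $B\times X\hookrightarrow S\hookleftarrow B\times Y$ yields a span over $B$:
\[
B\times BG^X \xleftarrow{\rho_X} \bbE(S/B) \xrightarrow{\rho_Y} B\times BG^Y.
\]
Positivity ensures that $\rho_X$ is a fibration whose fibre over $(b,\varphi)$ is weakly equivalent to $G^{-\chi(S_b,X)}$, a closed manifold of dimension $-d\chi(S_b,X)$.

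The operation $\Phi(S/B)$ is then defined as the composite
\[
H_*(B)\otimes H_*(BG^X) \xrightarrow{\times} H_*(B\times BG^X) \xrightarrow{\rho_X^!} H_{*+d\chi(S,X)}(\bbE(S/B)) \xrightarrow{(p_Y)_*} H_{*+d\chi(S,X)}(BG^Y),
\]
where $\times$ is the homology cross product, $p_Y$ is $\rho_Y$ followed by projection to $BG^Y$, and $\rho_X^!$ is an umkehr (Gysin) map of degree $d\chi(S,X)$ for the fibration $\rho_X$. The construction of $\rho_X^!$ is the central technical point: in characteristic~$2$ the fibres of $\rho_X$ carry canonical mod~$2$ fundamental classes, since products of copies of the compact Lie group $G$ are mod~$2$ oriented closed manifolds, and I would assemble these into a fibrewise Poincar\'e--Lefschetz duality class for $\rho_X$ via a parameterised Pontryagin--Thom / Atiyah duality construction.

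With the operations in hand, I would verify the four HHGFT axioms of Definition~\ref{def:HHGFT}: identity, monoidality, base change, and composition. The first three follow essentially formally from the naturality of mapping-space formation, the K\"unneth theorem, and standard naturality of umkehr maps with respect to pullback squares. The composition axiom is the hardest: for composable positive families $S/B\colon X\hto Y$ and $S'/B'\colon Y\hto Z$ with composite $(S'\cup_Y S)/(B\times B')\colon X\hto Z$, one must show $\Phi((S'\cup_Y S)/(B\times B')) = \Phi(S'/B')\circ\Phi(S/B)$. This reduces to a base-change formula for the umkehr maps applied to the homotopy pullback square $\bbE(S'\cup_Y S)\simeq \bbE(S)\times_{BG^Y}\bbE(S')$, together with multiplicativity of the fibrewise fundamental classes under fibre products. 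The main obstacle is therefore the rigorous construction of the umkehr maps $\rho_X^!$ in the generality of families of h-graph cobordisms over arbitrary base spaces, together with their naturality under composition, disjoint union and base change; this is essentially a parameterised Poincar\'e--Lefschetz duality for families of h-graph mapping spaces, which I would develop via a theory of vertical tangent Thom spectra for such families, exploiting positivity to keep all fibres compact.
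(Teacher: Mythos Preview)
Your pull--push outline matches the paper's architecture, but there is a genuine gap at the crucial step: the construction of $\rho_X^!$. You assert that ``positivity ensures that $\rho_X$ is a fibration whose fibre over $(b,\varphi)$ is weakly equivalent to $G^{-\chi(S_b,X)}$, a closed manifold,'' and that you will be ``exploiting positivity to keep all fibres compact.'' This is false. Positivity only says that $X$ meets every component of $S_b$; it does \emph{not} say that $X\hookrightarrow S_b$ is an h-embedding (that hypothesis is placed on the \emph{outgoing} boundary $Y$). For the counit cobordism $D^2\colon S^1\hto\emptyset$, the restriction map $BG^{D^2}\to BG^{S^1}$ is, up to homotopy, the inclusion of constant loops $BG\hookrightarrow LBG$, whose homotopy fibre is $\Omega G$. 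For $G$ of positive dimension this is not a closed manifold (e.g.\ $\Omega SU(2)=\Omega S^3$ has polynomial homology), so no fibrewise Poincar\'e duality for $\rho_X$ is available and your umkehr construction does not get started. The paper explicitly flags this obstacle in the introduction and in Section~\ref{overview:section}.

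The paper's fix is precisely the idea your sketch is missing. One chooses basepoints $P\to X$ meeting every component and observes that, while the fibres of $BG^S\to BG^X$ can be large, the fibres of both $BG^S\to BG^P$ and $BG^X\to BG^P$ \emph{are} homotopy equivalent to closed manifolds (products of copies of $G$). One then replaces $BG^S$ and $BG^X$ by genuine fibrewise closed manifolds $B(G^{\Pi_1(S,P)})$ and $B(G^{\Pi_1(X,P)})$ over $BG^P\times B$, built from fibrewise fundamental groupoids, and obtains the umkehr map from the Crabb--James fibrewise Pontryagin--Thom construction applied to the resulting fibrewise smooth map. Proving independence of the choice of $P$ and establishing all the compatibilities (base change, gluing, monoidality) is then a substantial bookkeeping problem, which the paper organises via fibred categories and symmetric monoidal double categories (Sections~\ref{categorical-background-section}--\ref{sec:umkehr-maps}).
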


The theorem will be proved by giving an explicit construction
of the theory $\Phi^G$. We will compare our theory with 
Chataur and Menichi's and Godin's theories
in subsection~\ref{subsec:CM-Godin} below;
in particular, as we will see there, the HHGFT $\Phi^G$
restricts to an HCFT that is in a certain sense the closest 
possible analogue to Godin's HCFT in the context of 
string topology of classifying spaces. 
Moreover, we will show in section~\ref{sec:ChataurMenichi} that our $\Phi^G$
is an extension of Chataur and Menichi's HCFT.
For the rest of the paper, unless mentioned otherwise, 
we will work over a fixed field $\F$ 
of characteristic 2, so that our vector spaces are 
over $\F$, and our homology groups are taken with coefficients in $\F$.

To demonstrate the nontriviality of our theory, we explicitly compute
the operation that $\Phi^{\Z/2}$ associates to a  family of
h-graph cobordisms $S/B(\Z/2) \colon \pt \hto \pt$
defined in Definition~\ref{def:family-s}.

\begin{theorem}
\label{thm:calc}
The operation
\[
	\Phi^{\Z/2}\big(S/B(\Z/2)\big)
	\colon 
	H_\ast B(\Z/2) \tensor H_\ast B(\Z/2)
	\longto 
	H_\ast B(\Z/2)
\]
is the map given by
\[
	a\tensor b \longmapsto
		\begin{cases}
		 a \cdot b &\text{if the degree of $a$ is positive}\\
		 0 &\text{if the degree of $a$ is $0$}
		\end{cases}
\]
for homogeneous elements $a,b \in H_\ast B(\Z/2)$.
\end{theorem}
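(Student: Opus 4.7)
The plan is to calculate $\Phi^{\Z/2}(S/B(\Z/2))$ by unwinding the construction of $\Phi^G$ supplied by the proof of Theorem~\ref{thm:main}, applied to the specific family $S/B(\Z/2)\colon \pt\hto\pt$ of Definition~\ref{def:family-s}. For an h-graph family $S/B\colon X\hto Y$, the operation $\Phi^G(S/B)$ will be built from the fibrewise mapping space $BG^{S/B}\to B$ together with its two restriction maps
\[
    BG^X\xleftarrow{\rho_X}BG^{S/B}\xrightarrow{\rho_Y}BG^Y
\]
by means of an umkehr map along $\rho_X$, a pushforward along $\rho_Y$, and a pairing with classes on the base. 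In our setting $X=Y=\pt$ and $G=\Z/2$ is $0$-dimensional, so there is no degree shift, and the operation takes the asserted form $H_\ast B(\Z/2)\tensor H_\ast B(\Z/2)\to H_\ast B(\Z/2)$.

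First I would write down the total mapping space $B(\Z/2)^{S/B(\Z/2)}$ together with $\rho_\rmin$, $\rho_\rmout$, and the base projection, using the explicit description of $S/B(\Z/2)$ in Definition~\ref{def:family-s}. Because each fibre of $S$ is a finite h-graph, each fibre of $B(\Z/2)^{S/B(\Z/2)}$ is, up to homotopy, a finite product of copies of $B(\Z/2)$, and so the whole mapping space becomes an associated bundle over $B(\Z/2)$. I would then identify the umkehr/pushforward composite concretely in this model and recognise it as a twisted Pontryagin product, where the twist is determined by a characteristic class of the parameterising bundle. The calculation then reduces to a computation in $H_\ast B(\Z/2)\isom\F[x]$ with $|x|=1$ and Pontryagin product $x^i\cdot x^j=x^{i+j}$.

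The main obstacle, and the source of the dichotomy in the statement, will be controlling this twist: one must show that the contribution vanishes exactly when $a$ is the unit $1\in H_0B(\Z/2)$, while reducing to the ordinary Pontryagin product in positive degrees. I expect the vanishing to follow from the vanishing of the relevant umkehr on the bottom cell, as is typical for transfers along fibrations over infinite-dimensional classifying spaces whose fibres have appropriate Euler characteristic; carrying this out rigorously will be the technical crux. Once this vanishing is established, the positive-degree formula should follow by multiplicativity together with a single universal computation in $\F[x]$, completing the proof.
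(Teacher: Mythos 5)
Your high-level plan (unwind the push-pull construction of $\Phi^{\Z/2}$ on the specific family $S/B(\Z/2)$) is the right idea, and the paper does carry out such an unwinding. However, there are two genuine gaps.

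First, the claim that $H_\ast B(\Z/2)\isom\F[x]$ with Pontryagin product $x^i\cdot x^j=x^{i+j}$ is false. It is the \emph{cohomology} ring $H^\ast B(\Z/2)$ that is the polynomial algebra $\F[u]$. The Pontryagin product on homology is the dual Hopf algebra structure, which is the divided power algebra $\Gamma_\F[x]$, equivalently the exterior algebra $\Lambda_\F(y_0,y_1,\ldots)$ with $|y_i|=2^i$. In particular $x\cdot x=0$, so your claimed ``single universal computation in $\F[x]$'' would go wrong immediately. (The paper makes this identification precise in Remark~\ref{rk:calc}, and the identification is essential for seeing that the theorem yields \emph{nontrivial} higher operations.)

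Second, the mechanism you propose for the vanishing on degree zero---``vanishing of the relevant umkehr on the bottom cell, as is typical for transfers along fibrations over infinite-dimensional classifying spaces whose fibres have appropriate Euler characteristic''---is not what happens, and would be hard to make rigorous in the form stated. The actual mechanism in the paper is an explicit cancellation in characteristic~$2$. After replacing the fibrewise mapping space by a fibrewise manifold via the fibrewise fundamental groupoid construction (Lemmas~\ref{lm:cobs-from-groupoids}--\ref{lm:bgpiso}), one decomposes the relevant finite covering space over $B(\Z/2)\times BG$ into orbits of a concrete $(\Z/2\times G)$-action on $G$ (Proposition~\ref{prop:sg-push-pull}). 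For $G=\Z/2$ this action is trivial, the umkehr degenerates to a transfer along two identity covers (the transfer being the right notion for finite $G$ by Lemma~\ref{lm:umkehrs-for-maps-bw-coverings}), and the composite sends $a\tensor b$ to $a\cdot b+\epsilon(a)b$ where $\epsilon$ is the augmentation: the Pontryagin product from the multiplication map plus a projection contribution. For $\deg a>0$ the second term vanishes; for $\deg a=0$ the two terms coincide and cancel over $\F$ of characteristic~$2$. Your ``twisted Pontryagin product'' intuition can be made to fit this picture, but without the orbit decomposition and the passage to classifying spaces of finite subgroups it is hard to see how you would actually pin down the twist, and the route you outline (a smallness/Euler characteristic argument for the transfer) does not apply here, since the relevant transfers are along identity maps and are therefore isomorphisms, not zero.
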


Here $a\cdot b$ denotes the Pontryagin product of 
$a,b\in H_\ast B(\Z/2)$
induced by the addition map $\Z/2\times\Z/2\to\Z/2$.  
In particular,
for each $i>0$ and for each nonzero $a\in H_i B(\Z/2)$,
we obtain a nontrivial operation
\[
	\Phi^{\Z/2}\big(S/B(\Z/2)\big)(a\otimes -)
	\colon
	H_\ast B(\Z/2)\longrightarrow H_{\ast+i}B(\Z/2)
\]
which for $i>1$ cannot correspond to any HCFT operation.
See Remark~\ref{rk:calc}.

Calculations of higher string topology operations have been few and far between
in the literature.   To our knowledge, Theorem~\ref{thm:calc}
gives the first examples
of non-trivial higher operations in string topology of classifying spaces
other than the Batalin--Vilkovisky $\Delta$-operator.  In particular, it gives the first
infinite family of such operations.  (See Wahl's recent paper \cite[section 4]{Wahl}
for the construction of infinite families of non-trivial higher string topology operations
in the manifold case.)

\subsection{Benefits of the extension}

We will now briefly highlight some of the benefits brought 
by the extension of an HCFT into an HHGFT. 
For further discussion as well as elaboration of some of the 
points made here, see subsection~\ref{sbs:compare}.

\subsubsection{New cobordisms and factorisations}
\label{subsubsec:new-cobordisms}
One advantage of passing from homological conformal field
theories to homological h-graph field theories
is that there are many more h-graph cobordisms
than ordinary open-closed cobordisms, even between $1$-manifolds.
One such example is the disk $D^2\colon S^1\hto I$ shown on the left.
\[\begin{tikzpicture}[scale=0.03,baseline=-2]
	\path[ARC, fill=gray!20] (0,12) .. controls (10,12) .. (20,6)
	-- (20,-6)
	.. controls (10,-12) .. (0,-12);
	\path[ARC, fill=gray!50] (0,0) ellipse (6 and 12);
\end{tikzpicture}
\qquad\qquad\qquad\qquad
\begin{tikzpicture}[scale=0.03,baseline=-2]
	\path[ARC, fill=gray!20, join=round] (0,18) .. controls (25,18) and (20,6) .. (40,6)
		arc [start angle=90, end angle=-90, x radius=3, y radius=6]
		.. controls (20,-6) and (25,-18) .. (0,-18)
		-- (0,-6)
		.. controls (5,-6) and (15,-5) .. (15,0)
		.. controls (15,5) and (5,6) .. (0,6)
		-- cycle;	
	\path[ARC, densely dotted] (40,-6) 
		arc [start angle=270, end angle=90, x radius=3, y radius=6];
	\path[ARC,fill=gray!50] (0,12) ellipse (3 and 6);
	\path[ARC,fill=gray!50] (0,-12) ellipse (3 and 6);
\end{tikzpicture}
\quad=\quad
\begin{tikzpicture}[scale=0.03,baseline=-2]
	\path[ARC, fill=gray!20, join=round]
	(-2,6)
	-- (-2,18)
	-- (15,18)
	.. controls (20,18) and (20,5) .. (15,0)
	.. controls (20,-5) and (20,-18) .. (15,-18)
	-- (-2,-18)
	-- (-2,-6)
	.. controls (5,-6) and (14.055,-5) .. (14.055,0)
	.. controls (14.055,5) and (5,6) .. (-2,6)
	-- cycle;
	\path[ARC, densely dotted]
		(15,0) .. controls (10,5) and (10,18) .. (15,18);
	\path[ARC, densely dotted]
		(15,0) .. controls (10,-5) and (10,-18) .. (15,-18);	
	\path[ARC, fill=gray!50] (-2,12) ellipse (3 and 6);
	\path[ARC, fill=gray!50] (-2,-12) ellipse (3 and 6);
\end{tikzpicture}
\circ
\begin{tikzpicture}[scale=0.03,baseline=-2]
	\path[ARC, fill=gray!20, join=round]
	(15,18)
	.. controls (30,18) and (25,6) .. (40,6)
	arc [start angle=90, end angle=-90, x radius=3, y radius=6]
	.. controls (25,-6) and (30,-18) .. (15,-18)
	-- cycle;	
	\path[ARC, densely dotted] (40,-6) 
		arc [start angle=270, end angle=90, x radius=3, y radius=6];
	\path[ARC, fill=gray!50]
	(15,18)
	.. controls (20,18) and (20,5) .. (15,0)
	.. controls (10,5) and (10,18) .. (15,18)
	-- cycle;
	\path[ARC, fill=gray!50]
	(15,-18)
	.. controls (20,-18) and (20,-5) .. (15,0)
	.. controls (10,-5) and (10,-18) .. (15,-18)
	-- cycle;
\end{tikzpicture}\]
The result is a host of new field theory operations.
In addition to being interesting in their own right, 
these new operations have implications for the structure 
of the underlying HCFT of the HHGFT.
For example, the disk $D^2\colon S^1 \hto I$
above is easily seen to induce a retraction of coalgebras from the 
value of the HCFT on $S^1$ onto its value on $I$; 
see Proposition~\ref{prop:retraction}.

The supply of new cobordisms also
allows us to find new factorisations of ordinary cobordisms,
as displayed above on the right.  By applying Theorem~\ref{thm:main},
the factorisation depicted shows that
the loop product
$H_\ast(LBG)\otimes H_\ast(LBG)\to H_{\ast+\dim G}(LBG)$
factors through the space $H_\ast(BG^{S^1\vee S^1})$.
This is an exact reflection, in the structure of the HHGFT,
of the usual way one defines the loop product
(as in, for example, Chas--Sullivan~\cite{ChasSullivan})
by first restricting to pairs of loops that can be concatenated,
and then concatenating those pairs.

\subsubsection{Homotopy automorphisms}
The extension from homological conformal field theories to
homological h-graph field theories brings another entirely 
new aspect into the theory,
namely that it replaces diffeomorphisms with homotopy equivalences,
as we now explain.

Given an h-graph cobordism $S\colon X\hto Y$, we write
$\hAut(S)$ for the topological monoid of self homotopy equivalences
of $S$ that fix $X$ and $Y$ pointwise,
and $\hAut^w(S)$ for the homotopy equivalent `whiskered' monoid obtained
by attaching an interval at the identity element of $\hAut(S)$; see
\cite[Definition~A.8]{MayGILS}.
Then the classifying space $B\hAut^w(S)$
parameterizes a family of h-graph cobordisms
$U\hAut^w(S)/B\hAut^w(S)\colon X\hto Y$ that in each fibre is homotopy equivalent
to $S$ itself.
The families $U\hAut^w(S)/B\hAut^w(S)$ satisfy a certain universal property,
one consequence of which is that as $S$ runs through all h-graph
cobordisms, the resulting operations
\begin{equation}\label{UniversalOperation:eq}
	H_{\ast-d\chi(S,X)}(B\hAut^w(S))\otimes\Phi_\ast(X)
	\longto
	\Phi_{\ast}(Y)
\end{equation}
determine all of the operations \eqref{HHGFTop:eq} in the HHGFT.
See subsection~\ref{subsec:universal-families} and 
Remarks~\ref{rk:universal-operations} and 
\ref{rk:families-vs-universal-families}.

If we take $S=\Sigma$ an open-closed cobordism, then there is a map 
\begin{equation}\label{eq:comparison}
	B\Diff(\Sigma)\longto B\hAut^w(\Sigma)
\end{equation}
via which the HCFT operations
\[
	H_{\ast-d\chi(\Sigma,X)}(B\Diff(\Sigma))\tensor\Phi_\ast(X)
	\longto
	\Phi_{\ast}(Y)
\]
 factor through
\eqref{UniversalOperation:eq}.
In the case where $\Sigma$ has no free boundary, 
this map \eqref{eq:comparison} is a homotopy equivalence.
On the other hand, when $\Sigma$ does have free boundary,
\eqref{eq:comparison} can be far from a homotopy equivalence,
opening up the possibility that there are more 
HHGFT operations than HCFT operations associated to $\Sigma$
(if \eqref{eq:comparison} fails to be surjective on homology)
and the prospect of obtaining vanishing results
for HCFT operations
(if \eqref{eq:comparison} fails to be injective on homology).
An example of the former situation is given in Remark~\ref{rk:calc}.

\subsubsection{Automorphisms of free groups}
Roughly speaking, an HCFT is a collection of operations parameterized
by the homology of mapping class groups.
The passage from an HCFT to an HHGFT adds brand new operations,
including ones parameterized by the homology of the automorphism groups
of free groups  with boundaries $A^s_{n,k}$ studied by Hatcher, Jensen and Wahl 
\cite{JensenWahl}, \cite{HatcherWahl}, \cite{WahlAutomorphisms}.
The family $A^s_{n,k}$ is highly interesting: 
for example,
the group $A^1_{n,0}$ agrees with the automorphism group $\Aut(F_n)$
of the free group on $n$ generators; 
the group $A^2_{n,0}$ is isomorphic to the holomorph 
$\Hol(F_n) = F_n\rtimes \Aut(F_n)$ of the free group on $n$ generators;
and the group $A^1_{0,k}$ is a central extension by $\Z^k$
of the pure symmetric automorphism group of the free group 
on $k$ generators \cite{JensenWahl}.
To illustrate a pattern for constructing elements in the homology groups
of $A^s_{n,k}$ (and other homology groups that parameterize HHGFT
operations), we will explain in section~\ref{sec:higher-operations}
how Theorem~\ref{thm:calc} leads to the  construction of  non-trivial
elements in the homology of  $\Hol(F_n)$.

To see why the groups $A^s_{n,k}$ appear in HHGFTs, recall that
$A^s_{n,k}$ is defined to be the set of path components of the space of
homotopy automorphisms (fixing $s$ distinguished points and
$k$ distinguished circles) of an appropriate graph $S$ \cite{HatcherWahl}.
Assuming that $s+k>0$, we can turn $S$ into a positive h-graph cobordism
by suitably dividing the $s$ points and the $k$ circles into
incoming and outgoing ones. Then $A^s_{n,k} = \pi_0\hAut(S)$.
The components of $\hAut(S)$ are contractible
(see Proposition~\ref{pr:cc}), so that $B\hAut^w(S) \simeq BA^s_{n,k}$,
and consequently \eqref{UniversalOperation:eq}
gives us operations parameterized by the homology of $A^s_{n,k}$.

\subsection{Remarks on the construction}
The HHGFT operations \eqref{HHGFTop:eq} in 
our theory arise from a push-pull construction that combines
ordinary induced maps with `umkehr' or `wrong-way' maps. 
This is a common method for constructing field theories,
especially in string topology, and in particular it is the method
used by both Chataur and Menichi~\cite{ChataurMenichi}
and Godin~\cite{Godin}.

For the sake of simplicity, let us consider the case of 
a single positive h-graph cobordism
$S \colon X \hto Y$, regarded as a family parameterized by the
one-point space $\pt$, and let us assume
that $G$ is  positive dimensional.  Write 
$r_\mathrm{in}\colon BG^{S} \to BG^{X}$ and 
$r_\mathrm{out}\colon BG^{S} \to BG^{Y}$
for the two restriction maps. Then the operation 
that our field theory associates to the family 
$S/\pt$ is given by the composite
\[
	H_\ast(BG^{X})
	\xrightarrow{\ (r_\mathrm{in})^!\ }
	H_{\ast-\dim(G)\cdot\chi(S,X)}(BG^{S})
	\xrightarrow{\ (r_\mathrm{out})_\ast\ }
	H_{\ast-\dim(G)\cdot\chi(S,X)}(BG^{Y})
\]
where $(r_\mathrm{out})_\ast$ is a standard induced map
and $(r_\mathrm{in})^!$ is an umkehr map.
If $S$ is one of the ordinary cobordisms
considered by Chataur and Menichi, then the operation
that their theory associates to the generator of
$H_0(B\Diff(S))$ arises as a composite of the same form.  
In both cases, the crucial step is the construction of the umkehr map.

Chataur and Menichi obtain their umkehr maps from the Serre spectral 
sequence. This is made possible by the restrictions
they place on the cobordism $S$, which are stringent enough to 
ensure that the fibres of the map $r_\mathrm{in}$ are
\emph{small} in the sense that they are homotopy equivalent
to closed manifolds.  In our case, where $S$ is a general positive
h-graph cobordism, the fibres of $r_\mathrm{in}$ need not satisfy
any such smallness condition, and so
a different construction of the umkehr maps is necessary.

The key idea in our construction of 
the umkehr maps is the realization that even though
the fibres of $r_\mathrm{in}$ for a positive h-graph 
cobordism $S$ need not be small in general,
by choosing a finite set $P \subset X$ meeting every 
component of $X$, we obtain a commutative diagram
\[\xymatrix{
	BG^S 
	\ar[dr]
	\ar[rr]^{r_\mathrm{in}}
	&&
	BG^X
	\ar[dl]
	\\
	&
	BG^P
}\]
where the fibres of both maps into $BG^P$ are small.
Indeed, the spaces $BG^S$ and $BG^X$ turn out to be 
fibrewise homotopy equivalent to 
fibrewise manifolds over $BG^P$, and the map $r_\mathrm{in}$
corresponds to a fibrewise smooth map. 
This makes it possible for us to 
obtain the umkehr map $(r_\mathrm{in})^!$ from  
a fibrewise Pontryagin--Thom construction.

The approach to constructing the umkehr maps outlined above
is complicated by the fact that it relies on the choice of the set $P$. 
Proving that the resulting umkehr map is independent
of this choice and satisfies all the properties
necessary for proving the HHGFT axioms 
turns out to be a surprisingly complicated task, and
meticulous organization is required to prevent
the argument from collapsing under the weight of 
a plethora of details.

\subsection{Relation to the work of Chataur--Menichi and Godin}
\label{subsec:CM-Godin}

A homological h-graph field theory $\Phi$
gives rise to a homological conformal field theory $\phi$ as follows.
A compact 1-manifold $X$ is in particular an h-graph, 
so we may define $\phi_\ast(X)=\Phi_\ast(X)$.
Moreover, an open-closed cobordism $\Sigma$ from $X$ to $Y$
determines a family of h-graph
cobordisms $S/B\colon X\hto Y$ with $B=B\Diff(\Sigma)$ and
$S=E\Diff(\Sigma)\times_{\Diff(\Sigma)}\Sigma$.
We may therefore define the HCFT operation
\begin{equation}
\label{eq:hcft-op}
	H_{\ast-d\chi(\Sigma,X)}(B\Diff(\Sigma))\otimes\phi_\ast(X)
	\longto
	\phi_{\ast}(Y)
\end{equation}
to be the HHGFT operation \eqref{HHGFTop:eq}
associated to $S/B$.
Let $\phi^G$ denote the HCFT obtained from the HHGFT
$\Phi^G$ of Theorem~\ref{thm:main} in this way.

We would now like to compare the HCFT $\phi^G$ 
with the HCFTs discussed earlier in the introduction.
The three HCFTs in question are:
\begin{enumerate}
	\item[(a)]
	The Godin HCFT of \cite{Godin}, which we denote $\phi^\Godin$.
	\item[(b)]
	The Chataur--Menichi HCFT of \cite{ChataurMenichi},
	which we denote $\phi^\CM$.
	\item[(c)]
	The HCFT $\phi^G$ obtained using Theorem~\ref{thm:main}.
\end{enumerate}
The input data from which these theories are constructed are:
\begin{enumerate}
	\item[(a)]
	A closed oriented manifold $M$.
	\item[(b)]
	A compact Lie group $G$, either finite or connected.
	\item[(c)]
	A compact Lie group $G$.
\end{enumerate}
The basic values of the theories are:
\begin{itemize}
	\item[(a)]
	$\phi_\ast^\Godin(S^1)=H_\ast(LM)$ and $\phi_\ast^\Godin(I)=H_\ast(M)$.
	\item[(b)]
	$\phi_\ast^\CM(S^1)=H_\ast(LBG)$.
	\item[(c)]
	$\phi_\ast^G(S^1)=H_\ast(LBG)$
	and $\phi_\ast^G(I)=H_\ast(BG)$.
\end{itemize}
The theories are
\begin{enumerate}
	\item[(a)]
	open-closed,
	\item[(b)]
	closed,
	\item[(c)]
	open-closed
\end{enumerate}
respectively.
Here an \emph{open-closed} theory is one in which 1-manifolds 
are compact but may have boundary, and cobordisms may have corners.
In this case the \emph{free} boundary of a cobordism
consists of those points of the boundary that are neither incoming nor 
outgoing.
A \emph{closed} theory is one in which the $1$-manifolds must be closed
and in which cobordisms have no free boundary.
See \cite[section~2.1.1]{MooreSegal}
or \cite[section~1.2]{Costello} for more details.
The three theories are subject to the following 
\emph{positive boundary} or \emph{noncompactness} conditions.
\begin{enumerate}
	\item[(a)]
		Each component of a cobordism  meets the 
		free or the outgoing boundary.
	\item[(b)]
		Each component of a cobordism meets the 
		incoming boundary (for $G$ finite); or
		each component of a cobordism meets both the incoming
		and the outgoing boundary (for $G$ connected).
	\item[(c)]
		Each component of a cobordism meets the incoming boundary.
\end{enumerate}

Observe that $\phi^G$ is defined for any 1-manifold or cobordism
for which $\phi^\CM$ is defined,
and so can be restricted to an HCFT of the same kind as $\phi^\CM$.
In section~\ref{sec:ChataurMenichi} we will show that 
this restriction of $\phi^G$ is exactly $\phi^\CM$.

In order to illustrate the differences between the three theories
$\phi^\Godin$, $\phi^\CM$ and $\phi^G$,
we consider the various  `unit'
and `counit' cobordisms admitted by the theories:
\medskip
\begin{center}
\begin{threeparttable}
\begin{tabular}{llcccc}
\noalign{\hrule height 0.08em\vspace{.65ex}}
\multicolumn{2}{c}{Theory}
&
\begin{tabular}{c}
\begin{tikzpicture}[scale=0.03,baseline=0]
	\clip (-23,-16) rectangle (12,16);
	\path[ARC, fill=gray!20] (0,0) ellipse (6 and 12);
	\path[ARC, fill=gray!20] (0,12) .. controls (-25,12) and (-25,-12) .. (0,-12);
	\begin{scope}
	\clip (-7,-12) rectangle (1,12);
	\path[ARC, densely dotted] (0,0) ellipse (6 and 12);	
	\end{scope}
\end{tikzpicture}
\\
$\eta_{S^1}$
\end{tabular}
&
\begin{tabular}{c}
\begin{tikzpicture}[scale=0.03,baseline=0]
	\clip (23,-16) rectangle (-12,16);
	\path[ARC, fill=gray!20] (0,12) .. controls (25,12) and (25,-12) .. (0,-12);
	\path[ARC, fill=gray!50] (0,0) ellipse (6 and 12);
\end{tikzpicture}
\\
$\varepsilon_{S^1}$
\end{tabular}
&
\begin{tabular}{c}
\begin{tikzpicture}[scale=0.03,baseline=0]
	\clip (-23,-16) rectangle (12,16);
	\path[ARC, fill=gray!20] (0,12) .. controls (-25,12) and (-25,-12) .. (0,-12) -- cycle;
\end{tikzpicture}
\\
$\eta_I$
\end{tabular}
&
\begin{tabular}{c}
\begin{tikzpicture}[scale=0.03,baseline=0]
	\clip (23,-16) rectangle (-12,16);
	\path[ARC, fill=gray!20] (0,12) .. controls (25,12) and (25,-12) .. (0,-12) -- cycle;
\end{tikzpicture}
\\
$\varepsilon_I$ 
\end{tabular}
\\
\noalign{\vspace{.4ex}\hrule height 0.05em\vspace{.65ex}}
(a)& $\phi^\Godin$ & \tickYes & \tickNo & \tickYes & \tickYes \\
(b)& $\phi^\CM$ & \tickNo & \tickYes\!/\tickNo\,\tnote{$\dagger$} & n/a & n/a \\
(c)& $\phi^G$ & \tickNo & \tickYes & \tickNo & \tickYes 
\\
\noalign{\vspace{.4ex}\hrule height 0.08em}
\end{tabular}
\begin{tablenotes}
{\footnotesize
\item[$\dagger$]\tickYes\ for $G$ finite, \tickNo\ for $G$ connected.}
\end{tablenotes}
\end{threeparttable}
\end{center}
\medskip
Here $\eta_{S^1}$, $\varepsilon_{S^1}$, $\eta_I$ and $\varepsilon_I$ 
denote the disk viewed as a cobordism $\emptyset \to S^1$, 
$S^1 \to \emptyset$, $\emptyset \to I$ and $I\to \emptyset$, respectively.

Observe that our theory $\phi^G$ admits the counit cobordism
$\varepsilon_{S^1}$ for all compact Lie groups $G$,
whereas the Chataur--Menichi theory $\phi^\CM$  does so 
only when $G$ is finite. In a preprint version~\cite{ChataurMenichiPreprint} of their 
article, Chataur and Menichi used an ad-hoc
method to construct a unit for the 
product on $H^\ast(LBG)$ in a cohomological version
of their theory even in the case of a connected $G$.
The existence of this unit in the cohomological case 
suggests that it should be possible to incorporate 
the counit cobordism $\varepsilon_{S^1}$ into the 
homological version of the theory even when $G$ is 
positive-dimensional.
Our results confirm this expectation.

It is natural to ask whether any of the `missing' units or counits
can be  incorporated into the HCFTs $\phi^G$ and $\phi^\Godin$.
It turns out that no such improvements are possible.
For if an HCFT $\phi$ admits $\eta_{S^1}$ and $\varepsilon_{S^1}$,
then $\phi_\ast(S^1)$ is finite-dimensional, but this is not in general
the case for $\phi^\Godin_\ast(S^1)=H_\ast(LM)$ or for
$\phi^G_\ast(S^1)=H_\ast(LBG)$.
Similarly, if an HCFT $\phi$ admits $\eta_{I}$ and $\varepsilon_{I}$,
then $\phi_\ast(I)$ is finite-dimensional.
Although $\phi^\Godin_\ast(I)=H_\ast(M)$ is finite-dimensional,
$\phi^G_\ast(I)=H_\ast(BG)$ is typically not.

In view of the above discussion, we regard the HCFT $\phi^G$
as the structure that is the closest analogue to Godin's HCFT 
possible in the context of string topology of classifying spaces.
It is natural to ask whether Godin's HCFT can be extended
to an HHGFT. We expect that to be the case. 
See Conjecture~\ref{conj:manifold-hhgft}.

\subsection{Other related work}
In~\ref{subsec:CM-Godin} above we discussed how the work in the present paper
is related to that of Chataur and Menichi~\cite{ChataurMenichi}
and Godin~\cite{Godin}.  Here we discuss two other pieces of related work.

Part of our extension of Chataur and Menichi's HCFT has already been obtained
by Guldberg in the master's thesis \cite{Guldberg}.
There, Guldberg constructs an extension of 
Chataur and Menichi's HCFT to an open-closed theory with boundary conditions.
The cobordisms in this theory are open-closed, 
they are subject to the condition that every component of every cobordism
meets the incoming and outgoing boundary,
and their free boundary components
are labelled by connected closed subgroups of the given
compact connected Lie group $G$.
By restricting to the case where every free boundary component is labelled by $G$
itself, one obtains an open-closed HCFT that extends the Chataur--Menichi theory,
and of which our HHGFT is an extension.

Behrend, Ginot, Noohi and Xu~\cite{BGNX}
have given a unified approach to string topology of manifolds and of classifying spaces.
Given an oriented Hurewicz stack $\mathfrak{X}$ of dimension $d$,
they construct an HCFT of degree $d$ whose value on $S^1$
is $H_\ast(L\mathfrak{X})$.  See \cite[Theorem~14.2]{BGNX}.
Taking $\mathfrak{X}$ to be a closed oriented manifold $M$ gives
$H_\ast(L\mathfrak{X})\isom H_\ast(LM)$ and $d=\dim(M)$,
while taking 
$\mathfrak{X}$ to be $[\pt/G]$ for a compact connected Lie group $G$
gives $H_\ast(L\mathfrak{X})\isom H_\ast(LBG)$ and $d=-\dim(G)$.
The HCFT constructed is, like Chataur and Menichi's, a closed theory
in which all components of a cobordism are assumed to meet the incoming
and outgoing boundary.
It seems likely that in the case $\mathfrak{X}=[\pt/G]$ the resulting HCFT
coincides with Chataur and Menichi's.

\subsection{Extension to arbitrary characteristic}
Theorem~\ref{thm:main} admits an extension
in which the ground field $\F$ has arbitrary characteristic.
The extended version states that if $G$ is a compact Lie group
whose adjoint action on its Lie algebra preserves orientations,
then the choice of an orientation for the Lie algebra of $G$
induces a positive HHGFT of degree $-\dim(G)$ whose value 
on an h-graph $X$ is $H_\ast(BG^X;\,\F)$.
See Remark~\ref{rem:arbitrary-characteristic} for a discussion
of (positive) HHGFTs over a ground field of arbitrary characteristic.
Observe that the `orientability' condition on $G$
holds automatically if $G$ is finite or connected.

The proof of the extended version of Theorem~\ref{thm:main} 
requires a significant amount of extra
detail that is in some sense orthogonal to the other aspects of the
construction, and which the decision to work in characteristic 2 
has allowed us to suppress. The extended version will be proved in 
a later paper.

\subsection{Organisation of the paper}
The paper is arranged as follows.

Section~\ref{sec:h-graphs} introduces h-graphs, h-graph cobordisms,
and families of such.
It relates them to open-closed cobordisms, and gives a classification of
families of h-graph cobordisms over sufficiently nice base spaces.
Then section~\ref{sec:hhgft} introduces homological h-graph field theories
and relates them to homological conformal field theories.
Section~\ref{overview:section} sketches the construction of the operation 
that our HHGFT associates to a family of positive h-graph cobordisms.

The next three sections of the paper are dedicated to the 
actual construction of the
HHGFT $\Phi^G$ of Theorem~\ref{thm:main}.
Our construction makes use of 
fibred categories, used for handling base-change issues,
and symmetric monoidal double categories, used to express our theory
of umkehr maps. We will recall these category-theoretical notions 
in section~\ref{categorical-background-section}.
Then section~\ref{sec:push-pull} formalises the push-pull construction
by explaining how a collection of umkehr maps for families of h-graphs,
expressed in terms of double categories, leads to an HHGFT.
Finally, in section~\ref{sec:umkehr-maps}
we construct the theory of umkehr maps necessary
to obtain our HHGFT.
This section is by far the longest section of the paper,
and it begins with a detailed outline of its own contents.

In section~\ref{sec:ChataurMenichi} we show that
the HHGFT operations in our theory $\Phi^G$ agree with 
the HCFT operations constructed by Chataur and Menichi
when the latter are defined.  And in section~\ref{sec:higher-operations}
we prove Theorem~\ref{thm:calc}, which demonstrates nontriviality
of our theory, and discuss an application to the homology of 
the holomorph $\Hol(F_n)$ of the free group on $n$ generators.

We conclude with two appendices.
Appendix~\ref{appendix:fibred-spaces} recalls some basic facts from the theory
of fibrewise topology, while
appendix~\ref{app:Fd-fibrations} gives the classification
of what we call $(F,\partial)$-fibrations.
This classification is used to 
obtain the classification of families of h-graph cobordisms
given in section~\ref{sec:h-graphs}.

\subsection{Notation and conventions}
\label{subsec:notation-conventions}
Here and for the rest of the paper, when working with topological spaces,
we will operate entirely within the category of $k$-spaces. 
See for example~\cite[section 1.1]{MaySigurdsson}. 
Furthermore, when working with fibred spaces,
we will follow May and Sigurdsson \cite{MaySigurdsson} and assume
that all our base spaces are weak Hausdorff in addition to being 
$k$-spaces. See \cite[section 1.3]{MaySigurdsson} for discussion.
The one-point space will be denoted $\pt$. We remind the 
reader that unless mentioned otherwise, 
we will work over a fixed field $\F$ of characteristic 2,
and homology is taken with coefficients in $\F$.
The category of graded $\F$-modules will be denoted $\grmod$.
For a topological group or monoid $H$, 
by $BH$ and $EH$ we always mean the bar constructions 
$B(\pt,H,\pt)$ and $B(\pt,H,H)$, respectively. 
See for example~\cite[section 7]{MayClassifying}.
Finally, by a symmetric monoidal functor $F\colon \calC \to \calC'$ 
between symmetric monoidal categories
we mean a \emph{strong} symmetric monoidal functor in the sense
of Mac~Lane~\cite[section XI.2]{MacLane}, meaning that 
the monoidality and unit constraints
\[
	F_\tensor \colon F(X)\tensor F(Y) \longto F(X\tensor Y)
	\quad\text{and}\quad
	F_I \colon I'\longto F(I)
\]
are required to be isomorphisms. Here $I$ and $I'$ denote 
the unit objects of $\calC$ and $\calC'$, respectively.
As usual, $F$ is called \emph{strict} if $F_\tensor$ and $F_I$
are identity maps.

\subsection{Acknowledgements}
We would like to thank Nathalie Wahl and Alexander Kupers 
for helpful comments and conversations.
Moreover, we gratefully acknowledge support
from the Danish National Research Foundation 
through the Centre for Symmetry and Deformation (DNRF92).


\section{H-graphs}
\label{sec:h-graphs}
The present section will introduce h-graphs and h-graph cobordisms
in detail.
It will also introduce families of h-graphs and h-graph cobordisms,
and will show that open-closed cobordisms and their universal
families fit into this new framework.

\subsection{H-graphs and h-graph cobordisms}
This subsection will introduce h-graphs and h-graph cobordisms,
and will show that open-closed cobordisms are examples of such.

\begin{definition}
An \emph{h-graph} is a space with the homotopy type of a finite CW-complex
of dimension at most $1$.
\end{definition}

In other words an h-graph is a space with the homotopy type of a finite graph.
In particular a compact $1$-manifold is an h-graph, as is a compact surface
whose components all have nonempty boundary.

\begin{definition}
A continuous map $f\colon X\to Y$ between h-graphs is called 
\emph{positive} if $\pi_0f\colon\pi_0X\to\pi_0Y$ is surjective.
It is called an \emph{h-embedding} if there is a homotopy cofibre square
\begin{equation}\label{eq:hembedding}
	\vcenter{\xymatrix{
		A\ar[r]\ar[d] & X\ar[d]^f\\
		B\ar[r] & Y
	}}
\end{equation}
in which $B$ is an h-graph and $A$ has the homotopy type of a finite set.
Observe that the existence of such a homotopy cofibre square
implies that $Y$ itself is an h-graph.
Observe also that homotopy equivalences between h-graphs are h-embeddings.
\end{definition}

Recall that a \emph{homotopy cofibre square} is a commutative square for which the
map from the double mapping cylinder on the top-left part 
to the lower-right term is a homotopy equivalence.
So in rough terms a map $f\colon X\to Y$ is an h-embedding if $Y$
is obtained up to homotopy by adding points and arcs to $X$.

The notion of homotopy cofibre square makes sense in any
left-proper model category.
For general definitions and results
we refer the reader to \cite[sections~13.5 and~13.3]{Hirschhorn}.
For our applications, the appropriate model structure is the
Str\o m model structure on $k$-spaces in which
fibrations are Hurewicz fibrations, cofibrations are closed cofibrations,
and weak equivalences are homotopy equivalences.
See \cite[section~4.4]{MaySigurdsson}.

\begin{lemma}\label{lm:pushout}
Suppose given a pushout square
\[\xymatrix{
	X\ar[d]_{f}\ar[r]^{g} & Y\ar[d]^{h}\\
	Z\ar[r]_{k} & W
}\]
in which $X$, $Y$ and $Z$ are h-graphs
and $f$ is both an h-embedding and a closed cofibration.
Then $W$ is an h-graph and $h$ is an h-embedding.
If $g$ is positive then so is $k$.
\end{lemma}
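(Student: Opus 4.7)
The plan is to deduce everything from two standard facts about homotopy cofibre squares in the Str\o m model structure on $k$-spaces (which is left proper): first, that a pushout along a closed cofibration is automatically a homotopy cofibre square; and second, the pasting lemma, which says that the outer rectangle obtained by pasting two homotopy cofibre squares along a common edge is itself a homotopy cofibre square.

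Because $f$ is a closed cofibration, the given pushout square is already a homotopy cofibre square. The assumption that $f$ is an h-embedding provides a homotopy cofibre square
\[\xymatrix{A \ar[r] \ar[d] & X \ar[d]^f \\ B \ar[r] & Z}\]
with $A$ having the homotopy type of a finite set and $B$ an h-graph. Pasting this square with the given pushout along the common edge $X \to Z$ yields a homotopy cofibre square
\[\xymatrix{A \ar[r] \ar[d] & Y \ar[d]^h \\ B \ar[r] & W,}\]
which, by the definition of h-embedding and the observation accompanying it, simultaneously exhibits $h$ as an h-embedding and forces $W$ to be an h-graph.

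For the positivity claim I would chase path components in the set-theoretic pushout $W = (Y \sqcup Z)/\!\!\sim$. Any component of $W$ meets $h(Y)$ or $k(Z)$, so it suffices to check that every component meeting $h(Y)$ also meets $k(Z)$. If $h(y)$ lies in such a component, then positivity of $g$ gives some $x \in X$ with $g(x)$ in the same component of $Y$ as $y$; applying $h$, it follows that $h(y)$ lies in the same component of $W$ as $h(g(x)) = k(f(x)) \in \im k$, as required.

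The substance of the argument really lies entirely in setting up and invoking the pasting lemma for homotopy pushouts, together with the fact that pushout along a closed cofibration is a homotopy pushout; beyond that there is no serious obstacle, only bookkeeping.
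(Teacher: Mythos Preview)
Your proof is correct and follows essentially the same route as the paper's: both arguments paste the witnessing homotopy cofibre square for $f$ with the given pushout square (a homotopy cofibre square since $f$ is a closed cofibration) to exhibit $h$ as an h-embedding and $W$ as an h-graph. The paper dismisses the positivity claim as ``immediate''; your explicit component chase is a fine way to spell this out.
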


\begin{proof}
Choose a homotopy cofibre square \eqref{eq:hembedding}
that witnesses $f$ as an h-embedding.
By pasting it with the square in the statement one obtains a second homotopy cofibre
square of the form \eqref{eq:hembedding} in which the right-hand map is $h$.
It follows that $W$ is an h-graph and $h$ is an h-embedding.
The final claim is immediate.
\end{proof}

\begin{lemma}\label{lm:composites}
A composite of positive maps is positive
and a composite of h-embeddings is an h-embedding.
\end{lemma}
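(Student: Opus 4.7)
The first claim is immediate, since $\pi_0$ is a functor and composites of surjective set maps are surjective. For the h-embedding claim, suppose $f\colon X \to Y$ and $g\colon Y \to Z$ are h-embeddings, witnessed by homotopy cofibre squares built from data $(A, B, \alpha_X, \alpha_B)$ and $(A', B', \beta_Y, \beta_{B'})$ respectively, where $\alpha_X\colon A \to X$, $\alpha_B\colon A \to B$, $\beta_Y\colon A' \to Y$, $\beta_{B'}\colon A' \to B'$. The plan is to construct an explicit witnessing square for $g \circ f$ by fusing the two given witnessing squares into one.

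Using the Str\o m model structure on $k$-spaces, I first replace each of the four attaching maps by a closed cofibration, so that the two homotopy cofibre squares become strict pushouts; in particular, $Y = X \cup_A B$ realizes $X$ and $B$ as closed subspaces of $Y$ whose intersection is $A$. I also replace $A'$ by an equivalent finite set. Since $A'$ is now literally a finite set, each of its points lands under $\beta_Y$ either in $X$ or in $B \setminus A$, yielding a partition $A' = A'_X \sqcup A'_B$. I then set $A'' := A \sqcup A'_X$, which is a finite set, and $B'' := B \cup_{A'_B} B'$, which is an h-graph by Lemma~\ref{lm:pushout}, provided one checks that $A'_B \to B$ is both a closed cofibration and an h-embedding---the latter follows from a standard mapping-cylinder construction exhibiting any map from a (homotopy) finite set to an h-graph as an h-embedding.

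The natural maps $A'' \to X$ (combining $\alpha_X$ with $\beta_Y|_{A'_X}$) and $A'' \to B''$ (combining $\alpha_B$ with $\beta_{B'}|_{A'_X}$, post-composed with the inclusions into $B''$) then assemble into a commutative square whose bottom-right corner is $Z$ and whose right-hand vertical is $g \circ f$. By associativity of pushouts, its underlying strict pushout equals $(X \cup_A B) \cup_{A'} B' = Z$; and since the leg $A'' \to B''$ is a closed cofibration, this strict pushout is a homotopy pushout as well, witnessing $g \circ f$ as an h-embedding. The main obstacle in this plan is that the partition $A' = A'_X \sqcup A'_B$ depends on the particular cofibrant model chosen, and one needs to manipulate an iterated pushout carefully; both issues are handled by the associativity of colimits together with Lemma~\ref{lm:pushout}.
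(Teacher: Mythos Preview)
Your strategy differs from the paper's, and there is a real gap in the cofibration bookkeeping.

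The unjustified step is the assertion that $A''\to B''$ is a closed cofibration (and, relatedly, that $A'_B\to B$ is one).  You obtained $A'_B\to B$ by restricting the codomain of the closed cofibration $\beta_Y|_{A'_B}\colon A'_B\hookrightarrow Y$ to the closed subspace $B\subset Y$.  Restricting the codomain of a closed cofibration to a closed subspace containing the image does \emph{not} in general yield a cofibration, so $A'_B\to B$ need not be one.  Consequently $B'\to B''=B\cup_{A'_B}B'$ need not be a cofibration, and hence neither need $A'_X\to B''$ nor $A''=A\sqcup A'_X\to B''$.  You flag that one must check ``$A'_B\to B$ is both a closed cofibration and an h-embedding'' but then argue only the h-embedding part; the cofibration part is exactly what fails.  (You could show $B''$ is an h-graph by applying Lemma~\ref{lm:pushout} to the leg $A'_B\to B'$, which \emph{is} a closed cofibration, but that does not rescue the claim about $A''\to B''$.)  The other leg $A''\to X$ has the same defect: $A'_X\to X$ is again a codomain-restriction of a cofibration, and moreover the images of $A$ and $A'_X$ in $X$ can coincide (whenever a point of $A'_X$ lands in $A$), so $A''\to X$ need not even be injective.

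The paper avoids this partition entirely.  It first arranges that $T\to Y$ is surjective on $\pi_0$, then (since $U$ is homotopy-finite) homotopes $U\to Y$ to factor through $U\to T$ and makes $U\to T$ a cofibration.  Setting $W=T\cup_U V$, one gets the witnessing square for $X\to Z$ with the \emph{same} finite set $S$ and the enlarged h-graph $W$.  The homotopy-pushout property follows from two applications of the pasting lemma: first $U\to T\to Y$ over $V\to W\to Z$ shows $T\to Y$, $W\to Z$ is a homotopy pushout; then pasting with the original square $S\to T$, $X\to Y$ gives the desired square.  No splitting of $U$ according to where it lands, and no codomain-restriction of cofibrations, is needed.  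If you want to salvage your approach, you would need an extra round of cofibrant replacement (e.g.\ replace $B$ by the mapping cylinder of $A'_B\to B$) and then argue carefully that the images of $A$ and $A'_X$ in the new $B''$ are disjoint; this can be done, but it is noticeably more delicate than the paper's route.
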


\begin{proof}
The first part is immediate.
To prove the second part, let $X\to Y$ and $Y\to Z$ be h-embeddings, and choose the two homotopy cofibre diagrams
on the left
\[
	\xymatrix{
		S\ar[d]\ar[r] & X\ar[d]\\
		T\ar[r] & Y
	}
	\qquad\qquad
	\xymatrix{
		U\ar[d]\ar[r] & Y\ar[d]\\
		V\ar[r] & Z
	}
	\qquad\qquad
	\xymatrix{
		S\ar[d]\ar[r] & X\ar[d]\\
		W\ar[r] & Z
	}
\]
in which $T$ and $V$ are h-graphs and $S$ and $U$ have the homotopy
type of finite sets.
By modifying these squares we may assume, in turn,
that $T\to Y$ is surjective on $\pi_0(-)$,
that $U\to Y$ factors through a map $U\to T$,
and that this last map is a cofibration.
Writing $W$ for the pushout of $V\leftarrow U\rightarrow T$,
we obtain the homotopy cofibre square on the right,
in which $W$ is an h-graph.
Thus $X\to Z$ is an h-embedding, as required.
\end{proof}

\begin{definition}
Let $X$ and $Y$ be h-graphs.
An \emph{h-graph cobordism} $S$ from $X$ to $Y$,
written $S\colon X\hto Y$, consists of an h-graph $S$ and a zig-zag
\[
	X\xrightarrow{\ \ i\ \ } S\xleftarrow{\ \ j\ \ } Y
\]
such that the map $j$ is an h-embedding and the map
$(i,j) \colon X\sqcup Y \to S$ is a closed cofibration. 
It is called \emph{positive} if $i$ is positive.

Let $S\colon X\hto Y$ and $T\colon Y\hto Z$ be h-graph cobordisms.
The \emph{composite} h-graph cobordism
\[T\circ S\colon X\longhto Z\]
is given by $T\circ S=T\cup_Y S$ and the zig-zag
\[
	X\rightarrow S\rightarrow T\cup_Y S \leftarrow T\leftarrow Z.
\]
This is indeed an h-graph cobordism, for Lemmas~\ref{lm:pushout}
and~\ref{lm:composites} guarantee that $T\cup_Y S$ is an h-graph
and that the right-hand map in the zig-zag is an h-embedding.
If $S$ and $T$ are positive then so is $T\circ S$.

Let $S_1\colon X_1\hto Y_1$ and $S_2\colon X_2\hto Y_2$
be h-graph cobordisms.
The \emph{disjoint union} h-graph cobordism
\[S_1\sqcup S_2\colon X_1\sqcup X_2\longhto Y_1\sqcup Y_2\]
is obtained in the evident way.
If $S_1$ and $S_2$ are positive then so is $S_1\sqcup S_2$.
\end{definition}

\begin{example}\label{ex:occobordism}
Let $\Sigma$ be an open-closed cobordism 
in which every component meets
$\partial_\rmin\Sigma\cup\partial_\rmfree\Sigma$. 
See Costello \cite[\S1.2]{Costello}.
Then the inclusions of the incoming and outgoing boundaries of $\Sigma$
determine an h-graph cobordism
\[\Sigma\colon \partial_\rmin\Sigma\longhto\partial_\rmout\Sigma.\]
If furthermore every component of $\Sigma$ meets
$\partial_\rmin\Sigma$, then the h-graph cobordism is positive.

To see this, observe that
$\partial_\rmin\Sigma$, $\partial_\rmout\Sigma$
and $\Sigma$ are certainly h-graphs,
the latter because every component of $\Sigma$ has nonempty boundary;
the two maps 
$\partial_\rmin\Sigma \to \Sigma$ and 
$\partial_\rmout\Sigma \to \Sigma$ 
are closed cofibrations;
and the inclusion of $\partial_\rmout\Sigma$ is an h-embedding,
because every component of $\Sigma$ has boundary that is not outgoing.

\end{example}

H-graph cobordisms, even when their source and target are compact 1-manifolds,
can be significantly more general than open-closed cobordisms.
The reader who is interested to see examples of this could briefly skip
forward to subsection~\ref{sbs:compare}.

\subsection{Families of h-graphs and h-graph cobordisms}
\label{hgraphfamilies:subsection}
Now we will introduce {families} of h-graphs and h-graph cobordisms.
This will take place in the setting of fibred spaces,
for which the relevant notions are recalled in appendix~\ref{appendix:fibred-spaces}.
It may suffice to know that if $B$ is a chosen base space,
then a space \emph{fibred over} $B$ is simply a map $X\to B$.
Recall that we assume
that all base spaces are weak Hausdorff in addition to being $k$-spaces.

\begin{definition}
\label{def:families-of-h-graphs}
A \emph{family of h-graphs} $X$ over a base space $B$
is a Hurewicz fibration $X\to B$ whose fibres are h-graphs.
By a map from a family of h-graphs $X$ over $B$
to a second family of h-graphs $Y$ over $C$, 
we mean a pair $(f,g)$ of continuous maps making the diagram
\[\xymatrix{
	X
	\ar[r]^f
	\ar[d]
	&
	Y
	\ar[d]
	\\
	B
	\ar[r]^g
	&
	C
}\]
commutative.
Such a map is called \emph{positive}
(respectively, an \emph{h-embedding})
if for each $b\in B$, the induced map $X_b \to Y_{g(b)}$
between fibres is positive (respectively, an h-embedding).
\end{definition}

\begin{definition}\label{def:famcob}
Let $X$ and $Y$ be h-graphs and let $B$ be a base space.
A \emph{family of h-graph cobordisms} $S$ over $B$ from $X$ to $Y$,
written $S/B\colon X\hto Y$, 
consists of a family of h-graphs $S$ over $B$ and a zig-zag of maps over $B$
\[
	\pi_B^\ast X\xrightarrow{\ \ i\ \ } S\xleftarrow{\ \ j\ \ }\pi_B^\ast Y
\]
such that $j$ is an h-embedding and such that the map
$(X\sqcup Y) \times B \to S$ induced by $i$ and $j$ 
is a closed fibrewise cofibration.
Here $\pi_B\colon B\to\pt$ is the constant map and 
$X$ and $Y$ are regarded as families of h-graphs over $\pt$.
The family of cobordisms is called \emph{positive} if $i$ is positive.
We regard a family of h-graph cobordisms $S$ as a collection of
h-graph cobordisms $S_b\colon X\hto Y$, varying continuously with $b\in B$.

Let $S/B\colon X\hto Y$ and $T/C\colon Y\hto Z$ be families of h-graph
cobordisms.  The \emph{external composite} family of h-graph cobordisms
\[(T\ucirc S)/(C\times B)\colon X\xhto{\quad} Z\]
is the family of h-graph cobordisms
whose fibre over $(c,b)$ is the composite $T_c\circ S_b\colon X\hto Z$.
More precisely, it is given by the zig-zag
\[
	\pi_{C\times B}^\ast X
	\rightarrow
	\pi_{C}^\ast S
	\rightarrow
	\pi_{C}^\ast S\cup_{\pi_{C\times B}^\ast Y}\pi_B^\ast T
	\leftarrow
	\pi_B^\ast T
	\leftarrow
	\pi_{C\times B}^\ast Z
\]
in which $\pi_{C\times B}$, $\pi_C$ and $\pi_B$ are the projections from
$C\times B$ to $\pt$, $B$ and $C$ respectively.
By \cite[Proposition~1.3]{Clapp} the map
$\pi_{C}^\ast S\cup_{\pi_{C\times B}^\ast Y}\pi_B^\ast T \to C\times B$
is a fibration,
and now it follows from Lemmas~\ref{lm:pushout} and~\ref{lm:composites}
that $(T\ucirc S)/(C\times B)$ is indeed a family of h-graph cobordisms.

Let $S_1/B_1\colon X_1\hto Y_1$ and $S_2/B_2\colon X_2\hto Y_2$
be families of h-graph cobordisms.
The \emph{external disjoint union} family of h-graph cobordisms
\[
	(S_1\usqcup S_2)/(B_1\times B_2)
	\colon
	X_1\sqcup X_2
	\longhto Y_1\sqcup Y_2
\]
is the family whose fibre over $(b_1,b_2)$
is the disjoint union $(S_1)_{b_1}\sqcup (S_2)_{b_2}$.
The task of formulating $S_1\usqcup S_2$ as a space fibred 
over $B_1\times B_2$ is left to the reader.
\end{definition}

\begin{example}
\label{ex:universal-family}
Let $\Sigma$ be an open-closed cobordism,
so that we have an h-graph cobordism 
$\Sigma\colon\partial_\rmin\Sigma \hto \partial_\rmout\Sigma$
as in Example~\ref{ex:occobordism}.
Let $\Diff(\Sigma)$ denote the topological group of diffeomorphisms
of $\Sigma$ that fix the incoming and outgoing boundaries pointwise.
The universal bundle $\pi\colon U\Diff(\Sigma)\to B\Diff(\Sigma)$
with 
\[
	U\Diff(\Sigma) = E\Diff(\Sigma) \times_{\Diff(\Sigma)} \Sigma
\]
determines a \emph{family} of h-graph cobordisms
from $\partial_\rmin\Sigma$ to $\partial_\rmout\Sigma$:
\[
	B\Diff(\Sigma)\times \partial_\rmin\Sigma
	\longrightarrow
	U\Diff(\Sigma)
	\longleftarrow
	B\Diff(\Sigma) \times \partial_\rmout\Sigma.
\]
We must verify that $\pi$ is a fibration and that the maps in the zig-zag
are closed fibrewise cofibrations.
By \cite[Theorem~8.2]{MayClassifying} $\pi$ is a numerable fibre bundle,
so that the properties hold over the elements of a numerable cover,
and now by Proposition~\ref{pr:locality} they hold globally.
\end{example}

A crucial aspect of our theory is that we are able to
adjust families of h-graph cobordisms: fibres can be replaced up to
homotopy equivalence, and by taking pullbacks of families we can
change the base space.
The following definition makes this precise.

\begin{definition}\label{def:two-cell}
Let $S/B\colon X\hto Y$ and $S'/B'\colon X'\hto Y'$ be families of
h-graph cobordisms.
A \emph{2-cell} $\varphi\colon S/B\Rightarrow S'/B'$,
written as the square on the left,
\[\vcenter{\xymatrix{
	X
	\ar|-@{|}[r]^{S/B}_{\phantom{.}}="A"
	\ar[d]
	&
	Y
	\ar[d]
	\\
	X'
	\ar|-@{|}[r]_{S'/B'}^{\phantom{.}}="B"
	&
	Y'
	\ar@{=>}"A";"B"^\varphi
}}
\qquad\qquad\qquad
\begin{array}{l}
	\varphi_X\colon X\to X'
	\\
	\varphi_Y\colon Y\to Y'
	\\
	\varphi_B\colon B\to B'
	\\
	\varphi_S\colon S\to S'
\end{array}\]
consists of four maps as on the right,
compatible in the sense that $\varphi_S$
lies under $(\varphi_X\sqcup \varphi_Y)\times\varphi_B$
and over $\varphi_B$,
and subject to the conditions that
$\varphi_X$ and $\varphi_Y$ be homotopy equivalences
and that $\varphi_S$ be a homotopy equivalence in each fibre.
\end{definition}

\subsection{Homotopy automorphisms and universal families}
\label{subsec:universal-families}

Now we will give a classification of families of h-graph cobordisms,
at least when the base has the homotopy type of a CW-complex.
The results here are specializations of those in
appendix~\ref{app:Fd-fibrations}.
The present section gives the definitions and the statements of results,
referring to the appendix for details.
Our classification theorem will be given in terms of the following monoid
of homotopy automorphisms, which can be regarded as the h-graph
analogue of the group $\Diff(\Sigma)$.

\begin{definition}
Let $S_0\colon X\hto Y$ be an h-graph cobordism.
The \emph{homotopy automorphism monoid} of $S_0$,
written $\hAut(S_0)$, is the topological monoid of homotopy equivalences
$f\colon S_0\to S_0$ that fix $X\sqcup Y$ pointwise.
\end{definition}

The monoid $\hAut(S_0)$ has contractible components so long as
every component of $S_0$ meets $X\sqcup Y$;
in particular this holds if $S_0$ is positive.
See Proposition~\ref{pr:cc}.

\begin{example}[Diffeomorphisms of surfaces]\label{ex:DNB}
	If $X$ and $Y$ are $1$-manifolds and $\Sigma$
	is an open-closed cobordism from $X$ to $Y$
	then there is an evident homomorphism
	$\Diff(\Sigma)\to \hAut(\Sigma)$.
	If $X$ and $Y$ are closed and $\Sigma$ has no free boundary
	and every component of $\Sigma$ meets $X$ or $Y$,
	then this homomorphism is in fact a homotopy equivalence.
	This result should be classical but, lacking a specific reference,
	we sketch a proof in the next paragraph.
	
	The components of $\Diff(\Sigma)$ are contractible by
	\cite[Theorem~1D]{EarleSchatz}, and the components of
	$\hAut(\Sigma)$ are contractible by Proposition~\ref{pr:cc}.
	It therefore remains to show that the induced map
	$\pi_0(\Diff(\Sigma))\to\pi_0(\hAut(\Sigma))$ is an isomorphism.
	To prove this we decompose the boundary of $\Sigma$
	as $\partial_0\sqcup\partial_1$,
	where $\partial_0$ has exactly one component,
	and we fix a basepoint $p_0\in\partial_0$.
	We denote by $\hAut'(\Sigma)$ the group of homotopy classes of
	homotopy automorphisms of $\Sigma$ that fix $\partial_1\sqcup \{p_0\}$
	pointwise and that fix the based homotopy class of the inclusion
	$\partial_0\hookrightarrow\Sigma$.
	A routine argument,
	based on the fact that $\Sigma$ has the form $K(\pi,1)$,
	shows that the natural map $\pi_0(\hAut(\Sigma))\to \pi_0(\hAut'(\Sigma))$
	is an isomorphism, so it will suffice to show that the composite
	\[
		\pi_0(\Diff(\Sigma)\longto \pi_0(\hAut(\Sigma))\longto \pi_0(\hAut'(\Sigma))
	\]
	is an isomorphism.
	This is proved in \cite[section~2]{JensenWahl}
	(the relevant map is the central isomorphism appearing
	on page~548).
\end{example}

\begin{example}[Automorphisms of free groups with boundary]
\label{ex:free-groups-with-boundary}
Let $n$, $k$ and $s$ be non-negative integers and consider the following
graph.
\[\begin{tikzpicture}[scale=0.03]
	\path[ARC, fill=black] (0,0) circle (3);
	
	\path[ARC] (30,60) circle (10) node (A) at +(225:10) {};
	\path[ARC] (0,0) .. controls (10,10) and (20,30) .. (A.center);
	\path[ARC] (70,15) circle (10);
	\path[ARC] (0,0) .. controls (20,0) and (40,15) .. (60,15);
	\path (30,60) -- (70,15) node [black,midway,rotate=-50]  {$\cdots$};
	\draw
	[
		decorate,
		decoration={brace,amplitude=10pt},
		xshift=0pt, yshift=0pt, line width=0.3 mm
	]
	(30,78) -- (88,12) node [black,midway,xshift=15, yshift=15]  {$k$};	

	\path[ARC] (0,0) .. controls (-10,80) and (-70,60) .. (0,0);
	\path[ARC] (0,0) .. controls (-70,-40) and (-100,20) .. (0,0);
	\draw
	[
		decorate,
		decoration={brace,amplitude=10pt},
		xshift=0pt, yshift=0pt, line width=0.3 mm
	]
	(-70,-10) -- (-30,60) node [black,midway,xshift=-15, yshift=10]  {$n$};
	\path (-25,45) -- (-55,-5) node [black,midway,rotate=55]  {$\cdots$};

	\path[ARC] (0,0) -- (0,-50) [fill=black] circle (3);
	\path[ARC] (0,0) -- (45,-20) [fill=black] circle (3);
	\draw
	[
		decorate,
		decoration={brace,amplitude=10pt,mirror},
		xshift=0pt, yshift=0pt, line width=0.3 mm,
	]
	(-5,-60) -- (55,-25) node [black,midway,xshift=10, yshift=-15]  {$s$};
	\path (0,-50) -- (45,-20) node [black,midway,rotate=42]  {$\cdots$};
\end{tikzpicture}\]
Denote by $G_{n,k}^s$ the monoid of homotopy equivalences of the graph
that fix the $k$ circles and the $s$ leaves pointwise,
and define $A_{n,k}^s=\pi_0(G_{n,k}^s)$.
These groups were introduced by Hatcher and Wahl in~\cite{HatcherWahl}.
They encompass the groups $\mathrm{Out}(F_n)$, $\mathrm{Aut}(F_n)$,
and Wahl's automorphisms of free groups with boundary \cite{WahlAutomorphisms}.
By partitioning the $s$ points and $k$ circles into incoming and outgoing parts,
the graph above becomes an h-graph cobordism $S_0$ with
$\hAut(S_0)=G_{n,k}^s$. Moreover, $\hAut(S_0)$ is homotopy equivalent
to $A_{n,k}^s$ as long as $s>0$ or $k>0$. See Proposition~\ref{pr:cc}.
\end{example}

\begin{definition}
Let $S_0\colon X\hto Y$ be an h-graph cobordism.
	An \emph{$S_0$-family} of h-graph cobordisms is
	a family $S/B\colon X\hto Y$ such that for all $b\in B$ there is
	a map $S_0\to S_b$ under $X\sqcup Y$ which is a homotopy 
	equivalence.
\end{definition}

\begin{example}
 Suppose $S/B$ is a family of h-graph cobordisms over a
 path-connected base space $B$, and suppose $b\in B$.
 Then $S$ is an $S_b$-family.
\end{example}

\begin{definition}
	Two $S_0$-families $S/B$ and $S'/B$ over a base space $B$
	are \emph{equivalent}
	if they can be related by a zig-zag of 2-cells $\varphi$,
	all such that $\varphi_X$, $\varphi_Y$ and $\varphi_B$ are the identity maps.
	Let $S_0\Fam(B)$ denote the collection of equivalence classes of $S_0$-families
	over $B$.
\end{definition}

\begin{remark}
	If $B$ is sufficiently nice (for example, a CW complex),
	then $S/B$ and $S'/B$ are equivalent
	if and only if there is a 2-cell $S\Rightarrow S'$.
	See Remark \ref{rk:fd-equiv-rel}.
\end{remark}

Let $\hAut^w(S_0)$ denote the \emph{whiskered monoid}
$\hAut(S_0)\cup [0,1]$ in which $0\in [0,1]$ is identified with 
$\id_{S_0}\in \hAut(S_0)$.
Our classification theorem identifies $S_0\Fam(B)$
in terms of homotopy classes of maps into the classifying space
$B\hAut^w(S_0)$.
It is an instance of Theorem~\ref{thm:fd-classification}.

\begin{theorem}\label{th:universal}
Let $S_0\colon X\hto Y$ be an h-graph cobordism.
There is a \emph{universal $S_0$-family $U\hAut^w(S_0)\to B\hAut^w(S_0)$}
with the property that for any base space $B$ homotopy equivalent to a CW-complex
the map
\[
	[B,{B}\hAut^w(S_0)]\longrightarrow S_0\Fam(B),
	\qquad
	f\longmapsto f^\ast(U\/\hAut^w(S_0))
\]
is a bijection. \qed
\end{theorem}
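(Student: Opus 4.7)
The plan is to reduce the theorem to the general classification of $(F,\partial)$-fibrations proved as Theorem~\ref{thm:fd-classification} in appendix~\ref{app:Fd-fibrations}, with $F = S_0$ and $\partial = (X \sqcup Y) \incl S_0$. Unwinding the definitions, an $S_0$-family $S/B \colon X \hto Y$ is exactly a Hurewicz fibration $S \to B$ equipped with a closed fibrewise cofibration $B \times (X \sqcup Y) \incl S$ whose fibre at every point is homotopy equivalent to $S_0$ under $X \sqcup Y$; the equivalence relation defining $S_0\Fam(B)$ is the one defining the set of $(F,\partial)$-fibrations.

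Accordingly, the first step is to build the universal family. Let $\hAut^w(S_0)$ act on $S_0$ by restricting the canonical action of $\hAut(S_0)$ and letting the whisker act through the identity. Since every element of this action fixes $X \sqcup Y$ pointwise, the Borel construction
\[
    U\hAut^w(S_0) \;:=\; E\hAut^w(S_0) \times_{\hAut^w(S_0)} S_0
\]
carries a canonical map from $B\hAut^w(S_0) \times (X \sqcup Y)$, and the fibres identify canonically with $S_0$ under $X \sqcup Y$. The role of the whisker is to ensure that the identity is a non-degenerate basepoint of $\hAut^w(S_0)$; this well-pointedness guarantees (by standard bar-construction arguments as in \cite{MayClassifying}) that the projection $U\hAut^w(S_0) \to B\hAut^w(S_0)$ is a Hurewicz fibration and that the inclusion of $B\hAut^w(S_0) \times (X \sqcup Y)$ is a closed fibrewise cofibration. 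This exhibits $U\hAut^w(S_0)/B\hAut^w(S_0)$ as an $S_0$-family.

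The second step is to check that pullback along a map $f \colon B \to B\hAut^w(S_0)$ preserves the structure of an $S_0$-family, and that homotopic $f, g$ produce equivalent pullback families. The first assertion is automatic since fibrations and closed fibrewise cofibrations pull back. The second follows by lifting a homotopy $B \times I \to B\hAut^w(S_0)$ along the fibration $U\hAut^w(S_0) \to B\hAut^w(S_0)$ and restricting to the two endpoints, producing the required $2$-cell. Thus the assignment $f \mapsto f^*(U\hAut^w(S_0))$ descends to a well-defined map $[B, B\hAut^w(S_0)] \to S_0\Fam(B)$.

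The third and decisive step is to invoke Theorem~\ref{thm:fd-classification} to establish bijectivity when $B$ is homotopy equivalent to a CW-complex. Surjectivity amounts to producing, from any $S_0$-family $S/B$, a classifying map $B \to B\hAut^w(S_0)$; the standard strategy is to show that over a CW base any $S_0$-family is equivalent to a principal $\hAut^w(S_0)$-bundle twisted by $S_0$, by inductively constructing local trivializations on cells using the fact that every fibre admits a homotopy equivalence to $S_0$ rel $X \sqcup Y$ and gluing them by homotopies on the boundaries of cells. Injectivity proceeds dually: an equivalence between two pulled-back families provides the data needed to construct a homotopy between the classifying maps. The main obstacle is precisely this step, whose content lies in the appendix: one must verify that $\hAut^w(S_0)$ is a well-pointed grouplike-enough topological monoid for the bar-construction classification of rel-boundary fibrations to apply, and that the equivalence relation on $S_0\Fam(B)$ generated by zig-zags of $2$-cells matches the one captured by $[B, B\hAut^w(S_0)]$. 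Both issues are handled in appendix~\ref{app:Fd-fibrations} in the generality needed here, and the present statement follows as a direct specialization.
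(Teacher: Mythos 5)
Your overall strategy — reduce to the classification of $(F,\partial)$-fibrations with $F=S_0$ and $\partial=X\sqcup Y$ — is exactly what the paper does, and the translation between $S_0$-families and $(F,\partial)$-fibrations is correct. However, there is a genuine error in your second paragraph, where you assert that the raw Borel (i.e., two-sided bar) construction
\[
U\hAut^w(S_0)\;:=\;E\hAut^w(S_0)\times_{\hAut^w(S_0)}S_0\;=\;B\bigl(\pt,\hAut^w(S_0),S_0\bigr)
\]
projects to $B\hAut^w(S_0)$ as a Hurewicz fibration on account of the whisker. Well-pointedness (which the whisker ensures) only gives that this projection is a \emph{quasifibration}; it is not in general a Hurewicz fibration, nor is the inclusion of $B\hAut^w(S_0)\times(X\sqcup Y)$ automatically a closed fibrewise cofibration. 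The construction in appendix~\ref{app:Fd-fibrations} addresses both defects explicitly: it passes to the mapping cylinder $F'$ of $\partial\incl F$, replaces $B(\pt,H',F')\to BH'$ by a Hurewicz fibration $\Gamma B(\pt,H',F')\to BH'$ via Moore path spaces, and then replaces again by the mapping cylinder $\Gamma'B(\pt,H',F')$ so that $\partial\times BH'$ maps in by a closed fibrewise cofibration. Without these steps, your proposed ``universal family'' is not actually an $S_0$-family, so the object you claim classifies the set $S_0\Fam(B)$ does not lie in that set.

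A secondary point: your third paragraph is both redundant and circular. Once you have matched up $S_0$-families with $(S_0,X\sqcup Y)$-fibrations, Theorem~\ref{thm:fd-classification} \emph{is} the statement to be proved; there is nothing left to do, and in particular no need for the sketched cell-by-cell surjectivity and injectivity arguments (which would effectively be re-proving the appendix theorem by hand). The paper simply cites the appendix; you should do the same, after correctly describing the universal fibration or, better, simply taking its existence and universal property from Theorem~\ref{thm:fd-classification}.
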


The whiskered monoid $\hAut^w(S_0)$ is used here for technical reasons,
and can be replaced with $\hAut(S_0)$ itself when the identity element
of the latter is strongly nondegenerate. See Remark~\ref{rk:whisker}.
In any case, if $X\sqcup Y$ meets every component of $S_0$ then
$\hAut(S_0)$ has contractible components, so that $\hAut^w(S_0)\to \pi_0(\hAut(S_0))$
is a homotopy equivalence between monoids 
with strongly nondegenerate basepoints,
and consequently $B\hAut^w(S_0)\to B\pi_0(\hAut(S_0))$ is a homotopy equivalence.

\begin{corollary}\label{cor:universal}
Let $S/B\colon X\hto Y$ be an $S_0$-family of h-graph cobordisms
over a CW complex.
Then there is a 2-cell 
$\varphi\colon S/B\Rightarrow U\hAut^w(S_0) / B\hAut^w(S_0)$
with $\varphi_X$ and $\varphi_Y$ the identity maps.
The map $\varphi_B$ in such a 2-cell is 
uniquely determined up to homotopy. \qed
\end{corollary}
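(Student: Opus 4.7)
The plan is to derive the Corollary directly from Theorem \ref{th:universal} together with Remark \ref{rk:fd-equiv-rel}, the latter ensuring that over a CW base the equivalence of $S_0$-families is realised by a \emph{single} 2-cell (not just a zig-zag) whose base, source, and target components are identity maps. Once this input is granted, the whole statement becomes a formal consequence of the classification bijection.

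For existence, I would proceed as follows. Since $S/B$ is an $S_0$-family over a CW complex, the surjectivity in Theorem \ref{th:universal} supplies a map $f\colon B \to B\hAut^w(S_0)$ such that the pullback family $f^\ast(U\hAut^w(S_0))/B$ is equivalent to $S/B$ as $S_0$-families over $B$. By Remark \ref{rk:fd-equiv-rel} this equivalence can be realised by a single 2-cell $\psi\colon S/B \Rightarrow f^\ast(U\hAut^w(S_0))/B$ in which $\psi_X$, $\psi_Y$, and $\psi_B$ are all identity maps. The defining pullback square furnishes a canonical 2-cell $\pi\colon f^\ast(U\hAut^w(S_0))/B \Rightarrow U\hAut^w(S_0)/B\hAut^w(S_0)$ with $\pi_X = \id_X$, $\pi_Y = \id_Y$, $\pi_B = f$, and with $\pi_S$ a fibrewise homeomorphism (hence a fibrewise homotopy equivalence). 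The composite $\varphi := \pi \circ \psi$ is then a 2-cell of the required form, with $\varphi_B = f$.

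For uniqueness, suppose $\varphi, \varphi'$ are two such 2-cells with base maps $g := \varphi_B$ and $g' := \varphi'_B$. By the universal property of the pullback, $\varphi$ factors as $S/B \Rightarrow g^\ast(U\hAut^w(S_0))/B \Rightarrow U\hAut^w(S_0)/B\hAut^w(S_0)$ where the second arrow is the canonical pullback 2-cell; similarly for $\varphi'$. Since the canonical pullback 2-cell is a fibrewise homeomorphism and $\varphi_S$ is a fibrewise homotopy equivalence, the first factor is also a fibrewise homotopy equivalence, and its base map is the identity on $B$. Thus $g^\ast(U\hAut^w(S_0))/B$ and $(g')^\ast(U\hAut^w(S_0))/B$ both represent the same equivalence class as $S/B$ in $S_0\Fam(B)$. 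The injectivity half of Theorem \ref{th:universal} then forces $g \simeq g'$.

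The only nontrivial ingredient is the passage from equivalence-as-zig-zag to equivalence-as-single-2-cell over CW bases, which is supplied by Remark \ref{rk:fd-equiv-rel} and ultimately by the classification arguments in Appendix \ref{app:Fd-fibrations}. Given that input, the remaining verifications are routine: that the pullback construction produces a 2-cell with the stated identity and homeomorphism properties, and that the two-out-of-three property lets one detect fibrewise homotopy equivalences inside a factorisation. No further obstacles arise.
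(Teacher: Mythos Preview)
Your proposal is correct and is precisely the intended argument: the paper marks this corollary with a bare \qed, treating it as an immediate consequence of Theorem~\ref{th:universal}, and your unwinding via the pullback factorisation together with Remark~\ref{rk:fd-equiv-rel} is exactly the routine verification the reader is expected to supply.
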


\section{Homological h-graph field theories}
\label{sec:hhgft}
Now that we have defined h-graphs, h-graph cobordisms and families of these,
we are able to define homological h-graph field theories
(or HHGFTs).
Broadly speaking these are analogous to homological conformal field theories
(or HCFTs), but with 1-manifolds and open-closed
cobordisms replaced by h-graphs and h-graph cobordisms.
The precise definition is given in subsection~\ref{sbs:hhgft} below.
Then in subsection~\ref{sbs:HCFT} we recall the notion of HCFT and explain
how every HHGFT restricts to an HCFT.
There are diverse implications for an HCFT when it arises from
an HHGFT in this way,
and in subsection~\ref{sbs:compare} we explore some of the phenomena
that arise.
The section ends in subsection~\ref{sbs:hgraphs-stringtop} with some comments
on the place of h-graphs in string topology.
Recall that we are working over a field $\bbF$ of characteristic $2$ throughout.

\subsection{Homological h-graph field theories}\label{sbs:hhgft}

\begin{definition}
Given a space $B$ and an integer $m$, we will write $H_{\ast+m}(B)$
for the graded $\bbF$-vector space given in degree $q$ by $H_{q+m}(B)$.
We will also use the evident generalization to the case where
$m$ is a locally constant function $m\colon B\to\bbZ$.
\end{definition}

\begin{definition}
Let $S/B\colon X\hto Y$ be a family of h-graph cobordisms.
Then $\chi(S,X)$ denotes the locally constant function on $B$
whose value at $b$ is the relative Euler characteristic $\chi(S_b,X)$.
These functions respect (external) composition and disjoint union of families
of h-graph cobordisms, so
that in the situation of Definition~\ref{def:famcob} we have
\begin{align*}
	\chi(T\ucirc S,X)&=\chi(T,Y)+\chi(S,X),\\
	\chi(S_1\usqcup S_2,X_1\sqcup X_2)&=\chi(S_1,X_1)+\chi(S_2,X_2).
\end{align*}
(The right-hand-sides are to be interpreted as functions on $C\times B$
and $B_1\times B_2$ in the evident way.)
\end{definition}

\begin{definition}
\label{def:HHGFT}
Fix an integer $d$.
A \emph{(positive) degree $d$ homological h-graph field theory}
$\Phi$ over a field $\bbF$ of characteristic 2 consists of
the following data:
\begin{itemize}
	\item A symmetric monoidal functor $\Phi_\ast$
 	from the category of h-graphs and homotopy equivalences among them
	into the category of graded $\bbF$-vector spaces.
	Here the monoidal structures are disjoint union and tensor product, respectively.
	Recall from section~\ref{subsec:notation-conventions} that 
	we require symmetric monoidal functors to be strong
	in the sense of Mac~Lane~\cite[section XI.2]{MacLane}.
	
	\item For each (positive) family $S/B\colon X\hto Y$ of h-graph cobordisms,
	a map
	\[
		\Phi(S/B)\colon
		H_{\ast-d\cdot\chi(S,X)}(B)\otimes\Phi_\ast(X)
		\longrightarrow
		\Phi_\ast(Y).
	\]
	Here homology is taken with $\bbF$ coefficients.
\end{itemize}
These data are required to satisfy the following
\emph{base change},
\emph{gluing},
\emph{identity},
and \emph{monoidality} axioms.
\end{definition}

\begin{BaseChange}
Given a 2-cell (see Definition~\ref{def:two-cell}) as on the left,
\[\xymatrix{
	X
	\ar|-@{|}[r]^{S/B}_{\phantom{.}}="A"
	\ar[d]
	&
	Y
	\ar[d]
	\\
	X'
	\ar|-@{|}[r]_{S'/B'}^{\phantom{.}}="B"
	&
	Y'
	\ar@{=>}"A";"B"^\varphi
}
\qquad\qquad\qquad
\xymatrix@C=50 pt{
	H_{\ast-d\cdot\chi(S,X)}(B)\otimes\Phi_\ast(X)
	\ar[r]^-{\Phi(S/B)}
	\ar[d]_{(\varphi_B)_\ast\otimes\Phi_\ast(\varphi_X)}
	&
	\Phi_\ast(Y)
	\ar[d]^{\Phi_\ast(\varphi_Y)}
	\\
	H_{\ast-d\cdot\chi(S',X')}(B')\otimes\Phi_\ast(X')
	\ar[r]_-{\Phi(S'/B')}
	&
	\Phi_\ast(Y')
}\]
the resulting diagram on the right commutes.
\end{BaseChange}

\begin{Gluing}
Given 
\refstepcounter{dummy}\label{gluing}
families of h-graph cobordisms
$X\xhto{S/B} Y\xhto{T/C}Z$,
the diagram
\[\xymatrix@C=60 pt@R=45pt{
	H_{\ast-d\cdot\chi(T,Y)}(C)\otimes H_{\ast-d\cdot\chi(S,X)}(B)\otimes\Phi_\ast(X)
	\ar[r]^-{1\otimes\Phi(S/B)}
	\ar[d]_{\times\otimes 1}
	&
	H_{\ast-d\cdot\chi(T,Y)}(C)\otimes\Phi_\ast(Y)
	\ar[d]^{\Phi(T/C)}
	\\
	H_{\ast-d\cdot\chi(T\ucirc S,X)}(C\times B)\otimes\Phi_\ast(X)
	\ar[r]_-{\Phi(T\ucirc S/C\times B)}
	&
	\Phi_\ast(Z)
}\]
commutes.
\end{Gluing}

\begin{Identity}
Let $X$ be an h-graph and let $(X\times I)/\pt\colon X\hto X$ be the cylinder
h-graph cobordism.
Then the diagram
\[\xymatrix{
	\bbF\otimes\Phi_\ast(X)\ar[rr]^l\ar[dr]_{\theta\otimes 1}
	&
	{}
	&
	\Phi_\ast(X)
	\\
	{}
	&
	H_\ast(\pt)\otimes\Phi_\ast(X)
	\ar[ur]_{\ \ \Phi((X\times I)/\pt)}
	&
	{}
}\]
commutes.
Here $\theta\colon \bbF\to H_\ast(\pt)$
is the canonical isomorphism and $l$ denotes the
left unit constraint for the tensor product of
graded $\bbF$-vector spaces.
\end{Identity}

\begin{Monoidality}
Given families of h-graph cobordisms
\[
	S_1/B_1\colon X_1\hto Y_1
	\quad\text{and}\quad
	S_2/B_2\colon X_2\hto Y_2,
\]
the pentagon
\[\xymatrix@!0@R=6ex@C=9.7em{
	*{\left[\begin{array}{c}
		H_{\ast-d\cdot\chi(S_1,X_1)}(B_1)\otimes \Phi_\ast(X_1)
		\\
		\otimes
		\\
		H_{\ast-d\cdot\chi(S_2,X_2)}(B_2)\otimes \Phi_\ast(X_2)
	\end{array}\right]}
	\ar@{<->}[rr]^-\isom_-{\mathrm{perm}}
	\ar[ddd]_{\Phi(S_1/B_1)\otimes\Phi(S_2/B_2)}
	&
	{}
	&
	*{\left[\begin{array}{c}
		H_{\ast-d\cdot\chi(S_1,X_1)}(B_1)\otimes H_{\ast-d\chi(S_2,X_2)}(B_2)
		\\
		\otimes
		\\
		\Phi_\ast(X_1)\otimes \Phi_\ast(X_2)
	\end{array}\right]}
	\ar[ddd]^{\times\otimes\Phi_\otimes}
	\\
	{}
	&
	{}
	&
	{}
	\\
	{}
	&
	{}
	&
	{}
	\\
	\Phi_\ast(Y_1)\otimes \Phi_\ast(Y_2)
	\ar `d[ddr] [ddr]_(0.35){\Phi_\otimes}
	& 
	{}
	&
	*{\left[\begin{array}{c}
	H_{\ast-d\cdot\chi(S_1\usqcup S_2,X_1\sqcup X_2)}(B_1\times B_2)
	\\
	\otimes
	\\
	\Phi_\ast(X_1\sqcup X_2)
	\end{array}\right]}
	\ar `d[ddl] [ddl]^(0.25){\Phi((S_1\usqcup S_2)/(B_1\times B_2))} 
	\\
	{}
	&
	{}
	&
	{}
	\\
	{}
	&
	\Phi_\ast(Y_1\sqcup Y_2)
	&
	{}	
}\]
commutes.  Here $\Phi_\otimes$ is the monoidality constraint 
for the symmetric monoidal functor $\Phi_\ast$.
\end{Monoidality}

\begin{remark}[Universal operations]
\label{rk:universal-operations}
The universal family $U\hAut^w(S_0)/B\hAut^w(S_0)$
of Theorem~\ref{th:universal}
associated to an h-graph cobordism
$S_0\colon X\hto Y$ induces an operation
\[
		\Phi(U\hAut^w(S_0)/B\hAut^w(S_0))\colon
		H_{\ast-d\cdot\chi(S_0,X)}(B\hAut^w(S_0))\otimes\Phi_\ast(X)
		\longrightarrow
		\Phi_\ast(Y).
\]
These universal examples determine all operations in the field theory.
To see this, let $S/B\colon X\hto Y$ be a family of h-graph cobordisms,
and assume that $B$ is a connected CW-complex.
This is no loss, for by the base change axiom,
the operation $\Phi(S/B)$ is determined by the operations induced by the
restrictions of $S$ to the path components of $B$; and using the 
base change axiom again, one sees that $\Phi(S/B)$ can be read
off from the operation associated with the pullback of $S$ to a
CW approximation of $B$.
Applying Corollary~\ref{cor:universal}  to $S/B$ and then using base change
shows that the operation $\Phi(S/B)$
factors through $\Phi(U\hAut^w(S_0)/B\hAut^w(S_0))$
in a canonical way.
\end{remark}

\begin{remark}[Families \emph{vs} universal families]
\label{rk:families-vs-universal-families}
An important advantage of using arbitrary families and base change
in the data for an HHGFT,
rather than including only the universal examples above,
is that it allows us to detach issues about the construction and properties
of the universal fibrations from the definition and construction of HHGFTs.
Indeed, the universal families are difficult to construct and interrelate,
choices being required at every step.
Moreover, having the non-universal families of h-graph cobordisms
available is useful for performing calculations such as the
one we make in Theorem~\ref{thm:calc}.
\end{remark}

\begin{remark}[Homotopy invariance]
The above axioms imply that $\Phi_\ast$ is homotopy invariant in the 
sense that $\Phi_\ast(f) = \Phi_\ast(g)$ when $f$ and $g$
are homotopic homotopy equivalences between h-graphs.  
Thus in particular $\Phi_\ast(f)$ is an isomorphism for each $f$. 
To see the claimed homotopy invariance, 
suppose $f\colon X \to Y$ is a homotopy 
equivalence between h-graphs. 
Then the mapping cylinder $M_f$ of $f$ gives an h-graph cobordism 
from $X$ to $Y$, and there is an evident 2-cell
\[\xymatrix@C+1em{
	X
	\ar|-@{|}[r]^{M_f/\pt}_{\phantom{.}}="A"
	\ar[d]_f
	&
	Y
	\ar[d]^{\id}
	\\
	Y
	\ar|-@{|}[r]_{(Y\times I)/\pt}^{\phantom{.}}="B"
	&
	Y
	\ar@{=>}"A";"B"
}\]
Applying the base change and identity axioms, we see that the map
\[
	\Phi_\ast(f)\colon \Phi_\ast(X) \longto \Phi_\ast(Y)
\]
agrees with the one obtained from
\[
	\Phi(M_f/\pt)\colon H_\ast(\pt)\tensor\Phi_\ast(X) \longto \Phi_\ast(Y)
\]
by evaluating against the generator of $H_*(\pt)$. 
But if $g\colon X\to Y$ is homotopic to $f$, then $M_f$ and $M_g$
are related by a 2-cell
\[\xymatrix@C+1em{
	X
	\ar|-@{|}[r]^{M_f/\pt}_{\phantom{.}}="A"
	\ar[d]_{\id}
	&
	Y
	\ar[d]^{\id}
	\\
	X
	\ar|-@{|}[r]_{M_g/\pt}^{\phantom{.}}="B"
	&
	Y
	\ar@{=>}"A";"B"
}\]
whence the base change axiom implies that $\Phi(M_f/\pt) = \Phi(M_g/\pt)$.
\end{remark}

\begin{remark}
Expressed in the language of symmetric monoidal double categories
(see Shulman~\cite{ShulmanSMBicat} and 
subsections~\ref{subsec:double-categories} and 
\ref{subsec:symmetric-monoidal-double-categories}),
an HHGFT corresponds precisely to a  
symmetric monoidal double functor
\[
	\Phi\colon\cob \longto \pmor(\grmod)
\]
from a certain double category $\cob$ 
of families of h-graph cobordisms
to a double category $\pmor(\grmod)$
of parameterized morphisms in $\grmod$. 
More precisely, the double category $\cob$ has
as objects h-graphs; as vertical morphisms
homotopy equivalences of h-graphs; as horizontal
morphisms families of h-graph cobordisms; and as
2-cells the 2-cells of Definition~\ref{def:two-cell}, 
except that the map $\varphi_S$ is only considered up to 
appropriate homotopy. On the level of horizontal morphisms,
the symmetric monoidal structure of $\cob$ is given by
external disjoint union. The double category $\pmor(\grmod)$
is described as follows: the objects are objects of $\grmod$;
the vertical morphisms are morphisms of $\grmod$; horizontal
morphism from $A$ to $B$ consist of a 
`parameterizing' object $V$ of $\grmod$ together with a 
map $V\tensor A \to B$ in $\grmod$; and a 2-cell 
is a morphism between parameterizing modules compatible 
with the boundary morphisms of the 2-cell.
We have chosen to avoid the double categorical language
in our definition of HHGFTs for the sake of directness 
and simplicity, but it is perhaps reassuring to know that
HHGFTs as we defined them amount to certain kind 
of symmetric monoidal functors, as outlined above. 

Later in this paper, we will make use of symmetric monoidal
double categories to encode theories of umkehr maps. 
However, unlike in the symmetric monoidal double categories we are 
going to encounter then, the composition of horizontal 
morphisms in $\cob$ and $\pmor(\grmod)$ fails to be 
strictly associative. The definitions we put forward in 
subsection~\ref{subsec:double-categories} therefore do not 
suffice to capture the structure of $\cob$ and $\pmor(\grmod)$;
instead, one should make use of the more general notions
defined in \cite{ShulmanSMBicat}.
\end{remark}

\begin{remark}[HHGFTs in arbitrary characteristic]
\label{rem:arbitrary-characteristic}
The definition of HHGFT given in this subsection is specialised
to the case where the ground field $\bbF$ has characteristic $2$.
Indeed, that is all that we require for the present paper.
However, as discussed in subsection~\ref{statement:subs},
there is a version of Theorem~\ref{thm:main}
that holds in arbitrary characteristic.

An HHGFT $\Phi$ over a field $\bbF$ of arbitrary characteristic
consists of a symmetric monoidal functor $\Phi_\ast$
from the category of h-graphs and homotopy equivalences among them
into the category of graded $\bbF$-vector spaces,
together with operations
\[
	\Phi(S/B)
	\colon
	H_\ast(B;\partial_S^{\tensor d})
	\otimes\Phi_\ast(X)
	\longrightarrow
	\Phi_\ast(Y),
\]
one for each family of h-graph cobordisms $S/B\colon X\hto Y$.
Here $\partial_S$ is the local coefficient system whose fibre over $b\in B$
is the determinant line $\det(H^\ast(S_b,X))$.
It is analogous to the local coefficient system $\partial_\Sigma$
that appears in HCFT operations in arbitrary characteristic~\eqref{HCFTop1:eq}.
The functor $\Phi_\ast$ and the operations $\Phi(S/B)$
are subject to base change, gluing, identity and monoidality axioms 
that have the same general form as the ones listed above,
but now modified to take into account the coefficient systems $\partial_S$.
\end{remark}

\subsection{HCFTs from HHGFTs}\label{sbs:HCFT}
The present section will demonstrate how to obtain a
homological conformal field theory from a 
homological h-graph field theory.
The definition of homological conformal field theory
is phrased in various ways in the literature.
For our purposes the following is most convenient.
A \emph{degree $d$ homological conformal field theory}
(or \emph{HCFT})
$\phi$ over a field $\bbF$ of characteristic 2 consists of
the following data:
\begin{itemize}
	\item A symmetric monoidal functor $\phi_\ast$
 	from the category of 1-manifolds and diffeomorphisms
	into the category of graded $\bbF$-vector spaces.
	Here the monoidal structures are disjoint union and tensor product, respectively.

	\item For each open-closed cobordism $\Sigma$ from $X$ to $Y$,
	a map
	\[
		\phi(\Sigma)\colon
		H_{\ast-d\cdot\chi(\Sigma,X)}(B\Diff(\Sigma))\otimes\phi_\ast(X)
		\longrightarrow
		\phi_\ast(Y).
	\]
	Here homology is taken with $\bbF$ coefficients.
\end{itemize}
These data are required to be compatible with
disjoint unions and compositions of cobordisms
and diffeomorphisms of cobordisms.
The permitted classes of $1$-manifolds and cobordisms vary from example to example.
Compare with \cite{Godin}, \cite{ChataurMenichi} and \cite{Costello}.
The definition here is tailored to characteristic $2$.

\begin{example}[String topology of manifolds]
\label{ex:godin}
	Let $M$ be a closed manifold.
	Godin \cite{Godin} constructs an HCFT of degree $\dim(M)$
	for which
	\[I\mapsto H_\ast(M),\qquad  S^1 \mapsto H_\ast(LM).\]
	Here the $1$-manifolds may have boundary.
	The cobordisms $\Sigma$ are open-closed, 
	and are subject to the boundary condition
	that every component meets $\partial_\rmout\Sigma\cup\partial_\rmfree\Sigma$.
	(Godin proves a result in arbitrary characteristic, so long as $M$ is orientable.)
\end{example}

\begin{example}[String topology of classifying spaces]
\label{ex:chataur-menichi}
	Let $G$ be a connected compact Lie group or a finite group.
	Chataur and Menichi \cite{ChataurMenichi}
	construct an HCFT of degree $-\dim(G)$ for which
	\[S^1 \mapsto H_\ast(LBG).\]
	In this case the 1-manifolds must be closed,
	the cobordisms $\Sigma$ may not have
	free boundary, and every component must meet both $\partial_\rmin\Sigma$
	and $\partial_\rmout\Sigma$.
	(Chataur and Menichi prove a result valid in any characteristic.)
	In section~\ref{sec:ChataurMenichi}
	we will compare this result with our main theorem.
\end{example}
	
\begin{example}[Hochschild homology]\label{ex:wahl-westerland}
	Let $A$ be an $A_\infty$-Frobenius algebra of degree $d$.
	Following Costello~\cite{Costello},
	Wahl and Westerland~\cite{WahlWesterland}
	construct an HCFT of degree $d$ for which
	\[I \mapsto H_\ast(A),\qquad S^1 \mapsto HH_\ast(A).\]
	Here the $1$-manifolds may have boundary.
	The cobordisms $\Sigma$ are open-closed, and are subject to 
	the boundary condition that every component meets
	$\partial_\rmin\Sigma\cup\partial_\rmout\Sigma$.
	(Costello's result was in characteristic $0$;
	Wahl and Westerland prove a result valid in any characteristic.)
\end{example}

Suppose given a degree $d$ homological h-graph field theory $\Phi$.
Then one can obtain a degree $d$ homological conformal field theory $\phi$
whose value on a 1-manifold $X$ is $\phi_\ast(X)=\Phi_\ast(X)$,
and for which the graded-linear map
	\[
		\phi(\Sigma)\colon
		H_{\ast-d\cdot\chi(\Sigma,X)}(B\Diff(\Sigma))
		\otimes
		\phi_\ast(X)
		\longrightarrow
		\phi_\ast(Y),
	\]
is given by $\Phi(U\Diff(\Sigma) / B\Diff(\Sigma))$,
the value of $\Phi$ on the universal family
discussed in Example~\ref{ex:universal-family}.
In this theory all compact $1$-manifolds are permitted.
The cobordisms are open-closed, subject to the boundary condition
that every component meets $\partial_\rmin\Sigma\cup\partial_\rmfree\Sigma$
(or $\partial_\rmin\Sigma$ if $\Phi$ is positive).

The conditions required to make $\phi$ an HCFT
follow from the conditions we have imposed on $\Phi$.
Consider, for example, attempting to prove that $\phi$ respects composition of
cobordisms.
Once unwound, this condition amounts to the compatibility of
the values of $\Phi$ on the three families
\[
		U\Diff(\Sigma) / B\Diff(\Sigma),\quad
		U\Diff(\Sigma') / B\Diff(\Sigma'),\quad
		U\Diff(\Sigma'\circ\Sigma) / B\Diff(\Sigma'\circ\Sigma)
\]
whenever $\Sigma$ and $\Sigma'$ are composable open-closed cobordisms.
An application of the gluing axiom relates the first two 
of these to the external composite
\[(U\Diff(\Sigma')\ucirc U\Diff(\Sigma))/ (B\Diff(\Sigma')\times B\Diff(\Sigma)),\]
and then an application of base-change to the 2-cell
\[\xymatrix@C+5em{
	X
	\ar|-@{|}[r]
	^{U\Diff(\Sigma')\ucirc U\Diff(\Sigma)}
	_{\phantom{.}}="A"
	\ar@{=}[d]
	&
	Z
	\ar@{=}[d]
	\\
	X
	\ar|-@{|}[r]_{U\Diff(\Sigma'\circ\Sigma)}^{\phantom{.}}="B"
	&
	Z
	\ar@{=>}"A";"B"
}\]
relates this to the third.
The other compatibilities follow similarly.
In particular compatibility with diffeomorphisms of open-closed cobordisms
follows from the base change axiom.

\subsection{Comparison between HCFTs
and HHGFTs}\label{sbs:compare}
Let us fix a degree $d$ HCFT $\phi$
and a degree $d$ HHGFT $\Phi$,
and let us assume that
$\Phi$ restricts to $\phi$ as in subsection~\ref{sbs:HCFT}.
Such an extension $\Phi$ of $\phi$ produces a raft of new data beyond
what is contained in $\phi$, and its existence also imposes new
restrictions on $\phi$ itself.
Here we will highlight several such phenomena.
In particular we will see that not all HCFTs
admit such an extension.

\subsubsection{Degree-$0$ operations}
The \emph{degree-$0$ operation} associated to an open-closed cobordism
$\Sigma$ from $X$ to $Y$ is the linear map
\[\phi_\Sigma\colon \phi_\ast(X)\longrightarrow\phi_{\ast+d\cdot\chi(\Sigma,X)}(Y)\]
obtained from $\phi(\Sigma)$ by selecting the class of a single
point in $H_{\ast-d\cdot\chi(\Sigma,X)}(B\Diff(\Sigma))$.
Similarly, the \emph{degree-$0$ operation} associated to an h-graph
cobordism $S\colon X\hto Y$ is the linear map
\[\Phi_S\colon \Phi_\ast(X)\longrightarrow\Phi_{\ast+d\cdot\chi(S,X)}(Y)\]
obtained from $\Phi(S/\pt)$ by selecting the standard generator of
$H_{\ast-d\cdot\chi(S,X)}(\pt)$.
The operation $\phi_\Sigma$ depends only on the diffeomorphism
class of $\Sigma$ relative to $X\sqcup Y$, while $\Phi_S$ depends
only on the homotopy-equivalence class of $S$ relative to $X\sqcup Y$.
If we take $S=\Sigma$ then
an instance of the base change axiom shows that
the degree-$0$ operations $\phi_\Sigma$ and $\Phi_\Sigma$
coincide.

\begin{example}\label{ex:factorisation}
\emph{Existing cobordisms can be factorised in new ways.}
The pair of pants cobordism from two circles to one can be expressed
as a composite of h-graph cobordisms:
\[\begin{tikzpicture}[scale=0.05,baseline=-2]
	\path[ARC, fill=gray!20, join=round] (0,18) .. controls (25,18) and (20,6) .. (40,6)
		arc [start angle=90, end angle=-90, x radius=3, y radius=6]
		.. controls (20,-6) and (25,-18) .. (0,-18)
		-- (0,-6)
		.. controls (5,-6) and (15,-5) .. (15,0)
		.. controls (15,5) and (5,6) .. (0,6)
		-- cycle;	
	\path[ARC, densely dotted] (40,-6) 
		arc [start angle=270, end angle=90, x radius=3, y radius=6];
	\path[ARC,fill=gray!50] (0,12) ellipse (3 and 6);
	\path[ARC,fill=gray!50] (0,-12) ellipse (3 and 6);
	\node () at (20,-30) {$P$};
\end{tikzpicture}
\qquad=\qquad
\begin{tikzpicture}[scale=0.05,baseline=-2]
	\path[ARC, fill=gray!20, join=round]
	(-2,6)
	-- (-2,18)
	-- (15,18)
	.. controls (20,18) and (20,5) .. (15,0)
	.. controls (20,-5) and (20,-18) .. (15,-18)
	-- (-2,-18)
	-- (-2,-6)
	.. controls (5,-6) and (14.055,-5) .. (14.055,0)
	.. controls (14.055,5) and (5,6) .. (-2,6)
	-- cycle;
	\path[ARC, densely dotted]
		(15,0) .. controls (10,5) and (10,18) .. (15,18);
	\path[ARC, densely dotted]
		(15,0) .. controls (10,-5) and (10,-18) .. (15,-18);	
	\path[ARC, fill=gray!50] (-2,12) ellipse (3 and 6);
	\path[ARC, fill=gray!50] (-2,-12) ellipse (3 and 6);
	\node () at (7,-30) {$Q$};
\end{tikzpicture}
\:\:
\circ
\:\:
\begin{tikzpicture}[scale=0.05,baseline=-2]
	\path[ARC, fill=gray!20, join=round]
	(15,18)
	.. controls (30,18) and (25,6) .. (40,6)
	arc [start angle=90, end angle=-90, x radius=3, y radius=6]
	.. controls (25,-6) and (30,-18) .. (15,-18)
	-- cycle;	
	\path[ARC, densely dotted] (40,-6) 
		arc [start angle=270, end angle=90, x radius=3, y radius=6];
	\path[ARC, fill=gray!50]
	(15,18)
	.. controls (20,18) and (20,5) .. (15,0)
	.. controls (10,5) and (10,18) .. (15,18)
	-- cycle;
	\path[ARC, fill=gray!50]
	(15,-18)
	.. controls (20,-18) and (20,-5) .. (15,0)
	.. controls (10,-5) and (10,-18) .. (15,-18)
	-- cycle;
	\node () at (27,-30) {$R$};
\end{tikzpicture}\]
The gluing axiom for $\Phi$ then gives a new factorisation
$\phi_P = \Phi_R\circ\Phi_Q$
of the original degree-$0$ operation.
\end{example}

\begin{example}\label{ex:whistlepunchcard}
\emph{Open-closed cobordisms can be homotopy equivalent
but not diffeomorphic.}
Consider the following two open-closed cobordisms $U$ and $V$
from the interval $I$ to itself.
\[\begin{tikzpicture}[scale=0.05,baseline=0]
	\path[ARC, fill=gray!50] (0,10) -- (50,10) -- (50,-10) -- (0,-10) -- (0,10) -- cycle;
	\path[ARC, fill=white] (25,0) circle (6);
	\node () at (25,-20) {$U$};
\end{tikzpicture}
\qquad\qquad\qquad
\begin{tikzpicture}[scale=0.05,baseline=0]
	\path[ARC, fill=gray!20] (12,13) -- (38,13) -- (38,-13) -- (12,-13) -- cycle;
	\path[ARC, fill=gray!50]
	(0,0)
	-- (0,10)
	.. controls (10,10) and (8, 13) .. (12,13)
	.. controls (23,13) and (23,-13) .. (12,-13)
	.. controls (8,-13) and (10,-10) .. (0,-10)
	-- cycle;
	\path[ARC, fill=gray!50]
	(50,0)
	-- (50,10)
	.. controls (40,10) and (42, 13) .. (38,13)
	.. controls (27,13) and (27,-13) .. (38,-13)
	.. controls (42,-13) and (40,-10) .. (50,-10)
	-- cycle;
	\node () at (25,-20) {$V$};
\end{tikzpicture}\]
The two are not diffeomorphic relative to $I\sqcup I$,
but they \emph{are} homotopy equivalent relative to $I\sqcup I$.
It follows that $\phi_U=\phi_V$
as a simple consequence of the base-change axiom for $\Phi$.
This relation does not hold in a general HCFT.
For example, one can take a finite group $G$ and consider
the group-ring $\bbF[G]$ as a strict Frobenius algebra
(with trace $\sum a_g g\mapsto a_e$)
and so as an $A_\infty$-Frobenius algebra.
Applying the method of Wahl and Westerland
(Example~\ref{ex:wahl-westerland})
produces an HCFT with $\phi_\ast(I)=\bbF[G]$.
In this case the two operations above can be computed directly from
the Frobenius structure and for $g\in G$ we have
$\phi_U(g)=|G|\cdot g$ and $\phi_V(g)=\sum_{k\in G} kgk^{-1}$.
These are distinct in general unless $g$ is in the centre of $G$.
\end{example}

\begin{example}\label{ex:beanie}
\emph{New cobordisms between $1$-manifolds.}
Consider the disk as an h-graph cobordism
$D\colon S^1\hto I$
as shown.
\[\begin{tikzpicture}[scale=0.05,baseline=0]
	\path[ARC, fill=gray!20] (0,12) .. controls (10,12) .. (20,6)
	-- (20,-6)
	.. controls (10,-12) .. (0,-12);
	\path[ARC, fill=gray!50] (0,0) ellipse (6 and 12);
\end{tikzpicture}\]
This h-graph cobordism does not arise from an open-closed cobordism,
even up to homotopy equivalence relative to $S^1\sqcup I$.
The result is a new operation
\[\Phi_D\colon \phi_\ast(S^1)\longrightarrow \phi_{\ast+d}(I)\]
from the closed to the open part of the HCFT.
This operation leads to a new algebraic relationship between $\phi_\ast(S^1)$
and $\phi_\ast(I)$, as we will see in \ref{sbs:algebraic} below.
\end{example}

\subsubsection{Higher operations}
Let $\Sigma$ be an open-closed cobordism from $X$ to $Y$.
By assumption the \emph{higher operation}
\[
	\phi(\Sigma)\colon
	H_{\ast-d\cdot\chi(\Sigma,X)}(B\Diff(\Sigma))
	\otimes
	\phi_\ast(X)
	\longrightarrow
	\phi_\ast(Y)
\]
is obtained by applying $\Phi$ to the family
$U\Diff(\Sigma)/B\Diff(\Sigma)$ of Example~\ref{ex:universal-family}.
There is a 2-cell
\[\varphi\colon U\Diff(\Sigma)/B\Diff(\Sigma)
\Rightarrow
U\hAut^w(\Sigma)/B\hAut^w(\Sigma)\]
with target the universal family of Theorem~\ref{th:universal} 
and consequently a factorisation:
\[\xymatrix@C=60 pt{
	H_{\ast-d\cdot\chi(\Sigma,X)}(B\Diff(\Sigma))
	\otimes
	\phi_\ast(X)
	\ar[r]^-{\phi(\Sigma)}
	\ar[d]
	&
	\phi_\ast(Y)
	\\
	H_{\ast-d\cdot\chi(\Sigma,X)}(B\hAut^w(\Sigma))
	\otimes
	\phi_\ast(X)
	\ar[ur]_{\qquad \qquad \Phi(U\hAut^w(\Sigma)/B\hAut^w(\Sigma))}
	&
	{}
}\]
In this case $\hAut(\Sigma)$ has nondegenerate basepoint,
so that we may use it in place of $\hAut^w(\Sigma)$ throughout,
and then one can check that
$\varphi_B\colon B\Diff(\Sigma)\to B\hAut(\Sigma)$
is induced by the evident homomorphism.
If the resulting map
$(\varphi_B)_\ast\colon H_\ast(B\Diff(\Sigma))\to H_\ast(B\hAut(\Sigma))$
is not surjective we therefore obtain new higher operations associated to $\Sigma$,
and if it is not injective then we obtain new 
relations among existing higher operations associated to $\Sigma$.
When $\Sigma$ has no free boundary there is nothing
new to learn, for then $(\varphi_B)_\ast$
is an isomorphism as in Example~\ref{ex:DNB}.
This is not necessarily the case for open-closed cobordisms
with free boundary, as the next example demonstrates.

\begin{example}
\label{ex:diff-vs-haut}
Consider the open-closed cobordisms $U$ and $V$ of Example~\ref{ex:whistlepunchcard}.
For the diffeomorphism groups we have
\[\Diff(U)\simeq\pt,\qquad \Diff(V)\simeq\bbZ\]
but for homotopy automorphisms we have
\[\hAut(U)\simeq\bbZ\rtimes\bbZ/2,\qquad \hAut(V)\simeq\bbZ\rtimes\bbZ/2.\]
In each case the homomorphism $(\varphi_B)_\ast$
is injective but not surjective.
\end{example}

\subsubsection{Algebraic consequences}\label{sbs:algebraic}
The value $\phi_\ast(I)$ of our HCFT on the interval is a counital (but potentially not unital)
Frobenius algebra of degree $d$,
with product, coproduct and counit given by the degree-$0$ operations associated
to the three cobordisms on the left.
\[\begin{tikzpicture}[scale=0.03,baseline=0]
	\path[ARC, fill=gray!50]
	(0,18)
	.. controls (25,18) and (20,6) .. (40,6)
	-- (40,-6)
	.. controls (20,-6) and (25,-18) .. (0,-18)
	-- (0,-6)
	.. controls (5,-6) and (15,-5) .. (15,0)
	.. controls (15,5) and (5,6) .. (0,6)
	-- cycle;
\end{tikzpicture}
\qquad
\begin{tikzpicture}[scale=0.03,baseline=0, xscale=-1]
	\path[ARC, fill=gray!50]
	(0,18)
	.. controls (25,18) and (20,6) .. (40,6)
	-- (40,-6)
	.. controls (20,-6) and (25,-18) .. (0,-18)
	-- (0,-6)
	.. controls (5,-6) and (15,-5) .. (15,0)
	.. controls (15,5) and (5,6) .. (0,6)
	-- cycle;
\end{tikzpicture}
\qquad
\begin{tikzpicture}[scale=0.03,baseline=0]
	\path[ARC, fill=gray!50]
	(0,6)
	.. controls (15,6) and (15,-6) .. (0,-6)
	-- cycle;
\end{tikzpicture}
\qquad\qquad\qquad
\begin{tikzpicture}[scale=0.03,baseline=0]
	\path[fill=black] (0,12) circle (2);
	\path[fill=black] (0,-12) circle (2);
	\path[fill=black] (40,0) circle (2);
	\path[ARC] (0,12) .. controls (20,12) and (22,5) .. (22,0);
	\path[ARC] (0,-12) .. controls (20,-12) and (22,-5) .. (22,0);
	\path[ARC] (22,0) -- (40,0);
\end{tikzpicture}
\qquad
\begin{tikzpicture}[scale=0.03,baseline=0, xscale=-1]
	\path[fill=black] (0,12) circle (2);
	\path[fill=black] (0,-12) circle (2);
	\path[fill=black] (40,0) circle (2);
	\path[ARC] (0,12) .. controls (20,12) and (22,5) .. (22,0);
	\path[ARC] (0,-12) .. controls (20,-12) and (22,-5) .. (22,0);
	\path[ARC] (22,0) -- (40,0);
\end{tikzpicture}
\qquad
\begin{tikzpicture}[scale=0.03,baseline=0]
	\path[fill=black] (0,0) circle (2);
	\path[ARC, fill=gray!50]
	(0,0) -- (10, 0);
\end{tikzpicture}
\]
Similarly the value $\Phi_\ast(\pt)$ of our HHGFT on a point
is a counital (but possibly not unital)
Frobenius algebra of degree $d$.
The product, coproduct and counit are given by the degree-$0$
operations associated to the three h-graph cobordisms on the right.
In this case the product and coproduct are clearly commutative.

\begin{proposition}\label{prop:commutative}
$\phi_\ast(I)$ and $\Phi_\ast(\pt)$ are isomorphic as Frobenius algebras.
In particular $\phi_\ast(I)$ is commutative and cocommutative.
\end{proposition}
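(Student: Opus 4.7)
The plan is to build an explicit Frobenius-algebra isomorphism $\Phi_\ast(c)\colon\phi_\ast(I)=\Phi_\ast(I)\to\Phi_\ast(\pt)$, where $c\colon I\to\pt$ is the unique collapse map, and then to deduce (co)commutativity from the symmetry of the generating h-graph cobordisms on the $\pt$-side.

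First, by the homotopy-invariance remark following the HHGFT axioms, the functor $\Phi_\ast$ sends any homotopy equivalence between h-graphs to an isomorphism of graded vector spaces; applying this to $c\colon I\to\pt$ produces an isomorphism $\Phi_\ast(c)$. To see that this intertwines the multiplications, note that the pair-of-pants cobordism $P\colon I\sqcup I\hto I$ and the ``Y'' h-graph cobordism $Y\colon\pt\sqcup\pt\hto\pt$ fit into a 2-cell
\[\xymatrix{
I\sqcup I\ar|-@{|}[r]^{P/\pt}_{\phantom{.}}="A"\ar[d]_{c\sqcup c}&I\ar[d]^{c}\\
\pt\sqcup\pt\ar|-@{|}[r]_{Y/\pt}^{\phantom{.}}="B"&\pt\ar@{=>}"A";"B"
}\]
whose middle map $P\to Y$ is a fibrewise homotopy equivalence (both $P$ and $Y$ deformation retract onto $Y$ relative to their distinguished boundary points). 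The base change axiom applied to this 2-cell, combined with the monoidality constraint $\Phi_\otimes$ identifying $\Phi_\ast(c\sqcup c)$ with $\Phi_\ast(c)\otimes\Phi_\ast(c)$, yields $\Phi_\ast(c)\circ\Phi_P=\Phi_Y\circ(\Phi_\ast(c)\otimes\Phi_\ast(c))$. The identical argument, applied to the analogous 2-cells from the cup to the co-$Y$ and from the cap to its one-point counterpart, shows that $\Phi_\ast(c)$ also intertwines coproducts and counits. Hence $\Phi_\ast(c)$ is an isomorphism of counital Frobenius algebras.

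It remains to show that the product and coproduct on $\Phi_\ast(\pt)$ are commutative and cocommutative. For the product, the $Y$-cobordism admits an evident self-homotopy-equivalence $\tau\colon Y\to Y$ swapping its two input points and fixing the output. This gives a 2-cell
\[\xymatrix{
\pt\sqcup\pt\ar|-@{|}[r]^{Y/\pt}_{\phantom{.}}="A"\ar[d]_{\mathrm{swap}}&\pt\ar[d]^{\id}\\
\pt\sqcup\pt\ar|-@{|}[r]_{Y/\pt}^{\phantom{.}}="B"&\pt\ar@{=>}"A";"B"^{\tau}
}\]
and the base change axiom, together with the fact that $\Phi_\ast$ as a symmetric monoidal functor intertwines the symmetry $\mathrm{swap}$ with the usual tensor-symmetry on graded $\bbF$-vector spaces, yields $\Phi_Y=\Phi_Y\circ\sigma$, proving commutativity. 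The same recipe applied to the mirror automorphism of the co-$Y$ cobordism gives cocommutativity of the coproduct.

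The construction is essentially formal once the 2-cells are produced, so the only substantive point is verifying that the required maps $P\to Y$ and the self-map $\tau$ of $Y$ really qualify as components of 2-cells in the sense of Definition~\ref{def:two-cell} — in particular that the middle map is a fibrewise homotopy equivalence and that the compatibility with the $(i,j)$-cofibrations holds. This is straightforward from the explicit models but is the step most likely to require care; everything else reduces to a direct application of the base change and monoidality axioms.
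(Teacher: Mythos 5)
Your proof is correct and follows the same route as the paper's: the isomorphism is $\Phi_\ast(c)$ for the collapse map $c\colon I\to\pt$, and compatibility with the Frobenius structures follows from base change applied to 2-cells relating the surface cobordisms to the corresponding graph cobordisms. The only difference is one of emphasis: the paper takes the (co)commutativity of $\Phi_\ast(\pt)$ as visually evident from the symmetry of the Y-shaped h-graph cobordism, while you supply the explicit 2-cell built from the input-swapping self-homeomorphism $\tau$ of $Y$ together with the fact that $\Phi_\ast$ is symmetric monoidal — this is precisely the justification the paper is leaving implicit, so the two arguments are the same in substance.
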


According to the definition of homological h-graph field theory,
$\Phi_\ast$ is functorial with respect to homotopy equivalences of h-graphs.
Now the required isomorphism is simply the map
$\phi_\ast(I)=\Phi_\ast(I)\to\Phi_\ast(\pt)$ induced
by the map $I\to\pt$.
That it respects the three algebraic structures follows using the base change
rule.  For example base change can be applied to the 2-cell
\[\xymatrix@C=70 pt{
	\begin{tikzpicture}[scale=0.02,baseline=-3]
		\path[ARC] (0,18)-- (0,6);
		\path[ARC] (0,-18)-- (0,-6);
	\end{tikzpicture}
	\ar|-@{|}[r]^{
		\begin{tikzpicture}[scale=0.02,baseline=0]
			\path[ARC, fill=gray!50]
			(0,18)
			.. controls (25,18) and (20,6) .. (40,6)
			-- (40,-6)
			.. controls (20,-6) and (25,-18) .. (0,-18)
			-- (0,-6)
			.. controls (5,-6) and (15,-5) .. (15,0)
			.. controls (15,5) and (5,6) .. (0,6)
			-- cycle;
		\end{tikzpicture}
	}_{\phantom{.}}="A"
	\ar[d]
	&
	\begin{tikzpicture}[scale=0.02,baseline=-3]
		\path[ARC] (40,6) -- (40,-6);
	\end{tikzpicture}
	\ar[d]
	\\
	\begin{tikzpicture}[scale=0.02,baseline=-3]
		\path[fill=black] (0,12) circle (3);
		\path[fill=black] (0,-12) circle (3);
	\end{tikzpicture}
	\ar|-@{|}[r]_{
		\begin{tikzpicture}[scale=0.02,baseline=0]
			\path[fill=black] (0,12) circle (3);
			\path[fill=black] (0,-12) circle (3);
			\path[fill=black] (40,0) circle (3);
			\path[ARC] (0,12) .. controls (20,12) and (22,5) .. (22,0);
			\path[ARC] (0,-12) .. controls (20,-12) and (22,-5) .. (22,0);
			\path[ARC] (22,0) -- (40,0);
		\end{tikzpicture}	
	}^{\phantom{.}}="B"
	&
	\begin{tikzpicture}[scale=0.02,baseline=-3]
		\path[fill=black] (40,0) circle (3);
	\end{tikzpicture}
	\ar@{=>}"A";"B"^\varphi
}\]
to show that the isomorphism respects products.

The proposition shows that not every homological conformal field theory
can be extended to an h-graph theory.
For example, applying the results of 
Wahl and Westerland~\cite{WahlWesterland}
to the group ring $\bbF[G]$ of a finite nonabelian group $G$
produces a homological conformal field theory $\phi$ with 
$\phi(I)=\bbF[G]$.

\begin{proposition}
\label{prop:retraction}
$\phi_\ast(I)$ is a retract of $\phi_\ast(S^1)$ in the category of counital coalgebras.
\end{proposition}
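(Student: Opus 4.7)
The plan is to realise the retraction $r$ and its section $s$ as degree-$0$ HHGFT operations associated with specific h-graph cobordisms. For the retraction I would take $r := \Phi_D$, where $D\colon S^1 \hto I$ is the disk of Example~\ref{ex:beanie}. For the section I would take $s := \Phi_L$, where $L\colon I \hto S^1$ is a lollipop h-graph cobordism: the total space is a circle with a dangling stick, the outgoing $S^1$ is the circle, and the incoming $I$ is a subinterval of the stick disjoint from the circle. The outgoing inclusion $S^1 \to L$ is an h-embedding, as witnessed by a homotopy cofibre square with a single point at the top-left and the whole stick at the bottom-left.

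To show $r \circ s = \id$, the gluing axiom gives $r \circ s = \Phi_{D \circ L}$, where $D \circ L\colon I \hto I$ is the composite h-graph cobordism obtained by gluing $D$ and $L$ along their common $S^1$. Its total space---a disk with an interior chord and a stick attached at a point of the boundary---is contractible. The identity cylinder $I \times I$ is likewise contractible, and both cobordisms contain $I \sqcup I$ as a cofibrantly embedded subspace. By the standard observation that two contractible spaces sharing a cofibrantly embedded common subspace are homotopy equivalent relative to that subspace, there is a 2-cell $D \circ L \Rightarrow I \times I$ with identity base and identity boundary. Combining the base change and identity axioms yields $\Phi_{D \circ L} = \Phi_{I \times I} = \id$.

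It remains to check that $r$ and $s$ are counital coalgebra maps. Using the gluing axiom, each compatibility reduces to an equality $\Phi_S = \Phi_{S'}$ for two composite h-graph cobordisms with matching source and target. Counit compatibility for both $r$ and $s$, and coproduct compatibility for $r$, reduce to contractible cases---for the coproduct compatibility of $r$, one uses that a pair of pants with two boundary disks filled in is topologically a disk---and the contractibility argument from the previous paragraph applies directly. The coproduct compatibility of $s$ is the one case involving non-contractible composites; there both resulting h-graph cobordisms $I \hto S^1 \sqcup S^1$ have homotopy type $S^1 \vee S^1$, with the incoming $I$ mapping to a contractible subspace and the two outgoing circles mapping to a free basis of $\pi_1$. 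H-equivalence rel boundary then follows from the general principle that, for h-graphs, a pair is determined up to homotopy equivalence rel the subspace by its overall homotopy type together with the induced map of fundamental groupoids.

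The principal obstacle is the careful identification of each composite cobordism and the verification of the corresponding h-equivalence rel boundary. In the contractible cases the argument is essentially automatic once the composites are described. The coproduct-compatibility check for $s$ is the one place requiring analysis of non-contractible h-graphs, but the standard $1$-dimensional homotopy theory of pairs is enough.
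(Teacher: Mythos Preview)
Your approach is essentially the same as the paper's. The paper defines the section $\iota$ and retraction $\pi$ as the degree-$0$ operations associated to two explicit h-graph cobordisms---a surface cobordism $I\hto S^1$ (an annulus with the outgoing $S^1$ as one boundary circle and the incoming $I$ as an arc in the other) and the disk $D\colon S^1\hto I$ of Example~\ref{ex:beanie}---and then simply asserts that ``it is easy to see'' that $\pi\circ\iota=\id$ and that both maps are counital coalgebra morphisms. Your lollipop $L$ is homotopy equivalent rel boundary to the paper's surface cobordism for $\iota$ (both have contractible incoming $I$, outgoing $S^1$ a homotopy equivalence, and total space $\simeq S^1$), so the two constructions yield the same operations. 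You then go further than the paper by actually spelling out the verifications via the gluing and base-change axioms, which is entirely appropriate; the contractibility arguments are clean, and the one non-contractible check (coproduct compatibility of $s$) is handled correctly, if somewhat informally, by the $K(\pi,1)$ rigidity of h-graphs.
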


This proposition follows by letting
$\iota\colon\phi_\ast(I)\to\phi_{\ast-d}(S^1)$
and $\pi\colon\phi_\ast(S^1)\to\phi_{\ast+d}(I)$
denote the degree-$0$ operations determined by
the following h-graph cobordisms.
\[\begin{tikzpicture}[scale=0.04,baseline=0]
	\path[ARC, fill=gray!20, join=round]
	(12,12.0)
	-- (34,12.0) 
	arc [start angle=90, end angle=-90, x radius=6, y radius=12]
	-- (34,-12.0) -- (12,-12.0) -- cycle;
	\path[ARC, densely dotted]
	(34,12.0) arc [start angle=90, end angle=270, x radius=6, y radius=12];
	\path[ARC, fill=gray!50]
	(0,0)
	-- (0,6.0)
	.. controls (10,6.0) and (8, 12.0) .. (12,12.0)
	.. controls (21,12.0) and (21,-12.0) .. (12,-12.0)
	.. controls (8,-12.0) and (10,-6.0) .. (0,-6.0)
	-- cycle;
\end{tikzpicture}
\qquad\qquad\qquad
\begin{tikzpicture}[scale=0.04,baseline=0]
	\path[ARC, fill=gray!20] (0,12) .. controls (10,12) .. (20,6)
	-- (20,-6)
	.. controls (10,-12) .. (0,-12);
	\path[ARC, fill=gray!50] (0,0) ellipse (6 and 12);
\end{tikzpicture}\]
It is easy to see that these maps are morphisms of counital coalgebras
and that they satisfy $\pi\circ\iota=\id$.

\subsection{H-graphs and string topology}\label{sbs:hgraphs-stringtop}
\subsubsection{H-graphs and string topology of manifolds}
We regard HHGFTs as a more natural setting than HCFTs
for discussing string topology of classifying spaces.
For example, in the previous subsection, we demonstrated 
how certain properties of  Chataur and Menichi's
string topology HCFT (such as the loop product factoring as 
in Example~\ref{ex:factorisation} and the commutativity of the 
open sector proved in Proposition~\ref{prop:commutative})
follow easily from the fact that the HCFT extends to an HHGFT.
While these properties could be established by examining 
Chataur and Menichi's 
construction, they cannot be derived from the HCFT axioms alone.

The same phenomena (factorisation of the loop product, commutativity
of the open sector) also occur in string topology of manifolds
(Example~\ref{ex:godin}), and would follow directly from the 
conjecture below.
\begin{conjecture}
\label{conj:manifold-hhgft}
Let $M$ be a closed oriented manifold.
Then there is a reversed homological h-graph field theory
of degree $\dim(M)$ 
that extends the HCFT of Godin.
\end{conjecture}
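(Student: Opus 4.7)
The natural candidate $\Phi^M$ assigns to an h-graph $X$ the value $\Phi^M_\ast(X) = H_\ast(M^X)$, where $M^X = \Map(X, M)$; this recovers $H_\ast(LM)$ on $S^1$ and $H_\ast(M)$ on $I$, matching Godin's values. I read ``reversed'' in the conjecture as referring to the positivity convention: since Godin's HCFT is subject to the condition that every component of a cobordism meets $\partial_\rmout\cup\partial_\rmfree$, the relevant HHGFT positivity should require every component of $S$ to meet the outgoing $Y$, dual to the convention for $\Phi^G$.

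For a positive family $S/B\colon X\hto Y$ in this reversed sense, the operation $\Phi^M(S/B)$ should arise from the push-pull
\[
	H_\ast(B)\otimes H_\ast(M^X)
	\longto H_\ast(M^X_B)
	\xrightarrow{\ (r_\rmin)^!\ }
	H_{\ast+\dim(M)\chi(S,X)}(M^S_B)
	\xrightarrow{\ (r_\rmout)_\ast\ }
	H_{\ast+\dim(M)\chi(S,X)}(M^Y_B)
\]
followed by the map into $H_{\ast+\dim(M)\chi(S,X)}(M^Y)$ induced by the projection $B \times M^Y \to M^Y$. Here $M^S_B\to B$ denotes the fibrewise mapping space and $r_\rmin$, $r_\rmout$ are the restrictions to the incoming and outgoing subspaces. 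The degree shift $+\dim(M)\chi(S,X)$ is precisely what is required for an HHGFT axiom of degree $\dim M$. Provided $(r_\rmin)^!$ can be constructed with the expected properties, the symmetric monoidal double category framework of Sections~\ref{categorical-background-section} and~\ref{sec:push-pull} adapts essentially verbatim to produce an HHGFT satisfying base change, gluing, identity and monoidality.

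The principal obstacle is the construction of $(r_\rmin)^!$. The method used in this paper for $\Phi^G$ rests on the observation that for a finite subset $P$ meeting every component of $X$, the maps $BG^S\to BG^P$ and $BG^X\to BG^P$ have fibres homotopy equivalent to products of copies of the compact manifold $G$, so one can reduce to a fibrewise Pontryagin--Thom construction over $BG^P$. The analogous fibres of $M^S\to M^P$ and $M^X\to M^P$ are based path spaces of $M$, which are infinite-dimensional; the reduction used in the $BG$ case is therefore unavailable. Instead one should emulate the Cohen--Jones approach to string topology: choose a finite CW model in which $X$ is a subcomplex of $S$ with complement consisting of $|V|$ attached $0$-cells and $|E|$ attached $1$-cells, and observe that up to fibrewise homotopy equivalence, $M^S\hookrightarrow M^X$ is the pullback of the iterated diagonal $M^V\to (M\times M)^E$ along an appropriate evaluation map on $M^X$. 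Since this diagonal is an oriented embedding of codimension $\dim(M)(|E|-|V|) = -\dim(M)\chi(S,X)$, a Pontryagin--Thom collapse provides an umkehr of the expected degree $+\dim(M)\chi(S,X)$.

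The hardest part would be to assemble these local constructions into a coherent theory of umkehr maps independent of the CW model and respecting the 2-cells of Definition~\ref{def:two-cell}, so that it fits into the symmetric monoidal double category formalism of Section~\ref{sec:push-pull}; this amounts to a manifold analogue of Section~\ref{sec:umkehr-maps}. A further subtlety in characteristic $\neq 2$ is that $TM$ is not in general trivial, forcing orientation data on $M$ and the associated local coefficient systems discussed in Remark~\ref{rem:arbitrary-characteristic} to be propagated through the Pontryagin--Thom construction. In characteristic $2$ this issue disappears and the conjecture should be most readily obtainable.
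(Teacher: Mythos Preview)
This statement is a \emph{conjecture} in the paper, not a theorem; the paper offers no proof of it. The only commentary the paper provides is the sentence immediately following the conjecture, which clarifies the meaning of ``reversed'': it refers to swapping which boundary inclusion is required to be an h-embedding (the incoming map $X\to S$ rather than the outgoing map $Y\to S$), not to the positivity condition as you read it. These are two separate requirements in the definition of h-graph cobordism, so your interpretation is slightly off, though in the right spirit.

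Your write-up is therefore not competing with any argument in the paper. It is a reasonable outline of how one might attack the conjecture, and you correctly isolate the essential obstacle: the groupoid-to-fibrewise-manifold machinery of Section~\ref{sec:umkehr-maps} depends on $\fun(\Pi_1(X,P),G)$ being a closed manifold, which has no analogue when $G$ is replaced by a general closed manifold $M$. Your proposed Cohen--Jones style umkehr via a Pontryagin--Thom collapse along a finite-codimension diagonal is the natural candidate. But, as you yourself concede, assembling these local collapses into a CW-model-independent symmetric monoidal double functor compatible with the 2-cells of Definition~\ref{def:two-cell} is the real content of such a theorem, and you have not carried this out. What you have written is a program sketch, not a proof --- which, since the paper leaves the statement open, is the honest status.
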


Here the notion of \emph{reversed} HHGFT
is defined in the same way as an ordinary HHGFT 
after adjusting the definition of
h-graph cobordisms and families of such
so that now it is the inclusion of the incoming boundary,
and not the outgoing boundary, which is assumed to be an h-embedding.

\subsubsection{Vanishing results}

Tamanoi~\cite[Vanishing Theorem]{TamanoiStable}
has shown that in string topology of manifolds 
and string topology of classifying spaces
(Examples~\ref{ex:godin} and~\ref{ex:chataur-menichi})
all stable operations vanish.  More precisely, let $P$ denote the
cobordism from $S^1$ to itself obtained from a torus
by deleting two open disks.  Tamanoi shows that the
degree-$0$ operation associated to $P$ vanishes,
and concludes using Harer Stability that all 
HCFT operations in the stable range vanish.
The last statement means that if $\Sigma$ is a closed 
cobordism from $X$ to $Y$, admissible in the respective HCFT,
then the operation
\[
	\phi_\ast(X) \longto \phi_{\ast + k + d\chi(\Sigma,X)}(Y)
\]
associated to $x \in H_k(B\Diff(\Sigma))$
vanishes as long as $2k+1 \leq \mathrm{genus}(\Sigma)$.
In fact Tamanoi's result holds in a large class of HCFTs:

\begin{proposition}
Let $\phi$ be an HCFT for which $\phi_\ast(S^1)$ is concentrated
in non-negative degrees. If $\phi$ has positive degree
and admits the unit cobordism $\eta_{S^1}\colon \emptyset\to S^1$,
then $\phi_P=0$. The same conclusion also holds if
$\phi$ has negative degree and admits 
the counit cobordism $\varepsilon_{S^1}\colon S^1\to\emptyset$.
\end{proposition}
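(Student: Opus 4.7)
The plan is to exploit the Frobenius algebra structure on $A := \phi_\ast(S^1)$ coming from the pair of pants product $m := \phi_{Y'}$ and the copair of pants coproduct $\Delta := \phi_Y$, and to read off $\phi_P$ in terms of these. Since $P$ is obtained by gluing both legs of the copair of pants $Y\colon S^1 \to S^1 \sqcup S^1$ to both incoming circles of the pair of pants $Y'\colon S^1 \sqcup S^1 \to S^1$, the gluing axiom yields $\phi_P = m \circ \Delta$. Two different decompositions of the 4-holed sphere, viewed as a cobordism $S^1 \sqcup S^1 \to S^1 \sqcup S^1$, together with the gluing and monoidality axioms, produce the Frobenius relation $\Delta \circ m = (m \otimes \mathrm{id}) \circ (\mathrm{id} \otimes \Delta)$ on $A$. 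Combining this with associativity and coassociativity yields the two key identities
\[
	\phi_P(xy) = \phi_P(x) \cdot y, \qquad \Delta(\phi_P(x)) = \sum \phi_P(x_{(1)}) \otimes x_{(2)},
\]
where $\Delta(x) = \sum x_{(1)} \otimes x_{(2)}$ in Sweedler notation.

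In the positive-degree case, the unit cobordism $\eta_{S^1}$ supplies the class $u := \phi_{\eta_{S^1}}(1) \in A_d$. The identification of the composite $Y' \circ (\eta_{S^1} \sqcup \mathrm{id}_{S^1})$ with the cylinder on $S^1$, combined with the gluing, monoidality, and identity axioms, shows that $m(u \otimes x) = x$. Since $\phi_P$ shifts degree by $-2d$, we have $\phi_P(u) \in A_{-d}$; and as $d > 0$, this group vanishes by assumption, so $\phi_P(u) = 0$. The first key identity now gives $\phi_P(x) = \phi_P(u \cdot x) = \phi_P(u) \cdot x = 0$ for all $x \in A$.

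In the negative-degree case, the counit cobordism $\varepsilon_{S^1}$ supplies $c := \phi_{\varepsilon_{S^1}}\colon A \to \F$. The analogous identification of $(\mathrm{id}_{S^1} \sqcup \varepsilon_{S^1}) \circ Y$ with the cylinder on $S^1$ gives $(\mathrm{id} \otimes c) \circ \Delta = \mathrm{id}$ and, symmetrically, $(c \otimes \mathrm{id}) \circ \Delta = \mathrm{id}$. Since $c$ vanishes outside $A_{-d}$ and $\phi_P$ shifts degree by $-2d$, the composite $c \circ \phi_P$ vanishes outside $A_d$; as $d < 0$, this group is zero, so $c \circ \phi_P = 0$. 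Applying $(c \otimes \mathrm{id}) \circ \Delta$ to $\phi_P(x)$ and invoking the second key identity yields
\[
	\phi_P(x) = \sum c(\phi_P(x_{(1)})) \cdot x_{(2)} = 0.
\]

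The main technical obstacle is the bookkeeping needed to derive these algebraic identities on $A$ from the HCFT axioms, namely the Frobenius relation, the (left and right) (co)unit axioms, and the identity $\phi_P(xy) = \phi_P(x) \cdot y$. Each reduces to identifying two cobordisms up to diffeomorphism relative to their boundaries and then invoking gluing, monoidality, and the identity axiom, and is routine once the appropriate decompositions have been exhibited.
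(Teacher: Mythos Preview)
Your argument is correct, but the paper's proof is considerably shorter and avoids most of the Frobenius bookkeeping. Rather than first decomposing $P = Y'\circ Y$ and then deriving the module identity $\phi_P(xy)=\phi_P(x)\cdot y$ algebraically from the Frobenius relation, the paper exhibits a single diffeomorphism $P \cong \mu\circ\bigl(1\sqcup (P\circ\eta_{S^1})\bigr)$. The gluing axiom then gives $\phi_P(x)=m\bigl(x\otimes\phi_{P\circ\eta_{S^1}}(1)\bigr)$, and $\phi_{P\circ\eta_{S^1}}(1)$ lies in $A_{-d}=0$. This is the same endpoint you reach---indeed $\phi_{P\circ\eta_{S^1}}(1)=\phi_P(u)$---but obtained directly from surface topology rather than via the Frobenius relation and associativity. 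Your approach has the virtue of making the algebraic reason transparent (the operator $\phi_P$ is multiplication by a fixed element), while the paper's one-line factorization is more economical and sidesteps the need to verify the Frobenius identity. The negative-degree case in the paper is left implicit; your explicit dual argument via the counit fills that in nicely.
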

\begin{proof}
Let us prove the first statement.
If $\mu$ denotes the pair-of-pants cobordism
from two copies of $S^1$ to one, then
$P
\isom
\mu\circ (1\otimes (P\circ \eta_{S^1}))$,
and $\phi_{P\circ\eta_{S^1}}$ vanishes because it strictly decreases the degree.
\end{proof}

This vanishing result (and its proof) extend immediately to any HHGFT.
We anticipate that more vanishing results of the same flavour 
can be proved using the additional functoriality available 
in an HHGFT.
On the other hand, it is known that in the HHGFT provided by 
Theorem~\ref{thm:main}, many
operations must be nonzero. Such operations can be produced
by direct computation, as in Theorem~\ref{thm:calc},
but they also exist for abstract reasons. For example,
the presence of a counit guarantees that the pair-of-pants 
coproduct is non-zero. 


\section{Overview of the construction}
\label{overview:section}
Let $S/B\colon X\hto Y$ be a family of h-graph cobordisms.
This section will sketch the construction of the operation 
\begin{equation}\label{HHGFTop1:eq}
	H_{\ast+\dim(G)\cdot\chi(S,X)}(B)\otimes H_\ast(BG^X)
	\longto
	H_{\ast}(BG^Y)
\end{equation}
that our HHGFT will associate to $S/B$.
Our aim is to give the reader a broad view of the construction,
without encumbering the discussion with all the additional 
details required to show that these operations do satisfy the 
axioms of a positive HHGFT.

\subsection{The push-pull construction}
\label{subsec:push-pull-construction}
The operation~\eqref{HHGFTop1:eq}
is obtained by a \emph{push-pull construction}, a general pattern for
constructing field theories that is discussed, for example,
in \cite[section~1]{FreedHopkinsTeleman}.
We consider the zig-zag
of inclusions
\[ X\times B\xrightarrow{\ \ i\ \ } S\xleftarrow{\ \ j\ \ } Y\times B,\]
and the resulting zig-zag of restrictions
\[ BG^{X\times B}\xleftarrow{\ \ i^\ast\ \ } BG^S\xrightarrow{\ \ j^\ast\ \ } BG^{Y\times B}.\]
Here, if $U\to B$ is a fibred space, then $BG^U$ denotes the 
\emph{fibrewise mapping space}
$\Map_B(U,BG\times B)$.  It is a space over $B$
whose fibre over $b\in B$ is the space of maps from $U_b$ into $BG$.
So $BG^{X\times B}$ and $BG^{Y\times B}$ are simply products
$BG^X\times B$ and $BG^Y\times B$, but $BG^S$ does not in general
factor in this way.
Now our operation \eqref{HHGFTop1:eq} is constructed as a composite of four maps,
\begin{equation}\label{composite:eq}
\vcenter{\xymatrix@R=1ex@C=-4em{
 *+[l]{H_{\ast+\dim(G)\cdot\chi(S,X)}(B)\otimes H_\ast(BG^X)}
 \ar[r]^-{\times}
 &
 *+[r]{H_{\ast+\dim(G)\cdot\chi(S,X)}(BG^{X\times B})}
 \\
 *+[l]{\phantom{H_{\ast-d\chi(S,X)}(B)\otimes H_\ast(BG^X)}}
 \ar[r]^-{i_!}
 &
 *+[r]{H_{\ast}(BG^{S})}
 \\
 *+[l]{\phantom{H_{\ast-d\chi(S,X)}(B)\otimes H_\ast(BG^X)}}
 \ar[r]^-{(j^\ast)_\ast}
 &
 *+[r]{H_\ast(BG^{Y\times B})}
 \\
 *+[l]{\phantom{H_{\ast-d\chi(S,X)}(B)\otimes H_\ast(BG^X)}}
 \ar[r]^-{\pi_\ast}
 &
 *+[r]{H_\ast(BG^Y).}
 }}
\end{equation}
Here $(j^\ast)_\ast\circ i_!$ can be regarded as a push-pull construction in families.
The external product $\times$ and the projection map $\pi_\ast$
are required to turn this push-pull map into an operation of the required form
\eqref{HHGFTop1:eq}.
The maps in the composite \eqref{composite:eq} are standard,
with the exception of $i_!$.

\subsection{Umkehr maps for fibrewise manifolds}
We must explain the definition of the nonstandard map $i_!$
appearing in the composite~\eqref{composite:eq}.
The main tool for constructing this map is
the theory of Gysin maps for fibrewise manifolds
developed by Crabb and James \cite{CrabbJames}.
Briefly, a fibrewise closed manifold $M$ over a base space $B$
is a fibre bundle over $B$
whose fibre is a smooth closed manifold and which 
satisfies a smoothness condition on transition maps.
Given a fibrewise smooth map
\[f\colon M\longto N\]
of fibrewise closed smooth manifolds over a finite CW-complex,
Crabb and James construct a {Gysin map}
\[
	f^\umk \colon N^{- \tau_N}
	\longto
	M^{- \tau_M}.
\]
By taking homology and using the Thom isomorphism, one obtains
an umkehr map
\[f^!\colon H_{\ast+\dim N}(N)\longto H_{\ast+\dim M}(M)\]
associated to $f$.
In the general case where the base space is not a finite CW-complex,
one defines the umkehr map $f^!$ by taking an appropriate colimit.

\subsection{Umkehr maps for fibrewise mapping spaces}
The nonstandard map $i_!$ appearing in composite~\eqref{composite:eq}
is obtained by applying a construction that associates to 
a map
\[k\colon U\longto V\]
between families of h-graphs over a base $B$
(satisfying certain conditions%
)
an {umkehr map}
\[ k_!\colon H_{\ast+\dim(G)\cdot\chi(V,U)}(BG^U) \longto H_\ast(BG^V).\]
The idea behind the construction of $k_!$
is to replace $BG^U$ and $BG^V$ with homotopy equivalent fibrewise manifolds,
in such a way that  the precomposition map $k^\ast\colon BG^V\to BG^U$ is replaced
with a fibrewise smooth map.
The map $k_!$ is then constructed by taking the umkehr map associated
with this fibrewise smooth map.

Here is how we replace the fibrewise mapping space $BG^U$
with a fibrewise manifold.
We assume that the map $k$ is positive.
We also assume that $U$ admits a choice of \emph{basepoints},
by which we mean a finite set $P$ and a map
$u\colon P\times B\to U$ over $B$
whose image meets every component of every fibre of $U\to B$.
Then we form the \emph{fibrewise fundamental groupoid} $\Pi_1(U,P)$.
This is a groupoid internal to topological spaces over $B$
whose fibre $\Pi_1(U_b,P)$ over $b\in B$ is the groupoid with objects $P$
in which a morphism from $p$ to $q$ is a homotopy class of paths in $U_b$
from the image of $p$ to that of $q$.
Next we form $G^{\Pi_1(U ,P)}$, another category
internal to spaces over $B$, whose fibre over $b\in B$
is the topological category of functors from $\Pi_1(U_b,P)$ to $G$.
Finally we form the classifying space $B(G^{\Pi_1(U,P)})$.
As long as the base space $B$ is sufficiently nice,
there is a natural zig-zag of homotopy equivalences
\[ BG^U \longleftrightarrow B(G^{\Pi_1(U,P)}).\]
In the case where $B=\pt$, $U=S^1$ and $P=\pt$, this zig-zag recovers
the familiar homotopy equivalence $LBG\simeq G^\mathrm{ad}\sslash G$.
Here $G^\mathrm{ad}$ denotes the group $G$ equipped with the 
conjugation action, and $X\sslash G$ for a $G$-space $X$
denotes the homotopy orbit space $EG \times_G X$.
There is a natural restriction map
\begin{equation}\label{FunFM:eq}
	B(G^{\Pi_1(U,P)}) \longrightarrow BG^P\times B
\end{equation}
whose fibre over $b\in B$ is the space of functors
$\Pi_1(U_b,P) \to G$.
Since $U_b$ is an h-graph, this space of functors
is simply a product of finitely many copies of $G$,
and so the fibres of \eqref{FunFM:eq}
are smooth manifolds.  In fact \eqref{FunFM:eq}
is itself a fibrewise closed manifold
whose fibre over $b\in B$ has dimension $\dim(G)\cdot(\chi(P)-\chi(U_b))$.

We have now seen that $BG^U$ and $BG^V$ can be replaced
with homotopy-equivalent fibrewise manifolds $B(G^{\Pi_1(U,P)})$ and
$B(G^{\Pi_1(V,P)})$ over $BG^P\times B$.
Under these equivalences, the precomposition map 
$k^\ast\colon BG^V\to BG^U$
corresponds to a fibrewise smooth map 
\[B(G^{\Pi_1(k)})\colon B(G^{\Pi_1(V,P)}) \longrightarrow B(G^{\Pi_1(U,P)}).\]
We therefore obtain an umkehr map
\[
	H_{\ast+\dim(G)\cdot\chi(V,U)}(B(G^{\Pi_1(U,P)}))
	\xto{B(G^{\Pi_1(k)})^!}
	H_{\ast}(B(G^{\Pi_1(V,P)})),
\]
and define the map $k_!$ by 
declaring that the square
\[\xymatrix@C=4em{
	H_{\ast +\dim(G)\cdot\chi(V,U)}(BG^U)
	\ar@{-->}[r]^-{k_!}
	\ar[d]_\isom
	&
	H_\ast(BG^V)
	\ar[d]^\isom
	\\
	H_{\ast+\dim(G)\cdot\chi(V,U)}(B(G^{\Pi_1(U,P)})
	\ar[r]^-{B(G^{\Pi_1(k)})^!}
	&
	H_\ast(B(G^{\Pi_1(V,P)}))
}\]	
commutes.
The resulting umkehr map $k_!$ is independent of the choice 
of basepoints $P$.

\subsection{Example: the counit}
Let us work out the steps of this construction in the case where
$S/B\colon X\hto Y$ is the counit cobordism
$S^1\hto\emptyset$
given by the disc $D^2$.
It is a family of h-graph cobordisms over a single point.
The resulting operation \eqref{HHGFTop1:eq}
has form
$H_{\ast+\dim(G)}(\pt)\otimes H_\ast(BG^{S^1})
\to
H_\ast(BG^\emptyset)$.
For brevity we will omit mention of the base space $\pt$,
so that what we have is an operation
\[
	H_\ast(BG^{S^1})
	\longto
	H_{\ast-\dim(G)}(BG^\emptyset)
\]
given according to~\eqref{composite:eq} by the composite
\begin{equation}\label{eq:counit}
	H_\ast(BG^{S^1})
	\xrightarrow{\ i_!\ }
	H_{\ast-\dim(G)}(BG^{D^2})
	\xrightarrow{\ (j^\ast)_\ast\ }
	H_{\ast-\dim(G)}(BG^\emptyset)
\end{equation}
in which $i\colon S^1\to D^2$ and $j\colon\emptyset\to D^2$
are the inclusions.

To construct $i_!$, we must replace $BG^{S^1}$
and $BG^{D^2}$ with fibrewise manifolds.
As basepoints for $S^1$ we choose $u\colon P\to S^1$
to be the inclusion of a single point.
Then $\Pi_1(S^1,P)$ has a single object and one free generator,
so that the category of functors $G^{\Pi_1(S^1,P)}$ is the quotient
groupoid $G^\ad/G$, and the classifying space $B(G^{\Pi_1(S^1,P)})$
is the homotopy orbit space $G^\ad\sslash G$.
We leave it to the reader to verify that $B(G^{\Pi_1(D^2,P)})=\pt\sslash G$.
The construction now replaces the restriction map
$i^\ast\colon BG^{D^2}\to BG^{S^1}$
with the fibrewise smooth map
\[
	\pt\sslash G \longrightarrow G^\ad\sslash G
\]
over $\pt\sslash G\homeom BG^P$,
given in fibres by the inclusion of the neutral element of $G$.
The resulting umkehr map
\[
	H_\ast(G^\ad\sslash G)\longto H_{\ast-\dim(G)}(\pt\sslash G)
\]
is then a fibrewise version of the standard umkehr map
$H_\ast(G)\to H_{\ast-\dim(G)}(\pt)$ sending the fundamental class to a generator.

The operation~\eqref{eq:counit} is now given,
after replacing
$H_\ast(BG^{S^1})$ with $H_\ast(G^\ad\sslash G)$,
$H_{\ast-\dim(G)}(BG^{D^2})$ with $H_{\ast-\dim(G)}(\pt\sslash G)$
and $H_{\ast-\dim(G)}(BG^\emptyset)$ with $H_{\ast-\dim(G)}(\pt)$,
by the composite
\begin{equation}\label{eq:counit2}
	H_{\ast}(G^\ad\sslash G)
	\longto
	H_{\ast-\dim(G)}(\pt\sslash G)
	\longto
	H_{\ast-\dim(G)}(\pt)
\end{equation}
of the fibrewise umkehr with the standard induced map.
In the case where $G$ is abelian this composite 
is especially simple, for then
$G^\ad\sslash G$ may be replaced by the product $G\times BG$,
and \eqref{eq:counit2} may be replaced with
\[
	H_\ast(G)\otimes H_\ast(BG)
	\longto
	H_\ast(G)
	\longto
	H_{\ast-\dim(G)}(\pt)
\]
where the first map is induced by the projection $G\times BG\to G$
and the second map sends the fundamental class to a generator.


\section{Categorical background}
\label{categorical-background-section}

In this section, we will recall certain category-theoretical
concepts that we will use to organize the construction of our HHGFT. 
Many of our constructions are parameterized by a base space, and indeed
on many occasions we will need to work with multiple different 
base spaces at once. In subsections \ref{subsec:fibred-categories} through 
\ref{subsec:fibrewise-opposite} we discuss the language of 
fibred categories, which is ideal for handling the base change issues
that arise. After that, in subsections \ref{subsec:double-categories} and 
\ref{subsec:symmetric-monoidal-double-categories}, we will discuss
double categories. Our HHGFT operations are constructed by a 
push-pull construction that involves both ordinary induced maps 
and umkehr (or wrong-way) maps, and we will use 
double categories as a convenient formalism for capturing 
the interactions between these two kinds of induced maps.

\subsection{Fibred categories}
\label{subsec:fibred-categories}

This subsection recalls the notion of fibred category. For 
more detailed discussions we refer the reader to 
\cite[Expos\'e VI]{SGA1} and \cite[Chapter 8]{Borceux}.

\begin{definition}
Suppose $F \colon \calF \to \calE$ is a functor, and 
suppose $f$ is a morphism in $\calE$. 
A morphism $\alpha \colon Y \to X$ with $F(\alpha) = f$
is called \emph{cartesian}
over $f$ if for any morphisms $\beta \colon Z \to X$ of $\calF$
and $g \colon F(Z) \to F(Y)$ of $\calE$ such that 
$F(\beta) = f \circ g$, there exists a unique morphism
$\gamma \colon Z \to Y$ such that $\beta = \alpha\circ \gamma$ and 
$F(\gamma) = g$:
\[\xymatrix{
	Z
	\ar@{|->}[dd]
	\ar@{-->}[dr]^{\exists! \gamma}
	\ar@/^1em/[drrr]^\beta
	\\
	&
	Y
	\ar@{|->}[dd]
	\ar[rr]^\alpha
	&&
	X
	\ar@{|->}[dd]
	\\
	F(Z)
	\ar[dr]^{g}
	\ar@/^1em/[drrr]^{f\circ g }|!{[ur];[dr]}{\hole}
	\\
	&
	F(Y)
	\ar[rr]^{f}
	&&
	F(X)
}\]
The functor $F$ is called a \emph{fibration} 
and the category $\calF$ a \emph{category fibred over} $\calE$
if for every 
morphism $f \colon C \to B$ of $\calE$ and every 
object $X$ of $\calF$ such that $F(X) = B$, there exists
a cartesian morphism $\alpha \colon Y \to X$ over $f$.
A morphism $\alpha$ of $\calF$ is called \emph{vertical} if 
$F(\alpha)$ is an identity map. Given an object $C$ of $\calE$,
the \emph{fibre} of $F$ over $C$ is the subcategory $\calF_C$ of 
$\calF$ whose objects are the objects $Y$ of $\calF$ with 
$F(Y) = C$ and whose morphisms are the vertical maps of $\calF$
covering the identity map of $C$.
\end{definition}

Our interest in fibrations is explained by their usefulness
for discussing base change phenomena.
Suppose $F \colon \calF \to \calE$ is a fibration.
Given a morphism $f\colon C \to B$ in $\calE$
and an object $X$ of $\calF$ with $F(X) = B$, 
by a \emph{base change of} $X$ \emph{along} $f$ 
we mean an object $Y$ together with a cartesian morphism
$\alpha \colon Y \to X$
covering $f$. We will sometimes denote $Y$ by $f^\ast X$.
By the definition of fibration,
a base change of $X$ along $f$ exists for all $X$ and $f$,
and by the universal property of cartesian morphisms,
it is unique up to unique vertical isomorphism.
Moreover, suppose given two objects $X_1$ and $X_2$ 
in the fibre of $F$ over
$B$ and base changes $(Y_1,\alpha_1)$ and
$(Y_2,\alpha_2)$ of $X_1$ and $X_2$ along $f$, respectively.
Then for any vertical morphism $\varphi \colon X_1\to X_2$,
there is a unique vertical morphism $\psi \colon Y_1 \to Y_2$
making the top square in the diagram
\[\xymatrix@C-1em{
	&& Y_2
	\ar@{|->}[ddl]|!{[dr];[dl]}\hole
	\ar[rrr]^(0.50){\alpha_2}
	&&&
	X_2
	\ar@{|->}[ddl]	
	\\
	Y_1
	\ar@{|->}[dr]
	\ar@{-->}[urr]^(0.4){\exists!\psi}
	\ar[rrr]^(0.65){\alpha_1}
	&&&
	X_1
	\ar@{|->}[dr]
	\ar[urr]^(0.4)\varphi
	\\
	& C
	\ar[rrr]^f
	&&&
	B
}\]
commutative. We say that $\psi$ is \emph{obtained from}
$\varphi$ \emph{by base change along} $f$
(with respect to $\alpha_1$ and $\alpha_2$).

\begin{definition}[The categories $\calB$ and $\calU$]
\label{def:calB-calU}
We denote by $\calB$ the category of base spaces
(that is, weak Hausdorff $k$-spaces) and continuous maps.
By a \emph{good base space} we mean
any space which can be embedded as an open subset of some CW complex. 
We denote by $\calU$ the full subcategory of  $\calB$
whose objects are the good base spaces.
Most of the fibred categories we will
encounter will be fibred over $\calU$, which we regard as our 
preferred category of base spaces. However, for some purposes
the category $\calU$ will be insufficient, and we will need to
work over the more general category $\calB$.
\end{definition}

Good base spaces are paracompact (see eg.\ 
\cite[Corollary II.4.4]{LundellWeingram}) and locally
contractible. Moreover, an open subset of a good base space
is again a good base space, as is the product of a finite number of
good base spaces.

\begin{example}[The fibred categories $\calS$ and $\widehat\calS$]
\label{ex:calS-def}
Let $\widehat\calS$ be the category of parameterized spaces,
that is, the category of maps $X \to B$ of spaces where
$B$ is an object of $\calB$. 
A map in $\widehat\calS$ from $X\to B$ to $Y \to C$
is a pair $(f,g)$ of continuous maps making
the square
\[\xymatrix{
	X
	\ar[d]
	\ar[r]^f 
	&
	Y
	\ar[d]
	\\
	B
	\ar[r]^g
	&
	C
}\]
commutative. The functor
\[
	\widehat\calS \longto \calB,\quad (X\to B) \longmapsto B
\]
makes $\widehat\calS$ a category fibred over $\calB$.
A map is cartesian exactly when the corresponding
square is a pullback square. We denote by $\calS$ the
full subcategory of $\widehat\calS$ consisting of spaces over
good base spaces. It is a category fibred over $\calU$,
a morphism of $\calS$ being cartesian if and only if its
image in $\widehat\calS$ is so.
\end{example}

\begin{example}[The fibred category $\calH$]
\label{ex:calH-def}
Let us denote by $\calH$ the category whose objects are 
families of h-graphs admitting a choice of basepoints 
and whose base space lies in $\calU$.
The morphisms in $\calH$ are the maps of families of h-graphs.
(See Definition~\ref{def:families-of-h-graphs} for the 
definition of families of h-graphs and maps between them, 
and recall from section~\ref{overview:section} that a family of h-graphs
$U$ over $B$ admits basepoints if there is a finite set $P$ and a 
map $u\colon P\times B\to U$ over $B$ 
whose image meets every component of every fibre of $U\to B$.)
Then $\calH$ is a full subcategory of $\calS$,
and the composite functor $\calH \to \calS \to \calU$  
makes $\calH$ into a category 
fibred over $\calU$. The cartesian morphisms in $\calH$ are
again the morphisms corresponding to pullback squares.
We call $\calH$ the \emph{category of families of h-graphs}.
\end{example}

\subsection{Symmetric monoidal fibrations}
\label{subsec:symmetric-monoidal-fibrations}

Most of the fibrations we will encounter will  
have the structure of a symmetric monoidal fibration.
Following Shulman~\cite[section 12]{Shulman},
we make the following definitions.

\begin{definition}
A functor $F\colon \calF \to \calE$ is called 
a \emph{symmetric monoidal fibration}
if $\calF$ and $\calE$ are symmetric monoidal categories,
$F$ is both a fibration and a strict symmetric monoidal functor,
and 
the tensor product of any two cartesian morphisms in $\calF$
is again cartesian.
\end{definition}

\begin{definition}
Suppose 
$F \colon \calF \to \calE$ and $G \colon \calG\to \calE$ 
are symmetric monoidal fibrations. Then by a
\emph{symmetric monoidal morphism of fibrations from} 
$F$ \emph{to} $G$ \emph{over} $\calE$ 
we mean a symmetric monoidal functor 
$K\colon \calF \to \calG$
which preserves cartesian morphisms and makes the triangle
\[\xymatrix{
	\calF
	\ar[dr]_F
	\ar[rr]^K
	&&
	\calG
	\ar[dl]^G
	\\
	&\calE
}\]
of symmetric monoidal functors commutative.
\end{definition}

\begin{definition}
Given symmetric monoidal fibrations $F \colon \calF \to \calE$
and $G \colon \calG \to \calE$ and 
two symmetric monoidal morphisms of fibrations
\[	
	K,L \colon F \longto G
\]
over $\calE$, a 
\emph{symmetric monoidal transformation of fibrations from}
$K$ \emph{to} $L$ \emph{over} $\calE$ 
is a symmetric monoidal natural transformation
$\eta \colon K \Rightarrow L$ such that the natural transformation
$G\eta \colon F \Rightarrow F$ is the identity.
\end{definition}

Symmetric monoidal fibrations with target $\calE$,
symmetric monoidal morphisms of fibrations over $\calE$, 
and symmetric monoidal transformations of fibrations over $\calE$
assemble into a 2-category $\Fib_\tensor(\calE)$.

\begin{example}[The external disjoint union $\usqcup$]
\label{ex:sm-fibration-examples}
The fibrations
$\calS \to \calU$ and $\calH \to \calU$
become symmetric monoidal fibrations if we equip
$\calU$ with the symmetric monoidal structure given by 
direct product of spaces and 
we equip $\calS$ and $\calH$ with the symmetric monoidal
structure given by external disjoint union.
By definition, for parameterized spaces $X \to B$ and $Y \to C$,
the \emph{external disjoint union} of $X$ and $Y$ is the 
space
\[
	X \usqcup Y = (X \times C) \sqcup (B \times Y)
\]
over $B \times C$. The neutral object for the
symmetric monoidal structure on $\calS$
given by $\usqcup$ 
is the empty space over the one-point space.
The external disjoint union of objects of $\calH$ is
defined similarly.
\end{example}

\begin{example}[The direct product $\times$]
\label{ex:sm-calS-with-times}
Instead of the symmetric monoidal structure given by 
external disjoint union, we may equip $\calS$ with the 
symmetric monoidal structure given by the direct product
\[
	(X\to B)\times (Y\to C) = (X\times Y \to B \times C).
\]
This symmetric monoidal structure also makes $\calS \to \calU$
into a symmetric monoidal fibration. In a similar way, 
we may turn $\widehat\calS \to \calB$ into a symmetric monoidal
fibration by equipping $\widehat\calS$ with the direct product.
\end{example}

\subsection{The fibrewise opposite of a fibred category}
\label{subsec:fibrewise-opposite}
The construction of our HHGFT operations involves an assignment 
\begin{equation}
\label{assignment:BGX}
	X \mapsto BG^X
\end{equation}
where $X$ is a space fibred over a good base space $B$
and $BG^X$ denotes the fibrewise mapping space 
$\Map_B(X,BG\times B)$, a space over $B$ 
whose fibre over $b\in B$
is the space of maps from $X_b$ to $BG$. Observe that 
this assignment is contravariantly functorial with respect to vertical
morphisms of $\calS$ and covariantly functorial with respect to 
cartesian morphisms of $\calS$. Thus the above assignment
defines a functor on neither $\calS$ nor $\calS^\op$.
In this subsection, we will
discuss the construction of the fibrewise opposite $\calF^\fop$ of a 
fibred category $\calF$. This construction
has the property that the assignment 
\eqref{assignment:BGX} extends to a functor $\calS^\fop \to \calS$.

\begin{definition}
 Suppose $F\colon \calF \to \calE$ is a fibration. Then by the  
\emph{fibrewise opposite category} of $\calF$ we mean the 
category $\calF^\fop$ defined as follows.  The objects of 
$\calF^\fop$ are the objects of $\calF$, and the morphisms
in $\calF^\fop$ from $X$ to $Y$ are equivalence classes
of diagrams
\[
	X \xot{\ \alpha\ } U \xto{\ \beta\ } Y
\]
where $\alpha$ is vertical and $\beta$ is cartesian.
Two such diagrams
\[
	X \xot{\ \alpha_1\ } U_1 \xto{\ \beta_1\ } Y
	\quad \text{ and } \quad
	X \xot{\ \alpha_2\ } U_2 \xto{\ \beta_2\ } Y
\]
are equivalent
if there exists a (necessarily unique) vertical isomorphism
$U_1 \to U_2$ 
fitting into a commutative diagram
\[\xymatrix@R-4ex{
	& 
	U_1
	\ar[dd]^\isom
	\ar[dl]_{\alpha_1}
	\ar[dr]^{\beta_1}
	\\
	X
	&&
	Y.
	\\
	&
	U_2
	\ar[ul]^{\alpha_2}
	\ar[ur]_{\beta_2}
}\]
The composite of the morphisms 
$[X \xot{\alpha} U \xto{\beta} Y]$ and $[Y \xot{\gamma} V \xto{\delta} Z]$
is the morphism represented by the composites along the
two sides of the diagram
\[\xymatrix{
&&
W
\ar[dl]_{\tilde\gamma}
\ar[dr]^{\tilde\beta}
\\
&
U
\ar[dl]_{\alpha}
\ar[dr]^{\beta}
&&
V
\ar[dl]_\gamma
\ar[dr]^\delta
\\
X
&&
Y
&&	
Z
}\]
where $\tilde\beta$ is a cartesian morphism covering $F(\beta)$ 
and $\tilde\gamma$ is obtained from $\gamma$ by base change.

Defining $F^\fop \colon \calF^\fop \to \calE$ by 
setting $F^\fop(X) = F(X)$
on objects and 
\[
	F^\fop[X \gets U \xto{\beta}Y] = F(\beta)
\]
on morphisms, we obtain a new fibration 
$F^\fop \colon \calF^\fop \to \calE$, the 
\emph{fibrewise opposite fibration} of $F \colon \calF \to \calE$.
The morphism represented by a diagram $X\xot{\alpha} U \to Y$
is cartesian precisely when $\alpha$ is an isomorphism.
\end{definition}

\begin{remark}
The fibre $(\calF^\fop)_B$ over an object $B$ of $\calE$ is 
naturally isomorphic to the opposite category of the fibre $\calF_B$,
explaining the name ``fibrewise opposite category'' for $\calF^\fop$.
\end{remark}

Our next goal is to explain how the assignment 
$\calF \mapsto \calF^\fop$ extends to a 2-functor
\[
	(-)^\fop \colon \Fib_\tensor(\calE) \longto \Fib_\tensor(\calE)
\]
which reverses the direction of 2-morphisms.

\begin{definition}
\label{def:fop-objects}
If $F \colon \calF \to \calE$ is a symmetric monoidal fibration,
we give $\calF^\fop \to \calE$ the structure of a symmetric monoidal
fibration by equipping $\calF^\fop$ with the following symmetric
monoidal structure:
The tensor product in $\calF^\fop$ agrees on objects with
the tensor product of objects in $\calF$, and is defined on
morphisms by
\[
	[X_1 \xot{\alpha_1} U_1 \xto{\beta_1} Y_1]
		\tensor
	[X_2 \xot{\alpha_2} U_2 \xto{\beta_2} Y_2]	
	=
	[X_1\tensor X_2 
		\xot{\alpha_1\tensor\alpha_2}
	U_1\tensor U_2
		\xto{\beta_1\tensor\beta_2}
	Y_1\tensor Y_2].
\]
The neutral object for the tensor product in $\calF^\fop$ is 
the same as that for the tensor product in $\calF$.
The associativity constraint
\[
	(X\tensor Y) \tensor Z \xto{\,\isom\,} X \tensor (Y\tensor Z)
\]
for $\calF^\fop$ is the morphism represented by the diagram
\[
	(X\tensor Y) \tensor Z
	\xot{\quad\id\quad}
	(X\tensor Y) \tensor Z
	\xto[\isom]{\quad a\quad}
	X \tensor (Y\tensor Z)
\]
and similarly for the left and right unit constraints and
the symmetry constraint. 
\end{definition}
\begin{definition}
\label{def:fop-functors}
Suppose $F \colon \calF \to \calE$ and $G \colon \calG \to \calE$ 
are symmetric monoidal fibrations and $K \colon F \to G$
is a symmetric monoidal morphism of fibrations over $\calE$.
We define the symmetric monoidal morphism of 
fibrations over $\calE$
\[
	K^\fop \colon F^\fop \longto G^\fop
\]
on objects by setting $K^\fop (X) = K(X)$
and on morphisms by setting
\[
	K^\fop[X \xot{\alpha} U \xto{\beta} Y]
	 = 
	[K(X) \xot{K(\alpha)} K(U) \xto{K(\beta)} K(Y)].
\]
The monoidality isomorphism
\[
	K^\fop_\tensor \colon K^\fop(X)\tensor K^\fop(Y) \to K^\fop(X\tensor Y)
\]
is defined by 
\[
	[K(X) \tensor K(Y) 
		\xot{\ \id\ }
	K(X) \tensor K(Y) 
		\xto{\  K_\tensor \ }
	K(X\tensor Y)]
\]
and similarly for the unit isomorphism $K^\fop_I \colon I_\calG \to K^\fop(I_\calF)$. 
\end{definition}
\begin{definition}
\label{def:fop-transformations}
If $F \colon \calF \to \calE$
and $G \colon \calG \to \calE$ are symmetric monoidal fibrations,
$K,L \colon F \to G$
are symmetric monoidal morphisms of fibrations over $\calE$,
and $\eta\colon K \Rightarrow L$ 
is a symmetric monoidal transformation of fibrations
over $\calE$, we define the
symmetric monoidal transformation of fibrations over $\calE$ 
\[
	\eta^\fop \colon L^\fop \Rightarrow K^\fop
\]
by letting the 
component $\eta^\fop_X \colon L^\fop(X) \to K^\fop(X)$ 
of $\eta^\fop$ to be the morphism
\[
	[L(X) \xot{\ \eta_X\ } K(X) \xto{\ \id\ } K(X)].
\]
\end{definition}

\begin{proposition}
Definitions~\ref{def:fop-objects} through \ref{def:fop-transformations} 
give a 2-functor
\[
	(-)^\fop \colon \Fib_\tensor(\calE) \longto \Fib_\tensor(\calE)^\co
\]
where the superscript $(-)^\co$ means that the direction
of the 2-morphisms has been reversed. \qed
\end{proposition}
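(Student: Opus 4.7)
The proof is essentially a long but routine verification that the data in Definitions~\ref{def:fop-objects} through~\ref{def:fop-transformations} satisfy all the axioms required to define a 2-functor with the stated reversal of 2-morphisms. I would organize the verification in the order suggested by the definitions themselves, tackling the object, 1-morphism, and 2-morphism levels in turn.

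First, for each symmetric monoidal fibration $F\colon \calF \to \calE$, I would check that $\calF^\fop$ is a category and $F^\fop$ a symmetric monoidal fibration. Well-definedness of composition in $\calF^\fop$ rests on the universal property of cartesian morphisms: given composable morphisms $[X\xot{\alpha}U\xto{\beta}Y]$ and $[Y\xot{\gamma}V\xto{\delta}Z]$, a cartesian lift $\tilde\beta$ of $F(\beta)$ with target $V$ exists and is unique up to unique vertical isomorphism, and then $\tilde\gamma$ is obtained from $\gamma$ by base change. Independence from the choice of representatives, associativity of composition, and unitality then all follow by standard diagram chases using the same universal property. For the fibration property, one checks that morphisms of the form $[U\xot{\id}U\xto{\beta}X]$, with $\beta$ cartesian in $\calF$, are cartesian in $\calF^\fop$; this yields existence of cartesian lifts in $\calF^\fop$ from those in $\calF$.

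Second, I would verify the symmetric monoidal structure on $\calF^\fop$. Well-definedness of the tensor product on morphisms uses that $\tensor$ preserves cartesian morphisms in $\calF$ (a hypothesis on $F$) and preserves vertical isomorphisms. The associativity, unit, and symmetry constraints, being obtained by postcomposing identity morphisms with the constraints of $\calF$, automatically satisfy the pentagon and hexagon axioms. Strictness of $F^\fop$ as a symmetric monoidal functor and closure of cartesian morphisms under $\tensor$ in $\calF^\fop$ are immediate from the corresponding properties of $F$.

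Third, I would check the 1-morphism and 2-morphism constructions. For $K^\fop$, the crucial point is that $K$ preserves cartesian morphisms, so applying $K$ termwise to a representative diagram yields a valid representative in $\calG^\fop$; functoriality of $K^\fop$ then follows from functoriality of $K$ together with uniqueness of base change. The symmetric monoidal and fibration-morphism properties of $K^\fop$ are routine. For $\eta^\fop$, naturality in $\calG^\fop$ (with reversed direction) is obtained by noting that given a morphism $[X\xot{\alpha}U\xto{\beta}Y]$ in $\calF^\fop$, naturality of $\eta$ applied to $\alpha$ and $\beta$ separately, together with the fact that cartesian morphisms in $\calG$ are $G$-cartesian, gives the required equality in $\calG^\fop$. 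The reversal of direction is built into the definition via the asymmetric role of $\eta_X$ and $\id$, and the symmetric monoidal property of $\eta^\fop$ is inherited from that of $\eta$.

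Finally, the 2-functor axioms, namely preservation of identity 1- and 2-morphisms, preservation of composition of 1-morphisms, and preservation of both vertical and horizontal composition of 2-morphisms (with the vertical composition reversed), reduce to direct computations using the universal properties established above. The main obstacle is bookkeeping rather than conceptual: because morphisms in $\calF^\fop$ are equivalence classes of zig-zags defined only up to vertical isomorphism, each verification requires a careful application of the universal property of cartesian morphisms to select compatible representatives. Drawing the appropriate zig-zag diagrams systematically turns this into a clean if tedious check.
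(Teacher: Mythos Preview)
Your proposal is correct and is exactly the kind of routine verification the paper has in mind: the paper gives no proof at all, marking the proposition with a \qed\ immediately after the statement. Your outline of checking well-definedness and associativity of composition in $\calF^\fop$ via the universal property of cartesian morphisms, then the symmetric monoidal structure, then the 1- and 2-morphism levels, is the natural way to fill in the details the authors omit.
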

	
\begin{example}
\label{ex:BG-to-X}
The assignment \eqref{assignment:BGX} determines
a symmetric monoidal morphism of fibrations over $\calU$ 
\[\xymatrix{
 (\calS,\usqcup)^\fop 
 \ar[rr]^{BG^{(-)}}
 \ar[dr]
 &&
 (\calS,\times)
 \ar[dl]
 \\
 &
 \calU
}\]
from the fibrewise opposite of $\calS$ equipped with the 
symmetric monoidal structure given by external disjoint union to 
$\calS$ equipped with the symmetric monoidal structure given 
by direct product.
\end{example}

\subsection{Double categories}
\label{subsec:double-categories}
The goal of this subsection and the next one 
is to lay out the definition of 
symmetric monoidal double category together with the 
concomitant notions of symmetric monoidal double functor  
and symmetric monoidal transformation.
Along the way we also construct a number of symmetric monoidal 
double categories needed in the sequel. In our definitions,
we will follow
Shulman \cite{ShulmanSMBicat} with the major simplification
that we will only consider double categories where both 
the horizontal and vertical compositions are strict.

In this subsection, we discuss
non-monoidal double categories and the accompanying 
notions of double functor and transformation.
Intuitively, a double category is a structure consisting 
of objects, two kinds of 1-morphisms (horizontal and vertical), 
and 2-cells that are shaped like squares. In addition,
there is a composition law for each kind of 1-morphism, and 
two composition laws for 2-cells corresponding to horizontal 
and vertical pasting of the 2-cells. A concise definition is 
as follows.

\begin{definition}
A \emph{double category} is a category internal to categories. 
Thus a double category $\bbD$ consists of the following data:
\begin{itemize}
\item a category $\bbD_0$ (`objects')
\item a category $\bbD_1$ (`morphisms')
\item a functor $U\colon \bbD_0 \to \bbD_1$ (`units')
\item functors $S,T\colon \bbD_1 \to \bbD_0$ (`source and target')
\item a functor $\odot\colon \bbD_1 \times_{\bbD_0} \bbD_1 \to \bbD_1$
(`composition'), 
where the fibre product is formed by
mapping the first copy of $\bbD_1$ to $\bbD_0$ by
$S$ and the second one by $T$.
\end{itemize} 
We will write the value of the functor $U$ 
on an object $A$ and a morphism $f$ as
$U_A$ and $U_f$, respectively.
The above data are subject to the following axioms:
\begin{itemize}
\item $\odot$ is associative
\item $SU_A=TU_A=A$ and $S(U_f)=T(U_f)=f$
\item $U_{TM} \odot M = M\odot U_{SM} = M$ and 
	$U_{T\varphi} \odot \varphi = \varphi\odot U_{S\varphi} = \varphi$
\item $S(M\odot N) = SN$ and $S(\varphi\odot\psi) = S(\psi)$
\item $T(M\odot N) = TM$ and $T(\varphi\odot\psi) = T(\varphi)$
\end{itemize}
for all objects $A$ and morphisms $f$ of $\bbD_0$ and for all 
objects $M$, $N$ and morphisms $\varphi$, $\psi$ of $\bbD_1$
composable with respect to $\odot$.

The \emph{objects} of the double category $\bbD$
are the objects of $\bbD_0$,
and the \emph{vertical 1-morphisms} of $\bbD$ are the morphisms
of $\bbD_0$.
We call $\bbD_0$ the \emph{vertical category} of $\bbD$.
The \emph{horizontal 1-morphisms} of $\bbD$ are the objects
of $\bbD_1$.
We will often draw a horizontal 1-morphism
$M$ with $S(M) = A$ and
$T(M) = B$ as 
\[
A \xhto{\ M\ } B.
\]
The \emph{2-cells} of the double category $\bbD$
are the morphisms of $\bbD_1$.
We will often draw a 2-cell $\varphi$ with domain 
$A \xhto{M} B$, codomain $C\xhto{N} D$ and $S(\varphi)=f$ and 
$T(\varphi) = g$ as
\[\xymatrix{
	A
	\ar@{}[dr]|{\Downarrow \varphi}
	\ar[d]_f
	\ar[r]|-@{|}^M
	&
	B
	\ar[d]^g
	\\
	C
	\ar[r]|-@{|}_N	
	&
	D
}\]
The functor $\odot$ is called
the \emph{horizontal composition functor} of $\bbD$.
The horizontal composition of 2-cells can then be pictured as follows:
\[
\left(\vcenter{\xymatrix{
	B
	\ar@{}[dr]|{\Downarrow \psi}
	\ar[d]_g
	\ar[r]|-@{|}^N
	&
	C
	\ar[d]^h
	\\
	B'
	\ar[r]|-@{|}_{N'}
	&
	C'
}}\right)
\odot
\left(\vcenter{\xymatrix{
	A
	\ar@{}[dr]|{\Downarrow \varphi}
	\ar[d]_f
	\ar[r]|-@{|}^M
	&
	B
	\ar[d]^g
	\\
	A'
	\ar[r]|-@{|}_{M'}
	&
	B'
}}\right)
=
\left(\vcenter{\xymatrix{
	A
	\ar@{}[dr]|{\Downarrow \psi \odot \varphi}
	\ar[d]_f
	\ar[r]|-@{|}^{N\odot M}
	&
	C
	\ar[d]^h
	\\
	A'
	\ar[r]|-@{|}_{N'\odot M'}
	&
	C'
}}\right)
\]
\emph{Vertical composition} of vertical 1-morphisms and 2-cells
is given by composition in $\bbD_0$ and $\bbD_1$,
respectively.
\end{definition}

\begin{example}
\label{ex:sqcat}
Most of our double categories
will be constructed from the 
\emph{double category of squares} $\sq(\calC)$
associated to some ordinary category $\calC$.
The vertical category $\sq(\calC)_0$ of $\sq(\calC)$
is just $\calC$, while $\sq(\calC)_1$ 
is the category 
of morphisms in $\calC$, that is, the
functor category $\calC^{[1]}$ where 
$[1]$ denotes the poset $\{0<1\}$.
Thus the objects of $\sq(\calC)$ are the objects of $\calC$,
the vertical and horizontal 1-morphisms are both the 
morphisms of $\calC$, and the 2-cells are 
the commutative squares in $\calC$.
The horizontal composition $\odot$ is given on horizontal 1-morphisms
by composition in $\calC$ and on 2-cells by pasting commutative squares horizontally:
\[
\left(\vcenter{\xymatrix{
	B
	\ar[d]_g
	\ar[r]^v
	&
	C
	\ar[d]^h
	\\
	B'
	\ar[r]_{v'}
	&
	C'
}}\right)
\odot
\left(\vcenter{\xymatrix{
	A
	\ar[d]_f
	\ar[r]^u
	&
	B
	\ar[d]^g
	\\
	A'
	\ar[r]_{u'}
	&
	B'
}}\right)
=
\left(\vcenter{\xymatrix{
	A
	\ar[d]_f
	\ar[r]^{vu}
	&
	C
	\ar[d]^h
	\\
	A'
	\ar[r]_{v'u'}
	&
	C'
}}\right)
\]
Notice that $\sq(\calC)$ has the special property that 
a 2-cell is determined by its bounding 1-morphisms.
\end{example}

The following double category will later serve as the domain
of definition for our theory of umkehr maps
for fibrewise mapping spaces.

\begin{example}[The double category $\bbS^d(\calH^\fop)$]
\label{ex:dc-special-squares-h-graphs}
Let $d\in \Z$. 
We will construct \emph{the double category of special squares
in} $\calH^\fop$ \emph{with degree shifts},
denoted $\bbS^d(\calH^\fop)$,
as a sub-double category of the double category of squares
in a certain ordinary category.
Let $\calB^\ds$ denote the category of pairs 
$(B,m)$ where $B$ is an object of $\calB$ and $m\colon B \to \Z$
is a locally constant function. We refer to such an $m$ as
a \emph{degree shift}. A morphism in $\calB^\ds$ from $(B,m)$ to 
$(B',m')$ is simply a continuous map $B \to B'$; the degree
shifts $m$ and $m'$ do not play a role. Consider now the
fibre product $\calH^\fop\times_\calB \calB^\ds$. 
The objects of this fibre product can be identified with pairs
$(X,n)$, where $X$ is an object of $\calH$ over a good
base space $B$ and $n$ is a locally constant function $B \to \Z$. 
We call the objects of $\calH^\fop\times_\calB \calB^\ds$
\emph{families of h-graphs with degree shifts}.  A morphism 
in $\calH^\fop\times_\calB \calB^\ds$ from a family 
$(X,m)$ over $B$ to another family $(Y,n)$ over $C$
is a morphism in $\calH^\fop$ from $X$ to $Y$, that is,
an equivalence class of commutative diagrams of the form
\begin{equation}
\label{zigzag:morphism-representative}
\vcenter{\xymatrix@C-1em{
	X
	\ar[dr]
	&& 
	U 
	\ar[dl]
	\ar[ll]_\alpha
	\ar[rr]^\beta
	&&
	Y
	\ar[d]
	\\
	&
	B
	\ar[rrr]^f
	&&&
	C
}}
\end{equation}
where $U \to B$ is an object of $\calH$ and 
the map $\beta$ is cartesian over $f$.
We now let $\bbS^d(\calH^\fop)$ be the sub-double category of 
$\sq(\calH^\fop\times_\calB\calB^\ds)$ defined as follows.
\begin{itemize}
\item
The objects of $\bbS^d(\calH^\fop)$ are simply the 
objects of $\calH^\fop\times_\calB\calB^\ds$.
\item
A vertical morphism in $\bbS^d(\calH^\fop)$ 
is a morphism $(X,m)\to (Y,n)$ in $\calH^\fop\times_\calB\calB^\ds$ 
represented by a diagram of the form
\eqref{zigzag:morphism-representative}
in which $\alpha$ is an h-embedding
of families of h-graphs and $m$ and $n$ satisfy $m = n\circ f$.
\item
A horizontal morphism in $\bbS^d(\calH^\fop)$ is a morphism
$(X,m)\to (Y,n)$ in $\calH^\fop\times_\calB\calB^\ds$ 
represented by a diagram of the form
\eqref{zigzag:morphism-representative}
in which $\beta$ and $f$ are identity maps, 
$\alpha$ is a positive map of families of h-graphs, 
and $m$ and $n$ satisfy
\[
	m-n = -d(\chi(X)-\chi(Y)).
\]
Here $\chi(X) \colon B \to \Z$ is the locally constant
function whose value at a point $b \in B$ is the Euler
characteristic of the fibre $X_b$, and similarly for $\chi(Y)$.
\item
A 2-cell in $\bbS^d(\calH^\fop)$ is a 2-cell
\[\xymatrix{
	(X,m) 
	\ar[r]
	\ar[d]
	&
	(Y,n)
	\ar[d]
	\\
	(Z,k)
	\ar[r]
	&
	(W,l)
}\]
in $\sq(\calH^\fop\times_\calB\calB^\ds)$ 
whose vertical and horizontal morphisms satisfy the respective
above conditions and in which the
underlying commutative diagram of spaces
\[\xymatrix@R-2.6ex{
	X
	\ar[dr]
	&&
	Y
	\ar[ll]
	\ar[dl]
	\\
	& 
	B
	\ar[dd]^f
	\\
	f^\ast Z
	\ar[uu]
	\ar[ru]
	\ar[dd]
	&&
	f^\ast W
	\ar[uu]
	\ar[lu]
	\ar[dd]
	\\
	& C
	\\
	Z
	\ar[ur]
	&&
	W
	\ar[ll]
	\ar[ul]
}\]
is such that, for each $b \in B$, the square
\[\xymatrix{
	X_b
	&
	Y_b	
	\ar[l]
	\\
	Z_{f(b)}
	\ar[u]
	&
	W_{f(b)}
	\ar[u]
	\ar[l]
}\]
of fibres is a homotopy cofibre square. 
\end{itemize}
It is now easily
verified that $\bbS^d(\calH^\fop)$ does inherit 
a double category structure from 
$\sq(\calH^\fop\times_\calB\calB^\ds)$.
(Later in Example~\ref{ex:sm-dc-special-squares-h-graphs}
we will equip $\bbS^d(\calH^\fop)$ with the structure of a symmetric
monoidal double category.)
\end{example}

\begin{definition}\label{def:double-functor}
Suppose $\bbD$ and $\bbE$ are double categories.
A \emph{double functor} $F\colon \bbD \to \bbE$ 
is a functor $\bbD \to \bbE$ in the sense of internal 
category theory. Thus $F$ consists of 
\begin{itemize}
\item a functor $F_0\colon \bbD_0 \to \bbE_0$
\item a functor $F_1\colon \bbD_1 \to \bbE_1$
\end{itemize}
such that
\begin{itemize}
\item $SF_1 = F_0S$ and $TF_1 = F_0T$
\item $F_1 M \odot F_1 N = F_1(M\odot N)$ and 
	$F_1 \varphi \odot F_1 \psi = F_1(\varphi\odot \psi)$
	for all composable horizontal 1-morphisms $M$ and $N$ and 
	2-cells $\varphi$ and $\psi$
\item $U F_0 = F_1 U$.
\end{itemize}
\end{definition}

\begin{example}
\label{ex:sqfun}
Suppose $F\colon \calC \to \calD$ is a functor between ordinary 
categories.  Then there is a double functor
\[
	\sq(F)\colon \sq(\calC) \longto \sq(\calD)
\]
where $\sq(F)_0\colon \calC \to \calD$ is $F$ and 
$\sq(F)_1$ is the functor 
$\calC^{[1]} \to \calD^{[1]}$
induced by $F$.
\end{example}

\begin{definition}
Suppose $F,G\colon \bbD \to \bbE$ are functors of double categories.
A (vertical) \emph{transformation} $\alpha \colon F \to G$ 
consists of 
\begin{itemize}
\item a natural transformation $\alpha_0\colon F_0 \to G_0$
\item a natural transformation $\alpha_1\colon F_1 \to G_1$
\end{itemize}
(both of which we will usually denote by $\alpha$) satisfying
the following compatibility conditions:
\begin{itemize}
\item $S(\alpha_M) = \alpha_{SM}$ and $T(\alpha_M) = \alpha_{TM}$
\item $\alpha_{N \odot M} = \alpha_N \odot \alpha_M$
\item $\alpha_{U_A} = U_{\alpha_A}$.
\end{itemize}
for all composable horizontal 1-morphisms $M$ and $N$
and all objects $A$.
Notice that this notion of transformation 
does \emph{not} agree with the 
usual notion of natural transformation one obtains from 
internal category theory.
\end{definition}

\begin{example}
\label{ex:sqtransf}
Suppose $F,G\colon \calC \to \calD$ are ordinary functors,
and suppose $\alpha \colon F \to G$ is an ordinary
natural transformation.
Then there is a transformation 
\[
	\sq(\alpha) \colon \sq(F) \longto \sq(G)
\]
with 
\[
	\sq(\alpha)_0 = \alpha \colon F \longto G.
\]
The component of $\sq(\alpha)_1$ associated to a
horizontal 1-morphism $u \colon A \to B$ in $\sq(\calC)$
is the square
\[\vcenter{\xymatrix{
	FA
	\ar[d]_{\alpha_A}
	\ar[r]^{Fu}
	&
	FB
	\ar[d]^{\alpha_B}
	\\
	GA
	\ar[r]_{Gu}
	&
	GB
}}\]
\end{example}

\subsection{Symmetric monoidal double categories}
\label{subsec:symmetric-monoidal-double-categories}

Double categories, double functors and transformations assemble
into a 2-category $\Dbl$. Guided by the case of ordinary 
symmetric monoidal categories, which can be characterized as 
the symmetric pseudomonoids in the 2-category $\Cat$ of
ordinary categories, we make the following 
definition.

\begin{definition}
A \emph{symmetric monoidal double category} is a symmetric
pseudomonoid in the 2-category of double categories. 
This means that a symmetric monoidal double category 
$\bbD = (\bbD,\tensor,I,a,l,r,s)$
consists of:
\begin{itemize}
\item a double category $\bbD$
\item a double functor $\tensor\colon \bbD \times \bbD \to \bbD$
\item a double functor $I\colon * \to \bbD$
\item invertible transformations
	\begin{itemize}
 	\item $a\colon \mathord{\tensor} \circ (\mathord{\tensor} \times \Id)
	        \xto{\,\isom\,}
	        \mathord{\tensor} \circ (\Id \times \mathord{\tensor})$
	\item $l\colon \mathord{\tensor} \circ (I\times \Id) \xto{\,\isom\,} \Id$
	\item $r\colon \mathord{\tensor} \circ (\Id\times I) \xto{\,\isom\,} \Id$
	\item $s\colon \mathord{\tensor} \xto{\,\isom\,} \mathord{\tensor}\circ\tau$,
	where $\tau\colon \bbD \times \bbD \xto{\,\isom\,} \bbD\times \bbD$ 
	interchanges the two coordinates
	\end{itemize}
\end{itemize}
satisfying the usual axioms -- 
see eg.\ Mac~Lane \cite[section XI.1]{MacLane}.
\end{definition}

Unwinding definitions, the axioms alluded to in the above definition
amount simply to requirement that 
$\bbD_0 = (\bbD_0,\tensor_0,I_0,a_0,l_0,r_0,s_0)$
and 
$\bbD_1 = (\bbD_1,\tensor_1,I_1,a_1,l_1,r_1,s_1)$
be symmetric monoidal categories.
 
\begin{definition}\label{def:symmetric-monoidal-double-functor}
Given symmetric monoidal double categories $\bbD$ and $\bbD'$,
a \emph{symmetric monoidal double functor} 
$F \colon \bbD \to \bbD'$ consists of
\begin{itemize}
\item a double functor $F\colon \bbD \to \bbD'$
\item invertible transformations
	\begin{itemize}
	\item $F_\tensor \colon \mathord{\tensor}' \circ (F\times F) 
		   \xto{\,\isom\,}
		   F\circ \mathord{\tensor}$
	\item $F_I \colon I' \xto{\,\isom\,} F\circ I$
	\end{itemize}
\end{itemize}
such that 
$F_0 = (F_0,F_{\tensor,0},F_{I,0})$
and
$F_1 = (F_1,F_{\tensor,1},F_{I,1})$
are symmetric monoidal functors.
A symmetric monoidal double functor $F$ is called \emph{strict} 
if $F_\tensor$ and $F_I$ are 
identities instead of isomorphisms. 
\end{definition}

\begin{definition}
Let $F=(F,F_\tensor,F_I)$ and 
$G=(G,G_\tensor,G_I)$
be symmetric monoidal double functors $\bbD \to \bbD'$
between symmetric monoidal double categories. 
A \emph{symmetric monoidal transformation} $\theta\colon F\to G$
is a transformation $\theta\colon F\to G$ such that
$\theta_0$ and $\theta_1$ are symmetric monoidal natural transformations.
\end{definition}

Symmetric monoidal double categories, 
symmetric monoidal double functors, 
and symmetric monoidal transformations 
assemble into a 2-category $\Dbl_\tensor$. 
Examples \ref{ex:sqcat}, \ref{ex:sqfun}
and \ref{ex:sqtransf} give a 2-functor
\[
	\sq \colon \Cat \longto \Dbl
\]
from the 2-category $\Cat$ of ordinary categories to $\Dbl$,
and it is readily verified that this 2-functor induces 
a 2-functor
\[
	\sq \colon \Cat_\tensor \longto \Dbl_\tensor
\]
from the 2-category $\Cat_\tensor$ of 
ordinary symmetric monoidal categories,
symmetric monoidal functors and 
symmetric monoidal natural transformations 
to $\Dbl_\tensor$.

\begin{example}[The symmetric monoidal 
structure on $\bbS^d(\calH^\fop)$]
\label{ex:sm-dc-special-squares-h-graphs}
Consider the sub-double category 
$\bbS^d(\calH^\fop)$ of $\sq(\calH^\fop\times_\calB \calB^\ds)$
constructed in Example~\ref{ex:dc-special-squares-h-graphs}. 
The category $\calB^\ds$ has a symmetric monoidal structure given by 
\[
	(B,m)\times (C,n) = (B\times C,m+n)
\]
where the function
\[
	m+n \colon B \times C \longto \Z
\]
is obtained from the functions
$m\colon B \to \Z$ and $n \colon C \to \Z$
by adding together the composites
of $m$ and $n$ with the appropriate projection maps from $B\times C$.
Together with the symmetric monoidal structure on $\calH^\fop$,
this gives a symmetric monoidal structure on the fibre product
$\calH^\fop\times_\calB \calB^\ds$.
Concretely, the 
symmetric monoidal structure on $\calH^\fop\times_\calB \calB^\ds$
is given by 
\[
	(X,m) \usqcup (Y,n) = (X\usqcup Y, m+n).
\]
The symmetric monoidal structure on 
$\calH^\fop\times_\calB \calB^\ds$ gives
$\sq(\calH^\fop\times_\calB \calB^\ds)$ the structure of 
a symmetric monoidal double category, and the sub-double category
$\bbS^d(\calH^\fop)$ of $\sq(\calH^\fop\times_\calB \calB^\ds)$
inherits from $\sq(\calH^\fop\times_\calB \calB^\ds)$
a symmetric monoidal structure making the inclusion
\[
	\bbS^d(\calH^\fop) \longincl \sq(\calH^\fop\times_\calB \calB^\ds)
\]
into a strict symmetric monoidal double functor.
\end{example}

\begin{definition}
Given a double category $\bbD$, its \emph{horizontal opposite}
$\bbD^\hop$ is the double category with the same objects, 
horizontal 1-morphism, vertical 1-morphisms and 2-cells as $\bbD$, 
but with the direction of horizontal morphisms reversed. 
If $\bbD$ is symmetric monoidal, so is $\bbD^\mathrm{hop}$.
\end{definition}


\section{The push-pull construction}
\label{sec:push-pull}

Let $d\in\Z$.
This section will explain how to obtain a homological
h-graph field theory of degree $d$ from the
data of a symmetric monoidal double functor
$U\colon \bbS^d(\calH^\fop)\to\sq(\grmod)^\hop$.
The functor $U$ encodes
all of the data required to perform the push-pull construction
discussed in section~\ref{overview:section}:
its effect on objects encodes the assignment
$X\mapsto H_\ast(BG^X)$ for $X$ a family of h-graphs,
while its effects on vertical and horizontal morphisms encode
the induced maps $(j^\ast)_\ast$ and 
umkehr maps $i_!$ appearing in~\eqref{composite:eq},
respectively.
The fact that $U$ is a symmetric monoidal double functor records all
of the properties required to obtain a positive HHGFT,
save for one property which we now define.

\begin{definition}\label{def:emptyfamilies}
Let $U\colon \bbS^d(\calH^\fop)\to\sq(\grmod)^\hop$ be a
symmetric monoidal double functor.
Let $\calU^\ds$ denote the symmetric monoidal category
in which an object is a pair $(B,m)$ with $B$ a good base space
and $m\colon B\to\Z$ a locally constant map, and in which a morphism
$(B,m)\to (C,n)$ is a continuous map $f\colon B\to C$ satisfying
$n\circ f=m$.
The symmetric monoidal structure is given by
$(B,m)\times (C,n) = (B\times C, m+n)$.
The assignment $(B,m)\mapsto (\emptyset_B,m)$
extends to a strict symmetric monoidal functor
$\emptyset\colon \calU^\ds\to \bbS^d(\calH^\fop)_0$.
We say that $U$ is \emph{homological} if the composite
\[
	\calU^\ds
	\xrightarrow{\ \ \emptyset\ \ }
	\bbS^d(\calH^\fop)_0
	\xrightarrow{\ \ U_0\ \ }
	\grmod
\]
is given by $(B,m)\mapsto H_{\ast-m}(B)$.
\end{definition}

\begin{theorem}\label{th:push-pull}
Given a homological symmetric monoidal double functor
\[U\colon \bbS^d(\calH^\fop)\longto\sq(\grmod)^\hop,\]
there is a positive homological h-graph field theory of degree $d$ whose
value on an h-graph $X$ is $U(X,0)$.
\end{theorem}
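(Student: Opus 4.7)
The plan is to unpack the symmetric monoidal double functor $U$ directly into the data of an HHGFT and then derive each axiom from a corresponding structural property of $U$. Set $\Phi_\ast(X) = U(X,0)$ for an h-graph $X$ regarded as a family over $\pt$ with trivial degree shift. A homotopy equivalence $f\colon X\to Y$ is an h-embedding, so the zig-zag $Y\xot{f}X\xto{\id}X$ represents a vertical morphism $(Y,0)\to(X,0)$ in $\bbS^d(\calH^\fop)$; applying $U$ yields a map $U(Y,0)\to U(X,0)$, which one checks to be an isomorphism by exhibiting 2-cells in $\bbS^d(\calH^\fop)$ arising from the trivial fibrewise homotopy cofibre squares associated to a homotopy equivalence. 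Setting $\Phi_\ast(f)$ to be its inverse gives a functor, and the symmetric monoidal structure on $\Phi_\ast$ transports from $U_\otimes$ and $U_I$ via the identification $X\sqcup Y = X\usqcup Y$ for h-graphs over $\pt$.

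For a positive family $S/B\colon X\hto Y$, the positive map $i\colon B\times X\to S$ yields a horizontal morphism $(S,0)\to(B\times X,\,d\chi(S,X))$ in $\bbS^d(\calH^\fop)$, while the h-embedding $j\colon B\times Y\to S$ together with the cartesian projection $B\times Y\to Y$ over $B\to\pt$ yields a vertical morphism $(S,0)\to(Y,0)$. Since $U$ lands in $\sq(\grmod)^\hop$, where horizontal direction is reversed, the image of the horizontal morphism is a map $U(B\times X,\,d\chi(S,X))\to U(S,0)$ in $\grmod$, and the image of the vertical morphism is a map $U(S,0)\to U(Y,0)$. Using $U_\otimes$ together with the homologicality hypothesis and the identification $(B\times X,\,d\chi(S,X)) = (\emptyset_B,\,d\chi(S,X))\usqcup (X,0)$, one obtains a canonical isomorphism
\[
U(B\times X,\,d\chi(S,X))\isom H_{\ast-d\chi(S,X)}(B)\otimes U(X,0),
\]
and $\Phi(S/B)$ is defined as the resulting composite through $U(S,0)$.

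The axioms are verified by matching each with a structural feature of $U$. Monoidality follows from the symmetric monoidal nature of $U$ and the additivity of $\chi(S,X)$ under external disjoint union. Base change follows because a 2-cell $\varphi\colon S/B\Rightarrow S'/B'$ in the sense of Definition~\ref{def:two-cell} lifts to a 2-cell in $\bbS^d(\calH^\fop)$ linking the horizontal-vertical pairs for $S/B$ and $S'/B'$, and $U$ is a double functor. For gluing, the horizontal-vertical decomposition of $T\ucirc S/(C\times B)$ equals the external composite of those for $S/B$ and $T/C$, using that both horizontal composition $\odot$ and vertical composition are respected by $U$, while the external product on homology arises through $U_\otimes$. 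For identity, the cylinder cobordism $(X\times I)/\pt$ produces horizontal and vertical morphisms that, via a 2-cell and the unit law for horizontal composition in $\bbS^d(\calH^\fop)$, are exhibited as mutually inverse homotopy equivalences over the identity on $X$, so the composite defining $\Phi((X\times I)/\pt)$ collapses to the identity on $U(X,0)$.

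The principal obstacle is the combinatorial bookkeeping of directions and degree shifts: $\calH^\fop$ reverses non-cartesian morphisms, $\sq(\grmod)^\hop$ reverses only horizontal morphisms, and the equation $m-n=-d(\chi(X)-\chi(Y))$ built into horizontal morphisms of $\bbS^d(\calH^\fop)$ must exactly match the exponent $d\cdot\chi(S,X)$ appearing in the HHGFT operation. A secondary delicacy is verifying that $U$ sends h-embeddings that are homotopy equivalences to isomorphisms, which is needed both for $\Phi_\ast$ to be well-defined on homotopy equivalences and for the identity axiom to collapse cleanly.
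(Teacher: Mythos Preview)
Your overall architecture matches the paper's, but there are two genuine gaps. The first and most serious is your argument that $U(f)$ is invertible when $f$ is a homotopy equivalence. The 2-cells of $\bbS^d(\calH^\fop)$ relate \emph{horizontal} morphisms via vertical ones, so they cannot directly compare the vertical morphisms arising from $f$ and a homotopy inverse $g$; and since $gf$ is only homotopic (not equal) to $\id_X$, composing the corresponding vertical morphisms does not give the identity in $\bbS^d(\calH^\fop)_0$. The paper instead proves (Lemma~\ref{inverse:lemma}) that $U(f_0)=U(f_1)$ whenever $f_0\simeq f_1$, by writing both as restrictions of a single morphism parametrised by the interval $I$ and then invoking the \emph{homological} hypothesis to see that the two endpoint inclusions $\pt\to I$ induce the same map under $U$ (because $H_\ast(\pt)\to H_\ast(I)$ is independent of the endpoint). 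This is the essential use of the homological condition in Definition~\ref{def:emptyfamilies}; your sketch never calls on it, and without it the identity axiom likewise does not collapse as you claim.

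Second, you omit the reduction to CW base spaces (subsection~\ref{subsection:cw}). Since $\bbS^d(\calH^\fop)$ is built from families over good base spaces only, $U$ directly furnishes operations $\Phi(S/B)$ only for such $B$, whereas an HHGFT requires them for arbitrary $B$; the paper fills this by proving that an HHGFT defined on CW bases extends uniquely via CW approximation and the base-change axiom. As a lesser point, your gluing claim that the decomposition of $T\ucirc S$ ``equals the external composite'' of those for $S$ and $T$ is not literally true: the two staircase diagrams differ and must be connected by explicit 2-cells (the paper's $\phi$ and $\psi$ in subsection~\ref{subsection:proof}), for which one must verify the homotopy-cofibre-square condition in each fibre --- this is where the substantive work in the gluing proof lies.
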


The rest of the section is given to the proof of this result.
We begin in subsection~\ref{subsection:cw} by reducing the problem
to one of constructing an HHGFT in which the base spaces are
CW complexes.
After making this reduction, we proceed to prove the theorem by construction,
defining the data for the required HHGFT in subsection~\ref{subsection:data}
and verifying that these data satisfy the required axioms
in subsection~\ref{subsection:proof}.

\subsection{Homological h-graph field theories defined on CW complexes}
\label{subsection:cw}

In the definition of HHGFT (Definition~\ref{def:HHGFT})
we worked with arbitrary base spaces,
but in a sense it is enough to consider base spaces which 
are CW complexes. By a \emph{(positive) HHGFT defined on CW complexes}
we mean a structure exactly analogous to a (positive) HHGFT, except
that only families of h-graph cobordisms over base spaces
admitting the structure of a CW complex are considered.
We then have the following proposition.

\begin{proposition}
Every HHGFT defined on CW complexes extends uniquely 
to an HHGFT, and similarly for positive HHGFTs.
\end{proposition}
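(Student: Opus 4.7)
The plan is to extend an HHGFT $\Phi^{\mathrm{CW}}$ defined on CW complexes to an arbitrary HHGFT $\Phi$ by CW approximation. Given a family of h-graph cobordisms $S/B\colon X\hto Y$, choose a weak equivalence $\pi\colon B'\to B$ with $B'$ a CW complex and form the pulled-back family $\pi^\ast S/B'$; since $\pi_\ast$ is then an isomorphism on homology and $\chi(\pi^\ast S,X) = \chi(S,X)\circ \pi$, one sets
\[
	\Phi(S/B) = \Phi^{\mathrm{CW}}(\pi^\ast S/B') \circ (\pi_\ast^{-1} \otimes \id_{\Phi_\ast(X)}).
\]
Uniqueness is then immediate: the canonical 2-cell $\pi^\ast S/B' \Rightarrow S/B$ (with $\varphi_X = \id_X$, $\varphi_Y = \id_Y$, $\varphi_B = \pi$, and $\varphi_S$ the canonical projection) combined with the base change axiom forces any extension to satisfy the displayed formula.

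For existence, the first task is to check independence from the chosen CW approximation. Given two CW approximations $\pi_i\colon B_i \to B$, use that $B_1$ is a CW complex and $\pi_2$ is a weak equivalence to lift $\pi_1$ to a map $g\colon B_1\to B_2$ with $\pi_2 g \simeq \pi_1$; by 2-out-of-3 and Whitehead's theorem, $g$ is a homotopy equivalence. Since $S\to B$ is a Hurewicz fibration, the homotopy $\pi_2 g \simeq \pi_1$ produces a fiberwise homotopy equivalence $\pi_1^\ast S \simeq g^\ast(\pi_2^\ast S)$ over $B_1$ compatible with the inclusions of $X \times B_1$ and $Y \times B_1$, yielding a 2-cell $\pi_1^\ast S / B_1 \Rightarrow \pi_2^\ast S / B_2$ with $\varphi_B = g$. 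Base change for $\Phi^{\mathrm{CW}}$ combined with the identity $g_\ast (\pi_1)_\ast^{-1} = (\pi_2)_\ast^{-1}$ (which follows from $(\pi_2 g)_\ast = (\pi_1)_\ast$) yields equality of the two operations.

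The HHGFT axioms for $\Phi$ then reduce to their CW counterparts. Base change for a 2-cell $\varphi\colon S/B \Rightarrow S'/B'$ is proved by choosing CW approximations $\pi,\pi'$ of $B, B'$ and a map $\tilde\varphi_B\colon B' \to \tilde{B}'$ lifting $\varphi_B \pi$ up to homotopy, producing a 2-cell between the pulled-back families to which the CW base change axiom applies. The identity axiom is immediate since $\pt$ is a CW complex. For gluing and monoidality, the key observation is that a product $\tilde B \times \tilde C \to B \times C$ of CW approximations is again a CW approximation (a product of CW complexes is a CW complex in the category of $k$-spaces, and a product of weak equivalences between such spaces is a weak equivalence), and that external disjoint union and external composition commute with pullback; applying the corresponding CW axiom together with naturality of the homology cross product concludes the argument.

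The main obstacle is that CW approximation is functorial only up to natural homotopy, so all the comparison maps exist only up to homotopy rather than strict equality. Converting the resulting homotopies into honest 2-cells between families of h-graph cobordisms requires careful use of the Hurewicz fibrancy condition on $S \to B$ to produce fiberwise homotopy equivalences compatible with the closed cofibrations from $X$ and $Y$; this bookkeeping is the only substantive work in the proof.
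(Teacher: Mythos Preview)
Your approach is correct and follows the same overall strategy as the paper: define the extension by pulling back along a CW approximation and invoke base change for uniqueness. The difference is in the \emph{choice} of approximation. You work with an arbitrary CW approximation $\pi\colon B'\to B$, which forces you to verify independence of the choice and then wrestle with the fact that CW approximation is functorial only up to homotopy; as you note, this means converting homotopies into genuine 2-cells via the Hurewicz fibrancy of $S\to B$, and this bookkeeping must be redone for each axiom.

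The paper sidesteps all of this by taking the \emph{functorial} CW approximation $\Gamma B = |\mathrm{Sing}(B)|$, which comes with a strictly natural weak equivalence $\gamma\colon \Gamma B\to B$. Since $\Gamma$ is an honest functor and (in $k$-spaces) preserves finite products, the comparison maps needed for base change, gluing, and monoidality are strict equalities rather than homotopies, and the 2-cells between pulled-back families exist on the nose. The verification of the axioms then really is routine. Your argument is more flexible in principle, but the paper's choice buys a much shorter proof by eliminating precisely the obstacle you flagged.
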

\begin{proof}
Suppose $\Phi$ is an HHGFT defined on CW complexes. 
Let $\Gamma$ be  the functor sending a space $B$ to the
geometric realization of the singular simplicial set of $B$.
Then for each $B$, the space $\Gamma B$ has a natural CW structure,
and there is a natural weak equivalence $\gamma\colon \Gamma B \to B$.
Define an HHGFT $\tilde\Phi$ by taking the symmetric
monoidal functor $\tilde\Phi_\ast$ to be $\Phi_\ast$ and 
by defining $\tilde\Phi(S/B)$ for a family of 
h-graph cobordisms $S/B \colon X \to Y$ to be the unique map 
making the diagram
\[\xymatrix@C+2em{
	H_{\ast-d\cdot \chi(\gamma^\ast S,X)}(\Gamma B) \tensor \Phi_\ast(X)
	\ar[d]_{\gamma_\ast \tensor \id}^{\isom}
	\ar[r]^-{\Phi(\gamma^\ast S/\Gamma B)}
	&
	\Phi_\ast(Y)
	\ar[d]^{\id}
	\\
	H_{\ast-d\cdot \chi(S,X)}(B)\tensor \Phi_\ast(X)
	\ar@{-->}[r]^-{\tilde\Phi(S/B)}
	&
	\Phi_\ast(Y)
}\]
commutative. It is then readily verified that $\tilde\Phi$
satisfies the axioms of an HHGFT, and the 
base change axiom implies that $\tilde\Phi(S/B)$ agrees with
$\Phi(S/B)$ when $B$ admits the structure of a CW complex.
Thus $\tilde\Phi$ is an extension of $\Phi$. Moreover,
by the base change axiom, any extension $\tilde\Phi$ of 
$\Phi$ into an HHGFT must make the above 
square commutative. Thus the extension is unique.
The claim for positive HHGFTs is proved
in a similar way.
\end{proof}

\subsection{Defining the HHGFT}\label{subsection:data}
Let $U\colon \bbS^d(\calH^\fop)\to\sq(\grmod)^\hop$
be a homological symmetric monoidal double functor.
We will prove Theorem~\ref{th:push-pull} by
using $U$ to explicitly construct a positive
HHGFT $\Phi$ defined on CW complexes.
Recall from Definition~\ref{def:HHGFT} that this means
we must specify the following data.
\begin{itemize}
	\item A symmetric monoidal functor $\Phi_\ast$
 	from the category of h-graphs and homotopy equivalences among them
	into the category of graded $\bbF$-vector spaces.
	\item For each positive family $S/B\colon X\hto Y$ of h-graph 
	cobordisms over a space $B$ admitting the structure
	of a CW complex,
	a map
	\[
		\Phi(S/B)\colon
		H_{\ast-d\cdot\chi(S,X)}(B)\otimes\Phi_\ast(X)
		\longto
		\Phi_\ast(Y).
	\]
\end{itemize}
Moreover, we must check that these data satisfy the axioms of an HHGFT.
We begin in this subsection by defining the functor $\Phi_\ast$
and the operations $\Phi(S/B)$.
First we require the following lemma.

\begin{lemma}\label{inverse:lemma}
Let $f\colon X\to Y$ be a homotopy equivalence of h-graphs,
regarded as a vertical morphism $f\colon (Y,0)\to (X,0)$ in $\bbS^d(\calH^\fop)$.
Then $U(f)\colon U(Y,0)\to U(X,0)$ is an isomorphism.
\end{lemma}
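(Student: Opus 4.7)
The plan is to exploit the fact that a homotopy equivalence $f\colon X\to Y$ between h-graphs is simultaneously an h-embedding and a positive map between h-graphs of equal Euler characteristic. Consequently, in addition to giving the vertical morphism $\tilde f\colon (Y,0)\to (X,0)$ of the statement, $f$ also defines a horizontal morphism $\bar f\colon (Y,0)\hto (X,0)$ in $\bbS^d(\calH^\fop)$, both of which are represented in $\calH^\fop$ by the same zigzag $[Y\xot{f} X\xto{\id} X]$. I will construct two 2-cells of $\bbS^d(\calH^\fop)$ whose images under $U$ exhibit the $\grmod$-morphism underlying $U(\bar f)$ as a two-sided inverse of $U(\tilde f)=U(f)$.

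First I will write down the square
\[\xymatrix{
(Y,0) \ar@{=}[d] \ar|-@{|}[r]^{\id} & (Y,0) \ar[d]^{\tilde f} \\
(Y,0) \ar|-@{|}[r]_{\bar f} & (X,0)
}\]
and verify it is a 2-cell of $\bbS^d(\calH^\fop)$. The two composites around the square agree in $\calH^\fop$, both being $[Y\xot{f} X\xto{\id} X]$, so the square commutes in $\calH^\fop\times_\calB\calB^\ds$. The extra condition required by $\bbS^d(\calH^\fop)$ is that the underlying fibre square of spaces, which works out to
\[\xymatrix{
X \ar[r]^f \ar[d]_f & Y \ar[d]^{\id} \\
Y \ar[r]_{\id} & Y,
}\]
be a homotopy cofibre square; this is the one place in the argument with any content, and it follows because the homotopy pushout of $Y\xot{f} X\xto{f} Y$ is homotopy equivalent to $Y$ via the map which restricts to the identity on each copy of $Y$ (here using that $f$ is a homotopy equivalence).

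Applying $U$ to this 2-cell and unpacking what a 2-cell in $\sq(\grmod)^\hop$ says -- namely, a commutative square in $\grmod$ with the horizontal edges reversed -- one reads off $V\circ U(f)=\id_{U(Y,0)}$, where $V\colon U(X,0)\to U(Y,0)$ denotes the morphism in $\grmod$ underlying the horizontal $U(\bar f)$. An entirely analogous construction with the mirror 2-cell
\[\xymatrix{
(Y,0) \ar[d]_{\tilde f} \ar|-@{|}[r]^{\bar f} & (X,0) \ar@{=}[d] \\
(X,0) \ar|-@{|}[r]_{\id} & (X,0),
}\]
whose underlying fibre square
\[\xymatrix{
X \ar[r]^{\id} \ar[d]_{\id} & X \ar[d]^f \\
X \ar[r]_f & Y
}\]
is even a strict pushout and thereby automatically a homotopy cofibre square, will yield the companion identity $U(f)\circ V=\id_{U(X,0)}$. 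Combined, these force $U(f)$ to be an isomorphism with inverse $V$.

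The only delicate point I foresee is the homotopy-pushout verification for the first 2-cell; everything else is either a direct check from the definitions or a transcription of the double-functor axioms. Notice that the argument uses neither the symmetric monoidal structure nor the homological property of $U$, nor does it require a choice of homotopy inverse for $f$.
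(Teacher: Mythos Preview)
Your argument is correct and is genuinely different from the paper's. One small slip: your second fibre square
\[\xymatrix{
X \ar[r]^{\id} \ar[d]_{\id} & X \ar[d]^f \\
X \ar[r]_f & Y
}\]
is \emph{not} a strict pushout (the pushout of $X\xleftarrow{\id}X\xrightarrow{\id}X$ is $X$, and the comparison map $X\to Y$ is $f$, which need not be a homeomorphism). It is nevertheless a homotopy cofibre square for exactly the same reason as your first one, namely that $f$ is a homotopy equivalence; so your conclusion survives, just with the corrected justification.

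By contrast, the paper proves the lemma by first establishing homotopy invariance of $U$ on vertical morphisms: given a homotopy $F\colon X\times I\to Y$ from $f_0$ to $f_1$, it builds a vertical morphism $G\colon (Y,0)\otimes(\emptyset_I,0)\to(X,0)$ and precomposes with the two endpoint inclusions $(Y,0)\otimes(\emptyset_\pt,0)\to (Y,0)\otimes(\emptyset_I,0)$; that $U$ equalises these endpoint inclusions uses the homological hypothesis. Once $U(f_0)=U(f_1)$ for homotopic $f_0,f_1$, the lemma follows by applying this to $g\circ f\simeq\id_X$ and $f\circ g\simeq\id_Y$ for a homotopy inverse $g$. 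Your approach sidesteps this entirely: by exploiting that a homotopy equivalence is simultaneously an h-embedding and a positive map of equal-Euler-characteristic h-graphs, you build the inverse directly from the horizontal incarnation $\bar f$, and the two $2$-cells encode the companionship between $\tilde f$ and $\bar f$. As you note, this uses neither the symmetric monoidal structure nor the homological property of $U$, and it avoids choosing a homotopy inverse.
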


\begin{proof}
It suffices to show that $U(f_0)=U(f_1)$
for homotopic homotopy equivalences $f_0,f_1\colon X\to Y$.
Let $F\colon X\times I\to Y$ be a homotopy from $f_0$ to $f_1$.
Consider the following vertical morphisms.
\begin{equation}\label{composites:equation}
	(Y,0)
	\isom
	(Y,0)\otimes(\emptyset_\mathrm{pt},0)
	\xrightarrow{\,1\otimes i_0,\,1\otimes i_1\,}
	(Y,0)\otimes (\emptyset_I,0)
	\xrightarrow{\ G\ }
	(X,0)
\end{equation}
Here $i_0$ and $i_1$ correspond to 
the morphisms $\mathrm{pt}\to I$ with image $\{0\}$ and $\{1\}$ respectively,
and $G$ corresponds to the zig-zag
\[
	Y\times I\xleftarrow{\ H\ }X\times I \xrightarrow{\ \mathrm{proj}\ } X
\]
with $H(x,t)=(F(x,t),t)$ for $x\in X$ and $t\in I$.
Then the two composites in \eqref{composites:equation}
are exactly $f_0$ and $f_1$, so it suffices to show that $U(i_0)=U(i_1)$.
But that follows from the fact that $U$ is homological.
\end{proof}

\begin{definition}\label{phiast:definition}
Let $\Phi_\ast$ be the symmetric monoidal functor from the 
category of h-graphs and homotopy equivalences into $\grmod$ 
obtained as follows.
For an h-graph $X$ we set
\[\Phi_\ast(X)=U(X,0)\]
where on the right $X$ is regarded as a family of h-graphs over a point,
and for a homotopy equivalence of h-graphs $f\colon X\to Y$ we set
\[\Phi_\ast(f)=U(f)^{-1}\]
where on the right $f$ is regarded as a vertical morphism
$f\colon (Y,0)\to (X,0)$ in $\bbS^d(\calH^\fop)$.
(Since $f$ is a homotopy equivalence, it is an h-embedding,
and Lemma~\ref{inverse:lemma} guarantees that $U(f)$ is invertible.)
We make $\Phi_\ast$ symmetric monoidal
by letting $(\Phi_\ast)_\otimes$
and $(\Phi_\ast)_I$ be given by $U_{\otimes,0}$ and $U_{I,0}$,
respectively.
\end{definition}

\begin{definition}\label{phisb:definition}
Let $S/B\colon X\hto Y$ be a family of positive h-graph cobordisms from an h-graph 
$X$ to an h-graph $Y$.
We define
\[
		\Phi(S/B)\colon
		H_{\ast-d\cdot\chi(S,X)}(B)\otimes\Phi_\ast(X)
		\longrightarrow
		\Phi_\ast(Y)
\]
as follows.
Consider the morphisms
\[\xymatrix{
	{}
	&
	(\emptyset_B,d\chi(S,X))\otimes (X,0)
	\ar[d]^{\alpha_S}
	\\
	(S,0)
	\ar[r]^-{\beta_S}
	\ar[d]_{\gamma_S}
	&
	(\pi_B^\ast X,d\chi(S,X))
	\\
	(Y,0)
	&
	{}
}\]
in $\bbS^d(\calH^\fop)$, where:
\begin{itemize}
	\item
	$\alpha_S$ is the vertical morphism corresponding
	to the cartesian morphism
	$\emptyset_B\usqcup X\to\pi_B^\ast X$ of $\calH$
	in which $\pi_B\colon B\to \pt$ denotes the constant map;
	\item
	$\beta_S$ is the horizontal morphism corresponding
	to the positive morphism
	$S\leftarrow\pi^\ast_BX$ in $\calH$;
	\item
	$\gamma_S$ is the vertical morphism corresponding to the zig-zag
	$S\leftarrow\pi_B^\ast Y\rightarrow Y$
	in $\calH$,
	where the left arrow is an h-embedding and the right arrow is cartesian.
\end{itemize}
Applying the symmetric monoidal functor $U$ to $\alpha_S$, $\beta_S$ and $\gamma_S$
gives a diagram of the same shape in $\sq(\grmod)^\hop$, which can be regarded as a 
sequence of three composable morphisms in $\grmod$.
Now we define $\Phi(S/B)$ to be the composite
\[
\xymatrix@R=1ex@C=-2em{
	 *+[l]{	H_{\ast-d\cdot\chi(S,X)}(B)\otimes U(X,0)}
	 \ar[r]^-{=}
	 &
	 *+[r]{	U(\emptyset_B,d\chi({S,X}))\otimes U(X,0)}
	 \\
	 *+[l]{\phantom{	H_{\ast+d\cdot\chi(S,X)}(B)\otimes U(X,0)}}
	 \ar[r]^-{U_\otimes}
	 &
	 *+[r]{U((\emptyset_B,d\chi(S,X))\otimes(X,0))}
	 \\
	 *+[l]{\phantom{	H_{\ast+d\cdot\chi(S,X)}(B)\otimes U(X,0)}}
	 \ar[r]^-{U(\gamma_S)U(\beta_S)U(\alpha_S)}
	 &
	 *+[r]{U(Y,0).}
}\]
\end{definition}

\subsection{Verifying the axioms}
\label{subsection:proof}
To complete the proof of Theorem~\ref{th:push-pull},
we must verify that the data defined above satisfy
the base change, gluing, identity and monoidality axioms of
Definition~\ref{def:HHGFT}.
In each case the verifications are lengthy, but not difficult,
and follow the same general pattern:
\begin{itemize}
	\item[(i)] Restate the axiom in terms of $U$ alone.
	\item[(ii)] Use monoidality of $U$ to reduce the problem to one taking
	place in $\bbS^d(\calH^\fop)$.
	\item[(iii)] Solve this final problem.
\end{itemize}
We will therefore restrict ourselves to proving the gluing axiom.
The remaining axioms can be proved by much the same method,
though the details are simpler,  and we leave the verifications to the reader.

Let us prove the gluing axiom.
Let $X\xhto{S/B} Y\xhto{T/C}Z$
be families of h-graph cobordisms. We are required to prove that the square 
appearing on page~\pageref{gluing},
which involves the morphisms $\Phi(S/B)$, $\Phi(T/C)$, $\Phi(T\ucirc S/C\times B)$
and the homology cross product, commutes.

We begin by rephrasing the problem in terms of $U$.
Replacing terms of the form $H_{\ast-n}(B)$ with $U(\emptyset_B,n)$,
replacing terms of the form $\Phi_\ast(A)$  with $U(A,0)$,
and unwinding the definitions of the morphisms $\Phi(T\ucirc S/C\times B)$,
$\Phi(S/B)$ and $\Phi(T/C)$ in terms of $U$,
commutativity of the square translates into the claim that the composites
\[\xymatrix@1@!0@C=4.5em{
	*!L{U\bigl(\emptyset_C,{d\chi(T,Y)}\bigr)
	\otimes
	U\bigl(\emptyset_B,d\chi(S,X)\bigr)
	\otimes
	U\bigl(X,0\bigr)}
	\\
	&
	\ar[rr]^{U_\otimes \otimes 1}
	&&
	*!L{\;U\bigl((\emptyset_C,d\chi(T,Y))\otimes( \emptyset_B,d\chi(S,X)\bigr)
	\otimes
	U\bigl(X,0\bigr)}
	\\
	&
	\ar[rr]^{U(\emptyset_\otimes)\otimes 1}
	&&
	*!L{\;U\bigl(\emptyset_{C\times B},d\chi(T\ucirc S,X)\bigr)
	\otimes
	U\bigl(X,0\bigr)}
	\\
	&
	\ar[rr]^{U_\otimes}
	&&
	*!L{\;U\bigl((\emptyset_{C\times B},d\chi(T\ucirc S,X))
	\otimes
	(X,0) \bigr)}
	\\
	&
	\ar[rr]^{U(\gamma_{T\ucirc S})U(\beta_{T\ucirc S})U(\alpha_{T\ucirc S})}
	&&
	*!L{\;U\bigl( Z,0 \bigr)}
}\]
and
\[\xymatrix@1@!0@C=4.5em{
	*!L{U\bigl(\emptyset_C,d\chi(T,Y)\bigr)
			\otimes
		U\bigl(\emptyset_B,d\chi(S,X)\bigr)
			\otimes
		U\bigl(X,0\bigr)}
	\\
	&
	\ar[rr]^{1\otimes U_\otimes}
	&&
	*!L{\;U\bigl(\emptyset_C,d\chi(T,Y)\bigr)
			\otimes
		U\bigl((\emptyset_B,d\chi(S,X)) \otimes (X,0)\bigr)}
	\\
	&
	\ar[rr]^{1\otimes U(\gamma_{S})U(\beta_{S})U(\alpha_{S})}
	&&
	*!L{\;U\bigl(\emptyset_C,d\chi(T,Y)\bigr)
			\otimes
		U\bigl(Y,0\bigr)}
	\\
	&
	\ar[rr]^{U_\otimes}
	&&
	*!L{\;U\bigl((\emptyset_C,d\chi(T,Y)) \otimes (Y,0)\bigr)}
	\\
	&
	\ar[rr]^{U(\gamma_{T})U(\beta_{T})U(\alpha_{T})}
	&&
	*!L{\;U\bigl(Z,0\bigr)}
}\]
coincide.
(Here we are writing
 $\emptyset_\otimes\colon \emptyset_C\otimes\emptyset_B\to \emptyset_{C\times B}$ 
to denote the monoidality isomorphism for the assignment $B\mapsto \emptyset_B$.
In fact $\emptyset_\otimes$ is the identity function,
but it is convenient retain it in the notation.)

Next, we use monoidality of $U$ to replace these with the composites
\[\xymatrix@1@!0@C=4.5em{
	*!L{U\bigl(\emptyset_C,d\chi(T,Y)\bigr)
	\otimes
	U\bigl(\emptyset_B,d\chi(S,X)\bigr)
	\otimes
	U\bigl(X,0\bigr)}
	\\
	&
	\ar[rr]^{U_\otimes\circ(U_\otimes \otimes 1)}
	&&
	*!L{\;U\bigl((\emptyset_C,d\chi(T,Y))
	\otimes
	(\emptyset_B,d\chi(S,X)) \otimes (X,0)\bigr)}
	\\
	&
	\ar[rr]^{U(\emptyset_\otimes\otimes 1)}
	&&
	*!L{\;U\bigl((\emptyset_{C\times B},d\chi(T\ucirc S,X))
	\otimes
	(X,0) \bigr)}
	\\
	&
	\ar[rr]^{U(\gamma_{T\ucirc S})U(\beta_{T\ucirc S})U(\alpha_{T\ucirc S})}
	&&
	*!L{\;U\bigl( Z,0 \bigr)}
}\]
and
\[\xymatrix@1@!0@C=4.5em{
	*!L{U\bigl(\emptyset_C,d\chi(T,Y)\bigr)
	\otimes
	U\bigl(\emptyset_B,d\chi(S,X)\bigr)
	\otimes
	U\bigl(X,0\bigr)}
	\\
	&
	\ar[rr]^{U_\otimes\circ(1\otimes U_\otimes)}
	&&
	*!L{\;
	U\bigl((\emptyset_C,d\chi(T,Y)) \otimes (\emptyset_B,d\chi(S,X)) \otimes (X,0)\bigr)
	}
	\\
	&
	\ar[rr]^{U(1\otimes\gamma_S)U(1\otimes\beta_S)U(1\otimes\alpha_S)}
	&&
	*!L{\;U\bigl((\emptyset_C,d\chi(T,Y))\otimes(Y,0)\bigr)}
	\\
	&
	\ar[rr]^{U(\gamma_{T})U(\beta_{T})U(\alpha_{T})}
	&&
	*!L{\;U\bigl(Z,0\bigr)}
}\]
respectively.
Since $U_\otimes\circ(U_\otimes\otimes 1)=U_{\otimes}\circ(1\otimes U_\otimes)$
it will be sufficient to show that
\[
	U(\gamma_{T\ucirc S})U(\beta_{T\ucirc S})U(\alpha_{T\ucirc S})U(\emptyset_\otimes\otimes 1)
	=
	U(\gamma_{T})U(\beta_{T})U(\alpha_{T})
	U(1\otimes\gamma_S)U(1\otimes\beta_S)U(1\otimes\alpha_S).
\]

The two sides of the last equation are obtained from the diagrams
\begin{equation}\label{stepsone}
\xymatrix{
	{}
	&
	\bullet
	\ar[d]^{\emptyset_\otimes\otimes 1}
	\\
	{}
	&
	\bullet
	\ar[d]^{\alpha_{T\ucirc S}}
	\\
	\bullet
	\ar|-@{|}[r]^{\beta_{T\ucirc S}}
	\ar[d]_{\gamma_{T\ucirc S}}
	&
	\bullet
	\\
	\bullet
	&
	{}
}
\qquad\qquad
\xymatrix{
	{}
	&
	{}
	&
	\bullet
	\ar[d]^{1\otimes\alpha_S}
	\\
	{}
	&
	\bullet
	\ar|-@{|}[r]^{1\otimes\beta_{S}}
	\ar[d]^{1\otimes\gamma_S}
	&
	\bullet
	\\
	{}
	&
	\bullet
	\ar[d]^{\alpha_T}
	&
	{}
	\\
	\bullet
	\ar|-@{|}[r]^{\beta_{T}}
	\ar[d]_{\gamma_T}
	&
	\bullet
	&
	{}
	\\
	\bullet
	&
	{}
	&
	{}
}\end{equation}
by first applying $U$ to obtain a diagram in $\sq(\grmod)^\hop$
and regarding the result as a sequence of composable morphisms in $\grmod$.
We will show that the two diagrams can be made equal by manipulating them
using moves of the following kind, and the reverses of these moves.
\begin{enumerate}
	\item
	Composing consecutive vertical morphisms.
	\item
	Composing consecutive horizontal morphisms.
	\item
	Replacing the top-left part of a 2-cell of $\bbS^d(\calH^\fop)$
	\[\xymatrix{
		\bullet\ar@{}[dr]|{\Downarrow}
		\ar[d]_f
		\ar[r]|-@{|}^e
		&
		\bullet
		\ar[d]^g
		\\
		\bullet
		\ar[r]|-@{|}_h
		&
		\bullet
	}
	\qquad
	\xymatrix{
		\bullet
		\ar[d]_f
		\ar[r]|-@{|}^e
		&
		\bullet
		\\
		\bullet
		&
		{}
	}
	\qquad
	\xymatrix{
		{}
		&
		\bullet
		\ar[d]^g
		\\
		\bullet
		\ar[r]|-@{|}_h
		&
		\bullet
	}
	\]
	with the bottom-right part.
\end{enumerate}
The claim will then follow, because these moves do
not affect the composite of morphisms in $\grmod$ obtained by applying $U$.

To show that the two diagrams \eqref{stepsone}
can be transformed into one another using the moves (1), (2) and (3) above,
let $\alpha'$, $\alpha''$, $\gamma'$ and $\gamma''$ be the evident  
vertical 1-morphisms of $\bbS^d(\calH^\fop)$ of the following kind.
(In what follows, the subscript of a projection map $\pi_{(-)}$ indicates a factor
that is being projected away; thus $\pi_C^\ast S$ denotes the pullback of
of $S$ (which is a family over $B$) to the new base $C\times B$.)
\[\xymatrix@1@!0@C=1.2em{
	*!R{(\emptyset_C,d\chi(T,Y))
	\otimes
	(\pi_B^\ast X,d\chi(S,X))\;}
	\ar[rr]^{\alpha'}
	&&
	*!L{\;(\pi_{C\times B}^\ast X,d\chi(T\ucirc S,X))}
	\\
	*!R{(\emptyset_C,d\chi(T,Y)) \otimes (S,0)\;}
	\ar[rr]^{\alpha''}
	&&
	*!L{\;(\pi_{C}^\ast S,d\chi(T,Y))}
	\\
	*!R{(\pi_{C}^\ast S,d\chi(T,Y))\;}
	\ar[rr]^{\gamma'}
	&&
	*!L{\;(\pi_C^\ast Y,d\chi(T,Y))}
	\\
	*!R{(T\ucirc S,0)\;}
	\ar[rr]^{\gamma''}
	&&
	*!L{\; (T,0)}
}\]
Moreover, let $\beta'$ and $\beta''$ be the evident horizontal 1-morphisms
of the following kind.
\[\xymatrix@1@!0@C=1.2em{
	*!R{(\pi_{C}^\ast S,d\chi(T,Y))\;}
	\ar|-@{|}[rr]^{\beta'}
	&&
	*!L{\;(\pi_{C\times B}^\ast X,d\chi(T\ucirc S,X))}
	\\
	*!R{(T\ucirc S,0)\;}
	\ar|-@{|}[rr]^{\beta''}
	&&
	*!L{\;(\pi_{C}^\ast S,d\chi(T,Y))}
}\]
Observe the identities
\begin{eqnarray*}
	\alpha_{T\ucirc S}\circ(\emptyset_\otimes\otimes 1)
	&=&
	\alpha'\circ(1\otimes\alpha_S),
	\\
	\gamma_{T\ucirc S}
	&=&
	\gamma_T\circ\gamma'',
	\\
	\alpha_T\circ(1\otimes\gamma_S)
	&=&
	\gamma'\circ\alpha'',
	\\
	\beta'\odot\beta''
	&=&
	\beta_{T\ucirc S}.
\end{eqnarray*}
Using moves of the first and second kind, we may
therefore replace the diagrams \eqref{stepsone} with the following.
\begin{equation}\label{stepstwo}
\vcenter{
\xymatrix{
	{}
	&
	{}
	&
	\bullet
	\ar[d]^{1\otimes\alpha_S}
	\\
	{}
	&
	{}
	&
	\bullet
	\ar[d]^{\alpha'}
	\\
	\bullet
	\ar|-@{|}[r]^{\beta''}
	\ar[d]_{\gamma''}
	&
	\bullet
	\ar|-@{|}[r]^{\beta'}	
	&
	\bullet
	\\
	\bullet
	\ar[d]_{\gamma_T}
	&
	{}
	&
	{}
	\\
	\bullet
	&
	{}
	&
	{}
}}
\qquad\qquad
\vcenter{\xymatrix{
	{}
	&
	{}
	&
	\bullet
	\ar[d]^{1\otimes\alpha_S}
	\\
	{}
	&
	\bullet
	\ar|-@{|}[r]^{1\otimes\beta_{S}}
	\ar[d]^{\alpha''}
	&
	\bullet
	\\
	{}
	&
	\bullet
	\ar[d]^{\gamma'}
	&
	{}
	\\
	\bullet
	\ar|-@{|}[r]^{\beta_{T}}
	\ar[d]_{\gamma_T}
	&
	\bullet
	&
	{}
	\\
	\bullet
	&
	{}
	&
	{}
}}
\end{equation}
Next observe that there are 2-cells in $\bbS^d(\calH^\fop)$
of the following form.
\[
\xymatrix@=35 pt{
	{}
	&
	\bullet
	\ar@{}[dr]|{\Downarrow \phi}
	\ar[d]_{\alpha''}
	\ar|-@{|}[r]^{1\otimes\beta_S}
	&
	\bullet
	\ar[d]^{\alpha'}
	\\
	\bullet
	\ar@{}[dr]|{\Downarrow \psi}
	\ar|-@{|}[r]^{\beta''}
	\ar[d]_{\gamma''}
	&
	\bullet
	\ar|-@{|}[r]_{\beta'}	
	\ar[d]^{\gamma'}
	&
	\bullet
	\\
	\bullet
	\ar|-@{|}[r]_{\beta_T}
	&
	\bullet
	&
	{}
}\]
To verify this claim, we must check that each square
qualifies as a 2-cell in $\bbS^d(\calH^\fop)$,
namely that in each fibre we obtain a homotopy cofibre square of h-graphs.
In the case of $\phi$ this is immediate,
and in the case of $\psi$ we are required to show that 
for $b\in B$ and $c\in C$ the square
\[\xymatrix{
	T_c\circ S_b
	&
	S_b
	\ar[l]
	\\
	T_c
	\ar[u]
	&
	Y
	\ar[u]
	\ar[l]
}\]
is a homotopy cofibre square.
But the square is a pushout and the arrows are all cofibrations,
so this claim follows.
We may therefore use moves of the third kind to identify the two diagrams
of \eqref{stepstwo}.
This completes the proof of the gluing axiom.

\section{Constructing the umkehr maps}
\label{sec:umkehr-maps}

Let $G$ be a compact Lie group and let $d=-\dim(G)$.
In this section we will complete the proof of Theorem~\ref{thm:main}
by constructing the required positive homological h-graph
field theory $\Phi^G$. In view of Theorem~\ref{th:push-pull}, 
it will suffice to construct a symmetric monoidal double functor
\[
	U^G \colon\bbS^d(\calH^\fop) \longrightarrow \sq(\grmod)^\hop
\]
whose vertical part is the symmetric monoidal functor
$(X,n)\mapsto H_{\ast-n}(BG^X)$.
We will construct $U^G$ as follows.

\begin{step}
\label{step1}
	 We construct a symmetric monoidal double functor
	\[
		U_\mfld \colon \bbS^\ds(\calM) \longto \sq(\grmod)^\hop
	\]
	where $\calM$ is a category of fibrewise manifolds
	and $\bbS^\ds(\calM)$ is a double category of
	special squares in $\calM$. 
	This double functor will encode the umkehr maps
	$f^!\colon H_{\ast-n}(N)\to H_{\ast-m}(M)$ associated to fibrewise
	smooth maps $f\colon M\to N$ between fibrewise manifolds over the
	same base
	that were mentioned in section~\ref{overview:section}.
	We will define $\calM$
	and construct $\bbS^\ds(\calM)$ 
	in subsection~\ref{subsec:fibrewise-manifolds},
	and we will construct $U_\mfld$ in
	subsection~\ref{subsec:umkehr-maps-for-fibrewise-manifolds}.
\end{step}
\begin{step}
\label{step2}
	We give a construction that obtains a fibrewise manifold
	from a family of h-graphs with basepoints.
	More precisely, 
	given a family of h-graphs with basepoints $(X,P)$
	over a good base space $B$, we construct a fibrewise
	manifold $B(G^{\Pi_1(X,P)})\to B \times BG^P$
	whose total space is homotopy equivalent
	to the fibrewise mapping space $BG^X$ over $B$.
	We will achieve this by constructing the following diagram of 
	symmetric monoidal categories and functors and 
	transformations of such,
	where the transformations restrict to homotopy
	equivalences on total spaces.
	\begin{equation}\label{diagram-one}
	\vcenter{
	\xymatrix@!0@C=6em@R=18ex{
		(\calHbp,\usqcup)^\fop
		\ar[d]_{\forget}
		\ar[rr]^{\Pi_1^\fop}
		\ar[drr]^(0.34){(WQ)^\fop}^{}="uwmid"_{}="dwmid"
		&&
		(\calG,\usqcup)^\fop
		\ar[d]^{B^\fop}
		\ar[rr]^{B(G^{(-)})}
		\ar@{=>}[];"uwmid"^-\homot
		&&
		(\calM,\times)
		\ar[d]^{\forget}
		\ar@{=>}[dll]^{\homot}
		\\
		(\calH,\usqcup)^\fop
		\ar[rr]_{\inclusion}
		\ar@{=>}[];"dwmid"_-\homot
		&&
		(\calS,\usqcup)^\fop
		\ar[r]_(0.53){BG^{(-)}}
		&
		(\calS,\times)
		\ar[r]_{\inclusion}
		&
		(\widehat\calS,\times)
	}}\end{equation}
	Here the composite across the bottom represents the
	passage from a family of h-graphs $X$
	(admitting basepoints and over a good base)
	to the fibrewise mapping space $BG^X$,
	while the composite across the top
	represents the passage from a family of h-graphs with basepoints $(X,P)$
	(over a good base $B$)
	to the fibrewise manifold $B(G^{\Pi_1(X,P)})\to BG^P\times B$.
	The remaining functors and transformations in the diagram
	encode the fact that $B(G^{\Pi_1(X,P)})$ and $BG^X$ are homotopy equivalent.

	Several of the items appearing
	in~\eqref{diagram-one} have been introduced earlier:
	\begin{itemize}
		\item
		$(\calH,\usqcup)$ is the category of families of h-graphs
		(over good base spaces and admitting basepoints)
		equipped with external disjoint union.
		(See Examples~\ref{ex:calH-def} and~\ref{ex:sm-fibration-examples}.)
		
		\item
		$(\calS,\usqcup)$ is the category of fibred spaces
		(over good base spaces)
		equipped with external disjoint union.
		(See Examples~\ref{ex:calS-def} and~\ref{ex:sm-fibration-examples}.)
		
		\item		
		$(\calS,\times)$ and $(\widehat\calS,\times)$ 
		are the categories of fibred spaces
		(over good base spaces and arbitrary base spaces, respectively)
		equipped with direct product.
		(See Examples~\ref{ex:calS-def} and~\ref{ex:sm-calS-with-times}.)
		
		\item
		 $(\calM,\times)$ is the symmetric monoidal fibration over $\calB$
		consisting of fibrewise manifolds constructed in 
		step~\ref{step1} above.
	
		\item
		The symbol $(-)^\fop$ denotes the fibrewise opposite.
		(See subsection~\ref{subsec:fibrewise-opposite}.)
		
		\item
		The functor $BG^{(-)}$ is the fibrewise mapping space functor.
		(See Example~\ref{ex:BG-to-X}.)
		
	\end{itemize}
	The remaining parts of \eqref{diagram-one} are constructed as follows.
	The left part of the diagram is obtained by applying $(-)^\fop$ to a
	diagram
	\begin{equation}
	\label{diag:left-part}
	\vcenter{\xymatrix@!0@C=12em@R=18ex{
		(\calHbp,\usqcup)
		\ar[d]_{\forget}
		\ar[r]^{\Pi_1}
		\ar[dr]^(0.34){WQ}^{}="uwmid"_{}="dwmid"
		&
		(\calG,\usqcup)
		\ar[d]^{B}
		\ar@{=>}"uwmid";[]_-\homot
		\\
		(\calH,\usqcup)
		\ar[r]_{\inclusion}
		\ar@{=>}"dwmid";[]^-\homot
		&
		(\calS,\usqcup)
	}}
	\end{equation}
	of symmetric monoidal fibrations over $\calU$ and functors and 
	transformations of such in which:
	\begin{itemize}
		\item
		$(\calHbp,\usqcup)$
		is a variant of $(\calH,\usqcup)$ in which the objects
		are equipped with an explicit choice of basepoints.
		It is defined in subsection~\ref{subsec:h-graphs-with-basepoints}.
		\item
		$(\calG,\usqcup)$ is a symmetric monoidal fibration over $\calU$
		whose objects consist of `fibrewise finite free groupoids'.
		It is defined in subsection~\ref{subsec:groupoids}.
		\item
		The functor $\Pi_1$ is a fibrewise fundemental groupoid functor.
		It is constructed in subsection~\ref{subsec:h-graphs-to-groupoids}.
		\item
		The remaining functors and transformations in \eqref{diag:left-part}
		are constructed in subsection~\ref{subsec:X-to-B-Pi-One-X}.
		Furthermore, the transformations 
		in diagram \eqref{diag:left-part} (and hence in the left part of 
		diagram \eqref{diagram-one}) in fact consist of fibrewise homotopy 
		equivalences.
	\end{itemize}
	In contrast with the left-hand part, the right-hand square 
	in diagram~\eqref{diagram-one} only consists of symmetric monoidal
	categories and functors and transformations of symmetric monoidal 
	categories; it is not fibred over any base category.
	\begin{itemize}
		\item
		The functor $B(G^{(-)})$ sends a fibrewise finite free groupoid $\sP$
		over $B$
		to a fibred manifold of functors from $\sP$ into $G\times B$.
		It is constructed in
		subsection~\ref{subsec:groupoids-to-manifolds}.
		\item
		The natural transformation in the right-hand square of
		\eqref{diagram-one} is constructed in
		subsection~\ref{subsec:functors-to-mapping-spaces}.	
	\end{itemize}
\end{step}
\begin{step}
\label{step3}
	The final step is to use diagram~\eqref{diagram-one} to construct the umkehr
	functor $U^G$.
	More precisely, we will construct the following diagram
	of symmetric monoidal double categories
	and functors and transformations of such.
	\begin{equation}\label{diagram-two}
	\vcenter{
	\xymatrix@!0@R=11ex@C=8em{
		\bbS^d(\calHbp^\fop)
		\ar[r]^{\bbS(\Pi_1)}
		\ar@/_6 ex/[rrr]_{\widetilde U^G}^{}="2"
		\ar[d]_{\bbS(\forget)}
		&
		\bbS^d(\calG^\fop)
		\ar[r]^{\bbS(B(G^{(-)}))}_{}="1"
		&
		\bbS^\ds(\calM)
		\ar[r]^-{U_\mfld}
		&
		\sq(\grmod)
		\ar@{=>}"1";"2"
		\\
		\bbS^d(\calH^\fop)
		\ar@/_5ex/[rrru]_{U^G}
	}}\end{equation}
	First, in subsection~\ref{subsec:special-squares}, we construct the double
	categories $\bbS^d(\calHbp^\fop)$ and $\bbS^d(\calG^\fop)$,
	the other double categories in diagram~\eqref{diagram-two}
	having already been constructed 
	($\bbS^d(\calH^\fop)$ in
	Examples~\ref{ex:dc-special-squares-h-graphs} and 
	\ref{ex:sm-dc-special-squares-h-graphs};
	$\bbS^\ds(\calM)$ in step~\ref{step1} above; and
	$\sq(\grmod)$ in Example~\ref{ex:sqcat} and
	subsection~\ref{subsec:symmetric-monoidal-double-categories}).
	Then in subsection~\ref{subsec:double-functors},
	the upper and left edges of~\eqref{diagram-one}
	will be used to construct the corresponding parts of~\eqref{diagram-two}.
	Finally, in subsection~\ref{subsec:UG},
	the natural transformations in \eqref{diagram-one}
	will be used to construct 
	a symmetric monoidal natural isomorphism from the top composite to a symmetric
	monoidal double functor $\widetilde U^G$ whose vertical part is exactly
	$(X,P,n)\mapsto H_{\ast-n}(BG^X)$.
	This property of $\widetilde U^G$ will allow us to factor $\widetilde U^G$ through
	a symmetric monoidal double functor $U^G$ of the required kind.
\end{step}

\subsection{Fibrewise manifolds}
\label{subsec:fibrewise-manifolds}
Our first aim is to construct our category $\calM$ of fibrewise
manifolds. Unlike $\calH$ and $\calS$, which are fibred over 
the category $\calU$ of good base spaces, we will construct $\calM$
as a category fibred over the category $\calB$ of all base spaces.
\begin{definition}
By a \emph{fibrewise closed manifold} $M$ over a base space $B$,
we mean a fibrewise smooth fibre bundle $M\to B$
in the sense of Crabb and James \cite[p.~245]{CrabbJames} 
with the property that the fibres of $M\to B$ are closed
manifolds. Unlike Crabb and James, however, we do not require
the base space $B$  to be an ENR or the total space $M$ 
to be second countable, those assumptions being unnecessary
for the most basic aspects of the theory. 
\end{definition}

\begin{definition}
By a \emph{fibrewise smooth map} from a fibrewise closed 
manifold $M_1 \xto{\pi_1} B_1$ 
to a fibrewise closed manifold $M_2 \xto{\pi_2} B_2$,
we mean
a pair of continous maps $(g,f)$ making the diagram
\begin{equation}
\label{diag:calM-morphism}
\vcenter{\xymatrix{
	M_1
	\ar[r]^g
	\ar[d]_{\pi_1}
	&
	M_2
	\ar[d]^{\pi_2}
	\\
	B_1
	\ar[r]^f
	&
	B_2	
}}
\end{equation}
commutative and having the property that the induced map
$M_1 \to f^*M_2$ over $B_1$ is a fibrewise smooth map in the sense 
of Crabb and James \cite[p.~244]{CrabbJames}.
\end{definition}

\begin{definition}[The fibred category $\calM$]
We let $\calM$ be the category of fibrewise closed
manifolds and fibrewise smooth maps and equip it 
with the symmetric monoidal structure given by 
the direct product 
\[
	(M_1\xto{\pi_1} B_1) \times (M_2 \xto{\pi_2} B_2)
	 = (M_1\times M_2 \xto{\pi_1\times\pi_2} B_1 \times B_2).
\]
The forgetful functor $\calM \to \calB$ sending a fibrewise 
manifold $M$ over $B$ to $B$ is then a symmetric monoidal fibration. 
\end{definition}

\begin{example}
Let $(F,G,P)$ be a triple consisting of a smooth closed manifold
$F$, a Lie group $G$ acting smoothly on $F$,
and a principal $G$-bundle $P$ over a base $B$.
Then the induced fibre bundle $P\times_G F\to B$
admits the structure of a fibrewise closed manifold.
See \cite[p.~246]{CrabbJames}.
Given a second such triple $(F',G',P')$ and a triple of maps
\[
	f_F\colon F\longto F',
	\qquad
	f_G\colon G\longto G',
	\qquad
	f_P\colon P\longto P',
\]
with $f_F$ smooth, $f_G$ a smooth homomorphism,
and $f_F$ and $f_P$ both equivariant with respect to $f_G$,
then the induced map
\[
	P\times_G F\longto  P'\times_{G'}F'
\]
is fibrewise smooth.
\end{example}

Having defined $\calM$, we now proceed to construct the symmetric monoidal
double category $\bbS^\ds(\calM)$ following a strategy similar to the
one we employed in Examples~\ref{ex:dc-special-squares-h-graphs}
and \ref{ex:sm-dc-special-squares-h-graphs} to construct the 
symmetric monoidal double category $\bbS^d(\calH^\fop)$.
First observe that the fibre product $\calM \times_\calB \calB^\ds$
of $\calM$ with the category $\calB^\ds$ defined in 
Example~\ref{ex:dc-special-squares-h-graphs}
can be described as follows.
\begin{itemize}
\item The objects of $\calM \times_\calB \calB^\ds$ are pairs 
	$(M \to B, m)$, where $M\to B$
	is an object of $\calM$ and $m\colon B \to \Z$
	is a locally constant function.
\item A morphism in $\calM \times_\calB \calB^\ds$ 
	from $(M_1 \to B_1,m_1)$ to $(M_2 \to B_2,m_2)$ 
	is simply a morphism from $M_1 \to B_1$  to $M_2\to B_2$ in $\calM$. 
\end{itemize}
Also observe that the symmetric monoidal structures on $\calM$ and $\calB^\ds$ 
induce on $\calM \times_\calB \calB^\ds$ a symmetric 
monoidal structure given by the product
\[
	(M_1\xto{\pi_1} B_1, m_1) \times (M_2\xto{\pi_2} B_2, m_2)
		=
	(M_1\times M_2 \xto{\pi_1\times \pi_2} B_1\times B_2, m_1+m_2).
\]
\begin{definition}[The double category $\bbS^\ds(\calM)$]
\label{def:sm-dc-special-squares-manifolds}
We let the  
\emph{double category} $\bbS^\ds(\calM)$ 
\emph{of special squares in $\calM$ with degree shifts}
be the sub-double category of 
$\sq(\calM \times_\calB \calB^\ds)$ defined as follows. For a 
fibrewise closed manifold $M \to B$, let $|M|$ denote the 
locally constant function $B \to \Z$ sending a point $b\in B$
to the dimension of the fibre $M_b$. Then 
\begin{itemize}
\item The objects of $\bbS^\ds(\calM)$ are the objects of
	 $\calM \times_\calB \calB^\ds$.
\item A morphism $(g,f)$ from $(M_1 \to B_1,m_1)$ to $(M_2 \to B_2,m_2)$ 
	in $\calM \times_\calB \calB^\ds$ as depicted in diagram 
	\eqref{diag:calM-morphism} qualifies as a horizontal 
	1-morphism in $\bbS^\ds(\calM)$ if $f$ is the identity map 
	and $m_1$ and $m_2$ satisfy 
	\[
		m_1 + |M_1| = m_2 + |M_2|.
	\]
\item The morphism $(g,f)$ from $(M_1 \to B_1,m_1)$ to $(M_2 \to B_2,m_2)$ 
	in $\calM \times_\calB \calB^\ds$ qualifies
	as a vertical 1-morphism if 
	$m_1 = m_2 \circ f$ and the restriction of $g$ to fibres
	\[
		g| \colon (M_1)_b \longto (M_2)_{f(b)}
	\]
	is a submersion for all $b \in B_1$. 
\item A commutative square in $\calM \times_\calB \calB^\ds$ 
	qualifies as a 2-cell in $\bbS^\ds(\calM)$
	if its horizontal and vertical morphisms are, respectively, 
	horizontal and vertical 1-morphisms as defined above,  
	and the commutative square formed by the maps between 
	total spaces is a pullback square of spaces.
\end{itemize}
We equip the double category $\bbS^\ds(\calM)$ with the symmetric monoidal 
structure inherited from $\sq(\calM \times_\calB \calB^\ds)$.
\end{definition}

\subsection{Umkehr maps for fibrewise manifolds}
\label{subsec:umkehr-maps-for-fibrewise-manifolds}
In this subsection we will construct the symmetric monoidal 
double functor
\[
	U_\mfld \colon \bbS^\ds(\calM) \longto \sq(\grmod)^\hop
\]
relying on Crabb and James \cite[section II.12]{CrabbJames}.
The vertical part
\[
	(U_\mfld)_0 \colon \bbS^\ds(\calM)_0 \longto \grmod
\]
of $U_\mfld$ is easy to construct: it is simply the functor that 
sends an object $(M\to B,m)$ to the homology $H_{\ast-m}(M)$.
The main difficulty is the construction of $U_\mfld$ on 
horizontal 1-morphisms of $\bbS^\ds(\calM)$. Given a 
horizontal 1-morphism of $\bbS^\ds(\calM)$, that is to say a map
\begin{equation}
\label{diag:umkehrfodder}
\vcenter{\xymatrix{
	M 
	\ar[rr]^f
	\ar[dr]
	&&
	N
	\ar[dl]
	\\
	&
	B	
}}
\end{equation}
of fibrewise closed manifolds over a base space $B$ 
equipped with degree shifts $m,n \colon B \to \Z$ 
satisfying $m+|M|= n+|N|$, the task is 
to construct an umkehr map
\begin{equation}
\label{eq:umkehrmap}
 f^! \colon H_{\ast -n}(N) \longto H_{\ast-m}(M)
\end{equation}
so that setting $U_\mfld(f) = f^!$ makes $U_\mfld$
a symmetric monoidal double functor.

To construct $f^!$, we will first consider the case where 
$B$ is a finite CW complex (and hence in particular a compact ENR).
In this situation, if $\beta$ is a virtual bundle over $N$,
Crabb and James \cite[p.~266]{CrabbJames} use a fibrewise
Pontryagin--Thom construction to construct a Gysin map
\begin{equation}
\label{eq:gysinmap}
	f_\beta^\umk \colon N^{\beta - \tau_N} \longto M^{f^\ast\beta - \tau_M} 
\end{equation}
between Thom spectra. Here $\tau_N$ and $\tau_M$ denote the 
fibrewise tangent bundles of $N$ and $M$, respectively.
(Crabb and James denote this map by $f^!$,
conflicting the notation we have chosen here.)
\begin{definition}[Definition of $f^!$ when $B$ is finite CW]
\label{def:f-shriek-CW}
Suppose the space $B$ in \eqref{diag:umkehrfodder} is a finite CW complex. 
To construct the map $f^!$ of \eqref{eq:umkehrmap} in this situation,
we choose a virtual bundle $\beta$ over $N$ with virtual dimension 
$|\beta|$ equal to $n + |N|$, and define $f^!$ as the unique
map making commutative the diagram
\[\xymatrix{
	H_{\ast -n}(N)
	\ar@{-->}[r]^{f^!}
	&
	H_{\ast-m}(M)
	\\
	\tilde H_\ast(N^{\beta - \tau_N})
	\ar[r]^-{(f_\beta^\umk)^{\phantom{\gamma}}_{\ast}}
	\ar[u]^{\mathrm{Thom}}_\isom
	&
	\tilde H_\ast(M^{f^\ast\beta - \tau_M})
	\ar[u]_{\mathrm{Thom}}^\isom
}\]
where the vertical maps are given by Thom isomorphisms.
(Recall that we are working in characteristic 2, so any
virtual bundle has a canonical orientation.)
\end{definition}
In the preceding definition, we could of course choose as 
$\beta$ the trivial 
virtual bundle of the required dimension, but for the purpose
of verifying that $U_\mfld$ satisfies the required axioms,
it is important to have the flexibility to use other choices of 
$\beta$ as well. Of course, we should verify that $f^!$ does
not depend on the choice of the virtual bundle $\beta$. 
We will postpone the proof of
the following lemma until the end of this subsection.
\begin{lemma}
\label{lm:umkehrwelldefined}
The map $f^!=f^!_\beta$ constructed in 
Definition~\ref{def:f-shriek-CW} 
is independent of the choice of $\beta$.
\end{lemma}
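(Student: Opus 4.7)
The plan is to deduce the independence from a single naturality property of the Crabb--James Gysin construction with respect to stabilization of the twisting virtual bundle $\beta$: for any virtual bundle $\gamma$ over $N$, the Gysin map $f^\umk_{\beta + \gamma}$ factors as $f^\umk_\beta$ smashed (fibrewise) with the identity on $\mathrm{Thom}(\gamma)$, under the canonical identification of $f^\ast \mathrm{Thom}(\gamma)$ with $\mathrm{Thom}(f^\ast \gamma)$. This reflects the fact that in the construction of Crabb--James, the bundle $\beta$ enters only as a final twist applied to an underlying fibrewise Pontryagin--Thom collapse that is itself independent of $\beta$.

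First I would verify this naturality against the construction of $f^\umk_\beta$ in \cite[\S II.12]{CrabbJames}. Passing to homology and Thom-izing, it amounts to commutativity of the square
\[
\xymatrix@C=3em{
\tilde H_\ast(N^{\beta + \gamma - \tau_N}) \ar[r]^-{(f^\umk_{\beta+\gamma})_\ast} \ar[d]_{u_\gamma}^\cong & \tilde H_\ast(M^{f^\ast\beta + f^\ast \gamma - \tau_M}) \ar[d]^{u_{f^\ast \gamma}}_\cong \\
\tilde H_{\ast - |\gamma|}(N^{\beta - \tau_N}) \ar[r]_-{(f^\umk_\beta)_\ast} & \tilde H_{\ast - |\gamma|}(M^{f^\ast \beta - \tau_M}),
}
\]
where the vertical maps are cap products with the Thom classes of $\gamma$ and $f^\ast \gamma$. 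This is the technical heart of the argument and the step I expect to require the most care, since the bookkeeping of Crabb--James's conventions (including their sign choices, all of which disappear in characteristic $2$) must be matched with the formulation required here.

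Second, I would combine this square with the multiplicativity of Thom classes, $u_{\beta + \gamma} = u_\beta \smile u_\gamma$ and $u_{f^\ast(\beta + \gamma)} = u_{f^\ast \beta} \smile u_{f^\ast \gamma}$ (neither being obstructed by orientations since we are working over $\F$). A direct diagram chase then shows that $f^!_{\beta + \gamma}$ and $f^!_\beta$ induce the same $\F$-linear map $H_\ast(N) \to H_{\ast - (|N| - |M|)}(M)$ of degree $|N| - |M|$, the only difference being the book-keeping degree shift used to index source and target.

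Finally, given two candidate virtual bundles $\beta_1$ and $\beta_2$ of virtual dimension $n + |N|$, I would apply this naturality identity twice, once with $\gamma = \beta_2$ (applied to $\beta_1$) and once with $\gamma = \beta_1$ (applied to $\beta_2$), to obtain
\[
f^!_{\beta_1} \;=\; f^!_{\beta_1 + \beta_2} \;=\; f^!_{\beta_2},
\]
where both outer identifications take place at the level of degree-$(|N|-|M|)$ maps $H_\ast(N) \to H_\ast(M)$. This yields the desired independence.
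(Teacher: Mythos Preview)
Your approach is correct and is genuinely different from the paper's. The paper does not use the additive stabilization naturality $f^\umk_{\beta+\gamma}\simeq f^\umk_\beta$ ``twisted by $\gamma$'' that you isolate; instead it reduces every $f^!_\beta$ to the $\beta=0$ case via a Frobenius-type argument. Concretely, the paper applies the fibrewise versions of Crabb--James's Propositions~II.12.11 and~II.12.16 to the pullback square
\[
\xymatrix{
M \ar[r]^f \ar[d]_{(f,\id)} & N \ar[d]^\Delta \\
N\times M \ar[r]_{\id\times f} & N\times N,
}
\]
which expresses $f^\umk_\beta$ in terms of the external product $(\id\times f)^\umk$ with twisting $\pr_1^\ast(\beta+\tau_N)$; after the K\"unneth and Thom manipulations (using Crabb--James~II.12.8), the $\beta$-dependence is peeled off as a cap with a Thom class and then killed by projecting $N\to\pt$, leaving exactly the $\beta=0$ formula. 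Your route is conceptually cleaner---it identifies the precise sense in which $\beta$ enters the construction as a passive twist---but it requires you to extract from \cite{CrabbJames} a statement about $f^\umk_{\beta+\gamma}$ that is implicit in the construction rather than stated as a numbered proposition; the paper's route is more circuitous but cites only results Crabb--James prove explicitly. One small point: in your final step $\beta_1+\beta_2$ has virtual dimension $2(n+|N|)$, so $f^!_{\beta_1+\beta_2}$ lies outside the scope of Definition~\ref{def:f-shriek-CW} as written; you handle this correctly by noting that the degree bookkeeping is immaterial, but it is worth saying explicitly that you are temporarily extending the definition to arbitrary $|\beta|$.
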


We will now generalize the construction of $f^!$ to the case where 
the base space $B$ is not necessarily a finite CW complex.

\begin{definition}[Definition of $f^!$ for arbitrary $B$]
Suppose the space $B$ in \eqref{diag:umkehrfodder} is 
an arbitrary base space. Let $\Gamma B$ denote 
the geometric realization of the singular simplicial
set of $B$, and let $\{B_\lambda\}_\lambda$ be the poset of finite 
CW subcomplexes of $\Gamma B$. For each $\lambda$ we have a map
$B_\lambda \to B$ obtained by composing the inclusion of $B_\lambda$
into $\Gamma B$ with the natural map $\Gamma B \to B$. Let
$M_\lambda$ and $N_\lambda$ denote the pullbacks of $M$ and $N$
over $B_\lambda$, and let $f_\lambda$ denote the map
\[
	f_\lambda \colon M_\lambda \longto N_\lambda
\]
induced by $f$. Then the maps 
\[
	H_{\ast-m}(M_\lambda) \longto H_{\ast-m}(M)
\]
exhibit $H_{\ast-m}(M)$ as a colimit
\[
	H_{\ast-m}(M) = \colim_\lambda H_{\ast-m}(M_\lambda)
\]
and similarly for $H_{\ast-n}(N)$.
It follows from \cite[Proposition II.12.16]{CrabbJames}
that the maps $(f_\lambda)^!$ of Definition~\ref{def:f-shriek-CW}
are compatible with 
each other as $\lambda$ varies
in the sense that for $B_\lambda \subset B_\mu$, 
the diagram
\[\xymatrix{
	H_{\ast-n}(N_\lambda)
	\ar[r]^{(f_\lambda)^!}
	\ar[d]
	&
	H_{\ast-m}(M_\lambda)
	\ar[d]
	\\
	H_{\ast-n}(N_\mu)
	\ar[r]^{(f_\mu)^!}
	&
	H_{\ast-m}(M_\mu)	
}\]
commutes. 
We now define $f^!$ to be the colimit
\[
	f^! = (\colim_\lambda (f_\lambda)^! )
		\colon H_{\ast-n}(N) \longto H_{\ast-m}(M).
\]
It is easily verified that the map $f^!$ so defined agrees with the 
previously defined $f^!$ when $B$ happens to be a finite CW complex.
\end{definition}

Using Lemma~\ref{lm:umkehrwelldefined}, 
\cite[Proposition~II.12.16]{CrabbJames}
and the fibrewise versions \cite[p.~266]{CrabbJames} of 
\cite[Propositions II.12.7 and II.12.11]{CrabbJames},
it is now easy to check that 
\[
	U_\mfld \colon \bbS^\ds(\calM) \longto \sq(\grmod)^\hop
\]
is a double functor. For example, the fact that $U_\mfld$
takes 2-cells to 2-cells follows from Lemma~\ref{lm:umkehrwelldefined}
together with \cite[Proposition II.12.16]{CrabbJames} and the 
fibrewise analogue of \cite[Propositions II.12.11]{CrabbJames}.

To promote $U_\mfld$ into a symmetric monoidal double functor,
we will now explicitly specify the monoidality and identity constraints 
$U_{\mfld,\tensor}$ and $U_{\mfld,I}$. The constraint 
\[
	U_{\mfld,\tensor,0} 
	\colon 
	U_\mfld(M,m) \tensor U_\mfld(N,n) 
	\longto 
	U_\mfld(M\times N,m+n)
\]
is simply the cross product
\[
	H_{\ast-m}(M) \tensor H_{\ast-n}(N) 
	\xto{\ \times\ }
	H_{\ast-m-n}(M\times N),
\]
while for horizontal 1-morphisms $f_i\colon (M_i,m_i) \to (N_i,n_i)$,
$i=1,2$ the constraint $U_{\mfld,\tensor,1}$ is given by 
the square
\[\xymatrix@C+1.2em@R+3ex{
	H_{\ast-n_1}(N_1) \tensor H_{\ast-n_2}(N_2)
	\ar[d]_\times
	\ar[r]^-{f_1^!\tensor f_2^!}
	&
	H_{\ast-m_1}(M_1) \tensor H_{\ast-m_2}(M_2)
	\ar[d]^\times
	\\
	H_{\ast-n_1-n_2}(N_1 \times N_2)
	\ar[r]^-{(f_1\times f_2)^!}
	&
	H_{\ast-m_1-m_2}(M_1 \times M_2)
}\]
whose commutativity follows from the fibrewise version of
\cite[Proposition II.12.8]{CrabbJames}.
The identity constraint $U_{\mfld,I,0}$ is given by 
the isomorphism
\[
	(\F,0) \xto{\ \isom\ } H_\ast(\pt)
\]	
where $\F$ is our coefficient field of characteristic 2,
and the identity constraint $U_{\mfld,I,1}$ is given by the 
square
\[\xymatrix{
	(\F,0)
	\ar[r]^\id
	\ar[d]_\isom
	&
	(\F,0)
	\ar[d]^\isom
	\\
	H_\ast(\pt)
	\ar[r]^\id
	&
	H_\ast(\pt)
}\]
It is now readily verified that these data make $U_\mfld$ into
a symmetric monoidal double functor.

The work of this subsection is now finished except for the 
proof of Lemma~\ref{lm:umkehrwelldefined} which we postponed
earlier.

\begin{proof}[Proof of Lemma~\ref{lm:umkehrwelldefined}]
It is enough to verify that for any choice of $\beta$, the diagram
\begin{equation}
\label{sq:umkehrcomp}
\vcenter{\xymatrix@C+1em@R+2ex{
	H_{\ast -n}(N)
	\ar[r]^{f^!_\beta}
	&
	H_{\ast-m}(M)
	\\
	\tilde H_{\ast-n-|N|}(N^{-\tau_N})
	\ar[r]^-{(f^\umk)_\ast}
	\ar[u];[]_{\mathrm{Thom^{-1}}}^\isom
	&
	\tilde H_{\ast-m-|M|}(M^{-\tau_M})
	\ar[u];[]^{\mathrm{Thom^{-1}}}_\isom
}}
\end{equation}
commutes, where
\[
	f^\umk \colon N^{-\tau_N} \longto M^{-\tau_M}
\]
is the map \eqref{eq:gysinmap} for $\beta$ the trivial 0-dimensional 
virtual bundle. Working component\-wise, we may assume that the 
base space $B$ is connected, in which case all locally constant 
functions on $B$ are constant. 
Using the fibrewise versions \cite[p.~266]{CrabbJames}
of \cite[Proposition II.12.11]{CrabbJames} and 
\cite[Proposition II.12.16]{CrabbJames}, the diagram
\[\xymatrix{
	M
	\ar[r]^f
	\ar[d]_{(f,\id)}
	&
	N
	\ar[d]^{\Delta}
	\\
	N\times M
	\ar[r]^{\id\times f}
	&
	N\times N
}\]
gives rise to a commutative diagram
\begin{equation}
\label{diag:frobexpand}
\vcenter{\xymatrix@C+4em@R+3ex{
	N^{\beta -\tau_N}
	\ar[r]^{f^\umk_\beta}
	\ar[d]_{\Delta}
	&
	M^{f^\ast\beta - \tau_M}
	\ar[d]^{(f,\id)}
	\\
	(N \times N)^{\pr_1^\ast(\beta+\tau_N) - \tau_{N\times N}}
	\ar[r]^{(\id \times f)^\umk_{\pr_1^\ast(\beta+\tau_N)}}
	&
	(N \times M)^{\pr_1^\ast(\beta+\tau_N) - \tau_{N\times M}}
}}
\end{equation}
where $\pr_1$ denotes projection onto the first factor.

We have the following diagram
\[\xymatrix@!0@C=6.7em@R=9ex{
	&
	H_{\ast-n}(N)
	\ar[rrr]^{f^!_\beta}
	&&&
	H_{\ast-m}(M)
	\\
	&
	\tilde H_{\ast}(N^{\beta-\tau_N})
	\ar[rrr]^{(f^\umk_\beta)_\ast}
	\ar@{<-}[u]^{\mathrm{Thom}^{-1}}_\isom
	\ar[d]_{\Delta_\ast}
	&&&
	\tilde H_{\ast}(M^{f^\ast\beta-\tau_M})
	\ar@{<-}[u]_{\mathrm{Thom}^{-1}}^\isom
	\ar[d]^{(f,\id)_\ast}
	\\
	&
	\tilde H_\ast\big((N \times N)^{\pr_1^\ast(\beta+\tau_N) - \tau_{N\times N}}\big)
	\ar[rrr]^{\big((\id \times f)^\umk_{\pr_1^\ast(\beta+\tau_N)}\big)_\ast}
	&&&
	\tilde H_\ast\big((N \times M)^{\pr_1^\ast(\beta+\tau_N) - \tau_{N\times M}}\big)
	\\
	&
	\tilde H_\ast(N^\beta \smashprod N^{-\tau_N})
	\ar[rrr]^{(\id\smashprod f^\umk)_\ast}
	\ar@{<-}[u]_\isom
	&&&
	\tilde H_\ast(N^\beta \smashprod M^{-\tau_M})
	\ar@{<-}[u]^\isom
	\\
	&
	\tilde H_\ast(N^\beta)\tensor \tilde H_\ast(N^{-\tau_N})
	\ar[rrr]^{\id\tensor (f^\umk)_\ast}
	\ar@{<-}[u]_\isom^{(\times)^{-1}}
	\ar[d]_{\mathrm{Thom}\tensor \id}^\isom
	&&&
	\tilde H_\ast(N^\beta)\tensor \tilde H_\ast(M^{-\tau_M})
	\ar@{<-}[u]^\isom_{(\times)^{-1}}
	\ar[d]^{\mathrm{Thom}\tensor \id}_\isom	
	\\
	&
	H_{\ast-n-|N|}(N) \tensor \tilde H_\ast(N^{-\tau_N})
	\ar[rrr]^{\id\tensor (f^\umk)_\ast}
	\ar[d]_{\pr_\ast\tensor \id}
	&&&
	H_{\ast-n-|N|}(N) \tensor \tilde H_\ast(M^{-\tau_M})
	\ar[d]^{\pr_\ast\tensor \id}
	\\
	&
	H_{\ast-n-|N|}(\pt) \tensor \tilde H_\ast(N^{-\tau_N})
	\ar[rrr]^{\id\tensor (f^\umk)_\ast}
	\ar[d]_\times^\isom
	&&&
	H_{\ast-n-|N|}(\pt) \tensor \tilde H_\ast(M^{-\tau_M})
	\ar[d]^\times_\isom
	&
	\\
	&
	\tilde H_{\ast-n-|N|}(N^{-\tau_N})
	\ar[rrr]^{(f^\umk)_\ast}
	\ar@{<-}	`l[l]
		`[uuuuuuu]^\isom_{\mathrm{Thom}^{-1}}
		[uuuuuuu]
	&&&
	\tilde H_{\ast-n-|N|}(M^{-\tau_M})	
	\ar@{<-}	`r[ru]
		`[uuuuuuu]_\isom^{\mathrm{Thom}^{-1}}
		[uuuuuuu]
	&
}\]
Observe that the square \eqref{sq:umkehrcomp} appears as the outer
square of the preceding diagram, so it is enough to show that 
the diagram commutes.
The top middle square of the diagram
commutes by the definition of $f^!_\beta$,
and the second square from top commutes by the commutativity of 
diagram \eqref{diag:frobexpand}. The commutativity of the third 
square from top follows from the fibrewise version 
\cite[p.~266]{CrabbJames} of \cite[Proposition II.12.8]{CrabbJames}.
The commutativity of the remaining squares is immediate.
The commutativity of the flanking octagon on the right
follows from the commutativity of the diagram
\[\xymatrix@!0@C=5em@R=11ex{
	\tilde H_\ast(M^{f^\ast\beta-\tau_M})
	\ar@{<-}[rr]^-{\mathrm{Thom}^{-1}}_\isom
	\ar[dd]_{(f,\id)_\ast}
	&&
	H_{\ast-m}(M)
	\ar[rr]^(0.47)\id
	\ar[d]_{(f,\id)_\ast}
	&&
	H_{\ast-m}(M)
	\\
	&&
	H_{\ast-m}(N\times M)
	\ar[urr]^(0.47)*[@]=-!U(1.2){\labelstyle\pr_\ast}
	\\
	\tilde H_\ast\big((N \times M)^{\pr_1^\ast(\beta+\tau_N) - \tau_{N\times M}}\big)
	\ar[urr]^(0.47)*[@]=-!U(1.2){\labelstyle\mathrm{Thom}}_-\isom
	&&
	&&
	H_{\ast-n-|N|}(\pt)\tensor H_{\ast+|M|}(M)
	\ar[uu]_\times^\isom
	\\
	\tilde H_\ast(N^\beta \smashprod M^{-\tau_M})
	\ar@{<-}[u]^\isom
	&&
	H_{\ast-n-|N|}(N) \tensor H_{\ast+|M|}(M)
	\ar@{<-}[uu]^\isom_{(\times)^{-1}}
	\ar[urr]^(0.47)*[@]=-!U(1.2){\labelstyle\pr_\ast\tensor\id}
	&&
	&
	\tilde H_{\ast-n-|N|}(M^{-\tau_M})
	\ar@{<-}@/_3.4pc/[uuul]^{\mathrm{Thom}^{-1}}_\isom
	\\ 
	H_\ast(N^\beta) \tensor \tilde H_\ast(M^{-\tau_M})
	\ar@{<-}[u]^(0.55)\isom_(0.55){(\times)^{-1}}
	\ar[rru]^(0.47)*[@]=-!U(1.2){\labelstyle\mathrm{Thom}\tensor\mathrm{Thom}}_\isom
	\ar[drr]^(0.47)*[@]=-!U(1.2){\labelstyle\mathrm{Thom}\tensor\id}_\isom
	&&
	&&
	H_{\ast-n-|N|}(\pt)\tensor \tilde H_\ast(M^{-\tau_M})
	\ar[uu]^(0.45){\id\tensor\mathrm{Thom}}_(0.45)\isom
	\ar[ur]^\times_\isom
	\\ 
	&&
	H_{\ast-n-|N|}(N) \tensor \tilde H_{\ast}(M^{-\tau_M})
	\ar[uu]^(0.45){\id\tensor\mathrm{Thom}}_(0.45)\isom
	\ar[urr]^(0.47)*[@]=-!U(1.2){\labelstyle\pr_\ast\tensor\id}
}\]
and the commutativity of the octagon on the left can be established
in a similar way.
\end{proof}

\subsection{H-graphs with basepoints}
\label{subsec:h-graphs-with-basepoints}

We now begin the task of constructing diagram~\eqref{diag:left-part} by
constructing the symmetric monoidal fibration 
$\calHbp \to \calU$
appearing there. This fibration is a variant of 
$\calH \to \calU$ in which the objects are families of h-graphs
equipped with a choice of basepoints.

\begin{definition}
\label{def:h-graphs-with-basepoints-and-maps-of-such}
By a \emph{set of basepoints} for an h-graph $X$ we mean
a finite set $P$ together with a map $u\colon P \to X$ 
(not necessarily injective) such that
the induced map $P \to \pi_0 X$ is surjective. We will denote
$X$ equipped with this set of basepoints by $(X,P,u)$ or simply $(X,P)$,
leaving the map $u$ implicit. By a \emph{map of h-graphs with basepoints}
from $(X,P,u)$ to $(Y,Q,v)$ we mean a pair $(f,i)$ consisting 
of a continuous map $f\colon X \to Y$ and
an injective map $i\colon P\to Q$ fitting into a commutative diagram
\[\xymatrix{
	P
	\ar[d]_u
	\ar[r]^i
	&
	Q
	\ar[d]^v
	\\
	X
	\ar[r]^f
	&Y
}\]
\end{definition}

\begin{definition}
\label{def:families-with-basepoints-and-maps-of-such}
More generally, a set of basepoints for a family of h-graphs $X$ 
over a base space $B$ consists of a finite set $P$ and a 
continuous map $u\colon B \times P \to X$ over $B$ 
such that the induced map $P\to X_b$
is a set of basepoints of $X_b$ for each $b\in B$. Again,
we denote $X$ equipped with these basepoints by $(X,P,u)$ or $(X,P)$.
A \emph{map of families of h-graphs with basepoints} from a family 
$(X,P,u)$ over $B$ to a family $(Y,Q,v)$ over $C$ is 
a triple $(f,g,i)$ consisting of continuous maps
$f \colon X \to Y$  and $g \colon B\to C$ and an injective map
$i\colon P \to Q$ making the diagram
\begin{equation}
\label{diag:mapoffamilieswbp}
\vcenter{\xymatrix{
	B\times P 
	\ar[r]^{g \times i}
	\ar[d]_u
	&
	C \times Q
	\ar[d]^v
	\\
	X
	\ar[r]^f
	\ar[d]
	&
	Y
	\ar[d]
	\\
	B
	\ar[r]^g
	&
	C
}}
\end{equation}
commutative.
\end{definition}

\begin{definition}[The fibred category $\calHbp$]
\label{def:calHbp}
Let $\calHbp$ be the category of families of h-graphs with basepoints 
over good base spaces, and maps of such.
There is a forgetful functor $\calHbp \to \calH$,
and the composite $\calHbp \to \calH \to \calU$ makes
$\calHbp$ a category fibred over $\calU$. A morphism $(f,g,i)$
of $\calHbp$ as in diagram~\eqref{diag:mapoffamilieswbp}
is cartesian exactly when $(f,g)$ is a cartesian
morphism of $\calH$ and $i$ is an isomorphism.
We equip $\calHbp$ with the symmetric monoidal structure $\usqcup$
given by
\[
	(X,P) \usqcup (Y,Q) = (X \usqcup Y, P \sqcup Q).
\]
for families of h-graphs with basepoints $(X,P)$  and $(Y,Q)$.
This makes $\calHbp\to\calU$ into a symmetric monoidal fibration.
The neutral object for $\calHbp$ is the pair
$(\emptyset_\pt,\emptyset)$ consisting of the empty family of h-graphs
over a single point and the empty set of basepoints.
\end{definition}

\subsection{Fibrewise finite free groupoids}
\label{subsec:groupoids}
We continue the construction of diagram~\eqref{diag:left-part}
by constructing the symmetric monoidal fibration $\calG \to \calU$.
We call $\calG$ the \emph{category of fibrewise finite free groupoids}.
We start the construction of $\calG$ by first discussing 
finite free groupoids in an unparameterized setting.

\begin{definition}
We call a small groupoid $\sP$ a \emph{finite free groupoid} 
if the set of objects $\Ob\sP$ is finite and 
there exists a finite indexed subset
$\Gamma = \{\gamma_i\colon x_i\to y_i\}_{i \in I}$ 
of $\Mor\sP$  with the following property: given a groupoid $\sG$,
a function $f_0 \colon \Ob\sP \to \Ob\sG$,
and morphisms $\tilde{\gamma}_i\colon f_0(x_i)\to f_0(y_i)$ of $\sG$ for $i \in I$,
there is a unique groupoid homomorphism
$f\colon \sP \to \sG$ extending $f_0$ and sending
each $\gamma_i$ to $\tilde{\gamma}_i$.
Such a $\Gamma$ is called a \emph{basis} for $\sP$.
A \emph{morphism of finite free groupoids} is simply a groupoid 
homomorphism.
\end{definition}

\begin{example}
A free group on a finite number of generators is a finite free groupoid
with basis given by the generators. In particular, the fundamental
group of any h-graph equipped with a distinguished basepoint
is a finite free groupoid.
\end{example}

\begin{example}
\label{ex:h-graph-subgroupoid}
More generally, if $X$ is an h-graph and $P$ is a finite subset of $X$,
then the full subgroupoid of the fundamental groupoid of $X$ spanned
by $P$ is a finite free groupoid. If $P$ intersects only a single
path component of $X$, 
a basis for this groupoid can be constructed
by choosing a point $p_0\in P$ and adding to a basis of 
the fundamental group $\pi_1(X,p_0)$ 
a single morphism from $p_0$ to every other point of $P$.
In general, if $P$ intersects several path components of $X$,
a basis can be constructed by performing this procedure componentwise.
\end{example}

\begin{definition}
The \emph{rank} of a finite free groupoid $\sP$ is defined 
to be the number of elements in any basis of $\sP$.  
This is well-defined by the following lemma.
\end{definition}

\begin{lemma}
Any two bases of a finite free groupoid have the same cardinality.
\end{lemma}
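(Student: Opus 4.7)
The plan is to identify a numerical quantity that depends only on $\sP$ and that equals $|\Gamma|$ for every basis $\Gamma$. A natural way to do this is to count groupoid homomorphisms from $\sP$ into a carefully chosen target groupoid, since the universal property of a basis describes such Hom-sets directly in terms of choices indexed by the basis elements.

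Concretely, I would take $\sG$ to be the cyclic group $\Z/2$, regarded as a groupoid with a single object $*$ and morphism set $\{1,g\}$. Given a basis $\Gamma = \{\gamma_i : x_i \to y_i\}_{i\in I}$ of $\sP$, the only function $\Ob\sP\to\Ob\sG$ is the constant function, and so the universal property in the definition of a basis identifies the set of groupoid homomorphisms $\sP\to\Z/2$ with the set of functions $I\to\{1,g\}$. Hence
\[
    |\Hom(\sP,\Z/2)| = 2^{|I|} = 2^{|\Gamma|}.
\]
Since the left-hand side is intrinsic to $\sP$, so is $|\Gamma|$, which proves the lemma.

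I do not anticipate any serious obstacle; the content of the proof is simply the correct selection of the auxiliary target groupoid so that the universal property becomes a clean counting statement. In fact the same argument goes through with any nontrivial finite group $H$ viewed as a one-object groupoid, giving the more general formula $|\Gamma|=\log_{|H|}|\Hom(\sP,H)|$, which additionally makes manifest that the rank is a well-defined non-negative integer invariant of $\sP$.
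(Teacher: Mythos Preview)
Your proof is correct and takes essentially the same approach as the paper: both exploit the universal property of a basis by computing $\Hom(\sP,\sG)$ for a one-object groupoid $\sG$, the paper choosing $\sG$ to be a field $k$ (under addition) and reading off $|\Gamma|$ as the $k$-vector-space dimension of $\Hom(\sP,k)$, while you take $\sG=\Z/2$ and count elements. These are minor variants of the same idea (indeed your argument is the paper's with $k=\F_2$, counting points rather than dimension).
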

\begin{proof}
Let $\sP$ be a finite free groupoid.
Let $k$ be a field, regarded as a one-object groupoid via addition.
The set of groupoid morphisms from $\sP$ to $k$ admits
the structure of a $k$-vector space, and by the defining property of 
basis its dimension is equal to the cardinality of any basis of $\sP$.
\end{proof}

\begin{lemma}
\label{lm:basis-words}
Let $\Gamma$ be a basis for a finite free groupoid $\sP$.
Then any morphism of $\sP$ can be written as a word in the elements
of $\Gamma$ and their inverses.
\end{lemma}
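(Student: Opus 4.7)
The plan is to use the universal property that defines a basis to show that the sub-structure generated by $\Gamma$ must coincide with $\sP$ itself. Let $\sP' \subseteq \sP$ be the subgroupoid consisting of all morphisms of $\sP$ expressible as finite words in the elements of $\Gamma$ and their inverses, together with the identity morphism at every object of $\sP$. By construction $\sP'$ has the same object set as $\sP$, contains $\Gamma$, and is closed under composition and inverses, so it is genuinely a subgroupoid.

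Next, I would apply the universal property of $\sP$ twice. First, regarding $\sP'$ as a groupoid in its own right, the identity function $\Ob\sP\to\Ob\sP'$ together with the assignment sending each $\gamma_i \in \Gamma \subseteq \sP$ to the corresponding morphism $\gamma_i$ viewed inside $\sP'$ extends, by the universal property of the basis $\Gamma$, to a unique groupoid homomorphism $r\colon \sP\to\sP'$. Composing with the inclusion $\iota\colon \sP'\hookrightarrow\sP$ yields $\iota\circ r\colon \sP\to\sP$, a groupoid homomorphism that is the identity on objects and sends each $\gamma_i$ to $\gamma_i$. The identity homomorphism $\id_\sP$ has these same properties, so the uniqueness clause in the universal property forces $\iota\circ r = \id_\sP$.

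It follows that $\iota$ is surjective on morphisms, hence $\sP'=\sP$, which is exactly the claim. There is no serious obstacle here; the only point that requires a moment's care is the verification that $\sP'$ is a well-defined subgroupoid containing all identities (including those at objects not appearing as a source or target of any basis element), which is immediate from the definition since we explicitly include every identity. The entire argument is a routine application of the universal property, identical in flavour to the proof that a generating set for an ordinary free group generates all words.
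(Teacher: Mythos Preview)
Your proof is correct and follows essentially the same approach as the paper: define the subgroupoid $\sP'$ generated by words in $\Gamma$, then use the universal property of the basis to show the inclusion $\sP'\hookrightarrow\sP$ is an isomorphism. The only cosmetic difference is that the paper phrases the final step as an appeal to Yoneda's lemma (restriction along $\iota$ induces a bijection on functors into any groupoid, hence $\iota$ is an isomorphism), whereas you unpack this by explicitly constructing the retraction $r$ and invoking uniqueness; these are the same argument.
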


\begin{proof}
Let $\sP_0$ denote the subgroupoid of $\sP$ consisting of all objects
and exactly those morphisms
which can be written as words in the elements of $\Gamma$ and their inverses.
Since $\Gamma$ is a basis,
any functor from $\sP_0$ to a groupoid $\sG$ extends uniquely
to a functor from $\sP$ to $\sG$.
It follows from Yoneda's lemma that the inclusion of $\sP_0$ into
$\sP$ is an isomorphism, and so $\sP_0=\sP$.
\end{proof}

We will construct the category $\calG$ 
as a subcategory of a larger category we now define.

\begin{definition}
By a \emph{fibrewise category internal to spaces}
parameterized by a good base space $B$ we mean a category internal to the
category $\calS_B$ of spaces over $B$. If $\sE_1$ and $\sE_2$ 
are such categories over good base spaces $B_1$ and $B_2$, respectively,
a morphism $\varphi$ from $\sE_1$ to $\sE_2$ consists of a map 
$\varphi_B\colon B_1 \to B_2$ and maps
\[
	\varphi_{\sE,0} \colon \Ob \sE_1 \longto \Ob \sE_2
	\quad\text{and}\quad
	\varphi_{\sE,1} \colon \Mor \sE_1 \longto \Mor \sE_2
\]
covering $\varphi_B$ such that for each $b\in B_1$,
$\varphi_{\sE,0}$ and $\varphi_{\sE,1}$ restrict to define
a functor 
$(\sE_1)_b \to (\sE_2)_{\varphi_B(b)}$
of categories internal to spaces. 
The \emph{external fibrewise disjoint union} 
of $\sE_1$ and $\sE_2$, denoted $\sE_1\usqcup\sE_2$,
is the evident fibrewise category internal to spaces
parameterized by $B_1 \times B_2$ whose fibre over a point 
$(b_1,b_2)\in B_1 \times B_2$ is the disjoint union
$(\sE_1)_{b_1} \sqcup (\sE_2)_{b_2}$.
We denote by $\calC$ the category of 
fibrewise categories internal to spaces. 
$\calC$ is a category fibred over $\calU$, a morphism 
$\varphi$ of $\calC$ being cartesian if and only if 
$\varphi_{\sE,0}$ and $\varphi_{\sE,1}$
are cartesian morphisms of $\calS$. Moreover, 
external fibrewise disjoint union makes the
fibration $\calC\to \calU$ into a symmetric monoidal fibration.
\end{definition}

We are now ready to construct the symmetric monoidal fibration
$\calG \to \calU$.

\begin{definition}
A \emph{fibrewise finite free groupoid} over a good base space $B$
is an object $\sP$ of $\calC$ over $B$ having the 
property that for each point $b\in B$, 
there exists a neighborhood $U$ of $b$
and a finite free groupoid $\sP_0$ such that 
the restriction of $\sP$ to $U$ is isomorphic in $\calC_U$
to $\sP_0 \times U$. 
\end{definition}

\begin{definition}[The fibred category $\calG$]
We denote by $\calG$ the full subcategory of $\calC$ spanned
by fibrewise finite free groupoids, equipped with the
symmetric monoidal structure inherited from $\calC$.
The desired symmetric monoidal fibration $\calG\to \calU$
is now given by the composite  $\calG \to \calC \to \calU$.
\end{definition}

\subsection{From families of h-graphs to fibrewise groupoids}
\label{subsec:h-graphs-to-groupoids}
We continue the construction of diagram~\eqref{diag:left-part}
by constructing the map
\[\xymatrix{
	\calHbp
	\ar[rr]^{\Pi_1}
	\ar[rd]
	&& 
	\calG
	\ar[ld]
	\\
	&
	\calU
}\]
of symmetric monoidal fibrations over $\calU$.
We call $\Pi_1$ the \emph{fibrewise fundamental groupoid} functor.
Again, we start by first discussing $\Pi_1$ in the 
unparameterized setting, building on 
Example~\ref{ex:h-graph-subgroupoid}.
In general, a set of basepoints $u\colon P \to X$ for 
an h-graph $X$ is not an injective map, so it does not quite 
make sense to speak about the full subgroupoid of $X$ spanned
by $P$. However, we may remedy this problem by first replacing
$X$ by the mapping cylinder of $u$.

\begin{definition}
Let $X$ be an h-graph with basepoints $u\colon P\to X$,
and let $X'$ denote the mapping cylinder of $u$.
The \emph{fundamental groupoid} $\Pi_1(X,P,u)$ of $(X,P,u)$
is the full subgroupoid spanned by $P$ of the fundamental groupoid of $X'$.
(Here we identify $P$ with a subset of $X'$ as usual.)  
We will often drop the map $u$ from notation 
and denote $\Pi_1(X,P,u)$ by $\Pi_1(X,P)$.
\end{definition}

For any $p_0,p_1\in P$,
the collapse map $X' \to X$ induces a bijection from the 
set of homotopy classes of paths in $X'$ from $p_0$ to $p_1$
to the set of homotopy classes of paths in $X$ from $u(p_0)$
to $u(p_1)$. Thus we may alternatively regard $\Pi_1(X,P)$
as the groupoid where objects are 
elements of $P$, where maps from $p_0$ to $p_1$ are  
the homotopy classes of paths in $X$ from $u(p_0)$ to $u(p_1)$,
and where composition of maps is given by concatenation of paths.

A map of h-graphs with basepoints (see 
Definition~\ref{def:h-graphs-with-basepoints-and-maps-of-such})
\[
	(f,i)\colon (X,P,u) \longto (Y,Q,v)
\]
induces in an obvious way a morphism of finite free groupoids 
$\Pi_1(X,P)\to \Pi_1(Y,Q)$, so that $\Pi_1$ gives a functor from 
the category of h-graphs with basepoints into the category 
of finite free groupoids. Notice that 
if $i$ is a bijection and $f$ is a homotopy equivalence, 
the map induced by $(f,i)$ is an isomorphism.

\begin{definition}
The \emph{fibrewise fundamental groupoid} of a family of 
h-graphs with basepoints $(X,P,u)$ over a good base space $B$ is the
category $\Pi_1(X,P) = \Pi_1(X,P,u)$ internal to spaces over $B$ 
whose fibre over $b\in B$ is $\Pi_1(X_b,P)$. The space of objects
in $\Pi_1(X,P)$ is topologized by identifying it with 
the product $P\times B$, while the space of morphisms is 
given a topology as follows. Let $X'$ be the mapping cylinder of 
$u \colon P\times B \to X$. Then the space of morphisms
in $\Pi_1(X,P)$ is topologized as a quotient of the 
subspace of the fibrewise mapping space $\Map_B(I\times B, X')$
consisting of those fibrewise paths in $X'$ whose endpoints 
lie in $P\times B$.
\end{definition}

The construction of fibrewise fundamental groupoid extends
in an evident way to a map of symmetric monoidal fibrations
\[\xymatrix{
	\calHbp 
	\ar[rr]^{\Pi_1}
	\ar[dr]
	&&
	\calC
	\ar[dl]
	\\
	&
	\calU
}\]
To see that $\Pi_1$ preserves cartesian arrows,
one makes use of the fact that base change in $\calS$ preserves
both limits and colimits
(see \cite[Remark~2.1.9]{MaySigurdsson}).
Our next goal is to show that $\Pi_1$ in fact takes values in 
$\calG$.
To do that, we will exploit a local
triviality property enjoyed by families of h-graphs 
with basepoints over good base spaces.

\begin{definition}
\label{def:trivialization}
A \emph{trivialization} of a family of h-graphs with basepoint
$(X,P,u)$ over 
a base space $B$ consists of an h-graph with basepoints $(X_0,P,u_0)$
and a map $f\colon X_0 \times B \to X$ which fits into 
a commutative diagram
\begin{equation}
\label{diag:trivialization}
\vcenter{\xymatrix@!C@C-2em{
	& P\times B
	\ar[dl]_{u_0\times 1}
	\ar[dr]^u
	\\
	X_0\times B
	\ar[rr]^f
	\ar[dr]_{\pr}
	&&
	X
	\ar[dl]
	\\
	&
	B
}}
\end{equation}
and which for each $b\in B$ retricts to a homotopy equivalence
\[
	f|\colon X_0 \xto{\ \homot\ } X_b.
\]
\end{definition}

\begin{lemma}
\label{lm:localtriviso}
Suppose $B$ is a good base space. 
Then a trivialization of $(X,P,u)$ as above induces an isomorphism
\[
	\Pi_1(X_0,P) \times B \xto{\ \isom\ } \Pi_1(X,P)
\]
in $\calC_B$.
\end{lemma}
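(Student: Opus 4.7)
The plan is to construct the morphism $\Phi \colon \Pi_1(X_0, P) \times B \to \Pi_1(X, P)$ in $\calC_B$ induced by the trivialization $f$, and then verify that it is an isomorphism. On object spaces, both internal categories have $P \times B$, and the basepoint compatibility in diagram~\eqref{diag:trivialization} forces $\Phi_0$ to be the identity. On morphism spaces, $\Phi_1$ sends $(\gamma, b)$, where $\gamma \in \Mor(\Pi_1(X_0, P))$ is represented by a path $\tilde\gamma$ in the mapping cylinder $X_0'$ of $u_0$ between points of $P$, to the fibrewise homotopy class of the path $f' \circ (\tilde\gamma \times \{b\})$ in $X'_b$, where $f' \colon X_0' \times B \to X'$ is the induced map on mapping cylinders. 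Continuity of $\Phi_1$ is immediate from continuity of $f'$. Moreover, fibrewise over each $b \in B$ the map $\Phi_1$ is bijective, because $f|\colon X_0 \to X_b$ is a homotopy equivalence of h-graphs with basepoints and hence induces an isomorphism $\Pi_1(X_0, P) \to \Pi_1(X_b, P)$ of finite free groupoids.

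The main obstacle is upgrading this continuous fibrewise bijection to an isomorphism of spaces over $B$. The strategy is to construct a continuous left inverse to $\Phi$. Since $X \to B$ is a Hurewicz fibration over the paracompact (good) base $B$ and $f$ is fibrewise a homotopy equivalence, Dold's theorem on numerable fibrations supplies a fibrewise homotopy inverse $g \colon X \to X_0 \times B$ over $B$. Because $P \times B \hookrightarrow X$ and $P \times B \hookrightarrow X_0 \times B$ are closed fibrewise cofibrations, the homotopy extension property lets us modify $g$ by a fibrewise homotopy so that it strictly respects basepoints; $g$ thereby defines a morphism in $\calHbp$. Composing the fibrewise functor $\Pi_1(g)$ with the canonical comparison $\Pi_1(X_0 \times B, P) \to \Pi_1(X_0, P) \times B$ (a morphism in $\calC_B$ by compatibility of $\Pi_1$ with base change and the triviality of the family $X_0 \times B$) yields a candidate inverse $\Psi \colon \Pi_1(X, P) \to \Pi_1(X_0, P) \times B$ in $\calC_B$.

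To conclude, the plan is to verify that $\Psi \circ \Phi = \id$. On objects the identity is immediate. On morphisms, $\Psi_1 \circ \Phi_1$ is a continuous self-map of $\Mor(\Pi_1(X_0, P)) \times B$ over $B$; since the target factors as a discrete space $\Mor(\Pi_1(X_0, P))$ times $B$, the composite is determined by its first coordinate, which is a continuous function with discrete codomain, hence locally constant in the $B$-variable. But on each fibre this first coordinate is the identity endomorphism of $\Mor(\Pi_1(X_0, P))$, because $g|_b$ and $f|_b$ are homotopy inverses on fibres; it must therefore equal the projection onto $\Mor(\Pi_1(X_0, P))$, so that $\Psi \circ \Phi = \id$. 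Combined with the fibrewise bijectivity of $\Phi$, this forces $\Psi$ to be a two-sided inverse of $\Phi$; since $\Psi$ is continuous, $\Phi$ is an isomorphism in $\calC_B$. The principal point requiring care is the basepoint rectification of the Dold inverse $g$, which relies on verifying that the relevant basepoint inclusions are indeed fibrewise closed cofibrations under our definition of families of h-graphs with basepoints.
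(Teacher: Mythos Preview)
Your overall strategy is sound and close to the paper's, but there is a gap in the second paragraph: you assert that $P\times B\hookrightarrow X$ is a closed fibrewise cofibration in order to rectify the Dold inverse $g$.  Under Definition~\ref{def:families-with-basepoints-and-maps-of-such}, however, the basepoint map $u\colon P\times B\to X$ is merely a continuous map, not required to be injective, let alone a cofibration.  So the rectification step, as written, cannot be justified --- and you flag exactly this at the end of the proposal without resolving it.

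The paper avoids this by working from the outset with the mapping cylinders $X'_0$ of $u_0$ and $X'$ of $u$, which is natural since $\Pi_1$ is defined in terms of these.  In $X'_0\times B$ and $X'$ the inclusion of $P\times B$ \emph{is} a closed fibrewise cofibration by construction of the mapping cylinder, and $X'\to B$ is a fibration by \cite[Proposition~1.3]{Clapp}.  The induced map $f'\colon X'_0\times B\to X'$ then satisfies all the hypotheses of Lemma~\ref{lem:equivalence-over-and-under}, which delivers in one stroke a homotopy equivalence over $B$ and under $P\times B$ --- packaging your Dold-plus-rectification argument into a single citation.  Since $\Pi_1$ depends only on the homotopy type over $B$ and under $P\times B$, the isomorphism follows immediately.

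One minor remark on your final step: the discreteness argument is not needed.  A continuous self-map of $\Mor(\Pi_1(X_0,P))\times B$ over $B$ that restricts to the identity on every fibre is set-theoretically the identity, whatever the topology on the morphism space.  And the fibre restrictions of $\Psi\circ\Phi$ are \emph{exactly} the identity, not just homotopic to it, because $\Pi_1$ collapses basepoint-preserving homotopies.
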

\begin{proof}
Let $X'_0$ and $X'$ be the mapping cylinders of the maps
$u_0\colon P \to X_0$  and  $u\colon P\times B \to X$, 
respectively. Then the maps $X'_0 \times B \to B$ and 
$X' \to B$ are fibrations, the latter by 
\cite[Proposition~1.3]{Clapp}; the inclusions of $P\times B$
to $X'_0 \times B$ and $X'$ are closed fibrewise cofibrations;
and $f$ induces a map $f'\colon X'_0 \times B \to X'$ which 
makes the diagram
\begin{equation}
\label{diag:localtriviso}
\vcenter{\xymatrix@!C@C-2em{
	& P\times B
	\ar[dl]
	\ar[dr]
	\\
	X'_0\times B
	\ar[rr]^{f'}
	\ar[dr]_{\pr}
	&&
	X'
	\ar[dl]
	\\
	&
	B
}}
\end{equation}
commutative and which restricts to a homotopy 
equivalence
\[
	f'| \colon X'_0 \xto{\ \homot\ } X'_b
\]
for each $b\in B$. 
As $B$ is locally contractible and paracompact, 
Lemma~\ref{lem:equivalence-over-and-under}
implies that the map $f'$ is a homotopy equivalence 
over $B$ and under $P\times B$. The claim now follows easily.
\end{proof}

\begin{lemma}
\label{lm:localtrivofhgraphs}
Suppose $(X,P,u)$ is a family of h-graphs with basepoints over 
a good base space $B$. Then each point $b\in B$
has an open neighborhood $U$ 
such that the restriction of $(X,P,u)$
to $U$ admits a trivialization.
\end{lemma}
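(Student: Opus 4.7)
The plan is to prove the lemma by explicit construction. Given $b \in B$, I will choose $U$ to be a contractible open neighborhood of $b$, which exists because $B$ is a good base space and hence locally contractible. I will set $X_0 = X_b$ and $u_0 = u|_{\{b\} \times P}\colon P \to X_0$, so that $(X_0, P, u_0)$ is an h-graph with basepoints. The task then becomes constructing a map $f\colon X_0 \times U \to X|_U$ covering $\id_U$ with $f \circ (u_0 \times 1) = u|_{P \times U}$, which restricts to a homotopy equivalence on each fibre.

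The construction of $f$ will proceed by a relative homotopy lifting argument. I will choose a contracting homotopy $h\colon U \times I \to U$ with $h_0$ the constant map at $b$ and $h_1 = \id_U$, and define $F_0\colon X_0 \times U \to X$ by $F_0(x, u') = x$ via the inclusion $X_b \hookrightarrow X$; this map covers the constant map at $b$. The homotopy $H\colon X_0 \times U \times I \to B$ defined by $H(x, u', t) = h(u', t)$ interpolates between $\pi \circ F_0$ and the projection $X_0 \times U \to U \hookrightarrow B$. On the subspace $P \times U$, I prescribe a partial lift of $H$ by $\tilde{H}(p, u', t) = u(p, h(u', t))$, which agrees with $F_0 \circ (u_0 \times 1)$ at $t = 0$. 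Noting that $u_0\colon P \to X_0$ is a closed cofibration (as the restriction to the fibre over $b$ of the closed fibrewise cofibration $u\colon P \times B \to X$), the inclusion $P \times U \hookrightarrow X_0 \times U$ is also a closed cofibration. The Relative Homotopy Lifting Property for the Hurewicz fibration $X \to B$ against this closed cofibration then yields an extension of the partial lift to a full lift $\tilde{H}\colon X_0 \times U \times I \to X$ of $H$ starting from $F_0$. I define $f = \tilde{H}|_{t=1}$, which by construction covers $\id_U$ and satisfies $f \circ (u_0 \times 1) = u|_{P \times U}$, hence fits into the commutative diagram \eqref{diag:trivialization}.

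The remaining point is to verify that $f$ restricts to a homotopy equivalence $f|\colon X_0 \to X_{b'}$ for each $b' \in U$. For this, I will observe that $\tilde{H}|_{X_0 \times \{b'\} \times I}$ provides a homotopy in $X$ from the inclusion $X_b \hookrightarrow X$ to $f|_{b'}\colon X_b \to X_{b'}$, covering the path $t \mapsto h(b', t)$ in $U$ from $b$ to $b'$. By the standard fibre-translation argument for Hurewicz fibrations, the map $f|_{b'}$ obtained in this way is a homotopy equivalence between fibres.

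The only step that requires more than routine verification is ensuring that the Relative Homotopy Lifting Property is applicable; this depends on the fact that $u_0\colon P \to X_b$ is a closed cofibration, which I will justify by restricting the closed fibrewise cofibration $u\colon P \times B \to X$ to the fibre over $b$ (using a property of fibrewise cofibrations recalled in Appendix~\ref{appendix:fibred-spaces}). Beyond this, the argument is a direct application of standard techniques for Hurewicz fibrations, and I do not anticipate any significant obstacles.
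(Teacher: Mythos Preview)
Your proposal has a genuine gap at the crucial step. You assume that $u\colon P\times B\to X$ is a closed fibrewise cofibration, and hence that $u_0\colon P\to X_b$ is a closed cofibration, in order to apply the Relative Homotopy Lifting Property. But this is not part of the definition: Definition~\ref{def:families-with-basepoints-and-maps-of-such} only asks for a continuous map $u$ over $B$ that hits every component of every fibre, and Definition~\ref{def:h-graphs-with-basepoints-and-maps-of-such} explicitly allows $u$ to be non-injective. There is no statement in Appendix~\ref{appendix:fibred-spaces} asserting that basepoint maps are cofibrations. So your RHLP step, as written, does not apply.

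The paper's proof confronts exactly this issue and handles it differently. Rather than lifting a contraction through the fibration with a basepoint constraint, the paper first trivializes $X|U$ as a plain fibration (obtaining a fibrewise homotopy equivalence $f\colon F\times U\to X|U$ for some CW h-graph $F$), then pulls $u|$ back through $f$ to get $v\colon P\times U\to F\times U$ with $fv$ fibrewise homotopic to $u|$. After shrinking $U$ and homotoping $v$ to a map of the form $v_0\times 1$ (using local contractibility of $F$), the homotopy from $f(v_0\times 1)$ to $u|$ is absorbed by passing to the mapping cylinder $F'$ of $v_0$: the pair $(F',P)$ together with the induced map $f'\colon F'\times U\to X|U$ gives the trivialization. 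The mapping cylinder is precisely what makes the basepoint map $P\hookrightarrow F'$ a cofibration without assuming anything about $u$.

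Your approach can be repaired along similar lines: take $X_0$ to be the mapping cylinder of $u_0\colon P\to X_b$ rather than $X_b$ itself, precompose $F_0$ with the collapse map $X_0\to X_b$, and then run your RHLP argument against the genuine cofibration $P\hookrightarrow X_0$. With this fix your argument goes through and is arguably more direct than the paper's (it builds the basepoint compatibility into the lift from the start rather than correcting for it afterward), but as written the proof is incomplete.
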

\begin{proof}
Let $b\in B$. Since the space $B$ is locally contractible,
we can find an open neighborhood $U$ of $b$ such that 
the restriction $X|U$ of $X$ to $U$ admits a 
fibrewise homotopy equivalence over $U$
\[
	f\colon F\times U \longto X|U
\]
for some h-graph $F$, which we may assume to be a CW complex.
Using a fibrewise homotopy inverse of $f$, we can lift the map
\[
	u|\colon P\times U \longto X|U
\]
to a map
\[
	v\colon P\times U \longto F\times U
\]
such that $fv$ is fibrewise homotopic to $u$. Since $F$ is 
locally contractible, by shrinking $U$ and modifying $v$ by 
a fibrewise homotopy, we may assume that $v$ is of the form
$v = v_0 \times 1$ for some set of basepoints $v_0 \colon P\to F$
for $F$. Let 
\[ 
	H \colon P\times U \longto X|U
\] 
be a fibrewise homotopy connecting $f\circ(v_0\times 1)$ and $u$,
and let $F'$ denote the mapping cylinder of $v_0$. Then $f$ 
and $H$ together define a map $f'$ fitting into a commutative
diagram
\[\xymatrix@!C@C-2em{
	& P\times U
	\ar[dl]_{v'_0\times 1}
	\ar[dr]^{u|}
	\\
	F'\times U
	\ar[rr]^{f'}
	\ar[dr]_{\pr}
	&&
	X|U
	\ar[dl]
	\\
	&
	U
}\]
where $v'_0 \colon P \to F'$ is the inclusion of $P$ as a subset
of the mapping cylinder $F'$. Notice that $f'$ is a 
homotopy equivalence when restricted to fibres. 
Thus $(F',P,v'_0)$ and $f'$ give the desired trivialization.
\end{proof}

Lemmas \ref{lm:localtrivofhgraphs} and \ref{lm:localtriviso}
imply that $\Pi_1(X,P)$ lies in $\calG$
whenever $(X,P)$ is an object of $\calHbp$, giving us the desired
map of symmetric monoidal fibrations
\[\xymatrix{
	\calHbp
	\ar[rr]^{\Pi_1}
	\ar[dr]
	&&
	\calG
	\ar[dl]
	\\
	&
	\calU
}\]

\subsection{Comparing $X$ and $B\Pi_1(X,P)$}
\label{subsec:X-to-B-Pi-One-X}

Given a category $\sC$ internal to spaces over a base space $B$, 
the classifying space $B\sC$ of $\sC$
(considered as a category internal to spaces) is naturally a space 
over $B$, with the fibre over $b\in B$ given by the classifying space 
$B(\sC_b)$ of the fibre of $\sC$ over $b$.
One can verify that the classifying space
construction gives a morphism of symmetric monoidal fibrations 
with base $\calU$
\[\xymatrix{
	(\calC,\usqcup)
	\ar[rr]^B
	\ar[dr]
	&&
	(\calS,\usqcup)
	\ar[dl]
	\\
	&
	\calU
}\]
and this morphism restricts to give the functor 
$B \colon \calG \to \calS$ 
appearing in diagram~\eqref{diag:left-part}.
Our next goal is to complete the construction of 
diagram~\eqref{diag:left-part} by giving a proof of the 
following proposition.

\begin{proposition}
\label{prop:pi1cmp}
There is a zig-zag of symmetric monoidal 
transformations of fibrations over $\calU$
\[
	U \xot{\ \homot\ } WQ \xto{\ \homot\ } B\Pi_1
\]
between the composite functor 
$B\circ\Pi_1\colon \calHbp \to \calS$ 
and the forgetful functor $U\colon\calHbp\to\calS$.
The zig-zag consists of fibrewise homotopy equivalences.
\end{proposition}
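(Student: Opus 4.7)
My plan is to construct $WQ(X,P,u)$ as an intermediate fibrewise category internal to spaces which carries natural comparison transformations to both the underlying family $X$ and to the classifying space $B\Pi_1(X,P)$, and then to prove that both comparison maps are fibrewise homotopy equivalences. The underlying idea is the classical observation that, since h-graphs are aspherical, the canonical map from the classifying space of the fundamental groupoid on a sufficient set of basepoints to the h-graph itself is a weak equivalence; here we need a parameterized and symmetric-monoidal-natural refinement of that fact.

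Concretely, I would take $Q(X,P,u)$ to be the category internal to spaces over $B$ obtained as the ``Grothendieck construction'' of the tautological fibrewise functor from $\Pi_1(X,P)$ to spaces over $B$ that assigns to a basepoint $p \in P$ the fibrewise Moore path space in the mapping cylinder $X'$ of $u\colon P\times B\to X$ consisting of paths starting at $p$. The objects of $Q(X,P,u)$ form a space over $B$ fibred over $P\times B$ whose fibre at $p$ is the aforementioned path space (which is fibrewise contractible over $B$), and the morphism space encodes concatenation with a morphism of $\Pi_1(X,P)$. Taking $W$ to be the classifying space construction (i.e.\ the geometric realization of the fibrewise nerve), I obtain a space $WQ(X,P,u)$ over $B$ equipped with two natural structure maps: the evaluation-at-the-endpoint map $WQ(X,P,u)\to X$, obtained from the fact that the space of objects of $Q$ maps to $X$ and this extends across morphisms up to coherent homotopy; and the map $WQ(X,P,u) \to B\Pi_1(X,P)$ obtained by collapsing each fibrewise Moore path to its homotopy class, so that $Q(X,P,u)$ receives a functor to $\Pi_1(X,P)$ inducing the desired map on classifying spaces. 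Both constructions are manifestly natural in maps of $\calHbp$, compatible with base change because base change preserves limits, colimits and mapping cylinders by \cite[Remark~2.1.9]{MaySigurdsson}, and strictly compatible with external disjoint union since $Q(X_1\usqcup X_2, P_1\sqcup P_2) \cong Q(X_1,P_1)\usqcup Q(X_2,P_2)$; hence the two maps assemble into symmetric monoidal transformations of fibrations over $\calU$.

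The main step is to verify that both comparison maps are fibrewise homotopy equivalences. First I would reduce the problem to the case of a single h-graph with basepoints over a point: by Lemma~\ref{lm:localtrivofhgraphs} every object of $\calHbp$ is locally trivial, the construction $WQ$ commutes with trivializations, and both comparison maps are fibrewise, so by Lemma~\ref{lem:equivalence-over-and-under} (as used in the proof of Lemma~\ref{lm:localtriviso}, exploiting that good base spaces are locally contractible and paracompact) it suffices to prove that for a single h-graph with basepoints $(X_0,P,u_0)$ the two maps $WQ(X_0,P,u_0)\to X_0$ and $WQ(X_0,P,u_0)\to B\Pi_1(X_0,P)$ are ordinary homotopy equivalences, and that the comparison maps are in fact fibrations or closed cofibrations over and under $P\times B$ so that the lemma applies. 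For the map to $X_0$, a Quillen Theorem A / fibrewise contractibility argument reduces the claim to the fact that for each point $x\in X_0$ the homotopy fibre is a homotopy colimit of contractible spaces indexed by a contractible simplicial set of paths ending at $x$. For the map to $B\Pi_1(X_0,P)$, the fibres over each vertex are the fibrewise contractible path spaces used in the Grothendieck construction, and a standard cofinality argument identifies the realization with the nerve of $\Pi_1(X_0,P)$.

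The hard part will be the book-keeping needed to ensure that $WQ$, the two transformations, and the verification of the fibrewise homotopy equivalence property interact cleanly with the symmetric monoidal structure, the fibration structure over $\calU$, and in particular with cartesian morphisms; this is where the good base space hypothesis is used, via local contractibility and paracompactness, to upgrade pointwise weak equivalences to fibrewise homotopy equivalences. Once this is done, verifying that the constructed transformations are symmetric monoidal amounts to unwinding the definitions and using that all the relevant constructions preserve disjoint unions strictly.
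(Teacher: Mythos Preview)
Your overall strategy---build an intermediate object mapping naturally to both $X$ and $B\Pi_1(X,P)$ and show both maps are fibrewise equivalences---matches the paper's, but your specific construction has a genuine gap. You define $Q(X,P,u)$ as the Grothendieck construction of ``the tautological fibrewise functor from $\Pi_1(X,P)$ to spaces over $B$'' sending $p$ to the Moore path space at $p$. But morphisms in $\Pi_1(X,P)$ are \emph{homotopy classes} of paths, and a homotopy class does not act strictly on the space of actual Moore paths: concatenation with one representative versus another gives different (merely homotopic) self-maps, so no strict functor exists and the Grothendieck construction is undefined as written. You could repair this by replacing $\Pi_1(X,P)$ with the Moore path groupoid on $P$ (where concatenation is strictly associative), but then the map to $B\Pi_1(X,P)$ is no longer induced by the identity and needs a separate argument, and the local-triviality reductions become more delicate because Moore path spaces are not locally constant the way $\Pi_1$ is. Relatedly, your description of the map to $X$ as extending ``up to coherent homotopy'' across morphisms is exactly the kind of coherence that the Grothendieck construction does not provide automatically.

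The paper avoids these coherence issues by a different and more symmetric device. Its $Q$ is merely the mapping-cylinder endofunctor $(X,P,u)\mapsto(X',P,u')$ on $\calHbp$, and its $W(X,P)$ is not a classifying space but the quotient of the space of tuples $(x,[\gamma],p,[\delta],y)$, where $x\in X_b$, $y\in B\Pi_1(X,P)_b$, $p\in P$, and $[\gamma],[\delta]$ are \emph{homotopy classes} of paths (from $x$ to $u(p)$ in $X_b$, and from $y$ to $v(p)$ in $B\Pi_1(X,P)_b$), by the evident action of $\Pi_1(X,P)_b$ on the triple $([\gamma],p,[\delta])$. Because all path data is already at the level of homotopy classes, the $\Pi_1$-action is strict and well-defined. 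The projections $\alpha,\beta$ pick out $x$ and $y$; after reducing to $B=\pt$ and a $1$-dimensional CW model via the Dold-fibration and local-triviality arguments you sketch, the fibre of $\alpha$ over a point is identified with the universal cover of a component of $B\Pi_1(X,P)$, hence contractible since that component is a $K(\pi,1)$. This replaces your Quillen Theorem~A step with a direct fibre computation.
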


The proof of Proposition~\ref{prop:pi1cmp} occupies the 
rest of this subsection. 
Let $(X,P,u)$ be a family of h-graphs with basepoints 
over a good base space $B$.
Then $B\Pi_1(X,P)$ is another family of h-graphs over $B$,
with a canonical choice of basepoints 
\[
	v\colon P\times B \longto B\Pi_1(X,P).
\]
(For the fibres of $B\Pi_1(X,P)$ are certainly h-graphs,
and the map $B\Pi_1(X,P) \to B$ is a fibration by the
local triviality of $\Pi_1(X,P)$ and the paracompactness of $B$.)
Moreover, there is a canonical map
of fibrewise fundamental groupoids
\[
	\iota \colon \Pi_1(X,P)\longto\Pi_1(B\Pi_1(X,P),P)
\]
that is the identity on objects,
and that sends a morphism in $\Pi_1(X,P)$ to the homotopy class
of paths arising from the corresponding $1$-simplex in $B\Pi_1(X,P)$.

\begin{lemma}
\label{lem:iota}
The functor $\iota$ is an isomorphism.
\end{lemma}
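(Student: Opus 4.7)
The plan is to reduce the assertion to the classical identification of a groupoid with the fundamental groupoid of its classifying space, and to transport this unparameterized statement across $B$ using the local triviality results from the previous subsection.

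First I would observe that $\iota$ is by construction the identity map on the common object space $P \times B$, so the question reduces to whether $\iota$ induces a homeomorphism on morphism spaces over $B$. Being a homeomorphism is local on $B$, so by Lemma~\ref{lm:localtrivofhgraphs} I would fix a point $b \in B$, pass to a neighborhood $U$ over which $(X,P,u)$ admits a trivialization $f \colon X_0 \times U \to X|U$ in the sense of Definition~\ref{def:trivialization}, and use Lemma~\ref{lm:localtriviso} to obtain an isomorphism
\[
	\Pi_1(X_0,P) \times U \xto{\,\isom\,} \Pi_1(X,P)|U
\]
in $\calC_U$. Applying the classifying space functor $B$ yields a compatible trivialization of $B\Pi_1(X,P)|U$ as a family of h-graphs with basepoints, so Lemma~\ref{lm:localtriviso} applies a second time to give
\[
	\Pi_1(B\Pi_1(X_0,P),P) \times U \xto{\,\isom\,} \Pi_1(B\Pi_1(X,P),P)|U.
\]
Under these two identifications, the restriction $\iota|U$ is readily checked to coincide with $\iota_0 \times 1_U$, where $\iota_0 \colon \Pi_1(X_0,P) \to \Pi_1(B\Pi_1(X_0,P),P)$ is the unparameterized comparison map applied to the h-graph with basepoints $(X_0,P)$.

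The next step is to invoke the classical fact that for any small groupoid $\sG$, the canonical map $\sG \to \Pi_1(B\sG)$ sending a morphism to the homotopy class of its associated $1$-simplex is an equivalence of groupoids; this is standard and can be verified from the $2$-skeleton of $B\sG$, or by recognizing $B\sG$ as a $K(\sG,1)$ in the groupoid sense. Applied to the finite free groupoid $\sP = \Pi_1(X_0,P)$, whose objects are exactly the vertices of $B\sP$ labelled by $P$, this equivalence restricts on the full subgroupoids spanned by $P$ to an isomorphism; that is, $\iota_0$ is an isomorphism.

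Consequently each $\iota|U$ is an isomorphism in $\calC_U$, and the locally defined inverses glue to a two-sided inverse of $\iota$ in $\calC_B$, establishing the lemma. The only potentially delicate point is the gluing at the end, but this is automatic: the morphism spaces of $\Pi_1(X,P)$ and $\Pi_1(B\Pi_1(X,P),P)$ are both locally of the form (discrete set) $\times\, U$ by the trivializations above, so the local fibrewise isomorphisms are honest homeomorphisms over $U$ and the uniqueness of inverses makes them patch on overlaps without further work.
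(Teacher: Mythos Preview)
Your proof is correct and follows essentially the same approach as the paper: both reduce via local triviality (you cite Lemmas~\ref{lm:localtrivofhgraphs} and~\ref{lm:localtriviso} explicitly, the paper invokes them implicitly) to the unparameterized case, and both settle that case by the classical identification of a groupoid with the fundamental groupoid of its nerve. The only cosmetic differences are that the paper phrases the reduction as ``it suffices to check in each fibre'' rather than explicitly trivializing and gluing inverses, and it proves the unparameterized statement by hand (reducing to a single automorphism group and using that the nerve of a groupoid is Kan) rather than citing it as a known fact.
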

\begin{proof}
By the local triviality of fibrewise fundamental groupoids
over good base spaces, 
it will suffice to check
that $\iota$ restricts to give an isomorphism in each fibre.
Equivalently it suffices to check the claim when $B$ is a single point.
In this case $X$ is a single h-graph, which we can assume to be connected.
Then $\Pi_1(X,P)$ and $\Pi_1(B\Pi_1(X,P),P)$
are connected groupoids with objects identified by $\iota$,
so it suffices to check the claim after restricting $\iota$ to the
automorphism group of a single object $p_0\in P$.
But this restriction may be identified with the natural map
\[
	\pi_1(N\Pi_1(X,P),p_0)\longto \pi_1(|N\Pi_1(X,P)|,p_0),
\]
which is an isomorphism since $N\Pi_1(X,P)$ is Kan.
\end{proof}

\begin{definition}
\label{def:wxp}
Let $W(X,P)$ denote following space over $B$.
First form the space of tuples
\[(x,\gamma,p,\delta,y)\]
in which $x$, $p$ and $y$ are points of $X$, $P$ and $B\Pi_1(X,P)$
respectively, and $x$ and $y$ belong to fibres over 
the same point $b\in B$;
$\gamma$ is a homotopy class of paths in $X_b$ from $x$ to $u(p,b)$;
and $\delta$ is a homotopy class of paths in $B\Pi_1(X,P)_b$
from $y$ to $v(p,b)$.
Topologize this space as a quotient of a subspace of
\[
	\Map_B(I\times B,X)
		\times_X (P\times B) \times_{B\Pi_1(X,P)}
	\Map_B(I\times B,B\Pi_1(X,P)).
\]
Now we form a further quotient of this space by identifying
a point $(x,\gamma,p,\delta,y)$ over $b\in B$ with
\[(x,\lambda\cdot\gamma,q,\iota(\lambda)\cdot\delta,y)\]
whenever we have a morphism $\lambda\colon p\to q$ in $\Pi_1(X,P)_b$.
The resulting space is $W(X,P)$.
\end{definition}

A map of families of h-graphs with basepoints
$(X,P)\to (Y,Q)$ induces in an evident way a map $W(X,P)\to W(Y,Q)$,
giving us a functor
\[
	W \colon \calHbp \longto \calS
\]
which one can verify to be a map of symmetric monoidal fibrations 
with base category $\calU$. The fact that $W$ preserves cartesian 
morphisms again follows from the fact that base change in $\calS$ 
preserves limits and colimits. We make the following observation,
which in particular implies that $W$ turns homotopy equivalences
over $B$ and under $P\times B$ 
(see Definition~\ref{def:equivalence-over-and-under}) 
into fibrewise homotopy equivalences.
\begin{lemma}
Suppose $(X,P)$ and $(Y,P)$ are families of h-graphs over 
a good base space $B$, and suppose 
\[
	(f_0,1,1),(f_1,1,1)\colon (X,P) \to (Y,P)
\]
are maps of families of h-graphs with basepoints such that
$f_0$ and $f_1$ are homotopic through maps over $B$ and under $P\times B$.
Then the maps $W(f_0,1,1)$ and $W(f_1,1,1)$ 
are homotopic through maps over $B$. \qed
\end{lemma}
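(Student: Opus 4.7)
The plan is to construct an explicit fibrewise homotopy
\[
 \widetilde H\colon W(X,P)\times I\longto W(Y,P)
\]
connecting $W(f_0,1,1)$ and $W(f_1,1,1)$ by letting the given homotopy $H\colon X\times I\to Y$ act only on the first two coordinates of a representative tuple $(x,\gamma,p,\delta,y)$, while leaving the last three coordinates fixed via a single choice of $f_0$ (equivalently $f_1$). The crucial algebraic input is that a basepoint-fixing homotopy induces \emph{equal}, not merely isomorphic, maps of fibrewise fundamental groupoids.

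First I would record the following rigidity observation: for every $t\in I$, one has $\Pi_1(H_t,1)=\Pi_1(f_0,1)$ as morphisms in $\calG$. Indeed, reparameterizing the restriction of $H$ to $X\times[0,t]$ gives a homotopy from $f_0$ to $H_t$ that is itself over $B$ and under $P\times B$. Hence for any $b\in B$ and any path $\lambda\colon I\to X_b$ with endpoints in $u(P,b)$, the composites $f_0\circ\lambda$ and $H_t\circ\lambda$ are homotopic in $Y_b$ rel endpoints, so they represent the same morphism in $\Pi_1(Y,P)_b$. Consequently $B\Pi_1(H_t,1)=B\Pi_1(f_0,1)$ for every $t$ as well.

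Next I would define $\widetilde H$ on representatives by
\[
 \widetilde H\bigl([x,\gamma,p,\delta,y],t\bigr)
 =\bigl[H_t(x),\;H_t\circ\gamma,\;p,\;B\Pi_1(f_0)\circ\delta,\;B\Pi_1(f_0)(y)\bigr]
\]
and verify that it descends to the quotient defining $W(Y,P)$. Under the equivalence $(x,\gamma,p,\delta,y)\sim(x,\lambda\cdot\gamma,q,\iota(\lambda)\cdot\delta,y)$ associated to a morphism $\lambda\colon p\to q$ of $\Pi_1(X,P)_b$, the two images of $\widetilde H(\cdot,t)$ are identified by the equivalence in $W(Y,P)$ associated to the morphism $\Pi_1(f_0)(\lambda)\colon p\to q$ of $\Pi_1(Y,P)_b$; this reduces to the two identities
\[
 H_t\circ(\lambda\cdot\gamma)=\Pi_1(f_0)(\lambda)\cdot(H_t\circ\gamma)
 \quad\text{and}\quad
 B\Pi_1(f_0)(\iota(\lambda)\cdot\delta)=\iota(\Pi_1(f_0)(\lambda))\cdot B\Pi_1(f_0)(\delta),
\]
the first being immediate from the rigidity observation together with the compatibility of postcomposition with concatenation, and the second being naturality of $\iota$. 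Continuity of $\widetilde H$ follows from that of $H$ through the quotient topology on $W$, by the same pattern that underlies the verification that $W$ is a functor. The boundary values $\widetilde H(\cdot,0)=W(f_0,1,1)$ and $\widetilde H(\cdot,1)=W(f_1,1,1)$ are then clear, the latter using the equality $B\Pi_1(f_0)=B\Pi_1(f_1)$.

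The only step with substantive content is the rigidity observation of the second paragraph. Once that equality is in hand, the definition of $\widetilde H$ is essentially forced, and the remaining well-definedness and continuity checks are a direct unwinding of the definition of $W(X,P)$. The under-$(P\times B)$ hypothesis on $H$ is precisely what converts what would otherwise be merely a natural isomorphism between the induced maps on $\Pi_1$ into a strict equality, so that holding the last three coordinates rigidly fixed in the formula for $\widetilde H$ is compatible with the equivalence relation.
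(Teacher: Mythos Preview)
The paper does not give a proof of this lemma; it is stated with a terminal \qed and left to the reader. Your argument supplies the omitted details correctly and is the natural one: the key point is that a homotopy under $P\times B$ induces \emph{equal} rather than merely naturally isomorphic morphisms $\Pi_1(H_t,1)$, so that the last three coordinates of a representative tuple can be pushed forward by the fixed map $B\Pi_1(f_0)$ independently of $t$, while the first two vary with $H_t$. The well-definedness check via $\mu=\Pi_1(f_0)(\lambda)$ and naturality of $\iota$ is exactly right.
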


There are natural maps $\alpha\colon W(X,P)\to X$
and $\beta\colon W(X,P)\to B\Pi_1(X,P)$
sending an element $[x,\gamma,p,\delta,y]$
to $x$ and $y$, respectively, giving us a zig-zag
\[
	U \xot{\ \alpha\ } W \xto{\ \beta\ } B\Pi_1
\]
of symmetric monoidal transformations of fibrations over $\calU$. 
This zig-zag, however, is not quite the one of 
Proposition~\ref{prop:pi1cmp}. Let 
\[
	Q \colon \calHbp \longto \calHbp
\]
be the map of symmetric monoidal fibrations sending 
a family of h-graphs with basepoints $(X,P,u)$ over $B$
to the family $(X',P,u')$ over $B$, where $X'$ is the mapping cylinder
of $u$ and $u'$ is the inclusion of $P\times B$ into $X'$;
the fact that the map $X' \to B$ is a fibration, as required
from a family of h-graphs, follows from \cite[Proposition~1.3]{Clapp}.
Let 
\[
	\eta \colon Q \longto \Id
\]
be the symmetric monoidal transformation of fibrations 
given by the collapse maps $X' \to X$. We will prove 
Proposition~\ref{prop:pi1cmp} by showing that all arrows
in the diagram
\[
	U 
	\xot{\ U\eta\ } 
	UQ 
	\xot{\ \alpha Q\ }
	WQ 
	\xto{\ \beta Q\ }
	B\Pi_1 Q 
	\xto{\ B\Pi_1\eta\ } 
	B\Pi_1
\]
consist of fibrewise homotopy equivalences.

It is evident that $U\eta$ and $B\Pi_1 \eta$ 
consist of fibrewise homotopy equivalences; indeed, $\Pi_1\eta$
is even a natural isomorphism. It therefore remains to show that
the maps $\alpha$ and $\beta$ in 
\begin{equation}
\label{diag:wzigzag'}
	X' \xot{\ \alpha\ } W(X',P) \xto{\ \beta\ } B\Pi_1(X',P)
\end{equation}
are fibrewise homotopy equivalences when $(X',P) = Q(X,P)$
for some family of h-graphs with basepoints $(X,P)$ over 
a good base space $B$. By \cite[Theorem~6.3]{Dold},
a map of Dold fibrations
\[\xymatrix{
	E 
	\ar[rr]^f
	\ar[dr]
	&&
	E'
	\ar[dl]
	\\
	&
	B		
}\]
over a good base space $B$ 
is a fibrewise homotopy equivalence exactly when it
restricts to an ordinary homotopy equivalence $f|\colon E_b \to E'_b$ 
on fibres for all $b\in B$. Thus the following lemma
implies that it is enough to consider the case where the 
base space $B$ is a point.

\begin{lemma}
\label{lm:doldfib}
Suppose $(X,P)$ is a family of h-graphs with basepoints 
over a good base space $B$. Then the map $WQ(X,P) \to B$
is a Dold fibration.
\end{lemma}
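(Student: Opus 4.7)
The strategy is to establish the Dold fibration property locally on $B$ and then patch using the paracompactness of good base spaces. We rely on two standard facts about Dold fibrations: first, being fibrewise homotopy equivalent over $U$ to a Hurewicz fibration forces a map to be a Dold fibration; and second, the local-to-global principle, namely that a map admitting a numerable open cover over which it restricts to a Dold fibration is itself a Dold fibration. Since good base spaces are paracompact, any open cover of $B$ admits a numerable refinement, so it suffices to exhibit an open cover $\{U_\alpha\}$ of $B$ over which $WQ(X,P)|U_\alpha \to U_\alpha$ is fibrewise homotopy equivalent to a Hurewicz fibration.

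First, I apply Lemma~\ref{lm:localtrivofhgraphs} to obtain an open cover $\{U_\alpha\}$ of $B$ such that each restriction $(X,P)|U_\alpha$ admits a trivialization $f_\alpha \colon X_0^\alpha \times U_\alpha \longto X|U_\alpha$ by some h-graph with basepoints $(X_0^\alpha,P,u_0^\alpha)$. Applying the mapping cylinder construction $Q$ fibrewise, the induced map between mapping cylinders lies over $U_\alpha$ and under $P\times U_\alpha$, and restricts to a homotopy equivalence on each fibre. Repeating the argument of Lemma~\ref{lm:localtriviso}, the relevant total spaces are Hurewicz fibrations over $U_\alpha$ by \cite[Proposition~1.3]{Clapp}, with $P\times U_\alpha$ included as a closed fibrewise cofibration; so Lemma~\ref{lem:equivalence-over-and-under} promotes this to a homotopy equivalence over $U_\alpha$ and under $P\times U_\alpha$ between $Q(X_0^\alpha,P)\times U_\alpha$ and $Q(X,P)|U_\alpha$.

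Next, the lemma immediately preceding Lemma~\ref{lm:doldfib} implies that $W$ sends homotopy equivalences over $B$ and under $P\times B$ to fibrewise homotopy equivalences over $B$. Applying this—together with the fact that $W$ and $Q$ commute with pullback along the open inclusion $U_\alpha \hookrightarrow B$, a formal consequence of both being defined fibrewise from constructions (mapping cylinders, spaces of paths between basepoints, classifying spaces of internal categories) stable under base change—one obtains a fibrewise homotopy equivalence
\[
    WQ(X_0^\alpha,P)\times U_\alpha \;\xrightarrow{\ \homot\ }\; WQ(X,P)|U_\alpha
\]
of spaces over $U_\alpha$. Since the projection on the left is a product bundle and therefore a Hurewicz fibration, $WQ(X,P)|U_\alpha \to U_\alpha$ is a Dold fibration, and the local-to-global principle finishes the proof.

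The delicate step, and the most likely source of friction, is the second one: extracting from the fibrewise homotopy equivalence supplied by the trivialization a genuine homotopy equivalence \emph{over} $U_\alpha$ and \emph{under} $P\times U_\alpha$ after applying $Q$. This structured form of the equivalence is precisely what is needed in order for the preceding lemma about $W$ to apply; fortunately the argument of Lemma~\ref{lm:localtriviso} provides exactly the template to follow, once one verifies that the mapping cylinder of the induced map of basepoint inclusions inherits the required fibration and cofibration properties.
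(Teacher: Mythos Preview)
Your proof is correct and follows essentially the same approach as the paper's: localize using Lemma~\ref{lm:localtrivofhgraphs}, promote the trivialization through $Q$ to a homotopy equivalence over $U_\alpha$ and under $P\times U_\alpha$ via the argument of Lemma~\ref{lm:localtriviso}, apply $W$ to obtain a fibrewise homotopy equivalence with a product, and then invoke Dold's results (\cite[Corollary~5.3]{Dold} for the local conclusion and \cite[Theorem~5.12]{Dold} for the patching). The paper organizes this by first treating the globally trivializable case and then localizing, but the substance is identical.
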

\begin{proof}
Consider first the special case where $(X,P)$ admits a trivialization
as in Definition~\ref{def:trivialization}. As in the proof of 
Lemma~\ref{lm:localtriviso}, applying $Q$ to the
diagram~\eqref{diag:trivialization} gives us a diagram as in
\eqref{diag:localtriviso}, with the map $f'$ a homotopy equivalence
over $B$ and under $P\times B$. Applying $W$ to this diagram
then gives us a fibrewise homotopy equivalence from 
a trivial space over $B$ to $WQ(X,P)$, and \cite[Corollary 5.3]{Dold} 
implies that $WQ(X,P) \to B$ is a Dold fibration
in this special case. In view of Lemma~\ref{lm:localtrivofhgraphs} and 
the paracompactness of $B$, the claim now follows in the general case 
from \cite[Theorem 5.12]{Dold}.
\end{proof}

Suppose now $(X,P,u)$ is a family of h-graphs with basepoints over a point.
We can then find a homotopy equivalence $f$ from $X$ to a finite
CW complex $Y$ of dimension $\leq 1$. We equip $Y$ with the basepoints
$fu \colon P \to Y$. Let $(X',P) = Q(X,P)$ and $(Y',P) = Q(Y,P)$.
Then $f$ induces a homotopy equivalence $f'\colon X'\to Y'$
under $P$ (and over the point),
and we have the diagram
\[\xymatrix{
	X'
	\ar[d]_\homot
	&
	\ar[l]_-\alpha
	W(X',P)
	\ar[d]^\homot
	\ar[r]^-\beta
	&
	B\Pi_1(X',P)
	\ar[d]^\homot
	\\
	Y'
	&
	\ar[l]_-\alpha
	W(Y',P)
	\ar[r]^-\beta
	&
	B\Pi_1(Y',P)
}\]
where the vertical maps are homotopy equivalences induced by $f'$.
Thus $\alpha$ and $\beta$ in the top row are homotopy equivalences
precisely when $\alpha$ and $\beta$ in the bottom row are.
Observe that $Y'$ can again be given the structure of 
a CW complex of dimension $\leq 1$.
Thus we are left with verifying the following lemma.

\begin{lemma}
\label{lm:alphabetaheq}
The maps $\alpha$ and $\beta$ in the zig-zag \eqref{diag:wzigzag'}
are homotopy equivalences when $B$ is a point and $X'$
is a CW complex of dimension $\leq 1$.
\end{lemma}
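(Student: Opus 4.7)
The plan is to verify that both $\alpha$ and $\beta$ are Hurewicz fibrations with contractible fibres; since the bases $X'$ and $B\Pi_1(X',P)$ are CW complexes, a standard application of Dold's theorem (\cite[Theorem~6.3]{Dold}; compare Lemma~\ref{lm:doldfib}) then implies that both maps are homotopy equivalences.

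\emph{Fibration property.} I would realise $W(X',P)$ as the quotient of the subspace $T \subset \Map(I,X') \times P \times \Map(I,B\Pi_1(X',P))$ consisting of triples $(\gamma,p,\delta)$ with $\gamma(1) = u(p)$ and $\delta(1) = v(p)$, modulo path-homotopy with fixed endpoints of $\gamma$ and $\delta$ together with the $\Pi_1(X',P)$-action. The map $\alpha$ is induced by the evaluation $(\gamma,p,\delta)\mapsto \gamma(0)$, which exhibits $T$ as a Hurewicz fibration over $X'$, and path lifting descends continuously to the quotient $W(X',P)$; the analogous argument handles $\beta$.

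\emph{Fibre computation.} Fix $x_0 \in X'$ and choose $p_0\in P$ in the same component of $X'$ as $x_0$ together with a homotopy class $[\gamma_0]\colon x_0 \to u(p_0)$. By Lemma~\ref{lem:iota} the functor $\iota$ is an isomorphism, and a direct unwinding shows that the $\Pi_1(X',P)$-action on the set of pairs $(p,[\gamma])$ with $[\gamma]\colon x_0 \to u(p)$ (necessarily with $p$ in the component of $x_0$) is free and transitive: the unique $\lambda$ transporting $(p_0,[\gamma_0])$ to $(p,[\gamma])$ is $\lambda = [\gamma]\circ [\gamma_0]^{-1}$. Every class in $\alpha^{-1}(x_0)$ thus admits a unique representative with $p = p_0$ and $[\gamma] = [\gamma_0]$, and the assignment $[x_0,\gamma_0,p_0,\delta,y]\mapsto (y,[\delta])$ exhibits $\alpha^{-1}(x_0)$, with its subspace topology, as the universal cover of the component of $B\Pi_1(X',P)$ containing $v(p_0)$, based at $v(p_0)$. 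A symmetric argument identifies $\beta^{-1}(y_0)$ with the universal cover of the component of $X'$ containing $u(p_0)$ (for $p_0$ in the component of $y_0$), based at $u(p_0)$.

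\emph{Contractibility and conclusion.} Since $X'$ is a $1$-dimensional CW complex, each of its components is a graph whose universal cover is a tree, hence contractible. Since $\Pi_1(X',P)$ is a finite free groupoid, each component of $B\Pi_1(X',P)$ is a $K(F,1)$ for some free group $F$, whose universal cover is a weakly contractible CW complex, hence contractible. Both $\alpha$ and $\beta$ are therefore Hurewicz fibrations with contractible fibres over CW bases, and the cited theorem of Dold concludes. The main obstacle I foresee is the careful verification that the fibre identification in the second step is a homeomorphism rather than merely a bijection, which requires tracking the compact-open topology through the two successive quotients (path-homotopy, then the groupoid action) defining $W(X',P)$.
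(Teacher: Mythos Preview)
Your proposal is correct and follows essentially the same strategy as the paper: show that $\alpha$ and $\beta$ are fibrations with contractible fibres, identify each fibre with a universal cover, and conclude via Dold. The fibre identification and contractibility arguments match the paper's almost verbatim.

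The one place the paper handles things more cleanly is the fibration property. Rather than trying to descend the path-lifting property through the two quotients defining $W(X',P)$---which is exactly the delicate point you flag at the end, and which is not automatic for quotient maps---the paper simply observes that $\alpha$ is a \emph{fibre bundle}: since $X'$ is locally contractible, over any contractible open set the path-space data trivialises and the fibre is constant. Paracompactness of $X'$ then lets one invoke \cite[Corollary~3.2]{Dold} directly. This sidesteps the topology-through-quotients issue entirely, and you may find it a cleaner route than the one you sketched.
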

\begin{proof}
We prove the claim for $\alpha$, the proof for $\beta$ being 
identical. Observe that $\alpha$ is a fibre bundle.
This is a direct consequence of the fact that $X'$ is 
locally contractible. We will show that the fibres of $\alpha$ are
contractible. Since $X'$ is paracompact the result will then follow,
for example by \cite[Corollary 3.2]{Dold}.

The fibre of $\alpha$ over a point $x_0 \in X'$ is the space of
equivalence classes of 
tuples $(\gamma,p,\delta,y)$ where $p\in P$, $y\in B\Pi_1(X',P)$,
$\gamma$ is a homotopy class of paths from $x_0$ to $u(p)$,
and $\delta$ is a homotopy class of paths from $y$ to $v(p)$.
Such a tuple $(\gamma,p,\delta,y)$ is identified with
$(\varepsilon\cdot \gamma,q,\iota(\varepsilon)\cdot \delta,y)$
whenever we have a morphism
$\varepsilon\colon p\to q$ in $\Pi_1(X',P)$. 
By choosing a homotopy class of paths from $x_0$ to $u(p_0)$
for some $p_0\in P$, we can identify the fibre with a quotient space
of tuples described in an identical way, except that $\gamma$ is now a
morphism $\gamma\colon p_0\to p$ in $\Pi_1(X',P)$.

The resulting space is homeomorphic to the space of pairs
$(\delta,y)$ in which $y\in B\Pi_1(X',P)$ and $\delta$ is a homotopy
class of paths from $y$ to $v(p_0)$. 
This is the universal cover of the component of $B\Pi_1(X',P)$ containing
$p_0$.
It is contractible, since the $p_0$-component of $B\Pi_1(X',P)$
is a CW-complex of type $K(\pi_1(X',u(p_0)),1)$. 
This completes the proof.
\end{proof}

The proof of Proposition~\ref{prop:pi1cmp} is now complete.

\subsection{From fibrewise groupoids to fibrewise manifolds}
\label{subsec:groupoids-to-manifolds}
The last subsection completed the construction of diagram~\eqref{diag:left-part},
and by taking fibrewise opposites we obtain the left-hand square
of diagram~\eqref{diagram-one}.
In this subsection we construct the symmetric monoidal functor 
\[
	B(G^{(-)}) \colon (\calG,\usqcup) \longto (\calM,\times)
\]
appearing in the
right-hand square of~\eqref{diagram-one}, the other functors in that square 
having already been constructed.

\begin{definition}
Let $\sP$ be a fibrewise finite free groupoid over a good base space $B$.
We denote by
\[
	G^\sP = \Fun_B(\sP,G\times B)
\]
the functor category (internal to spaces over $B$) whose fibre over a point $b\in B$
is the category (internal to spaces) of functors $\sP_b\to G$,
where we consider $G$ as a one-object category internal to spaces.
Regarding $\Ob(\sP)$ as a fibrewise finite free groupoid with only identity morphisms,
we obtain a restriction functor
\[
	G^\sP \longrightarrow G^{\Ob(\sP)}
\]
and by taking classifying spaces, we obtain a map of spaces
\[
	B(G^\sP) \longrightarrow B(G^{\Ob(\sP)}),
\]
which we regard as an object of $\widehat\calS$.
(In the cases of interest we have $\Ob(\sP)=P\times B$ for some finite set $P$,
and then $B(G^{\Ob(\sP)})=BG^P\times B$.)
The assignment $\sP \mapsto \bigl(B(G^\sP) \to B(G^{\Ob(\sP)})\bigr)$
extends naturally to a symmetric monoidal functor
\[
	B(G^{(-)})\colon (\calG^\fop,\usqcup) \longrightarrow (\widehat\calS,\times).
\]
Observe that this functor is not a morphism of fibrations, as it 
does not preserve base spaces.
\end{definition}

\begin{definition}
\label{def:rank-function}
If $\sP$ is a fibrewise finite free groupoid over a base $B$,
we write $r(\sP)\colon B\to\Z$ for the locally constant function sending
an element $b\in B$ to the rank of $\sP_b$.
Precomposing $r(\sP)$ with the projection 
$B(G^{\Ob(\sP)})\to B$, we obtain a locally
constant function $B(G^{\Ob(\sP)})\to\Z$ which we again denote $r(\sP)$.
\end{definition}

\begin{proposition}\label{prop:functors-manifolds}
The functor $B(G^{(-)})\colon (\calG^\fop,\usqcup) \to (\widehat\calS,\times)$
lifts to a  symmetric monoidal functor
\[
	B(G^{(-)})\colon (\calG^\fop,\usqcup) \longrightarrow (\calM,\times).
\]
The fibrewise dimension of $B(G^\sP) \to B(G^{\Ob(\sP)})$ 
is given by $|B(G^\sP)| = \dim(G)\cdot r(\sP)$.
\end{proposition}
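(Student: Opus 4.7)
The plan is to work locally, reducing to unparameterized finite free groupoids and then
globalizing by means of the local triviality enjoyed by objects of $\calG$.

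First, I would carry out the construction in the unparameterized case. Let
$\sP_0$ be a finite free groupoid of rank $r$ with object set $P$, and choose a basis
$\Gamma = \{\gamma_i\}_{i=1}^{r}$. A functor $F\colon \sP_0 \to G$ is uniquely
determined by the tuple $(F(\gamma_i))_{i=1}^r \in G^r$, while a natural transformation
from $F$ to $F'$ is determined by a tuple $\eta = (\eta_p)_{p\in P} \in G^P$ satisfying
$F'(\gamma_i) = \eta_{t(\gamma_i)} F(\gamma_i)\eta_{s(\gamma_i)}^{-1}$. Consequently,
$G^{\sP_0}$ is canonically isomorphic to the translation Lie groupoid $G^r \sslash G^P$
for a smooth action of $G^P$ on $G^r$. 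Passing to classifying spaces yields the
associated bundle $B(G^{\sP_0}) \cong EG^P \times_{G^P} G^r \to BG^P$,
which is a fibrewise smooth fibre bundle with closed fibre $G^r$ in the sense of
\cite[p.~246]{CrabbJames}, hence a fibrewise closed manifold of fibrewise dimension
$\dim(G)\cdot r$. The smooth structure is independent of the chosen basis since a change
of basis induces a polynomial diffeomorphism of $G^r$.

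Second, I would promote this to the parameterized setting using the local triviality
of objects of $\calG$. Any $\sP$ in $\calG$ over a good base space $B$
admits, by definition, an open cover $\{U_\alpha\}$ of $B$ on which $\sP|_{U_\alpha}$
is isomorphic in $\calG_{U_\alpha}$ to a trivial family $\sP_0^\alpha \times U_\alpha$.
Since the formation of the functor category $G^{(-)}$ and of the classifying space $B(-)$
both commute with base change in $\calS$, such a trivialization gives local identifications
$B(G^\sP)|_{U_\alpha} \cong B(G^{\sP_0^\alpha}) \times U_\alpha$
over $BG^{P_\alpha} \times U_\alpha$. Transition data on overlaps consist of
$(U_\alpha \cap U_\beta)$-families of groupoid isomorphisms, and because the set of
isomorphisms between two finite free groupoids is discrete, such a family is locally constant.
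Constant transition data assemble the local fibrewise smooth bundles into a global
fibrewise smooth bundle, so $B(G^\sP) \to B(G^{\Ob(\sP)})$ acquires the structure of a
fibrewise closed manifold with fibrewise dimension $\dim(G)\cdot r(\sP)$.

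Finally, I would verify functoriality and symmetric monoidality. A morphism in $\calG^\fop$
from $\sP$ over $B$ to $\sP'$ over $B'$ is represented by
$\sP \xot{\alpha} \sQ \xto{\beta} \sP'$ with $\alpha$ vertical and $\beta$ cartesian over
some $f\colon B \to B'$. For the cartesian leg, the identification $\sQ = f^\ast \sP'$
together with commutativity of both $G^{(-)}$ and $B(-)$ with base change yields
$B(G^\sQ) = f^\ast B(G^{\sP'})$, so the induced map $B(G^\sQ) \to B(G^{\sP'})$ is the
projection in a pullback square of fibrewise manifolds. For the vertical leg,
Lemma~\ref{lm:basis-words} shows that the induced fibrewise functor expresses each
basis element of $\sQ_b$ as a word in a basis of $\sP_b$, so the map on object spaces is
fibrewise a polynomial map $G^{r(\sP_b)} \to G^{r(\sQ_b)}$, which is smooth and equivariant
for the appropriate $G^P$-actions; local triviality then promotes this fibrewise statement
to global fibrewise smoothness on classifying spaces. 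Symmetric monoidality is built in via
the natural identifications $\Fun(\sP \usqcup \sP',G) = \Fun(\sP,G) \times \Fun(\sP',G)$ and
$B(\sC \times \sC') = B\sC \times B\sC'$. The main obstacle will be unwinding the
parameterized definition of fibrewise smooth map in the sense of Crabb and James carefully
enough to verify the cartesian case, where the base of the fibrewise manifold itself changes
under $f$.
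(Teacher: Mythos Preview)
Your proposal is correct and follows essentially the same route as the paper: identify $B(G^{\sP_0})$ with the Borel construction $EG^P\times_{G^P}\fun(\sP_0,G)$ to obtain the fibrewise manifold structure in the unparameterized case, then globalize via local triviality and check functoriality. The only notable presentational difference is that the paper packages the basis-independence of the smooth structure on $\fun(\sP_0,G)$ into a separate lemma (Lemma~\ref{lem:single-smooth-structure}) giving an intrinsic characterization, and patches via an explicit intermediate functor $\ell_{\mathrm{triv}}$ on trivial families rather than invoking discreteness of transition isomorphisms; both arguments amount to the same thing.
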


Let $\sP$ be a finite free groupoid (not fibrewise)
and write $\fun(\sP,G)$ for the topological {space}
of functors from $\sP$ to $G$, or in other words the space of objects
in $G^\sP$.
Evaluation on a morphism $\gamma$ of $\sP$ determines
a map $\varepsilon_\gamma\colon\fun(\sP,G)\to G$.
In the proof of Proposition~\ref{prop:functors-manifolds}, 
we will make use of the following lemma.

\begin{lemma}
\label{lem:single-smooth-structure}
The space $\fun(\sP,G)$ is a closed topological manifold of dimension 
$\dim(G)\cdot r$, where $r$ is the rank of $\sP$.
It admits a unique smooth structure
with the property that a map into $\fun(\sP,G)$ is smooth
if and only if its composition with $\varepsilon_\gamma$
is smooth for every morphism $\gamma$ of $\sP$.
\end{lemma}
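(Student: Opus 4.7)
The plan is to exploit the defining property of a basis to identify $\fun(\sP,G)$ homeomorphically with a finite power of $G$, and then to transport the smooth structure across that homeomorphism. Fix a basis $\Gamma=\{\gamma_1,\dots,\gamma_r\}$ of $\sP$. Since $G$ is a one-object groupoid, the basis property says that the map
\[
  \Phi_\Gamma \colon \fun(\sP,G)\longto G^r,\qquad f\longmapsto (f(\gamma_1),\dots,f(\gamma_r)),
\]
is a bijection. It is continuous because each factor $f\mapsto f(\gamma_i)$ is a coordinate of the inclusion $\fun(\sP,G)\hookrightarrow G^{\Mor(\sP)}$. For continuity of the inverse it suffices to check that every evaluation map $\varepsilon_\gamma$ is continuous in the coordinates $(g_1,\dots,g_r)$; by Lemma~\ref{lm:basis-words}, each morphism $\gamma$ of $\sP$ is a finite word in the $\gamma_i^{\pm 1}$, so $\varepsilon_\gamma\circ\Phi_\Gamma^{-1}$ is obtained by applying the corresponding word of multiplications and inversions in $G$, which are continuous. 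Hence $\Phi_\Gamma$ is a homeomorphism, and pulling back the product smooth structure on $G^r$ makes $\fun(\sP,G)$ a closed smooth manifold of dimension $\dim(G)\cdot r$ (compactness and closedness coming from those of $G$).

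Next I would verify that this smooth structure has the stated universal property. By construction, a map $h\colon M\to\fun(\sP,G)$ is smooth if and only if each $\varepsilon_{\gamma_i}\circ h$ is smooth. For a general morphism $\gamma$, the same word expression furnished by Lemma~\ref{lm:basis-words} exhibits $\varepsilon_\gamma\circ h$ as a composite of the maps $\varepsilon_{\gamma_i}\circ h$ with the smooth multiplication and inversion of $G$; thus smoothness of all $\varepsilon_{\gamma_i}\circ h$ is equivalent to smoothness of all $\varepsilon_\gamma\circ h$, as required.

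For uniqueness, suppose $\mathcal{A}_1$ and $\mathcal{A}_2$ are two smooth structures on $\fun(\sP,G)$ satisfying the stated property. Applying the "only if" direction of the property to the identity map on $(\fun(\sP,G),\mathcal{A}_j)$ shows that each $\varepsilon_\gamma$ is smooth with respect to $\mathcal{A}_j$. Now applying the "if" direction of the property with target $\mathcal{A}_2$ to the identity $\id\colon(\fun(\sP,G),\mathcal{A}_1)\to(\fun(\sP,G),\mathcal{A}_2)$ (whose compositions with the $\varepsilon_\gamma$ are smooth on $\mathcal{A}_1$) shows the identity is smooth; by symmetry $\mathcal{A}_1=\mathcal{A}_2$. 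There is no serious obstacle here: everything reduces, via the basis trick and Lemma~\ref{lm:basis-words}, to the elementary fact that group operations on a Lie group are smooth. The only point requiring any care is verifying that the smooth structure obtained this way is independent of the basis $\Gamma$, but this is immediate from uniqueness once the universal property is established.
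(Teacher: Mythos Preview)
Your proof is correct and follows essentially the same approach as the paper: choose a basis $\Gamma$, identify $\fun(\sP,G)$ with $G^\Gamma$, pull back the smooth structure, and use Lemma~\ref{lm:basis-words} to pass from smoothness of the $\varepsilon_{\gamma_i}$ to smoothness of all $\varepsilon_\gamma$. You are somewhat more explicit than the paper (which is terser and leaves the uniqueness argument implicit in the intrinsic nature of the characterisation), but the substance is the same.
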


\begin{proof}
If $\Gamma$ is a basis of $\sP$, then by evaluating functors on the elements
of $\Gamma$ one obtains a homeomorphism $\fun(\sP,G)\homeom G^\Gamma$,
and the first claim follows.
Using this homeomorphism and the smooth structure of $G^\Gamma$,
we obtain a smooth structure on $\fun(\sP,G)$
characterised by the fact that a map into $\fun(\sP,G)$
is smooth if and only if its composition with $\varepsilon_\gamma$ is smooth
for every element $\gamma\in\Gamma$.
Since by Lemma~\ref{lm:basis-words} every morphism of $\sP$ 
can be written as a word in the elements of $\Gamma$ and their inverses,
the given characterisation of the smooth structure on $\fun(\sP,G)$
follows.  In particular the smooth structure on $\fun(\sP,G)$ is independent
of the choice of basis.
\end{proof}

\begin{proof}[Proof of Proposition~\ref{prop:functors-manifolds}]
We begin by constructing a lift
$\ell_\pt\colon\calG^\fop_\pt\to\calM$
of the restriction of $B(G^{(-)})$ to the fibre $\calG^\fop_\pt$
of $\calG^\fop\to\calU$ over $\pt$.

Let $\sP$ be a single finite free groupoid.
Then $G^{\Ob(\sP)}$ is the topological group of functions from $\Ob(\sP)$ into $G$,
and it acts on $\fun(\sP,G)$ according to the rule
\[
	(\delta\cdot f)(\gamma)
	=
	\delta(y)\cdot f(\gamma)\cdot\delta(x)^{-1}
\]
for $f\in \fun(\sP,G)$, $\delta\in G^{\Ob(\sP)}$, and $\gamma\colon x\to y$
a morphism of $\sP$.
With this action the category of functors $G^\sP$ may be identified with the
action-groupoid $\fun(\sP,G) / G^{\Ob(\sP)}$.
Consequently the map
\[
	B(G^\sP) \longto B(G^{\Ob(\sP)})
\]
may be identified with the map
\begin{equation}
\label{eq:borel-construction}
	E(G^{\Ob(\sP)}) \times_{G^{\Ob(\sP)}} \fun(\sP,G)
	\longto
	B(G^{\Ob(\sP)}).
\end{equation}
This identification is natural in $\sP$.

The map \eqref{eq:borel-construction} is a fibre bundle.
Its structure group $G^{\Ob(\sP)}$ is a compact Lie group,
and by Lemma~\ref{lem:single-smooth-structure}
its fibre $\fun(\sP,G)$ is a smooth closed manifold
on which  the structure group acts smoothly.
This observation equips $B(G^\sP)\to B(G^{\Ob(\sP)})$
with the structure of a fibrewise closed manifold.
A morphism of finite free groupoids induces a fibrewise smooth map,
again by Lemma~\ref{lem:single-smooth-structure}.
This completes the construction of $\ell_\pt$.

We next prove that $\ell_\pt$ extends to a lift 
$\ell_\mathrm{triv}\colon\calG_{\mathrm{triv}}^\fop\to\calM$
of the restriction of $B(G^{(-)})$ to the full subcategory
consisting of trivial
fibrewise finite free groupoids, that is, ones of the 
form $\sP \times B$ for some finite free groupoid $\sP$ and
good base space $B$.
The existence of $\ell_\mathrm{triv}$
is immediate from the case already proved, since 
if $\sP\times B$ is a trivial fibrewise finite free groupoid over a good base $B$,
then $B(G^{\sP\times B})\to B(G^{\Ob(\sP\times B)})$ may be identified with
$B(G^\sP)\times B\to B(G^{\Ob(\sP)})\times B$, while a functor
$F\colon \sP\times B\to \sQ\times C$ over $f\colon B\to C$
is locally the product of $f$ with a functor $\sP\to\sQ$.

Finally, we construct the full lift $\ell$.
If $\sP$ is an arbitrary fibrewise finite free groupoid over a good base $B$,
then we may locally make $B(G^\sP)\to B(G^{\Ob(\sP)})$ into a fibrewise smooth
manifold by choosing local trivialisations of $\sP$ and using $\ell_\mathrm{triv}$.
By functoriality of $\ell_\mathrm{triv}$
these local smooth structures patch together,
and this defines $\ell$ on objects.
That induced maps are fibrewise smooth
again follows from functoriality of $\ell_\mathrm{triv}$,
so that $\ell$ extends to morphisms.

It is easy to verify that $\ell$ is  symmetric monoidal.
The claim about the fibrewise dimension 
follows from the identification of $B(G^\sP) \to B(G^{\Ob(\sP)})$
with \eqref{eq:borel-construction} together with Lemma~\ref{lem:single-smooth-structure}.
\end{proof}

\subsection{From functors to mapping spaces}
\label{subsec:functors-to-mapping-spaces}
In this subsection, we complete the construction of 
diagram~\eqref{diagram-one}
by proving the following result.

\begin{proposition}
\label{prop:funmapcmp}
There is a symmetric monoidal natural transformation $\Theta$
fitting into the diagram
\begin{equation}
\label{diag:Theta}
\vcenter{\xymatrix@R+5ex{
	(\calG,\usqcup)^\fop
	\ar[d]_{B^\fop}
	\ar[rr]^{B(G^{(-)})}
	&&
	(\calM,\times)
	\ar[d]^{\forget}
	\ar@{=>}[dll]^{\homot}_\Theta
	\\
	(\calS,\usqcup)^\fop
	\ar[r]_(0.53){BG^{(-)}}
	&
	(\calS,\times)
	\ar[r]_{\inclusion}
	&
	(\widehat\calS,\times)
}}
\end{equation}
and consisting of homotopy equivalences of total spaces.
\end{proposition}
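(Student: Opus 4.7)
I will construct $\Theta_\sP \colon B(G^\sP) \to BG^{B\sP}$ from a natural evaluation pairing and verify it is a homotopy equivalence by reducing first to the unparameterized case and then to $\sP$ being a single free group. For the construction, for each $k$ there is a natural identification $N_k(G^\sP) = \Fun_B(\sP \times [k], G \times B)$, so a $k$-simplex of $N(G^\sP)$ determines, via classifying spaces and the homeomorphism $B(\sP \times [k]) \homeom B\sP \times \Delta^k$, a map $B\sP \times \Delta^k \to BG$ over $B$. These assemble across $k$ into a fibrewise pairing $B(G^\sP) \times_B B\sP \to BG$, whose adjoint is a map $\Theta_\sP \colon B(G^\sP) \to BG^{B\sP}$ of spaces over $B$ covering the projection $B(G^{\Ob(\sP)}) \to B$. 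Naturality in morphisms of $\calG^\fop$ follows by direct inspection on vertical and cartesian generators, and $\Theta$ is symmetric monoidal because both $B(G^{(-)})$ and $BG^{B(-)}$ convert external disjoint unions into products via the canonical decompositions $B(G^{\sP_1 \usqcup \sP_2}) \homeom B(G^{\sP_1}) \times B(G^{\sP_2})$ and $B(\sP_1 \usqcup \sP_2) = B\sP_1 \sqcup B\sP_2$, which $\Theta$ manifestly respects.

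To verify $\Theta_\sP$ is a homotopy equivalence of total spaces, I first show that both $B(G^\sP) \to B$ and $BG^{B\sP} \to B$ are Dold fibrations. Any local trivialization $\sP|_U \cong \sP_0 \times U$ induces trivializations $B(G^\sP)|_U \cong B(G^{\sP_0}) \times U$ and $B\sP|_U \cong B\sP_0 \times U$, and hence $BG^{B\sP}|_U \cong BG^{B\sP_0} \times U$. Paracompactness of good base spaces together with \cite[Theorem~5.12]{Dold} then give the Dold-fibration property, and since $\Theta_\sP$ is a map over $B$, \cite[Theorem~6.3]{Dold} reduces the problem to checking that each fibre $\Theta_\sP|_b \colon B(G^{\sP_b}) \to BG^{B\sP_b}$ is a homotopy equivalence. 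Symmetric monoidality of $\Theta$ allows the assumption that $\sP_b$ is connected, and after choosing $p_0 \in \Ob(\sP_b)$, the inclusion of the full subgroupoid on $\{p_0\}$ is an equivalence of groupoids from a free group $F_r$ to $\sP_b$. This induces equivalences $G^{\sP_b} \homot G^{F_r}$ of internal groupoids (hence $B(G^{\sP_b}) \homot B(G^{F_r})$) and $BF_r \homot B\sP_b$ (hence $BG^{B\sP_b} \homot BG^{BF_r}$), so by naturality the problem reduces to $\sP = F_r$.

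For $\sP = F_r$, we have $B(G^{F_r}) \homeom G^r \sslash G$ with $G$ acting by simultaneous conjugation, and $BG^{BF_r} \homot BG^{\bigvee_r S^1}$, and $\Theta_{F_r}$ is the standard comparison. I analyze it as a map of fibrations over $BG$: on the left via the Borel projection $G^r \sslash G \to BG$ with fibre $G^r$, and on the right via evaluation at the wedge point, whose fibre is $\Map_*(\bigvee_r S^1, BG) \homot (\Omega BG)^r$. The induced map on $BG$ is the identity, and on fibres $\Theta_{F_r}$ is homotopic to the standard weak equivalence $G^r \homot (\Omega BG)^r$; the five-lemma then yields that $\Theta_{F_r}$ is a weak equivalence, and since both sides have CW homotopy type, a genuine homotopy equivalence. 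The main hurdle is the Dold-fibration reduction to fibres, which demands careful verification that both total spaces are locally trivial over $B$; the rest of the argument is relatively mechanical once $\Theta$ is in hand.
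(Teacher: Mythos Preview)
Your proof is correct, and the construction of $\Theta$ via evaluation plus adjunction, together with the reduction to fibres over a point using local triviality and Dold's results, matches the paper exactly. The genuine divergence is in the endgame for a single finite free groupoid. The paper introduces an explicit graph model $X_\Gamma$ together with a map $\beta_{\sP,\Gamma}\colon X_\Gamma\to B\sP$, proves $\beta_{\sP,\Gamma}$ is a homotopy equivalence, and then uses the pushout presentation of $\sP$ (and of $X_\Gamma$) from its basis to reduce $\phi_{\sP,\Gamma}=\beta_{\sP,\Gamma}^\ast\circ\theta_\sP$ to the case of the two-object interval groupoid $\sQ$, and finally to the trivial one-object groupoid $\ast$, where the map is an isomorphism by inspection. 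You instead pass along an equivalence of groupoids to the one-object full subgroupoid $F_r$, and there compare the Borel fibration $G^r\sslash G\to BG$ with the evaluation fibration $BG^{BF_r}\to BG$, identifying the fibre map with the classical equivalence $G^r\simeq(\Omega BG)^r$. Your route is shorter and leans on the well-known fact $G\simeq\Omega BG$; the paper's route is more self-contained and stays close to the combinatorics of bases that drives the rest of the section. One point you use without comment is that the automorphism group at $p_0$ in a connected finite free groupoid is a finitely generated free group; this follows for instance from the paper's graph model $B\sP_b\simeq X_\Gamma$, and is worth making explicit.
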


Let $\sP$ be a fibrewise finite free groupoid
over a good base $B$.  Then the composite 
\[
	\forget\circ B(G^{(-)})
\] 
in diagram~\eqref{diag:Theta} sends $\sP$ to the fibred space 
$B(G^\sP) \to B(G^{\Ob\sP})$, 
while the composite 
\[
	\inclusion \circ BG^{(-)}\circ B^\fop
\]
sends $\sP$ to the fibred space $BG^{B\sP} \to B$.
The component at $\sP$ of the required symmetric monoidal natural transformation 
$\Theta$ is then the commutative square (regarded as a morphism of
fibred spaces)
\[\xymatrix{
	B(G^\sP)
	\ar[r]^{\theta_\sP}\ar[d]
	&
	BG^{B\sP}
	\ar[d]
	\\
	B(G^{\Ob\sP})
	\ar[r]
	&
	B
}\]
where the upper map $\theta_\sP$ is defined as follows.
There is an evaluation functor $ G^\sP\times_B \sP \to G\times B$ 
of categories internal to spaces over $B$, inducing a map
$B(G^\sP\times_B \sP)\to B(G\times B)$ of classifying spaces.
Since geometric realisation of simplicial spaces commutes with fibre products,
this can be regarded as a map
$B(G^\sP)\times_B B\sP \to BG\times B$,
and we define $\theta_\sP$ to be its adjoint
$B(G^\sP)\to \Map_B(B\sP,BG\times B)=BG^{B\sP}$.
It is routine to check that these morphisms do indeed assemble to a symmetric
monoidal natural transformation $\Theta$.

It remains to show that each $\theta_\sP$ is a homotopy equivalence.
It will suffice to consider the case where $B$ is a single point.
For in the general case we may regard $\theta_P$ as a morphism between
spaces over $B$, which by local triviality of $\sP$ are fibre bundles,
and hence fibrations since $B$ is paracompact \cite[Theorem~4.8]{Dold}.
As in the discussion before Lemma~\ref{lm:doldfib},
it therefore suffices to show that $\theta_\sP$
is a homotopy equivalence in every fibre, or equivalently
that it is a homotopy equivalence when $B$ is a single point.
We will do so in Lemma~\ref{lm:theta-eq} below after some
preparatory work which we now commence.

Let $\sP$ be a single finite free groupoid and let $\Gamma$ be a basis for $\sP$.
Then we define $X_\Gamma$ 
to be the space obtained by starting with a copy of $\Ob\sP$
and adding arcs, one for each morphism in $\Gamma$,
attached at the points corresponding to the source
and target of the morphism.
There is a natural morphism
\[
	\alpha_{\sP,\Gamma}\colon\sP\longrightarrow \Pi_1(X_\Gamma,\Ob\sP)
\]
given by the identity on objects and sending an element
of $\Gamma$ to the homotopy class of the corresponding arc of $X_\Gamma$.

\begin{lemma}
The morphism $\alpha_{\sP,\Gamma}$ is an isomorphism.
\end{lemma}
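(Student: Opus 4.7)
The plan is to prove the lemma by showing that both $\sP$ and $\Pi_1(X_\Gamma,\Ob\sP)$ enjoy the same universal property as the free groupoid over $\Ob\sP$ on the symbols in $\Gamma$ with the prescribed source and target, and that $\alpha_{\sP,\Gamma}$ is the unique morphism realising this identification.

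First I would observe that $\alpha_{\sP,\Gamma}$ is the identity on objects by construction, so the content of the claim is that $\alpha_{\sP,\Gamma}$ is a bijection on each hom-set. For this, it suffices to exhibit $\Pi_1(X_\Gamma,\Ob\sP)$ together with the set of homotopy classes $\{[\gamma'] : \gamma \in \Gamma\}$ of the added arcs as a finite free groupoid with basis $\{[\gamma']\}$. Indeed, granting this, the universal property supplies a unique homomorphism $\beta_{\sP,\Gamma}\colon \Pi_1(X_\Gamma,\Ob\sP) \to \sP$ that is the identity on objects and sends $[\gamma']$ to $\gamma$; dually, $\alpha_{\sP,\Gamma}$ is the unique homomorphism in the opposite direction that is the identity on objects and sends $\gamma$ to $[\gamma']$. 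The composites $\beta_{\sP,\Gamma}\circ\alpha_{\sP,\Gamma}$ and $\alpha_{\sP,\Gamma}\circ\beta_{\sP,\Gamma}$ then agree with the respective identity functors on objects and on the respective bases, and hence are identities by the uniqueness clause in the definition of a basis.

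The core step is therefore to prove that the fundamental groupoid of the graph $X_\Gamma$, restricted to its vertex set $\Ob\sP$, is freely generated (in the sense of our definition of finite free groupoid) by the homotopy classes of the arcs of $X_\Gamma$. This is the standard description of fundamental groupoids of graphs, but I would give a proof by working one connected component of $X_\Gamma$ at a time: inside a component, choose a maximal subtree $T$ among the arcs of $X_\Gamma$; define the required homomorphism out of $\Pi_1(X_\Gamma,\Ob\sP)$ by specifying, for every vertex, an image determined by the data, and for every arc, the prescribed morphism; contractibility of $T$ shows that such a prescription extends uniquely from the spanning tree plus one loop per non-tree edge (which is the standard free basis of the fundamental group at any basepoint) to the full groupoid $\Pi_1(X_\Gamma,\Ob\sP)$, and a straightforward check reconstructs the image of each arc. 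Uniqueness is immediate from the fact that the arcs generate all morphisms of $\Pi_1(X_\Gamma,\Ob\sP)$, which follows from the normal-form description of paths in a graph up to homotopy.

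The main obstacle is the spanning-tree bookkeeping required to verify the universal property from a basis that includes both tree edges and non-tree edges, rather than the more familiar free-group basis obtained after collapsing the tree; however, since the defining property of a basis allows any indexed family of morphisms (not only a minimal generating set), this is a routine verification once one observes that the specification on arcs, together with the identity-on-objects assignment, determines the homomorphism on every path and that the homotopy relations among paths in a graph are consequences of the groupoid axioms applied to the arcs.
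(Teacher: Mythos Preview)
Your proof is correct but takes a different route from the paper's. You argue directly that the arcs form a basis for $\Pi_1(X_\Gamma,\Ob\sP)$ and then invoke the universal property on both sides to produce mutually inverse functors. The paper instead argues by induction on the number of objects: in a connected $\sP$ with more than one object, it finds a basis element $\gamma$ with distinct endpoints, forms the quotient $\bar\sP$ by collapsing $\gamma$ (with basis $\bar\Gamma=\Gamma\setminus\{\gamma\}$), and shows that $\alpha_{\sP,\Gamma}$ is an equivalence if and only if $\alpha_{\bar\sP,\bar\Gamma}$ is, eventually reducing to the one-object case where $\sP$ is a free group and $X_\Gamma$ a wedge of circles.

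Your approach is more self-contained and conceptually transparent (both groupoids satisfy the same universal property), but the price is the spanning-tree bookkeeping you flag at the end: showing that the \emph{arcs themselves} form a basis of $\Pi_1(X_\Gamma,\Ob\sP)$ is slightly more than what Example~\ref{ex:h-graph-subgroupoid} records, and your sketch via maximal trees is a somewhat indirect way to get there (a cleaner route is to use reduced edge paths directly). The paper's inductive collapse avoids this verification entirely by pushing everything down to the classical free-group/wedge-of-circles statement, at the cost of introducing the auxiliary pushout and checking that the vertical maps in the comparison square are equivalences.
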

\begin{proof}
We may assume that $\sP$ is connected.
It is enough to show that $\alpha_{\sP,\Gamma}$ is an equivalence,
since it is an isomorphism on objects.
If $\sP$ has more than one object, 
then we may find an element $\gamma\in\Gamma$
whose source and target are distinct.
Let $\sQ$ denote the finite free groupoid with two objects and basis
consisting of a single morphism from one object to the other,
and let $\bar\sP$ denote the pushout
\[\xymatrix{
	\sQ\ar[r]\ar[d] & \sP\ar[d]\\
	\ast\ar[r] & \bar\sP
}\]
where $\ast$ denotes the trivial one-object groupoid and 
the top horizontal map sends the basis element of $\sQ$ to $\gamma$.
Then $\bar\sP$ is finite free with basis $\bar\Gamma=\Gamma\setminus\{\gamma\}$,
the morphism $\sP\to\bar\sP$ is an equivalence, and the corresponding collapse map
$X_\Gamma\to X_{\bar\Gamma}$ is a homotopy equivalence.
Thus there is a commutative square of groupoids
\[\xymatrix{
	\sP
	\ar[r]^-{\alpha_{\sP,\Gamma}}
	\ar[d]
	&
	\Pi_1(X_\Gamma,\Ob\sP)
	\ar[d]
	\\
	\bar\sP
	\ar[r]_-{\alpha_{\bar\sP,\bar\Gamma}}
	&
	\Pi_1(X_{\bar\Gamma},\Ob\bar\sP)
}\]
in which the vertical arrows are equivalences, so that the upper arrow
is an equivalence if and only if the lower arrow is an equivalence.
Iterating this step, we see that it is enough to show that 
$\alpha_\sP$ is an equivalence when $\sP$ has a single object.
In this case the result is immediate since $\sP$ is the free
group on the elements of $\Gamma$ and $X_\Gamma$ is a wedge of $\Gamma$ circles.
\end{proof}

Now we consider the inclusion
\[
	\beta_{\sP,\Gamma}
	\colon
	X_\Gamma
	\longto
	B\sP
\]
that sends elements of $\Ob\sP$ to the corresponding vertices
of $B\sP$, and that sends the arc corresponding to an element of $\Gamma$
to the corresponding edge of $B\sP$.

\begin{lemma}
\label{lm:graphmodel}
The map $\beta_{\sP,\Gamma}$ is a homotopy equivalence.
\end{lemma}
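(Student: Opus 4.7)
The plan is to show that $\beta_{\sP,\Gamma}$ is a map between aspherical CW-complexes that induces a bijection on $\pi_0$ and an isomorphism on the fundamental groupoid based at $\Ob\sP$, whence it is a homotopy equivalence by Whitehead's theorem.

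First I would establish asphericity of both sides. The space $X_\Gamma$ is a 1-dimensional CW-complex, so each of its components is automatically aspherical. For $B\sP$, the nerve $N\sP$ of the groupoid $\sP$ is a Kan complex; consequently $B\sP$ is aspherical, with each component of the form $K(\pi,1)$ for the automorphism group $\pi$ of any chosen object in that component.

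Next I would analyze the induced map on fundamental groupoids with distinguished object set $\Ob\sP$. There is a tautological comparison morphism
\[
	\iota_\sP \colon \sP \longto \Pi_1(B\sP,\Ob\sP),
\]
which is the identity on objects and sends a morphism $\gamma$ of $\sP$ to the homotopy class of paths associated to the corresponding $1$-simplex of $B\sP$ (exactly as in the discussion leading up to Lemma~\ref{lem:iota}). A standard computation — for example by the van Kampen theorem applied to the canonical CW structure on $B\sP$, or by the fact that the nerve of a groupoid is a $1$-hypercovering — shows that $\iota_\sP$ is an isomorphism. I would then check that the triangle
\[
\xymatrix@R+1ex{
	\sP
	\ar[r]^-{\alpha_{\sP,\Gamma}}_-\isom
	\ar[dr]_-{\iota_\sP}^-\isom
	&
	\Pi_1(X_\Gamma,\Ob\sP)
	\ar[d]^-{(\beta_{\sP,\Gamma})_\ast}
	\\
	&
	\Pi_1(B\sP,\Ob\sP)
}
\]
commutes. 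Both composites are groupoid homomorphisms out of $\sP$, so it suffices to verify agreement on the basis $\Gamma$: the image of $\gamma \in \Gamma$ under $\alpha_{\sP,\Gamma}$ is the homotopy class of the arc of $X_\Gamma$ labelled by $\gamma$, which $\beta_{\sP,\Gamma}$ sends to the $1$-simplex of $B\sP$ for $\gamma$; this is precisely $\iota_\sP(\gamma)$. Commutativity of the triangle, combined with the invertibility of $\alpha_{\sP,\Gamma}$ and $\iota_\sP$, forces $(\beta_{\sP,\Gamma})_\ast$ to be an isomorphism. The bijection on $\pi_0$ is then automatic, as the $\pi_0$ of either space is the set of isomorphism classes of $\sP$.

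Putting these ingredients together, $\beta_{\sP,\Gamma}$ is a map between aspherical CW-complexes inducing a bijection on $\pi_0$ and an isomorphism on $\pi_1$ of each component, hence a weak equivalence, and therefore a homotopy equivalence by Whitehead's theorem. The only delicate point is the invocation of the isomorphism $\iota_\sP \colon \sP \to \Pi_1(B\sP,\Ob\sP)$, and while entirely standard, it is what does the real work — the rest of the argument is essentially bookkeeping with bases.
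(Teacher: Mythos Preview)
Your proof is correct and follows essentially the same approach as the paper: both arguments reduce to showing that the induced map on fundamental groupoids based at $\Ob\sP$ is an isomorphism by factoring through the tautological comparison $\sP \to \Pi_1(B\sP,\Ob\sP)$ and invoking the previously established isomorphism $\alpha_{\sP,\Gamma}$. The paper identifies your $\iota_\sP$ as the composite $(\beta_{\sP,\Gamma})_\ast \circ \alpha_{\sP,\Gamma}$ and appeals to the argument of Lemma~\ref{lem:iota} (which uses that the nerve of a groupoid is Kan) for its invertibility, exactly matching the ``real work'' you single out.
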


\begin{proof}
Since the domain and range of $\beta_{\sP,\Gamma}$ are both
disjoint unions of spaces of the type $K(\pi,1)$, it will be enough
to show that the induced functor
\[(\beta_{\sP,\Gamma})_\ast\colon \Pi_1(X_\Gamma,\Ob\sP)\longto\Pi_1(B\sP,\Ob\sP)\]
is an isomorphism.
By the previous lemma it will suffice to show that the composite
\[
	(\beta_{\sP,\Gamma})_\ast\circ\alpha_{\sP,\Gamma}
	\colon
	\sP
	\longto
	\Pi_1(B\sP,\Ob\sP)
\]
is an isomorphism.
Now this composite is given by the identity on objects and sends a
morphism to the homotopy class of the corresponding edge of $B\sP$
(the last claim holds by definition for elements of $\Gamma$
and then follows for arbitrary morphisms).
That $(\beta_{\sP,\Gamma})_\ast$ is an isomorphism now follows by 
the same argument we used to prove Lemma~\ref{lem:iota}.
\end{proof}

\begin{lemma}
\label{lm:theta-eq}
Let $\sP$ be a single finite free groupoid.
Then the map $\theta_\sP$ is a homotopy equivalence.
\end{lemma}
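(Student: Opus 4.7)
The plan is to show that $\theta_\sP$ is a homotopy equivalence by exhibiting it as a map of Hurewicz fibrations over the common base $BG^{\Ob\sP}$ which is a homotopy equivalence on each fibre.

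First I would observe that both sides of $\theta_\sP$ sit naturally over $BG^{\Ob\sP}$: the map $B(G^\sP) \to BG^{\Ob\sP}$ is induced by the restriction functor $G^\sP \to G^{\Ob\sP}$ and coincides, under the identification used in the proof of Proposition~\ref{prop:functors-manifolds}, with the projection from the Borel construction $EG^{\Ob\sP} \times_{G^{\Ob\sP}} \fun(\sP,G)$, which is a fibre bundle; while $BG^{B\sP} \to BG^{\Ob\sP}$ is restriction along the inclusion of the $0$-skeleton $\Ob\sP \hookrightarrow B\sP$, a Hurewicz fibration because that inclusion is a cofibration. The map $\theta_\sP$ lies over $BG^{\Ob\sP}$ because the adjoint of its defining evaluation $B(G^\sP) \times B\sP \to BG$, restricted to $B(G^\sP) \times \Ob\sP$, is exactly the projection $B(G^\sP) \to BG^{\Ob\sP}$.

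Next I would identify the two fibres. After choosing a basis $\Gamma = \{\gamma_i \colon x_i \to y_i\}$ for $\sP$, evaluation on $\Gamma$ yields a homeomorphism $\fun(\sP,G) \cong G^\Gamma$, giving the fibre of the first fibration. For the second, Lemma~\ref{lm:graphmodel} provides a homotopy equivalence $\beta_{\sP,\Gamma} \colon X_\Gamma \to B\sP$ that restricts to the identity on $\Ob\sP$; since $\Ob\sP \hookrightarrow X_\Gamma$ and $\Ob\sP \hookrightarrow B\sP$ are cofibrations, $\beta_{\sP,\Gamma}$ is a homotopy equivalence under $\Ob\sP$, so restriction along it is a fibrewise homotopy equivalence $BG^{B\sP} \to BG^{X_\Gamma}$ over $BG^{\Ob\sP}$. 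The fibre of $BG^{X_\Gamma} \to BG^{\Ob\sP}$ at $f\colon \Ob\sP \to BG$ is the product $\prod_{\gamma \in \Gamma} P_{f(x_\gamma),\, f(y_\gamma)} BG$ of path spaces, each factor being homotopy equivalent to $\Omega BG \simeq G$.

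The crux is to check that $\theta_\sP$ induces a homotopy equivalence on fibres. Working over a single component and choosing $f$ to be the constant map at the canonical basepoint of $BG$, both fibres acquire canonical identifications with $G^\Gamma$. Unwinding the adjunction defining $\theta_\sP$, its value on $F \in \fun(\sP,G) \cong G^\Gamma$ is the realisation $|NF| \colon B\sP \to BG$ of $F$ as a functor; restricted to the arc of $X_\Gamma$ corresponding to $\gamma \in \Gamma$ this is the $1$-simplex of $BG = |NG|$ labelled by $F(\gamma) \in G$, which under the standard equivalence $G \xto{\simeq} \Omega BG$ (sending $g$ to the loop given by that $1$-simplex) maps to $F(\gamma)$. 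Hence the map induced on fibres is the identity of $G^\Gamma$ up to the chosen equivalences, and in particular a homotopy equivalence. A standard Dold-type argument then concludes that $\theta_\sP$ itself is a homotopy equivalence, since it is a fibrewise equivalence of Hurewicz fibrations over the paracompact base $BG^{\Ob\sP}$.

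The main obstacle is the fibre-level comparison in the third step: one must carefully unwind the adjunction defining $\theta_\sP$ and verify that the two a priori different descriptions of the fibre as $G^\Gamma$ (one from evaluation on a basis, the other from the composite of $\beta_{\sP,\Gamma}$-restriction with the adjunction $G \simeq \Omega BG$) are indeed intertwined by $\theta_\sP$ up to homotopy.
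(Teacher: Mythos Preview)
Your approach is correct and essentially the same as the paper's: both pass through the composite with $\beta_{\sP,\Gamma}^\ast$ (the paper names this composite $\phi_{\sP,\Gamma}$), work fibrewise over $BG^{\Ob\sP}$, and use the basis $\Gamma$ to decompose the problem as a product of single-arc factors. The only difference is the endgame for one factor: you unwind the adjunction to identify the fibre map with the standard equivalence $G\xrightarrow{\sim}\Omega BG$, whereas the paper avoids this unwinding by observing that for the two-object one-generator groupoid $\sQ$, an equivalence $\ast\hookrightarrow\sQ$ reduces $\phi_{\sQ,\Delta}$ to $\phi_{\ast,\emptyset}$, which is an isomorphism by construction. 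Both arguments are valid; the paper's is slightly slicker in that it sidesteps the bookkeeping you flag as ``the main obstacle,'' while yours makes explicit what the map actually is on fibres.
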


\begin{proof}
Let $\Gamma$ be a basis of $\sP$.
The homotopy equivalence $\beta_{\sP,\Gamma}$ induces
a homotopy equivalence
$\beta_{\sP,\Gamma}^\ast\colon BG^{B\sP}\to BG^{X_\Gamma}$,
and so it will suffice to show that the composite
\[
	\phi_{\sP,\Gamma}
	\colon
	B(G^\sP)
	\longto
	BG^{X_\Gamma}
\]
of $\theta_\sP$ with $\beta_{\sP,\Gamma}^\ast$ is a homotopy equivalence,
or equivalently (since it is a map between fibrations over $BG^{\Ob\sP}$),
that it is a fibrewise homotopy equivalence over $BG^{\Ob\sP}$.

We again denote by $\sQ$ the groupoid with two objects and basis
$\Delta$ consisting of a single isomorphism from one object to the other.
Then we have two pushout diagrams, the first by the definition of basis
and the second by construction.
\[
\xymatrix{
	\bigsqcup_{\gamma\in\Gamma} \Ob\sQ
	\ar[r]\ar[d]
	&
	\bigsqcup_{\gamma\in\Gamma} \sQ
	\ar[d]
	\\
	\Ob\sP\ar[r]
	&
	\sP
}
\qquad\qquad
\xymatrix{
	\bigsqcup_{\gamma\in\Gamma} \Ob\sQ
	\ar[r]\ar[d]
	&
	\bigsqcup_{\gamma\in\Gamma} X_\Delta
	\ar[d]
	\\
	\Ob\sP\ar[r]
	&
	X_\Gamma
}
\]
It follows that $\phi_{\sP,\Gamma}$ is the pullback,
along the map $BG^{\Ob\sP}\to BG^{\bigsqcup\Ob\sQ}$,
of the direct product of $\Gamma$ copies of the map
$\phi_{\sQ,\Delta}$.
It will therefore suffice to show that $\phi_{\sQ,\Delta}$ is a fibrewise homotopy 
equivalence of spaces over $BG^{\Ob\sQ}$, 
or equivalently that it is a homotopy equivalence of total spaces.
Let $\ast$ denote the trivial groupoid with one object and empty basis.
The inclusion $\ast\to\sQ$ of a single object (which is an equivalence)
and the inclusion $X_\emptyset\hookrightarrow X_\Delta$ of a single point
(which is a homotopy equivalence)
induce a commutative square
\[\xymatrix{
	B(G^\sQ)
	\ar[r]^{\phi_{\sQ,\Delta}}
	\ar[d]
	&
	BG^{X_\Delta}
	\ar[d]
	\\
	B(G^\ast)
	\ar[r]_{\phi_{\ast,\emptyset}}
	&
	BG^{X_\emptyset}
}\]
in which the vertical maps are homotopy equivalences and the
lower map is an isomorphism by construction.
It follows that $\phi_{\sQ,\Delta}$ is a homotopy equivalence, as required.
\end{proof}

The proof of Proposition~\ref{prop:funmapcmp} is now complete.

\subsection{Double categories of special squares}
\label{subsec:special-squares}
Our aim now is to construct diagram~\eqref{diagram-two}
from diagram~\eqref{diagram-one}.
The symmetric monoidal double categories
$\bbS^d(\calH^\fop)$ and $\bbS^\ds(\calM)$ appearing there
have already been defined
(in Examples~\ref{ex:dc-special-squares-h-graphs}
and~\ref{ex:sm-dc-special-squares-h-graphs}
and Definition~\ref{def:sm-dc-special-squares-manifolds})
and our aim in this subsection is to define the remaining
symmetric monoidal double categories
$\bbS^d(\calHbp^\fop)$ and $\bbS^d(\calG^\fop)$.
We will employ a strategy similar to the one we used to construct
$\bbS^d(\calH^\fop)$, and so the reader may find it helpful 
recall the definition of $\bbS^d(\calH^\fop)$ from
Examples~\ref{ex:dc-special-squares-h-graphs}
and~\ref{ex:sm-dc-special-squares-h-graphs}.

We begin with the construction of $\bbS^d(\calHbp^\fop)$.
First observe that the fibre product $\calHbp^\fop \times_\calB \calB^\ds$
of $\calHbp^\fop$ with the category $\calB^\ds$ defined in 
Example~\ref{ex:dc-special-squares-h-graphs}
can be described as follows.
\begin{itemize}
\item The objects of $\calHbp^\fop \times_\calB \calB^\ds$ are quadruples
	$(X,P,u,m)$, where $(X,P,u)$ is a family of h-graphs with basepoints
	over a good base space $B$, and $m\colon B \to \Z$
	is a locally constant function.
\item A morphism in $\calHbp^\fop \times_\calB \calB^\ds$ 
	from $(X_1, P_1, u_1, m_1)$ to $(X_2, P_2, u_2, m_2)$
	is simply a morphism from $(X_1,P_1,u_1)$  to $(X_2,P_2,u_2)$
	in $\calHbp^\fop$.
\end{itemize}
Also observe that the symmetric monoidal structures on $\calHbp$ and $\calB^\ds$ 
induce on $\calHbp \times_\calB \calB^\ds$ a symmetric 
monoidal structure given by the product
\[
	(X_1, P_1, u_1, m_1)\otimes (X_2, P_2, u_2, m_2)
		=
	(X_1\usqcup X_2, P_1\sqcup P_2, u_1\usqcup u_2, m_1+m_2).
\]

\begin{definition}[The double category $\bbS^d(\calHbp^\fop)$]
\label{def:dc-special-squares-h-graphs}
The symmetric monoidal
\emph{double category of special squares in $\calHbp^\fop$ with degree-shifts}, denoted $\bbS^d(\calHbp^\fop)$, is the sub-double category of
$\sq(\calHbp^\fop\times_\calB\calB^\ds)$ defined as follows.
\begin{itemize}
	\item
	The objects of $\bbS^d(\calHbp^\fop)$ are the objects of
	$\sq(\calHbp^\fop\times_\calB\calB^\ds)$.
	\item
	A morphism
	$(X,P,u,m)\to(Y,Q,v,n)$
	in $\calHbp^\fop\times_\calB\calB^\ds$ qualifies as a vertical
	morphism in $\bbS^d(\calHbp^\fop)$ exactly when the underlying morphism
	$(X,m)\to(Y,n)$ in $\calH^\fop\times_\calB\calB^\ds$ qualifies
	as a vertical morphism in $\bbS^d(\calH^\fop)$.
	\item
	A morphism
	$(X,P,u,m)\to(Y,Q,v,n)$
	in $\calHbp^\fop\times_\calB\calB^\ds$ qualifies as a horizontal
	morphism in $\bbS^d(\calHbp^\fop)$ exactly if 
	the underlying morphism
	$(X,m)\to(Y,n)$ in $\calH^\fop\times_\calB\calB^\ds$
	qualifies as a horizontal morphism in $\bbS^d(\calH^\fop)$
	and the map $Q\to P$ is the identity map $P\to P$.
	\item
	A 2-cell in $\sq(\calHbp^\fop\times_\calB\calB^\ds)$ qualifies as a 2-cell in
	$\bbS^d(\calHbp^\fop)$ if its vertical and horizontal edges
	are respectively vertical and horizontal morphisms in $\bbS^d(\calHbp^\fop)$
	and the underlying 2-cell in $\sq(\calH^\fop\times_\calB\calB^\ds)$ qualifies
	as a 2-cell in $\bbS^d(\calH^\fop)$.
\end{itemize}
We equip $\bbS^d(\calHbp^\fop)$ with the symmetric monoidal structure
inherited from $\sq(\calHbp^\fop\times_\calB\calB^\ds)$.
\end{definition}

Before giving the definition of $\bbS^d(\calG^\fop)$, we need the following
notion, which is analogous to the notion of {h-embedding} for h-graphs.

\begin{definition}
We say that a morphism $F\colon\sP\to\sQ$ of finite free groupoids
has the \emph{basis extension property} if for any basis $\{\gamma_i\}_{i\in I}$
of $\sP$ one may extend the indexed set $\{F(\gamma_i)\}_{i\in I}$
to a basis $\{F(\gamma_i)\}_{i\in I}\cup\{\delta_j\}_{j\in J}$ of $\sQ$.
\end{definition}

To define $\bbS^d(\calG^\fop)$, we first observe that the fibre product
$\calG^\fop \times_\calB\calB^\ds$ of $\calG^\fop$ with the category
$\calB^\ds$ of Example~\ref{ex:dc-special-squares-h-graphs} can be 
described as follows.
\begin{itemize}
	\item
	An object of $\calG^\fop \times_\calB\calB^\ds$ is a pair
	$(\sP,m)$ consisting of a fibrewise finite free  groupoid
	$\sP$ over a good base $B$ and a locally constant function
	$m\colon B\to\bbZ$.
	\item
	A morphism in $\calG^\fop \times_\calB\calB^\ds$ from $(\sP,m)$
	to $(\sQ,n)$ is simply a morphism $\sP\to\sQ$ in $\calG^\fop$,
	or in other words an equivalence class of diagrams
	\begin{equation}
	\label{eq:calG-fop-morphism}
	\vcenter{\xymatrix@C-1em{
	\sP
	\ar@{|->}[d]
	&& 
	\sU
	\ar@{|->}[d]
	\ar[ll]_\alpha
	\ar[rr]^\beta
	&&
	\sQ
	\ar@{|->}[d]
	\\
	B
	&
	&
	B
	\ar[rr]^f
	\ar[ll]_=
	&&
	C
	}}
	\end{equation}
	with $\beta$ cartesian.
\end{itemize}
The symmetric monoidal structures on $\calG^\fop$
and $\calB^\ds$ induce a symmetric monoidal structure on 
$\calG^\fop\times_\calB\calB^\ds$ given by
\[
	(\sP_1,m_1)\otimes(\sP_2,m_2)
	=
	(\sP_1\usqcup\sP_2,m_1+m_2).
\]

\begin{definition}[The double category $\bbS^d(\calG^\fop)$]
The symmetric monoidal 
\emph{double category of special squares in $\calG^\fop$ with degree shifts},
denoted $\bbS^d(\calG^\fop)$,
is the sub-double category of $\sq(\calG^\fop\times_\calB\calB^\ds)$ defined as follows.
\begin{itemize}
	\item
	The objects of $\bbS^d(\calG^\fop)$ are the objects of
	$\sq(\calG^\fop\times_\calB\calB^\ds)$.
	\item
	A morphism
	$(\sP,m)\to(\sQ,n)$ in $\calG^\fop\times_\calB\calB^\ds$  represented by a 
	diagram \eqref{eq:calG-fop-morphism} qualifies as a vertical morphism
	in $\bbS^d(\calG^\fop)$ if the restriction of $\alpha$ to each fibre over $B$
	has the basis extension property,
	and $m$ and $n$ satisfy the equation $m=n\circ f$.
	\item
	A morphism $(\sP,m)\to(\sQ,n)$
	in $\calG^\fop\times_\calB\calB^\ds$ represented by a 
	diagram \eqref{eq:calG-fop-morphism} qualifies as a horizontal morphism
	in $\bbS^d(\calG^\fop)$ if $\beta$ and $f$ are identity maps,
	$\alpha$ is the identity map on objects,
	and $m$ and $n$ satisfy
	\[
		m-n = d(r(\sP)-r(\sQ)).
	\]
	Recall from Definition~\ref{def:rank-function} 
	that $r(\sP)\colon B \to \Z$ is the locally constant
	function whose value at a point $b \in B$ is the 
	the rank of $\sP_b$.
	\item
	Finally, a 2-cell
	\[\xymatrix{
		(\sP,m) 
		\ar[r]
		\ar[d]
		&
		(\sQ,n)
		\ar[d]
		\\
		(\sR,k)
		\ar[r]
		&
		(\sS,l)
	}\]
	in $\sq(\calG^\fop\times_\calB\calB^\ds)$ 
	whose vertical and horizontal morphisms satisfy the respective
	above conditions qualifies as a 2-cell of $\bbS^d(\calG^\fop)$
	if for each $b\in B$ the commutative square
	\[\xymatrix{
		\sP_b
		&
		\sQ_b	
		\ar[l]
		\\
		\sR_{f(b)}
		\ar[u]
		&
		\sS_{f(b)}
		\ar[u]
		\ar[l]
	}\]
	of finite free groupoids is a pushout.
\end{itemize}
We equip $\bbS^d(\calG^\fop)$ with the symmetric monoidal structure
inherited from $\sq(\calG^\fop\times_\calB\calB^\ds)$.
\end{definition}

\subsection{Double functors between double categories of special squares}
\label{subsec:double-functors}
We continue the task of constructing diagram~\eqref{diagram-two}.
By now we have defined all of the symmetric monoidal double categories
appearing there, and 
our next goal is to define the  symmetric monoidal double functors
\begin{align*}
	\bbS(\forget) &\colon \bbS^d(\calHbp^\fop) \longto \bbS^d(\calH^\fop), \\
	\bbS(\Pi_1) &\colon \bbS^d(\calHbp^\fop) \longto \bbS^d(\calG^\fop) \quad \text{and}\\
	\bbS(B(G^{(-)})) &\colon \bbS^d(\calG^\fop) \longto \bbS^\ds(\calM).
\end{align*}
\begin{definition}[The symmetric monoidal double functor $\bbS(\forget)$]
\label{def:bbS-forget}
The  symmetric monoidal functor
\[\forget\colon\calHbp\longrightarrow\calH,\]
which is a morphism of symmetric monoidal fibrations over $\calU$,
induces a fibrewise opposite functor
\[\forget^\fop\colon \calHbp^\fop \longrightarrow\calH^\fop,\]
a fibred product functor
\[\forget^\fop\times_\calB 1
\colon
\calHbp^\fop\times_\calB\calB^\ds
\longrightarrow
\calH^\fop\times_\calB\calB^\ds,\]
and a double functor
\[\sq(\forget^\fop\times_\calB 1)
\colon
\sq(\calHbp^\fop\times_\calB\calB^\ds)
\longrightarrow
\sq(\calH^\fop\times_\calB\calB^\ds),\]
all of them  symmetric monoidal.
The latter restricts to a symmetric monoidal double functor
\[
	\bbS(\forget)
	\colon
	\bbS^d(\calHbp^\fop)
	\longrightarrow
	\bbS^d(\calH^\fop)
\]
of sub-double categories. 

To see that $\sq(\forget^\fop\times_\calB 1)$ restricts to a double functor $\bbS(\forget)$
we must check that:
\begin{itemize}
	\item
	$\sq(\forget^\fop\times_\calB 1)$
	sends vertical morphisms of $\bbS^d(\calHbp^\fop)$
	to vertical morphisms of $\bbS^d(\calH^\fop)$;
	\item
	$\sq(\forget^\fop\times_\calB 1)$
	sends horizontal morphisms of $\bbS^d(\calHbp^\fop)$
	to horizontal morphisms of $\bbS^d(\calH^\fop)$;
	\item
	$\sq(\forget^\fop\times_\calB 1)$
	sends 2-cells of $\bbS^d(\calHbp^\fop)$
	to 2-cells of $\bbS^d(\calH^\fop)$.
\end{itemize}
It is trivial to check these conditions.

To see that the symmetric monoidal structure of $\sq(\forget^\fop\times_\calB 1)$
restricts to one for $\bbS(\forget)$, we must check that 
the unit and monoidality constraints for $\sq(\forget^\fop\times_\calB 1)$,
once restricted to $\bbS^d(\calHbp^\fop)$,
take values in $\bbS^d(\calH^\fop)$.
In more detail, we must check that:
\begin{itemize}
	\item
	the isomorphism
	$\sq(\forget^\fop\times_\calB 1)_{I,0}$
	and the components of the natural isomorphism
	$\sq(\forget^\fop\times_\calB 1)_{\otimes,0}$
	are vertical morphisms in $\bbS^d(\calH^\fop)$;
	\item
	the isomorphism 
	$\sq(\forget^\fop\times_\calB 1)_{I,1}$
	and the components of the natural isomorphim
	$\sq(\forget^\fop\times_\calB 1)_{\otimes,1}$
	at horizontal morphisms of $\bbS^d(\calHbp^\fop)$
	are 2-cells of $\bbS^d(\calH^\fop)$.
\end{itemize}
Again these conditions hold trivially.
\end{definition}

\begin{definition}[The symmetric monoidal double functor $\bbS(\Pi_1)$]
The symmetric monoidal double functor
\[\bbS(\Pi_1)\colon \bbS^d(\calHbp^\fop)\longrightarrow \bbS^d(\calG^\fop)\]
is obtained from the symmetric monoidal functor
$\Pi_1\colon \calHbp\to \calG$ exactly as $\bbS(\forget)$ was obtained
from the functor $\forget$ in Definition~\ref{def:bbS-forget}.
In order for this to be possible, we must verify that $\sq(\Pi_1^\fop\times_\calB 1)$ 
satisfies (the analogues of) the five conditions appearing 
in Definition~\ref{def:bbS-forget}.

To check that vertical morphisms are preserved, it suffices to check that if
$(X,P)\to(Y,Q)$ is a morphism of families of h-graphs with basepoints over the same base,
such that the underlying map $X\to Y$ is an h-embedding of families of h-graphs,
then the induced $\Pi_1(X,P)\to\Pi_1(Y,Q)$ has the basis extension property.
This is shown in Lemma~\ref{lem:h-embedding-basis-extension} below.

To check that horizontal morphisms are preserved, it suffices to check that
if $X$ and $Y$ are families of h-graphs over the same base $B$,
equipped with the same basepoints $P$,
and $m,n\colon B\to\Z$ are locally constant functions that satisfy the condition
\[
	m-n=-d\bigl(\chi(X)-\chi(Y)\bigr),
\]
then they also satisfy the condition
\[
	m-n=d\bigl(r(\Pi_1(X,P)- r(\Pi_1(Y,P))\bigr).
\]
Letting $p$ denote the cardinality of $P$, we have
$\chi(X)=p-r(\Pi_1(X,P))$ and $\chi(Y)=p-r(\Pi_1(Y,P))$,
so the second condition holds.

To check that 2-cells are preserved, it suffices to check that a 
commutative square of h-graphs with basepoints
	\[\xymatrix{
		(X,P)
		\ar[r]
		\ar[d]
		&
		(Y,P)
		\ar[d]
		\\
		(Z,Q)
		\ar[r]
		&
		(W,Q),
	}\]
in which the underlying square of h-graphs is a homotopy cofibre square
with its left and right edges h-embeddings,
is turned by $\Pi_1(-)$ into a pushout square of groupoids.
This is a simple consequence of an appropriate version of the 
van Kampen theorem \cite[Theorem 17']{Higgins}.

To check that the monoidality and unit constraints satisfy the required
conditions is trivial since isomorphisms of finite free groupoids
have the basis extension property, and since commutative squares of 
finite free groupoids whose vertical edges are isomorphisms are 
pushout squares.
\end{definition}
		
\begin{lemma}\label{lem:h-embedding-basis-extension}
Let $(X,P)\to (Y,Q)$ be a morphism of h-graphs with basepoints
whose underlying map of h-graphs is an h-embedding.
Then the induced morphism $\Pi_1(X,P)\to\Pi_1(Y,Q)$
has the basis extension property.
\end{lemma}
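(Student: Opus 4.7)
The plan is to prove the lemma by reducing to elementary cases, exploiting the easy observation that the basis extension property is closed under composition of morphisms of finite free groupoids: if $F\colon\sP\to\sR$ and $G\colon\sR\to\sQ$ both have it, then so does $G\circ F$. Thus it suffices to factor $\Pi_1(X,P)\to\Pi_1(Y,Q)$ as a finite composite of morphisms each of which can be seen directly to have the property.

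First I would reduce to a concrete cellular model. Since $\Pi_1(X,P)$ is invariant up to canonical isomorphism under homotopy equivalences of h-graphs rel basepoints (by the mapping-cylinder definition used throughout subsection~\ref{subsec:h-graphs-to-groupoids}), and since an h-embedding $X\to Y$ is by definition witnessed by a homotopy cofibre square with vertices a finite set and an h-graph, standard cofibrant replacement in the Str\o m model structure together with CW approximation allow one to assume that $X$ is a sub-CW-complex of a finite $1$-dimensional CW complex $Y$, that $P\subset X$ and $Q\subset Y$ lie in the $0$-skeleta, and that the given injection $P\hookrightarrow Q$ realises $P$ as a subset of $Q$.

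Next I would decompose the resulting inclusion $(X,P)\hookrightarrow (Y,Q)$ as a finite composition of elementary inclusions of the following four types, arranged so that the axiom that every component contain a basepoint is preserved throughout:
\begin{itemize}
\item[(a)] attach a single $1$-cell between two (possibly equal) existing vertices, leaving basepoints unchanged;
\item[(b)] attach a new vertex together with a single $1$-cell joining it to an existing vertex, leaving basepoints unchanged;
\item[(c)] adjoin a new isolated $0$-cell, declared to be a basepoint in the new component;
\item[(d)] promote an existing non-basepoint vertex to a basepoint.
\end{itemize}
It remains to verify basis extension in each of these elementary cases. In (b) the inclusion is a strong deformation retract rel basepoints, so $\Pi_1$ sends it to an isomorphism of groupoids. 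In (c) the new groupoid is the coproduct of the old with a one-object discrete groupoid and a basis extends by the empty set. For (a) and (d) the groupoid van Kampen theorem (see \cite[Theorem~17']{Higgins}) identifies the new groupoid as the old one with a single generator freely adjoined: in (a) the generator is the new edge, concatenated with paths chosen in the ambient graph from its endpoints to basepoints; in (d) the generator is a path from the new basepoint to an existing basepoint in the same component. In each case adjoining this single morphism to a given basis produces a basis of the enlarged groupoid, which is precisely the basis extension property; the count of added generators agrees with the rank formula $r(\Pi_1(X,P))=|P|-\chi(X)$.

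The main obstacle will be the first step: converting an h-embedding into a strict sub-CW-complex inclusion, while simultaneously arranging that the basepoints sit in the $0$-skeleton and that $P$ becomes a genuine subset of $Q$, requires some care so that the resulting model induces the original morphism of fundamental groupoids up to canonical isomorphism. Once this reduction is in place, the decomposition into elementary moves and the case-by-case verification are straightforward.
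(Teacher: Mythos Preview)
Your approach is correct and the elementary-move analysis in cases (a)--(d) is sound, but it is genuinely different from the paper's argument. The paper never reduces to an explicit CW model. Instead it works directly with the defining homotopy cofibre square of the h-embedding: after discarding components of $Y$ not meeting $X$ and factoring off the easy step $(Y,P)\to(Y,Q)$, it homotopes the map $A\to X$ so as to factor through the basepoints $P$, then refactors the cofibre square through an intermediate pushout so that the square with $P\to X$ along the top becomes a homotopy cofibre square. A single application of van Kampen turns this into a pushout square of finite free groupoids, and the proof concludes with the one-line observation that a pushout of a morphism out of a discrete groupoid (namely $\Pi_1(P,P)\to\Pi_1(C,P)$) has the basis extension property, hence so does its pushout $\Pi_1(X,P)\to\Pi_1(Y,P)$.

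The trade-off is clear. Your argument is more concrete and makes the added basis elements visible, but it front-loads exactly the difficulty you flag as the ``main obstacle'': rigidifying an arbitrary h-embedding with basepoints into a sub-CW inclusion with basepoints in the $0$-skeleton, compatibly with the given $\Pi_1$ morphism. The paper sidesteps this entirely by staying at the level of homotopy cofibre squares and using van Kampen once, which also explains why the paper's definition of h-embedding is phrased that way. If you want to keep your approach, the cleanest way to handle the reduction is to replace $Y$ by the double mapping cylinder of $X\leftarrow A\to B$ (with $A$ a genuine finite set and $B$ a finite graph), then CW-approximate $X$ and subdivide so that the basepoints and the image of $A$ land in the $0$-skeleton; this produces the sub-CW model you need without circularity.
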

		
\begin{proof}
By discarding the components of $Y$ that do not meet the image of $X$
(which does not affect the property of being an h-embedding),
we may assume that the image of $X$ in $Y$ meets every component.
Then the morphism factors as $(X,P)\to (Y,P)\to (Y,Q)$, and 
$\Pi_1(Y,P)\to\Pi_1(Y,Q)$ clearly has the basis extension property
(to a basis of $\Pi_1(Y,P)$ attach a single morphism from each element of $Q\setminus P$
to some element of $P$).
So we may assume that $Q=P$.

Let us write $f\colon X\to Y$ for the map underlying the given morphism
$(X,P)\to (Y,P)$ and $u\colon P\to X$ for the basepoints of $X$.
Take a square
\[\xymatrix{
		A\ar[r]^{g}\ar[d]_{h} & X\ar[d]^f\\
		B\ar[r]_k & Y
}\]
that witnesses $f$ as an h-embedding,
so that $B$ is an h-graph and $A$ has the homotopy type of a finite set.
We may assume that $h$ is a closed cofibration and that
$g$ factors as $u\circ l$ for some map $l\colon A\to P$.
(To achieve this, we first choose a homotopy $F\colon A\times [0,1]\to X$
from $g$ to a map $g'$ that factors through $u$,
and then replace $B$ with the mapping cylinder of $h$,
the map $h$ with the standard inclusion,
the map $g$ with $g'$,
and the map $k$ by its extension by $f\circ F$.)
Now we may factorise the above square as
\[\xymatrix{
		A\ar[r]\ar[d]_{h} & P \ar[r]^u\ar[d] & X\ar[d]^f\\
		B\ar[r] & C\ar[r] & Y
}\]
where the left-hand square is a pushout along a closed cofibration 
and therefore a homotopy cofibre square.
Then the right-hand square is a homotopy cofibre square
by the two-out-of-three property dual to~\cite[13.3.15]{Hirschhorn},
and $C$ is an h-graph by Lemma~\ref{lm:pushout}.

Applying $\Pi_1(-,P)$ to the right-hand square above produces a pushout
square of finite free groupoids.
(This follows from an appropriate version
of the van Kampen theorem \cite[Theorem 17']{Higgins}.)
Now the morphism $\Pi_1(X,P)\to\Pi_1(Y,P)$ is a pushout of the morphism
$\Pi_1(P,P)\to\Pi_1(B,P)$.  Since the latter evidently has the basis extension
property, it follows that the former does too, and the lemma is proved.
\end{proof}

\begin{definition}[The symmetric monoidal double functor $\bbS(B(G^{(-)}))$]
\label{def:bbS-B-G-blank}
We define
\[
		B(G^{(-)})\times'_\calB 1
		\colon
		\calG^\fop\times_\calB\calB^\ds
		\longrightarrow
		\calM\times_\calB\calB^\ds
\]
to be the  symmetric monoidal functor
that sends a pair $(\sP,m)$ consisting of a fibrewise finite groupoid $\sP$
over a base $B$ and a locally constant function $m\colon B\to\Z$ to the pair
$(B(G^{\sP}),m\circ\pi_\sP)$,
where $\pi_\sP\colon B(G^{\Ob(\sP)})\to B$ is the projection map.
(The construction of a simpler functor $B(G^{(-)})\times_\calB 1$ is precluded
by the fact that $B(G^{(-)})$ is not a functor over $\calB$.)
This induces a symmetric monoidal double functor 
\[
	\sq(B(G^{(-)})\times'_\calB 1)\colon
	\sq(\calG^\fop\times_\calB\calB^\ds)
	\longrightarrow
	\sq(\calM\times_\calB\calB^\ds),
\]
and by restricting the domain and range, we obtain
the symmetric monoidal double functor
\[\bbS(B(G^{(-)}))\colon \bbS^d(\calG^\fop)\longrightarrow\bbS^\ds(\calM).\]
For this last step to be possible we must verify that $\sq(B(G^{(-)})\times'_\calB 1)$
satisfies (the analogues of) the five conditions appearing 
in Definition~\ref{def:bbS-forget}.
Unravelling the definitions,
and in particular using the proof of Proposition~\ref{prop:functors-manifolds}
for the definition of the fibrewise smooth structure on the spaces $B(G^\sP)$,
we see that these checks amount to the following.

For vertical morphisms to be preserved, one must check that
if a morphism $f\colon \sP\to\sQ$ between finite free groupoids
has the basis extension property, then the map of smooth manifolds
$\fun(\sQ,G)\to \fun(\sP,G)$ is a smooth submersion.
But we may choose a basis $\Gamma$ for $\sP$ and extend its image $f\Gamma$
to a basis $f\Gamma\sqcup\Delta$ for $\sQ$, and then the induced map
$\fun(\sQ,G)\to \fun(\sP,G)$ may be identified with the map
$G^{f\Gamma\sqcup\Delta}\to G^\Gamma$, which is a smooth submersion
(it is a projection map from one product of copies of $G$ to a product of fewer copies
of $G$).

For horizontal morphisms to be preserved, we must check the following.
Let $\sP$ and $\sQ$ be fibrewise finite free groupoids over the same base $B$,
with the same spaces of objects, and let $m,n\colon B\to\Z$ be locally constant
functions.
We must show that if the fibrewise ranks of $\sP$ and $\sQ$ satisfy the equation
\[
	m-n=d(r(\sP)-r(\sQ))
\]
then the fibrewise dimensions of $B(G^\sP)$ and $B(G^\sQ)$ satisfy the equation
\[
	m+|B(G^{\sP})| = n+|B(G^{\sQ})|.
\]
But according to Proposition~\ref{prop:functors-manifolds},
$|B(G^{\sP})|=\dim(G)\cdot r(\sP)$ and $|B(G^{\sQ})|=\dim(G)\cdot r(\sQ)$,
while we have $d=-\dim(G)$, and so the required condition holds.

To check that 2-cells are preserved, we must check that
if the square on the left is a pushout of finite free groupoids
\[\xymatrix{
	\sP\ar[r]\ar[d]
	&
	\sQ\ar[d]
	\\
	\sR\ar[r]
	&
	\sS
}
\qquad\qquad
\xymatrix{
	\fun(\sS,G)\ar[r]\ar[d]
	&
	\fun(\sR,G)\ar[d]
	\\
	\fun(\sQ,G)\ar[r]
	&
	\fun(\sP,G)
}
\]
then the square on the right is a pullback square of spaces.
This is immediate.

That the unit and monoidality constraints satisfy the required
conditions is again immediate, since an isomorphism of fibrewise
manifolds is a submersion on fibres, and since a commutative
square of manifolds whose vertical edges are isomorphisms
is a pullback square.
\end{definition}

\subsection{Constructing $\widetilde U^G$ and $U^G$}
\label{subsec:UG}
Now we have constructed the left and upper edges of the diagram~\eqref{diagram-two}.
In this subsection we will construct the remaining part of that diagram,
thereby completing the programme outlined at the start of 
this section, and hence the construction of our HHGFT.

We begin by constructing the intermediate functor $\widetilde U^G$ and the natural
transformation $U_\mfld\circ\bbS(B(G^{(-)}))\circ\bbS(\Pi_1)\Rightarrow \widetilde U^G$. 
We require that the vertical part of $\widetilde U^G$ will be exactly the functor
$(X,P,m)\mapsto H_{\ast-m}(BG^X)$.
We start by examining the vertical part of 
$U_\mfld\circ\bbS(B(G^{(-)}))\circ\bbS(\Pi_1)$.

\begin{lemma}
\label{lm:vertpartiso}
The vertical part of the composite 
$U_\mfld\circ\bbS(B(G^{(-)}))\circ\bbS(\Pi_1)$ is symmetric
monoidally naturally isomorphic to the functor that sends a triple
$(X,P,m)$, consisting of a family of h-graphs with basepoints $(X,P)$ 
over a good base space
$B$ and a locally constant function $m\colon B\to\Z$,
to the homology $H_{\ast-m}(BG^X)$.
\end{lemma}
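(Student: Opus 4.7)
My plan is to assemble the natural isomorphism out of the symmetric monoidal natural transformations supplied by Propositions~\ref{prop:pi1cmp} and~\ref{prop:funmapcmp}. First I will unravel the composite on an object $(X,P,m)$, where $(X,P)$ is a family of h-graphs with basepoints over a good base $B$. The functor $\bbS(\Pi_1)$ sends $(X,P,m)$ to $(\Pi_1(X,P),m)$; the functor $\bbS(B(G^{(-)}))$ then sends this to $(B(G^{\Pi_1(X,P)}) \to BG^P\times B,\, m\circ\pi)$, where $\pi\colon BG^P\times B \to B$ is the projection; and finally $U_\mfld$ yields $H_{\ast-m\circ\pi}(B(G^{\Pi_1(X,P)}))$. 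Since $m\circ\pi$ is the pullback of $m\colon B \to \bbZ$, this is the same as $H_{\ast-m}(B(G^{\Pi_1(X,P)}))$ with the degree shift pulled back along the canonical projection to $B$; the target functor $(X,P,m) \mapsto H_{\ast-m}(BG^X)$ is described analogously.

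Second, I will construct the components of the required natural isomorphism from the zig-zag of fibrewise homotopy equivalences over $B$
\[
BG^X \xleftrightarrow{\homot} BG^{WQ(X,P)} \xleftrightarrow{\homot} BG^{B\Pi_1(X,P)} \xleftarrow{\Theta} B(G^{\Pi_1(X,P)}),
\]
obtained by applying the fibrewise mapping space functor $BG^{(-)}\colon \calS^\fop \to \calS$ to the zig-zag of Proposition~\ref{prop:pi1cmp} and appending the component of the symmetric monoidal transformation $\Theta$ of Proposition~\ref{prop:funmapcmp} at $\Pi_1(X,P)$. All of these arrows are fibrewise homotopy equivalences over $B$: the fibrewise mapping space functor preserves fibrewise homotopy equivalences, and the component of $\Theta$ at $\Pi_1(X,P)$ is a fibrewise homotopy equivalence over $B$ by the proof of Proposition~\ref{prop:funmapcmp} (in particular by Lemma~\ref{lm:theta-eq} combined with the Dold fibration criterion used there). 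Each arrow therefore induces an isomorphism on $H_{\ast-m}$, and the composite of these isomorphisms defines the component $\Upsilon_{(X,P,m)}$ of the sought natural isomorphism.

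Finally I will verify naturality and symmetric monoidality. Because each constituent of the zig-zag is the image under $BG^{(-)}$ of a component of a symmetric monoidal transformation of fibrations over $\calU$ (in the case of the first two arrows), or of a symmetric monoidal natural transformation of symmetric monoidal functors (in the case of $\Theta$), the isomorphism $\Upsilon$ is automatically natural with respect to all morphisms in $\calHbp^\fop\times_\calB\calB^\ds$, and in particular with respect to the vertical morphisms of $\bbS^d(\calHbp^\fop)$; symmetric monoidality follows in the same way. The main obstacle in executing this plan is organisational rather than conceptual: one must carefully track base spaces and degree shifts through the three intermediate categories $\bbS^d(\calHbp^\fop)$, $\bbS^d(\calG^\fop)$ and $\bbS^\ds(\calM)$, verify that the pullbacks of $m$ through the various projections to $B$ match the degree shifts introduced by the functors, and unpack the whiskering by $Q$ present in Proposition~\ref{prop:pi1cmp} so as to present the resulting zig-zag as a single chain of fibrewise homotopy equivalences over the common base $B$.
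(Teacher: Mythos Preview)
Your proposal is correct and follows essentially the same approach as the paper: both assemble the isomorphism from Propositions~\ref{prop:pi1cmp} and~\ref{prop:funmapcmp} by pushing the zig-zag of fibrewise homotopy equivalences through $BG^{(-)}$, appending the transformation $\Theta$, and then passing to homology where the homotopy equivalences become isomorphisms. The only difference is organisational: the paper packages everything into an explicit extended diagram obtained by taking fibre products with a subcategory $\calB^\ds_0\subset\calB^\ds$ (so that the degree shifts are tracked automatically and the vertical categories of the various double categories appear as honest subcategories), whereas you work componentwise and check the compatibility of the degree shifts $m$ and $m\circ\pi$ by hand; the paper also only uses that $\Theta$ is a homotopy equivalence on total spaces (the statement of Proposition~\ref{prop:funmapcmp}), while you invoke the slightly stronger fibrewise fact extracted from its proof.
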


\begin{proof}
Let us write $\calB^\ds_0$ for the symmetric monoidal subcategory of
$\calB^\ds$ with the same objects, but containing only those morphisms
$f\colon (B,m)\to (C,n)$ for which $n\circ f = m$.
Then diagram~\eqref{diagram-one} extends to the following diagram.
\[
\xymatrix@+3em{
	\calHbp^\fop\times_\calB\calB^\ds_0
	\ar[d]_{\forget\times_\calB 1}
	\ar[r]^{\Pi_1^\fop\times_\calB 1}
	&
	\calG^\fop\times_\calB\calB^\ds_0
	\ar[r]^{B(G^{(-)})\times'_\calB 1}
	\ar[d]^{B^\fop\times_\calB 1}
	\ar@{<=>}[dl]_\simeq
	&
	\calM\times_\calB\calB^\ds_0
	\ar[d]^{\forget\times_\calB 1}
	\ar@{=>}[dl]_\simeq
	\ar[r]^{(U_\mfld)_0}
	&
	\grmod
	\ar@{=}[d]
	\\
	\calH^\fop\times_\calB\calB^\ds_0
	\ar[r]_{\inclusion\times_\calB 1}
	&
	\calS^\fop\times_\calB\calB^\ds_0
	\ar[r]_{(\inclusion\circ BG^{(-)})\times_\calB 1}
	&
	\widehat\calS\times_\calB\calB^\ds_0
	\ar[r]_H
	&
	\grmod
}\]
The left hand square of this diagram is obtained from the left hand square of
\eqref{diagram-one} by taking fibre product over $\calB$ with $\calB^\ds_0$.
This is possible since the categories, functors and transformations in 
the left hand square in \eqref{diagram-one} are over $\calU$, and hence over $\calB$.
The bottom and right hand edges of the middle square are obtained in a similar way.  
The top functor in the middle square cannot be obtained in this way since the functor
$B(G^{(-)})\colon\calG^\fop\to\calM$ in \eqref{diagram-one} is not over $\calB$.
Instead it is defined in the same way as the analogous functor
in Definition~\ref{def:bbS-B-G-blank}.
The natural transformation in the middle square is obtained directly from
the natural transformation in the right hand square of \eqref{diagram-one}.
The functor $H$ is the functor
sending a pair $(S,m)$ consisting of a space $S$ over
a base space $B$ and a locally constant function 
$m\colon B\to\Z$ to $H_{\ast-m}(S)$.

Observe that the vertical parts of the double categories in \eqref{diagram-two} are all
subcategories of the categories on the left and top edges of this new diagram,
and that the vertical part of $U_\mfld\circ\bbS(B(G^{(-)}))\circ\bbS(\Pi_1)$
is the restriction
to these subcategories of the composite along the top edge.
Prolonging the natural transformations in the left-hand square
(which consist of fibrewise homotopy equivalences)
by $BG^{(-)}\times_\calB 1$ produces a zig-zag of natural transformations
consisting of homotopy equivalences on total spaces.
Composing this new zig-zag with the natural transformation in the middle square
(which consists of homotopy equivalences on total spaces)
produces another zig-zag of natural transformations
consisting of homotopy equivalences on total spaces.
Prolonging this zig-zag by the functor $H$
(which sends homotopy equivalences of total spaces to isomorphisms)
produces a zig-zag of natural \emph{isomorphisms}
between the vertical part of $U_\mfld\circ\bbS(B(G^{(-)}))\circ\bbS(\Pi_1)$
and the restriction of the composite
$H\circ(BG^{(-)}\times_\calB 1)\circ(\inclusion\times_\calB 1)\circ (\forget\times_\calB 1)$.
to the vertical part of $\bbS^d(\calHbp^\fop)$.
But the latter functor is simply the assignment
$(X,P,m)\mapsto H_{\ast-m}(BG^X)$.
\end{proof}

The (entirely formal) proof of the next lemma is left to the reader.

\begin{lemma}
\label{lem:moddfun}
Let $\calC$ be a symmetric monoidal category, let $\bbD$
be a symmetric monoidal double category, and let 
$F \colon \bbD \to \sq(\calC)$ be a  symmetric monoidal 
double functor. Suppose $G_0 \colon \bbD_0 \to \calC$ 
is a  symmetric monoidal functor and 
$\eta_0\colon F_0 \to G_0$
is a symmetric monoidal natural isomorphism. 
Then $G_0$ and $\eta_0$ extend in a unique way 
to a  symmetric monoidal double functor 
$G \colon \bbD \to \sq(\calC)$ and a symmetric monoidal
isomorphism $\eta \colon F \to G$. \qed
\end{lemma}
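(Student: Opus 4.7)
The approach will be to transport the double-functor data of $F$ across the natural isomorphism $\eta_0$. Specifically, for each horizontal 1-morphism $M \colon A \xhto{} B$ in $\bbD$, I will define the horizontal 1-morphism $G_1(M) \colon G_0(A) \to G_0(B)$ in $\sq(\calC)$ by the conjugation formula
\[
G_1(M) \;=\; \eta_{0,B} \circ F_1(M) \circ \eta_{0,A}^{-1}.
\]
For a 2-cell $\varphi$ of $\bbD$ with horizontal boundaries $M$, $N$ and vertical boundaries $f$, $g$, the image $F_1(\varphi)$ is a commutative square in $\calC$, and pre- and post-composing its sides with suitable components of $\eta_0^{\pm 1}$ yields a commutative square with horizontal sides $G_1(M)$, $G_1(N)$ and vertical sides $G_0(f)$, $G_0(g)$; this square serves as $G_1(\varphi)$. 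The components of the transformation $\eta$ are forced: $\eta_1$ applied to the horizontal 1-morphism $M$ is the square with sides $F_1(M)$, $G_1(M)$, $\eta_{0,A}$, $\eta_{0,B}$, which commutes by the very definition of $G_1(M)$.

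Next I will verify the double-functor axioms of Definition~\ref{def:double-functor} for $G$ and the transformation axioms for $\eta$. Preservation of horizontal composition $G_1(N \odot M) = G_1(N) \odot G_1(M)$ reduces, after cancelling adjacent factors of $\eta_{0,B}^{\phantom{1}} \circ \eta_{0,B}^{-1}$, to the corresponding identity for $F_1$. Preservation of units $G_1(U_A) = U_{G_0 A}$ uses that $F_1(U_A) = U_{F_0 A}$ together with the fact that conjugation by $\eta_{0,A}$ sends an identity to an identity. The compatibilities with $S$, $T$ are immediate since $\eta_0$ has trivial image under the projection to objects of $\bbD_0$. Functoriality of $G_1$ on 2-cells and the transformation axioms for $\eta$ follow by similar short diagram chases, using the naturality of $\eta_0$.

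For the symmetric monoidal structure, I will use the symmetric monoidal structure on $\eta_0$ to transport $F$'s monoidality and unit constraints $F_\tensor$, $F_I$ to analogous constraints $G_\tensor$, $G_I$ making $G$ a symmetric monoidal double functor. Concretely, the isomorphism $G_{\tensor, X, Y} \colon G_0(X) \tensor G_0(Y) \to G_0(X \tensor Y)$ is already specified (it comes from the monoidality of $G_0$), and the compatibility with $G_1$ on horizontal 1-morphisms follows from the monoidality compatibility for $F$ after inserting $\eta_0$'s; the hexagon, pentagon, and unit axioms all reduce formally to those for $F$. The transformation $\eta$ is symmetric monoidal by construction, since its components are the components of the already-monoidal $\eta_0$ considered at appropriate objects.

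The only delicate point is uniqueness, but this is also formal: the requirement that $\eta \colon F \Rightarrow G$ be a transformation of double functors with prescribed $\eta_0$ forces $\eta_1$ on $M$ to be the square with vertical sides $\eta_{0,A}$ and $\eta_{0,B}$, and the naturality square for $\eta_1$ at a 2-cell forces $G_1$ on both horizontal 1-morphisms and 2-cells to agree with the definitions above. I do not anticipate a genuine obstacle; this really is a bookkeeping exercise, and the main care needed is to lay out the several diagrams involved in the monoidality coherence clearly enough to make the verification transparent.
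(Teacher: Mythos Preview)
Your proposal is correct and is exactly the standard transport-of-structure argument one would expect; the paper itself declares the proof ``entirely formal'' and leaves it to the reader, so there is no alternative approach to compare against. Your explicit formula $G_1(M)=\eta_{0,B}\circ F_1(M)\circ\eta_{0,A}^{-1}$ and the observation that 2-cells in $\sq(\calC)$ are determined by their boundaries (so that commutativity is the only thing to check) are precisely the points that make the argument go through.
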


It follows from Lemmas~\ref{lm:vertpartiso} and~\ref{lem:moddfun}
that the composite $U_\mfld\circ\bbS(B(G^{(-)}))\circ\bbS(\Pi_1)$
is symmetric monoidally naturally isomorphic to a functor
$\widetilde U^G\colon \bbS^d(\calHbp^\fop)\to\sq(\grmod)^\hop$
whose vertical part is exactly $(X,P,m)\mapsto H_{\ast-m}(BG^X)$.
In particular, the vertical part of $\widetilde U^G$ factors 
as a composite
\[
	\bbS^d(\calHbp^\fop)_0
	\xrightarrow{\ \bbS(\forget)_0\ }
	\bbS^d(\calH^\fop)_0
	\longrightarrow
	\sq(\grmod)^\hop_0
\]
where the second functor is precisely
$(X,m)\mapsto H_{\ast-m}(BG^X)$.
It remains to show that $\widetilde U^G$ itself factors in 
an analogous way as indicated in diagram~\eqref{diagram-two}.

\begin{lemma}
Let
$\widetilde V\colon \bbS^d(\calHbp^\fop)\to\sq(\grmod)^\hop$
be a  symmetric monoidal double functor whose vertical part factors as
\[
	\bbS^d(\calHbp^\fop)_0
	\xrightarrow{\ \bbS(\forget)_0\ }
	\bbS^d(\calH^\fop)_0
	\xrightarrow{\ V_0\ }
	\sq(\grmod)^\hop_0
\]
for some  symmetric monoidal functor $V_0$.
Then $\widetilde V$ itself factors as
\[
	\bbS^d(\calHbp^\fop)
	\xrightarrow{\ \bbS(\forget)\ }
	\bbS^d(\calH^\fop)
	\xrightarrow{\ V\ }
	\sq(\grmod)^\hop_0
\]
for some uniquely determined  symmetric monoidal double functor $V$
whose vertical part is $V_0$.
\end{lemma}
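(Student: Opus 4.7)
The plan is to define $V$ by ``picking basepoints'' on each piece of data and to prove well-definedness via a refinement trick for basepoint sets. Since $\bbS(\forget)$ is clearly surjective on objects, vertical morphisms, horizontal morphisms, and 2-cells -- any piece of data in $\bbS^d(\calH^\fop)$ can be lifted to $\bbS^d(\calHbp^\fop)$ by an appropriate choice of basepoints -- uniqueness of $V$ is immediate. For existence, set $V$ equal to $V_0$ on the vertical category. For a horizontal morphism $\phi\colon (X,m)\xhto{}(Y,n)$ of $\bbS^d(\calH^\fop)$, represented by a positive map $\alpha\colon Y\to X$ of families of h-graphs over a common base $B$, choose any set of basepoints $u_Y\colon P\times B\to Y$ for $Y$, set $u_X := \alpha\circ u_Y$ (which is a valid set of basepoints for $X$ by positivity of $\alpha$), and define $V(\phi) := \widetilde V(\tilde\phi)$, where $\tilde\phi\colon(X,P,u_X,m)\xhto{}(Y,P,u_Y,n)$ is the resulting horizontal morphism of $\bbS^d(\calHbp^\fop)$.

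The crucial step is to show that $V(\phi)$ is independent of the choice of basepoints. Given two choices $(P_1,u_{Y,1})$ and $(P_2,u_{Y,2})$, I will form the common refinement $Q := P_1\sqcup P_2$ with $u_Y := u_{Y,1}\sqcup u_{Y,2}$ and corresponding $u_X := u_{X,1}\sqcup u_{X,2}$. The inclusions $P_i\hookrightarrow Q$ give rise to vertical morphisms $\sigma_i\colon(X,Q,u_X,m)\to(X,P_i,u_{X,i},m)$ and $\tau_i\colon(Y,Q,u_Y,n)\to(Y,P_i,u_{Y,i},n)$ in $\bbS^d(\calHbp^\fop)$, each represented in $\calHbp^\fop$ by a zigzag with identity cartesian leg and ``identity on the family, basepoint inclusion'' vertical leg; crucially, their underlying morphisms in $\bbS^d(\calH^\fop)$ are the identities. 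A direct unwinding using the composition formula of the fibrewise opposite shows that the square with edges $\sigma_i$, $\tau_i$, $\tilde\phi_Q$, $\tilde\phi_i$ is a 2-cell of $\bbS^d(\calHbp^\fop)$: its underlying square in $\sq(\calH^\fop\times_\calB\calB^\ds)$ is the identity 2-cell on $\phi$, whose fibrewise squares are trivially homotopy cofibre. Applying $\widetilde V$ and using the hypothesis $\widetilde V_0 = V_0\circ\bbS(\forget)_0$, which forces $\widetilde V(\sigma_i)$ and $\widetilde V(\tau_i)$ to be identity maps, the resulting commutative square in $\sq(\grmod)^\hop$ yields $\widetilde V(\tilde\phi_Q) = \widetilde V(\tilde\phi_i)$ for $i=1,2$, whence $\widetilde V(\tilde\phi_1) = \widetilde V(\tilde\phi_2)$ as required.

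With $V$ defined on horizontal morphisms, its value on 2-cells is forced, since 2-cells of $\sq(\grmod)^\hop$ are determined by their edges; only commutativity of the resulting square needs verification. For this, one lifts any 2-cell of $\bbS^d(\calH^\fop)$ with edges $f,g,\phi,\phi'$ to one of $\bbS^d(\calHbp^\fop)$ by choosing basepoints on the targets of the vertical edges, transporting them via the cartesian legs of the zigzags representing $f$ and $g$, and enlarging them as needed so that they are compatible with the positive maps representing $\phi$ and $\phi'$. Double functoriality of $\widetilde V$ then yields commutativity after applying $\widetilde V$, and by the previous step this commutative square equals the corresponding square built from $V$ applied to the edges. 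The composition and identity axioms for $V$, together with the symmetric monoidal constraints $V_\tensor$ and $V_I$ inherited from $\widetilde V_\tensor$ and $\widetilde V_I$, follow along the same lines by lifting all relevant data consistently.

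The main obstacle I anticipate is the combinatorial bookkeeping of basepoint choices in the lifting procedure for 2-cells, and similarly for the composition and monoidality axioms. The vertical morphisms of $\bbS^d(\calHbp^\fop)$ are zigzags in $\calHbp^\fop$ whose cartesian legs pin down basepoints on intermediate families by isomorphism, while horizontal morphisms impose the identification $Q = P$ between source and target basepoints on each horizontal edge; balancing these constraints across the four edges of a 2-cell (or the multiple edges of a composable chain) requires care. Iterating the refinement trick of the well-definedness step -- passing to common refinements of basepoint sets whenever two independent lifts need to be compared -- should reconcile all compatibilities, but the verification is likely to be lengthy in the spirit of the analogous arguments in Section~\ref{subsection:proof}.
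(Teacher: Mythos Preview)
Your proposal is correct and follows essentially the same approach as the paper: define $V$ on horizontal morphisms by choosing basepoints on the target, transporting them via positivity, and applying $\widetilde V$ to the lift; prove independence of the choice via the common refinement $P_1\sqcup P_2$ and the resulting 2-cell whose vertical edges sit over identities; then observe that 2-cells in $\sq(\grmod)^\hop$ are determined by their boundaries so only commutativity remains to be checked. The paper is terser about the verifications (``easily performed''), while you are more explicit about the 2-cell lifting procedure and the attendant bookkeeping, but the underlying argument is the same.
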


\begin{proof}
Let us construct the required $V$.
We begin by defining $V$ on horizontal morphisms.
Let $f\colon (X,m)\to (Y,n)$ be a horizontal morphism of $\bbS^d(\calH^\fop)$
between objects over $B$.
We may choose basepoints $P\times B\to Y$
(for all objects of $\calH$ admit basepoints by assumption)
and since $f$ is positive there is a unique choice of basepoints
$P\times B\to X$ such that $f$ lifts to a horizontal morphism 
$f^P\colon (X,P,m)\to (Y,P,n)$ in $\bbS^d(\calHbp^\fop)$.
Then $\widetilde V_1(f^P)$ is a horizontal morphism
$V_0 (X,m)\to V_0 (Y,n)$ in $\sq(\grmod)^\hop$, 
and we define $V_1(f)=\widetilde V_1(f^P)$.
To see that $V_1(f)$ is independent of the choice of basepoints for $Y$,
let $Q\times B\to Y$ be a second choice of basepoints.
Then we obtain a third choice $(P\sqcup Q)\times B\to Y$ and a 2-cell
\[\xymatrix{
	(X,P\sqcup Q,m)
	\ar[r]^{f^{P\sqcup Q}}
	\ar[d]
	&
	(Y,P\sqcup Q,n)
	\ar[d]
	\\
	(X,P,m)
	\ar[r]_{f^P}
	&
	(Y,P,n)
}\]
in $\bbS^d(\calHbp^\fop)$
whose vertical edges lie over identity morphisms in $\bbS^d(\calH^\fop)$.
It follows that $\widetilde V_1(f^P)=\widetilde V_1(f^{P\sqcup Q})$,
and similarly $\widetilde V_1(f^Q)=\widetilde V_1(f^{P\sqcup Q})$.
Thus $V_1(f)$ is well-defined.

The last paragraph defined the constituent functor $V_1$ 
of the symmetric monoidal double functor $V$ on objects. 
The data left to specify to complete the definition of $V$
are the values of $V_1$ on morphisms and the unit and 
monoidality constraints $V_{I,1}$ and $V_{\otimes,1}$. 
But these data consist of 2-cells in $\sq(\grmod)^\hop$ which,
since they amount to commutative squares in $\grmod$, are determined
by their boundary 1-morphisms; and the boundary 1-morphisms
of the requisite 2-cells are determined by the parts of $V$ 
that have already been specified, namely the symmetric monoidal functor 
$V_0$ and the values of $V$ on horizontal morphisms.
Thus to complete the construction of $V$, we must check that the
resulting squares in $\grmod$ do indeed commute,
and that the axioms for a symmetric monoidal double functor 
are satisfied. All these verifications are easily performed. 
Moreover, it is clear from the definition that the composite
$V\circ \bbS(\forget)$ is $\widetilde V$.

It remains to demonstrate that $V$ is uniquely determined.
By the discussion above, any choice for $V$ is determined 
by the given functor $V_0$ and the values $V$ takes on
horizontal 1-morphisms.
But the equation $V_1(f)=\widetilde V_1(f^P)$ 
determining the value of $V$ on a horizontal
1-morphism $f$ must clearly hold if $\widetilde V$ is to 
factor as $V\circ\bbS(\forget)$.
\end{proof}

Applying the last lemma to $\tilde U^G$
provides us with a symmetric monoidal double functor
$U^G\colon\bbS^d(\calH^\fop)\to\sq(\grmod)^\hop$ whose vertical part is
$(X,n)\mapsto H_{\ast-n}(BG^X)$.
This completes our task for this subsection, and indeed completes the construction
of our homological h-graph field theory.


\section{Comparison with Chataur and Menichi's theory}
\label{sec:ChataurMenichi}

Let $G$ be a compact Lie group for which Chataur and Menichi's 
HCFT $\phi^\mathrm{CM}$ is defined, that is, 
either a finite group or a connected compact Lie group.
The aim of this section is to show that the HHGFT $\Phi^G$
constructed in the proof of Theorem~\ref{thm:main} is an 
extension of Chataur and Menichi's HCFT 
$\phi^\mathrm{CM}$ \cite{ChataurMenichi} as claimed
after the statement of Theorem~\ref{thm:main}.
The two theories do agree on 1-manifolds,
both sending a closed 1-manifold $X$ to the homology $H_\ast(BG^X)$.
Thus we are left to show that the two operations
\[
	\Phi^G(\Sigma), \phi^\mathrm{CM}(\Sigma) 
		\colon
	H_{\ast+\dim(G)\cdot\chi(\Sigma,X)}(B\Diff(\Sigma))
		\tensor
	H_\ast(BG^X)
		\longto
	H_\ast(BG^Y)
\]
agree when $\Sigma$ is a closed cobordism from $X$ to $Y$
admissible in the theory $\phi^\mathrm{CM}$. 
(We remind the reader that 
we are working over a fixed field $\bbF$ of characteristic 2,
so the determinant twisting of \cite[appendix C]{ChataurMenichi}
reduces to a degree shift.)
Recall that in the case of a finite group $G$, $\Sigma$ is admissible 
if the inclusion $X\incl \Sigma$ is surjective on $\pi_0$,
while in the case of a compact connected Lie group $G$,
both $X \incl \Sigma$ and $Y \incl \Sigma$
must be surjective on $\pi_0$. 

The operations $\Phi^G(\Sigma)$
and $\phi^\mathrm{CM}(\Sigma)$ both arise
from a push-pull construction that considers the diagram
\[
	B\Diff(\Sigma) \times BG^X 
		\xleftarrow{\ \ p_0\ \ }
	BG^{U\Diff(\Sigma)}
		\xrightarrow{\ \ p_1\ \ }
	B\Diff(\Sigma) \times BG^Y,
\]
where $U\Diff(\Sigma)=E\Diff(\Sigma) \times_{\Diff(\Sigma)} \Sigma$ is the 
universal family of cobordisms from $X$ to $Y$ over $B\Diff(\Sigma)$.
They are defined as composites
\[\xymatrix@!0@C=4em{
	&
	{H_{\ast+\dim(G)\cdot\chi(\Sigma,X)}(B\Diff(\Sigma))\otimes H_\ast(BG^X)}
	\\
	\ar[r]^-{\times}
	&
	*+[r]{H_{\ast+\dim(G)\cdot\chi(\Sigma,X)}(B\Diff(\Sigma) \times BG^X)}
	\\
	\ar[r]^-{p_0^\natural}
	&
	*+[r]{H_{\ast}(BG^{U\Diff(\Sigma)})}
	\\
	\ar[r]^-{(p_1)\ast}
	&
	*+[r]{H_\ast(B\Diff(\Sigma) \times BG^Y)}
	\\
	\ar[r]^-{\pr_\ast}
	&
	*+[r]{H_\ast(BG^Y)}
}\]
where $p_0^\natural$ is an umkehr map induced by $p_0$.
See subsection~\ref{subsec:push-pull-construction}
and the proof of \cite[Theorem 9]{ChataurMenichi}.
The point of divergence in the construction of the operations
$\Phi^G(\Sigma)$ and $\phi^\mathrm{CM}(\Sigma)$ is in the 
construction of this umkehr map $p_0^\natural$. Thus to prove that 
the two operations agree, it is enough to verify the
following proposition.
\begin{proposition}
\label{prop:umkehrs-agree}
The umkehr maps $p_0^\natural$ employed in the constructions of $\Phi^G(\Sigma)$ 
and $\phi^\mathrm{CM}(\Sigma)$ agree.
\end{proposition}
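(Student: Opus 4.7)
The plan is to prove the proposition by unwinding the two definitions of $p_0^\natural$ into a common form and then appealing to the fact that when a fibration of Poincaré spaces is itself modelled by a fibrewise smooth manifold, the Serre-spectral-sequence umkehr of Chataur and Menichi agrees with the fibrewise Pontryagin--Thom umkehr used in Section~\ref{sec:umkehr-maps}. Concretely, I would first write out $p_0^\natural$ as constructed in $\Phi^G$: since $\Sigma$ is admissible for $\phi^\mathrm{CM}$ (and in particular $X\incl \Sigma$ is surjective on $\pi_0$), I can choose a finite set of basepoints $P\subset X$ meeting every component of $X$ and hence every component of every fibre of $U\Diff(\Sigma)\to B\Diff(\Sigma)$. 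Using the zig-zag of fibrewise homotopy equivalences of Proposition~\ref{prop:pi1cmp} and Proposition~\ref{prop:funmapcmp}, the map $p_0\colon BG^{U\Diff(\Sigma)}\to B\Diff(\Sigma)\times BG^X$ fits into a commutative diagram (up to zig-zag of homotopy equivalences) over $B\Diff(\Sigma)\times BG^P$ whose terms are genuine fibrewise closed smooth manifolds, and $p_0^\natural$ in $\Phi^G$ is by definition the Crabb--James Gysin map of the fibrewise smooth map between them.

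Next, I would analyse the Chataur--Menichi umkehr map for the same $p_0$. Their admissibility condition ensures that $p_0$ is a Serre fibration whose fibre is homotopy equivalent to a smooth closed manifold (a product of copies of $G$, whose dimension matches $\dim(G)\cdot\chi(\Sigma,X)$), and they define $p_0^\natural$ via the fibrewise Thom class and the Serre spectral sequence. The key comparison step is then to show that after replacing $p_0$ by the fibrewise smooth map between fibrewise manifolds $B(G^{\Pi_1(U\Diff(\Sigma),P)})\to B\Diff(\Sigma)\times B(G^{\Pi_1(X,P)})$ fibred over $B\Diff(\Sigma)\times BG^P$, their Serre-spectral-sequence umkehr coincides with the Pontryagin--Thom umkehr. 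This is essentially the statement that the two canonical constructions of an integration-along-the-fibre for a smooth fibre bundle of closed manifolds agree, which in turn follows from the fact that both are characterised by the property that capping with the fibrewise Thom class of the tangent bundle along the fibre gives the identity on homology of the fibre; alternatively, one may use the explicit chain-level description of the Crabb--James Gysin map and identify its associated graded on the Serre filtration with the Poincaré duality isomorphism used by Chataur and Menichi.

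The main obstacle will be the second step: precisely matching Chataur and Menichi's construction (which is phrased in terms of trivialisations of the fibration by local charts, a choice of Thom class, and a detour through the Leray--Hirsch theorem in their section~5) with the Crabb--James fibrewise Pontryagin--Thom construction. To handle this rigorously I would first check the statement locally over a numerable cover of $B\Diff(\Sigma)\times BG^P$ which trivialises both $U\Diff(\Sigma)$ and $\Pi_1(U\Diff(\Sigma),P)$, where both umkehr maps reduce to the classical integration-along-the-fibre of a product bundle $M\times B'\to B'$ with $M=G^r$ for some $r$, and this classical map is well-known to be independent of the construction used. A Mayer--Vietoris / colimit argument, together with the naturality of both constructions under pullback (Crabb--James' naturality being \cite[Proposition II.12.16]{CrabbJames}, and Chataur--Menichi's naturality being explicit in their construction), would then upgrade this local agreement to the required global statement. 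Compatibility with the remaining factors in the push-pull composite is automatic since $\times$, $(p_1)_\ast$, and $\pr_\ast$ are standard induced maps and appear identically in both theories, so the equality of $p_0^\natural$'s yields $\Phi^G(\Sigma)=\phi^\mathrm{CM}(\Sigma)$.
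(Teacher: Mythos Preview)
Your high-level strategy matches the paper's: both replace $p_0$ by the fibrewise smooth map $\tilde p_0\colon B(G^{\Pi_1(U\Diff(\Sigma),P)})\to B(G^{\Pi_1(X,P)})\times B\Diff(\Sigma)$ over $BG^P\times B\Diff(\Sigma)$ and then compare the two umkehr constructions on that level. However, there are two points where your proposal diverges from, or falls short of, the paper's argument.

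First, you treat only the Serre-spectral-sequence umkehr, but Chataur and Menichi use that construction only for \emph{connected} $G$; for \emph{finite} $G$ their umkehr is the ordinary transfer map of a finite covering. The paper handles the two cases separately: for finite $G$ it observes that both source and target of $\tilde p_0$ are finite covers of $BG^P\times B\Diff(\Sigma)$ and proves (Lemma~\ref{lm:umkehrs-for-maps-bw-coverings}) that the Crabb--James Gysin map agrees with the Becker--Gottlieb transfer on the level of stable maps. Your proposal needs an analogous branch.

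Second, for connected $G$ the paper's comparison argument is organised differently from yours and relies on a structural observation you do not make explicit. The key point is that $\tilde p_0$ is not merely fibrewise smooth but a \emph{fibrewise bundle}: because every component of $\Sigma$ meets the outgoing boundary (part of Chataur--Menichi's admissibility for connected $G$), the inclusion $X\hookrightarrow\Sigma$ is an h-embedding, hence $\Pi_1(X,P)\to\Pi_1(\Sigma,P)$ has the basis extension property, and so $\fun(\Pi_1(\Sigma,P),G)\to\fun(\Pi_1(X,P),G)$ is literally a projection $G^{a+b}\to G^a$. This lets the paper reduce (via a fibrewise-transverse pullback square and \cite[II.12.11, II.12.16]{CrabbJames}) the comparison for $\tilde p_0$ over $B$ to the comparison for a single fibrewise manifold $M\to B$ viewed as a map to $B=\id_B$. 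That final comparison (Lemma~\ref{lm:umkehrs-for-fw-mfld}) is then proved by a skeletal induction over a finite CW approximation: both umkehr constructions have relative versions natural under pullback, so one reduces to the trivial bundle $F\times(D^n,S^{n-1})\to(D^n,S^{n-1})$, where the check is elementary. Your Mayer--Vietoris/numerable-cover proposal is plausible in spirit, but neither umkehr construction sits in a Mayer--Vietoris long exact sequence in the variable $B$ in any obvious way, so patching local agreements into a global one is not automatic; the paper's cell-by-cell argument avoids this difficulty. Your alternative suggestion---that both maps are characterised by their interaction with a fibrewise Thom class---could be made to work, but you would need to state and prove such a characterisation for the Crabb--James map, which the paper does not do.
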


In the case of a finite group $G$, Chataur and Menichi use 
transfer maps as the umkehr maps in the 
construction of $\phi^\mathrm{CM}$, while
in the case of a connected group $G$, they obtain
the umkehr maps from the Serre spectral sequence in the way we now recall.

\begin{construction}
\label{cons:serre-ss-umkehr}
Let $F \to X \xto{p} B$ be a Serre fibration in which the base space $B$ is connected,
the fibre $F$ is homotopy equivalent to a closed $d$-dimensional manifold, 
and the action of $\pi_1(B)$ on $H_d(F)$ is trivial. 
Then there is an induced umkehr map
\[
	p^\sharp \colon H_\ast(B) \longto H_{\ast+d}(X)
\]
defined as the composite
\[
	H_\ast(B) 
		\longto 
	H_\ast(B;\,H_d(F)) = E^2_{\ast,d} 
		\longto 
	E^\infty_{\ast,d}
		\longto
	H_{\ast+d}(X)
\]
where the first map is induced by the fundamental class of $F$;
$E^2$ and $E^\infty$ refer to the respective pages in the 
Serre spectral sequence of the fibration $p$; the penultimate map
is the projection onto a quotient; and the last map is the monomorphism
given by the identification of $E^\infty_{\ast,d}$ as the first 
stage in a filtration of $H_{\ast+d}(X)$.
\end{construction}

On the other hand, 
the umkehr maps used in the construction of $\Phi^G$ are obtained, 
roughly speaking, by first replacing the domain and the target 
of the map by homotopy equivalent
fibrewise closed manifolds and the map itself by a fibrewise
smooth map, and by then taking the umkehr map constructed
in subsection~\ref{subsec:umkehr-maps-for-fibrewise-manifolds}
using a fibrewise Pontryagin--Thom construction. 
In the case of the map $p_0$,
the replacement for $p_0$ can be obtained by first choosing 
a set of basepoints $P \to X$ for $X$, and is then given by 
the map of fibrewise manifolds over $BG^P \times B\Diff(\Sigma)$
\begin{equation}
\label{diag:p0replacement}
\vcenter{\xymatrix@C-2em{
	B(G^{\Pi_1(U\Diff(\Sigma),P)})
	\ar[dr]
	\ar[rr]^-{\tilde p_0}
	&&
	B(G^{\Pi_1(X,P)}) \times B\Diff(\Sigma)
	\ar[dl]
	\\
	&
	BG^P\times B\Diff(\Sigma)
}}
\end{equation}
induced by the inclusion $X\times B\Diff(\Sigma) \incl U\Diff(\Sigma)$.
Alternatively, we may identify the above diagram with the one obtained
from the diagram
\[\xymatrix{
	\fun(\Pi_1(\Sigma,P),G)
	\ar[rr]
	\ar[dr]
	&&
	\fun(\Pi_1(X,P),G)
	\ar[dl]
	\\
	&
	\pt
}\]
of $G^P\times \Diff(\Sigma)$-spaces by the Borel construction.
(See the discussion preceding Lemma~\ref{lem:single-smooth-structure}
and the proof of Proposition~\ref{prop:functors-manifolds}.)

Let us now focus on the case where $G$ is connected.
Since in this case 
every component of $\Sigma$ is required to have non-empty outgoing boundary, 
it follows as in Example~\ref{ex:occobordism}
that the inclusion $X \incl \Sigma$ is an h-embedding, and hence
the map
\[
	\fun(\Pi_1(\Sigma,P),G)
		\longto
	\fun(\Pi_1(X,P),G)
\]
can be identified with a projection from a direct product 
of a number of copies of $G$ onto some of its factors. 
See Lemma~\ref{lem:h-embedding-basis-extension} and 
the discussion regarding vertical morphisms in 
Definition~\ref{def:bbS-B-G-blank}. Thus the map $\tilde p_0$
is a fibrewise bundle in the sense of the following definition.

\begin{definition}
Let $p\colon M \to N$ be a map of fibrewise closed manifolds
over a base space $B$. We call $p$ a \emph{fibrewise bundle}
if each point $b\in B$ has a neighbourhood $U$ over which $M$ and $N$
admit local trivializations $M|U \isom M'\times U$ and 
$N|U \isom N'\times U$ under which the map $p$ corresponds to a map
\[
	p' \times 1 \colon M'\times U \longto N' \times U
\]
where the map $p' \colon M' \to N'$ is a smooth fibre bundle. 
\end{definition}

Since the umkehr map obtained from the Serre spectral
sequence is compatible with homotopy equivalences, to prove 
Proposition~\ref{prop:umkehrs-agree} in the case of a 
connected compact Lie group $G$, it suffices to verify 
the following lemma.
\begin{lemma}
\label{lm:umkehrs-for-fw-bundles}
Let $M$ and $N$ be fibrewise manifolds over a base space $B$, 
and let $p \colon M \to N$ be a fibrewise bundle. Then the two maps
\[
	p^\sharp, p^! \colon H_\ast(N) \longto H_{\ast+|M|-|N|} (M)
\]
agree, where $p^\sharp$ denotes the umkehr map of
Construction~\ref{cons:serre-ss-umkehr} and $p^!$ the umkehr map
constructed in subsection~\ref{subsec:umkehr-maps-for-fibrewise-manifolds}.
\end{lemma}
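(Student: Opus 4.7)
The plan is to reduce the comparison to the classical fact that, for any smooth fibre bundle with closed fibre, the fibre-integration Gysin map can equivalently be computed by the Pontryagin--Thom construction or by feeding the fibre fundamental class through the Serre spectral sequence. The key observation is that a fibrewise bundle $p \colon M \to N$ over $B$, in the sense of the lemma, is in particular a smooth fibre bundle in its own right, with fibre a closed manifold $F$ of dimension $d := |M|-|N|$; once this observation is in place, both $p^\sharp$ and $p^!$ collapse to the same Gysin homomorphism.

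First I would verify this observation by patching. By the very definition of a fibrewise bundle, each point of $B$ admits a neighbourhood $U$ over which $p$ is isomorphic to a product $p' \times \id_U$ with $p' \colon M' \to N'$ a smooth fibre bundle of closed manifolds, and these local trivializations are compatible with the fibrewise smooth structures on $M$ and $N$. The transition data therefore glue to endow $p \colon M \to N$ with the structure of a smooth fibre bundle with fibre $F$.

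Second, I would reduce both sides to the case where $B$ is a finite CW complex. For $p^!$ this is built into the definition in subsection~\ref{subsec:umkehr-maps-for-fibrewise-manifolds} via the colimit over finite CW approximations. For $p^\sharp$ one invokes naturality of the Serre spectral sequence under pullback along the maps $B_\lambda \to B$ coming from a CW approximation, together with the evident compatibility of the fibre fundamental class under such pullback; here one works componentwise on $N$ so as to retain the connectedness and trivial-monodromy hypotheses of Construction~\ref{cons:serre-ss-umkehr}. With $B$ a finite CW complex and $p$ now a genuine smooth fibre bundle, both $p^!$ and $p^\sharp$ coincide with the classical Gysin / integration-along-fibre homomorphism of $p$. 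For $p^!$ this is the content of \cite[Proposition~II.12.11 and the discussion on p.~266]{CrabbJames}, which identifies the fibrewise Pontryagin--Thom map on a smooth fibre bundle with the classical fibre-integration map. For $p^\sharp$ this identification is by design: Construction~\ref{cons:serre-ss-umkehr} is the textbook description of the Gysin map via the $E^2$-edge of the Serre spectral sequence.

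The main obstacle I anticipate lies not in any single step but in the bookkeeping required for the reduction: ensuring that the Serre-spectral-sequence umkehr behaves correctly over a possibly disconnected base $N$ whose components carry fibres of differing dimensions, and that the PT and Serre-SS umkehrs both commute with the colimit over CW approximations of $B$. These checks are routine instances of naturality, but they need to be performed carefully so that the final identification of both $p^!$ and $p^\sharp$ with the classical Gysin map of the smooth fibre bundle $p$ is unambiguous.
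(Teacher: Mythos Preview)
Your overall strategy---reduce to viewing $p$ as a smooth fibre bundle over $N$ and then invoke the classical identification of Pontryagin--Thom and Serre spectral sequence Gysin maps---is the same as the paper's. But you gloss over the step that carries most of the content.

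The map $p^!$ is \emph{defined} in subsection~\ref{subsec:umkehr-maps-for-fibrewise-manifolds} using the fibrewise manifold structures of $M$ and $N$ \emph{over $B$}: it comes from the stable map $N^{-\tau_N} \to M^{-\tau_M}$ where $\tau_M, \tau_N$ are the vertical tangent bundles relative to $B$. To identify this with the classical Gysin map of $p \colon M \to N$ as a bundle in its own right, you need to know that $p^!$ computed over $B$ agrees with $p^!$ computed over $N$. This is not immediate---the two Pontryagin--Thom constructions use different tangent data---and the paper establishes it by analysing a specific fibrewise transverse pullback square built from $q \colon N \to B$ and the pullbacks $q^\ast M$, $q^\ast N$, invoking \cite[Proposition~II.12.16]{CrabbJames} and the fibrewise version of \cite[Proposition~II.12.11]{CrabbJames}. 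Your citation of II.12.11 alone does not deliver this; that proposition gives compatibility under transverse pullback, which is one ingredient, but you still have to set up and justify the diagram that reduces the over-$B$ umkehr to the over-$N$ one.

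After that reduction (the paper's Lemma~\ref{lm:umkehrs-for-fw-mfld}), you simply declare that both maps are ``the classical Gysin map'' and stop. The paper does not take this for granted: it proves the equality $p^\sharp = p^!$ for a fibrewise closed manifold $p \colon M \to B$ by a skeletal filtration argument, introducing relative versions $p^\sharp, p^! \colon H_\ast(B^{(n)}, B^{(n-1)}) \to H_{\ast+|M|}(M^{(n)}, M^{(n-1)})$ and using naturality under characteristic maps of cells to reduce to the trivial case $F \times (D^n, S^{n-1}) \to (D^n, S^{n-1})$. If you want to cite this as folklore you need a precise reference; the paper points to the strategy of \cite[Lemma~6.22]{BoardmanNotesCh5} but still carries out the check.
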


Since homology is compactly supported and both kinds of umkehr maps
under consideration are compatible with pullbacks, to prove 
Lemma~\ref{lm:umkehrs-for-fw-bundles}, it is enough to 
consider the special case where the base space $B$ is a finite CW complex
(and hence in particular a compact ENR). 
Then $N$ is also a compact ENR, as follows for example from 
\cite[Proposition IV.8.10]{DoldLAT} and the assumption that 
$N$ is a fibrewise smooth fibre bundle over $B$ with fibre
a closed manifold. Let $q$ denote the map $N \to B$, and
consider the  commutative diagram
\[\xymatrix@!0@C=2.2em@R=6ex{
	&
	M 
	\ar `l[ldd]`[dddd]_\id [dddd]
	\ar[dd]
	\ar[drr]^p
	\ar[rrrr]^p
	&&&&
	N
	\ar `r[rdd]`[dddd]^\id [dddd]
	\ar[dd]
	\ar[dll]_\id
	\\
	&&&
	N
	\ar[dd]_(0.35)q|!{[dr];[dl]}{\hole}
	\\
	&
	q^\ast M
	\ar[urr]
	\ar[dd]
	\ar[rrrr]
	&&&&
	q^\ast N
	\ar[ull]
	\ar[dd]
	&
	\\
	&&&
	B
	\\
	&
	M
	\ar[urr]^{qp}
	\ar[rrrr]^p
	&&&&
	\ar[ull]_q
	N
}\]
Observe that the assumption that $p$ is a fibrewise bundle in particular
makes $M$ into a fibrewise closed manifold over $N$, and that 
the top square of the above diagram is a fibrewise transverse
pullback square. Now \cite[Proposition II.12.16]{CrabbJames} 
together with the fibrewise version of 
\cite[Proposition II.12.11]{CrabbJames} applied to the top square 
of the above diagram 
lead to the conclusion that the umkehr maps $p^!$ for $p$
considered as a map of fibrewise manifolds over $B$ on one hand
and as a map of fibrewise manifolds over $N$ on the other hand 
coincide. Thus to prove Lemma~\ref{lm:umkehrs-for-fw-bundles}, 
it suffices to verify the following lemma.
\begin{lemma}
\label{lm:umkehrs-for-fw-mfld}
Let $p\colon M \to B$ be a fibrewise closed manifold over 
a base space $B$, and consider $B$ as a fibrewise closed manifold
over itself via the identity map.
Then the two umkehr maps
\[
	p^\sharp, p^! \colon H_\ast(B) \longto H_{\ast+|M|} (M)
\]
agree.
\end{lemma}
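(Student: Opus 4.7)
The plan is to identify both umkehrs with ``integration along the fibre'' for the smooth fibre bundle $p\colon M \to B$ whose fibre $F$ is a closed $d$-manifold, and in particular to show that in the case of a trivial bundle both send $x \in H_\ast(B)$ to $[F] \times x \in H_{\ast+d}(F\times B)$, where $[F] \in H_d(F)$ is the fundamental class.

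First, I would use the naturality of both $p^\sharp$ and $p^!$ under pullbacks of fibre bundles along maps of base spaces, and the compatibility of both with the presentation $H_\ast(B) = \colim_\lambda H_\ast(B_\lambda)$ over the poset of finite CW subcomplexes $B_\lambda$ of $\Gamma B$, to reduce to the case where $B$ is itself a finite CW complex. Compatibility of $p^!$ with such colimits is part of the definition adopted in subsection~\ref{subsec:umkehr-maps-for-fibrewise-manifolds}; compatibility of $p^\sharp$ follows from naturality of the Serre spectral sequence.

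Next, working on a finite CW base, I would cover $B$ by finitely many open subsets $U_1, \dots, U_k$ over each of which $p$ admits a trivialization; this is possible by local triviality of the fibrewise bundle together with compactness of $B$. By induction on $k$ via Mayer-Vietoris, the comparison $p^\sharp = p^!$ reduces to the same equality for trivial bundles. The inductive step requires that both umkehrs fit into maps of Mayer-Vietoris sequences associated to a decomposition $B = U \cup V$; for $p^!$ this is a consequence of \cite[Proposition II.12.16]{CrabbJames} together with the locality of the Pontryagin-Thom construction, and for $p^\sharp$ it follows from the naturality of the Serre spectral sequence under open inclusions. In the trivial bundle case $M = F \times B$, both umkehrs compute to the cross product with $[F]$: for $p^\sharp$ the Serre spectral sequence degenerates, and the identification of $E^\infty_{\ast,d}$ as a direct summand of $H_{\ast+d}(F\times B)$ matches the cross product $x \mapsto [F] \times x$; for $p^!$ a fibrewise embedding $M \hookrightarrow B \times \R^N$ can be chosen to be the product of an embedding $F \hookrightarrow \R^N$ with $B$, so the associated Pontryagin-Thom collapse splits as $c_F \wedge \id_{B_+}$, and the induced map on homology, composed with the Thom isomorphism for the normal bundle, gives $x \mapsto [F] \times x$ as well.

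The main obstacle I anticipate is a careful verification of the Mayer-Vietoris compatibility of the Serre spectral sequence umkehr $p^\sharp$, since this requires comparing spectral sequences associated to different fibre bundles rather than merely homology groups, and tracking the edge maps through the long exact sequence. An alternative that bypasses this difficulty is to appeal to the classical identification of the Pontryagin-Thom (Becker-Gottlieb) transfer for a smooth fibre bundle with the Serre spectral sequence umkehr in top fibre degree; combined with the observation that the Crabb-James Gysin map $p^\umk$ for a smooth fibre bundle coincides with the Becker-Gottlieb transfer, this gives $p^! = p^\sharp$ directly and reduces the proof to citing established results.
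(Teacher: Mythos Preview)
Your approach is plausible but genuinely different from the paper's, and the obstacle you flag is exactly the point where the two diverge.

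The paper does not use Mayer--Vietoris over a trivializing cover. Instead, after reducing to a finite CW base, it introduces \emph{relative} versions of both umkehr maps
\[
    p^\sharp, p^! \colon H_\ast(B,B_0) \longto H_{\ast+|M|}(M,M_0)
\]
for a subcomplex $B_0\subset B$ and $M_0=p^{-1}(B_0)$, and shows these are natural under pullback squares. It then runs a cellular argument: since $H_n(B^{(n)})\to H_n(B)$ is onto and $H_{n+|M|}(M^{(n)})\to H_{n+|M|}(M^{(n)},M^{(n-1)})$ is injective, it suffices to compare the relative umkehrs on $H_n(B^{(n)},B^{(n-1)})$, which is generated by images of characteristic maps. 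Naturality then reduces to the single case $q\colon F\times(D^n,S^{n-1})\to(D^n,S^{n-1})$, which is straightforward.

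Your Mayer--Vietoris induction would work, but the step you correctly identify as delicate---compatibility of $p^\sharp$ with the connecting homomorphism---is not covered by ``naturality under open inclusions''; handling it essentially forces you to introduce the same relative Serre spectral sequence machinery the paper uses anyway. The paper's skeletal approach buys a cleaner reduction: it needs only naturality under pullbacks (no connecting maps to chase) and lands directly on a single elementary relative pair. Your approach, by contrast, has the virtue of a more conceptual trivial-case computation (the cross product with $[F]$ over an arbitrary base rather than a disk), but pays for it in the glueing step. Your fallback of citing the Becker--Gottlieb identification is correct but, as you note, amounts to outsourcing the proof.
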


Again, to prove Lemma~\ref{lm:umkehrs-for-fw-mfld}, it is enough
to consider the case where $B$ is a finite CW complex. 
We can now employ the strategy outlined in the proof of 
\cite[Lemma~6.22]{BoardmanNotesCh5}
to verify that the two umkehr maps agree.
Both types of umkehr maps admit relative versions
in the sense that there are induced maps
\[
	p^\sharp, p^! \colon H_\ast(B,B_0) \longto H_\ast(M,M_0)
\]
associated to a map $p \colon (M,M_0) \to (B,B_0)$
where $p \colon M \to B$ is a fibrewise closed manifold over 
a finite CW complex $B$, $B_0 \subset B$ is a subcomplex,
and $M_0$ is the restriction of $M$ to $B_0$.
Moreover, these umkehr maps are natural in the sense that if 
$q \colon (N,N_0) \to (C,C_0)$ is another such map
and
\[\xymatrix{
	(N,N_0)
	\ar[r]^g 
	\ar[d]_q
	&
	(M,M_0)
	\ar[d]^p
	\\
	(C,C_0) 
	\ar[r]^f 
	&
	(B,B_0)
}\]
is a pullback square, then the square
\[\xymatrix{
	H_{\ast+|N|}(N,N_0) 
	\ar[r]^{g_\ast}
	&
	H_{\ast+|M|}(M,M_0)
	\\
	H_\ast(C,C_0)
	\ar[r]^{f_\ast}
	\ar[u]^{q^\sharp}
	&
	H_\ast(B,B_0)	
	\ar[u]_{p^\sharp}
}\]
commutes, and likewise for $p^!$ and $q^!$.
To construct the relative $p^\sharp$, one can simply 
use the relative Serre spectral sequence of the map 
$p \colon (M,M_0) \to (B,B_0)$; and to construct the relative $p^!$,
one uses a relativized form
\[
	p^\umk \colon \suspension^\infty B/B_0 \longto M^{-\tau_M}/ M_0^{-\tau_M}
\]
of the stable map \eqref{eq:gysinmap} (with $N=B$ and $\beta=0$)
together with a relative form of the Thom isomorphism.

Let $B^{(n)}$ denote the $n$-skeleton of $B$, and let 
$M^{(n)} = p^{-1}( B^{(n)} ) \subset M$. 
Since the map $H_n(B^{(n)}) \to H_n(B)$ is an epimorphism,
to show that the maps
\[
	p^\sharp, p^! \colon H_\ast(B) \longto H_{\ast+|M|} (M)
\]
agree, it is enough to show that the maps
\[
	p^\sharp, p^! \colon H_\ast(B^{(n)}) \longto H_{\ast+|M|} (M^{(n)})
\]
do. Since the map 
$H_{n+|M|} (M^{(n)}) \to H_{n+|M|}(M^{(n)},M^{(n-1)})$
is a monomorphism, it is enough to show that 
the maps
\[
	p^\sharp, p^! 
	\colon
	H_n(B^{(n)},B^{(n-1)})
	\longto
	H_{n+|M|}(M^{(n)},M^{(n-1)})
\]
coincide. Since the group
$H_n(B^{(n)},B^{(n-1)})$ is generated by the images of 
the maps
\[
	f_* \colon H_n(D^n,S^{n-1}) \longto H_n(B^{(n)},B^{(n-1)})
\]
as $f$ runs through the characteristic maps for the $n$-cells
of $B$, we see that it is enough to show that 
$q^\sharp = q^!$ when $q$ is a projection 
\[
	q \colon F\times (D^n,S^{n-1}) \longto (D^n,S^{n-1})
\]
with $F$ a closed manifold. This verification is straightforward. 
This concludes the proof of Lemma~\ref{lm:umkehrs-for-fw-mfld}
and therewith the proof that the operations $\Phi^G(\Sigma)$ 
and $\phi^\mathrm{CM}(\Sigma)$ agree when $G$ is a connected
compact Lie group.

Let us now assume that $G$ is a finite group. In this case, 
the domain and target of the map $\tilde{p}_0$ of 
diagram~\eqref{diag:p0replacement} are
(possibly disconnected) finite covering spaces of 
$BG^P \times B\Diff(\Sigma)$. Since the
transfer map is compatible with homotopy equivalences, 
to prove Proposition~\ref{prop:umkehrs-agree} in the case of a 
finite group $G$, it therefore suffices to verify the following lemma.
\begin{lemma}
\label{lm:umkehrs-for-maps-bw-coverings}
Let $M$ and $N$ be (possibly disconnected) finite covering spaces
of a path-connected base space $B$, and let 
$p \colon M \to N$ be a map over $B$.
Then the umkehr map 
\[
	p^! \colon H_\ast(N) \longto H_{\ast} (M)
\]
constructed in subsection~\ref{subsec:umkehr-maps-for-fibrewise-manifolds}
agrees with the transfer map induced by $p$. 
\end{lemma}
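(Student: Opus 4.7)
The plan is to reduce the lemma to a direct comparison between the Crabb--James fibrewise Pontryagin--Thom construction and the classical construction of the transfer for a finite covering.

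First I would reduce to the case where $B$ is a finite CW complex. This uses that both $p^!$ and the classical transfer are compatible with base change along continuous maps and hence with the directed colimit taken over finite CW approximations of $B$, following the same pattern as in the preceding discussion of Lemma~\ref{lm:umkehrs-for-fw-mfld}. In this setting $p^!$ is given directly by Definition~\ref{def:f-shriek-CW}, without the intervening colimit. Since the fibres of both $M \to B$ and $N \to B$ are finite discrete sets, the fibrewise tangent bundles $\tau_M$ and $\tau_N$ both vanish, and so the Thom spectra $M^{-\tau_M}$ and $N^{-\tau_N}$ reduce to the suspension spectra $\Sigma^\infty M_+$ and $\Sigma^\infty N_+$ respectively. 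Taking $\beta = 0$ in Definition~\ref{def:f-shriek-CW}, the Thom isomorphisms collapse to identities, and consequently $p^!$ is nothing more than the map on homology induced by the Crabb--James Gysin map
\[
	p^{\umk}\colon \Sigma^\infty N_+ \longto \Sigma^\infty M_+.
\]

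The core of the argument is then to identify $p^{\umk}$ with the classical stable transfer $\tau_p$ associated to the finite covering $p\colon M \to N$. The Crabb--James construction proceeds by choosing a fibrewise embedding of $p$ into a trivial vector bundle over $N$, taking a fibrewise tubular neighbourhood, and performing the resulting Pontryagin--Thom collapse. When $p$ is a finite covering, such an embedding exists because the fibres of $p$ are finite, the tubular neighbourhood is a disjoint union of fibrewise open neighbourhoods of the sheets of $p$, and the resulting collapse map is precisely the classical construction of the stable transfer for finite coverings, as given for example by Becker and Gottlieb. Matching the two constructions term by term yields $p^{\umk} = \tau_p$, and hence $p^!$ equals the transfer map on homology.

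The main obstacle I anticipate is the bookkeeping required in this final matching step, in particular making a choice of fibrewise embedding that is simultaneously valid for the Crabb--James construction and for a model of the transfer used by Chataur and Menichi. Naturality under pullback, together with additivity of both constructions under disjoint unions of coverings, should permit one to reduce this matching to the universal case of a trivial finite covering $\pt \sqcup \cdots \sqcup \pt \to \pt$ over a point, where both maps tautologically become the inclusion of summands into a wedge of sphere spectra, completing the verification.
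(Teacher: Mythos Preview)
Your proposal is correct and follows essentially the same route as the paper: reduce to a finite CW base, observe that the fibrewise tangent bundles vanish so that $p^!$ is induced directly by the Crabb--James stable map $p^{\umk}\colon \Sigma^\infty N_+ \to \Sigma^\infty M_+$, and then identify this with the Becker--Gottlieb transfer.

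The one organizational difference is in how the final identification is made. The paper first decomposes over components of $N$ and then, by the same pullback-square trick used just before Lemma~\ref{lm:umkehrs-for-fw-mfld}, reduces to the case $N=B$; at that point the Crabb--James Gysin map \emph{is} the Becker--Gottlieb construction by inspection, so no further reduction is needed. Your proposal instead matches the two stable maps directly over $B$ and suggests pushing the reduction all the way down to a trivial covering of a point. That works, but it is more than is required: once you have $N=B$ the two constructions coincide on the nose, so the paper's intermediate reduction to $N=B$ buys a shorter endgame than your reduction to a point.
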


Observe that if $N_\alpha$ is a component of $N$, 
then  $p$ restricts to give a finite 
(possibly disconnected and possibly empty) covering space
\[
	p_\alpha \colon  p^{-1}(N_\alpha) \longto N_\alpha
\]
of $N_\alpha$. Both $p^!$ and the transfer map associated
with $p$ decompose as direct sums of the corresponding maps
associated with the various $p_\alpha$, and 
if $p^{-1}(N_\alpha)$ is empty, 
then $p_\alpha^!$ and the transfer associated to $p_\alpha$ 
are both zero.
Thus it is enough to prove Lemma~\ref{lm:umkehrs-for-maps-bw-coverings}
when $N$ is connected and $M$ is a non-empty covering space 
of $N$. A similar argument as before now shows that we may 
further reduce to the case where $N = B$ and $B$ is a 
finite CW complex.
In this case, the claim can be proven by observing 
that the stable map
\[
	p^\umk \colon B_+ \longto M_+
\]
of equation~\eqref{eq:gysinmap} (with $N=B$ and $\beta=0$)
underlying $p^!$ agrees with Becker and Gottlieb's \cite{BeckerGottlieb} 
stable map underlying the transfer map associated to $p$.
This concludes the proof that the operations $\Phi^G(\Sigma)$ 
and $\phi^\mathrm{CM}(\Sigma)$ agree also when $G$ is a finite
group.


\section{An example of non-trivial higher operations}
\label{sec:higher-operations}

The aim of this section is to prove Theorem~\ref{thm:calc}
computing the operation
\begin{equation}
\label{eq:op-to-compute}
	\Phi^{\Z/2}(S/B(\Z/2))
	\colon 
	H_\ast B(\Z/2) \tensor H_\ast B(\Z/2) 
	\longto 
	H_\ast B(\Z/2) 
\end{equation}
associated to a certain family of h-graph cobordisms
$S/B(\Z/2) \colon \pt \hto \pt$.
The family $S/B(\Z/2)$ is defined in
Definition~\ref{def:family-s} below;
its fibres are modelled on the h-graph cobordism 
$S_0\colon \{p\} \hto \{q\}$
depicted below.
\begin{equation}
\label{eq:modelcob}
\begin{tikzpicture}[
		baseline=(current  bounding  box.center),
		scale=0.03,
		->-/.style={	decoration={	markings,
		  	mark=at position 0.5 with {\arrow{>}}
			},postaction={decorate}}
	]
	\clip (-15, -20) rectangle (110, 60);
	\path[ARC, fill=black] (0,0) circle (3);
	\node [below] at (0,0) {$p$};
	\path[ARC, fill=black] (100,0) circle (3);
	\node [below] at (100,0) {$q$};
	\path[ARC,->-] (0,0) -- (100,0) node [midway, above] {$c$};
	\path[ARC,->-] (0,0) .. controls (70,30) and (-30,70) 
						.. (0,0) node [midway,above] {$l$};
\end{tikzpicture}
\end{equation}
Moreover, we explain how the calculation of the operation 
\eqref{eq:op-to-compute} leads to 
the construction of non-trivial elements in the homology 
of the holomorph $\Hol(F_n) = F_n \rtimes \Aut(F_n)$ of the 
free group on $n$ generators. See Corollary~\ref{cor:hol-application}.
While it is possible to construct these elements of $H_\ast(B\Hol(F_n))$ 
by other means,
the proof of Corollary~\ref{cor:hol-application} illustrates what
we hope will be a fruitful pattern for 
constructing non-trivial elements in the difficult-to-understand
unstable homology of $\Hol(F_n)$, $\Aut(F_n)$, mapping 
class groups of surfaces, and other interesting groups 
whose homology groups parameterize operations in HHGFTs.

Before starting our work, we make the following remark
which expands on the comment we made after the statement of 
Theorem~\ref{thm:calc}.

\begin{remark}
\label{rk:calc}
The Pontryagin product on $H_\ast B(\Z/2)$ featuring 
in Theorem~\ref{thm:calc} can be computed
by dualizing the Hopf algebra structure on the cohomology
$H^\ast B(\Z/2) \isom \F[u]$ where the coproduct is induced
by the addition map of $\Z/2$. Explicitly, the homology $H_\ast B(\Z/2)$
is isomorphic as an algebra to the divided polynomial algebra
$\Gamma_\F[x]$ on a generator of degree 1, or what is the same,
the exterior algebra $\Lambda_\F (y_0,y_1,y_2,\ldots)$
where $y_i$ has degree $2^i$. 
See for example \cite[Example 3C.11 and p.~286]{Hatcher}.

As a consequence of the above calculation, we see that for every
$a\in H_i B(\Z/2)$, $i>0$, Theorem~\ref{thm:calc}
gives us a non-zero string topology operation 
$H_\ast B(\Z/2) \to H_{\ast+i} B(\Z/2)$,
as claimed following the statement of Theorem~\ref{thm:calc}. 
At least when $i$ is larger than $1$, 
this operation does not correspond to any HCFT operation.
To see this, observe that the only open-closed cobordisms
equivalent to $S_0$ are the cobordisms $U$ and $V$ of
Example~\ref{ex:whistlepunchcard}. But by Example~\ref{ex:diff-vs-haut}
neither $B\Diff(U)$ nor $B\Diff(V)$ has homology in dimensions
larger than $1$.
\end{remark}

Our goal now is to construct the family $S/B(\Z/2)$ and prove
Theorem~\ref{thm:calc}. 
We will obtain the family $S/B(\Z/2)$ from a diagram of 
fibrewise finite free groupoids over $B(\Z/2)$. We start 
with a series of lemmas.

\begin{lemma}
\label{lm:cobs-from-groupoids}
Let $\sX$ and $\sY$ be finite free groupoids, and let 
\[
	\sX\times B \xto{\ i\ } \sP \xot{\ j\ } \sY\times B
\]
be a diagram of fibrewise finite free groupoids over a 
good base space $B$ such that on every fibre, the map
$i$ is essentially surjective on objects and the map
$j$ has the basis extension property. 
Let $(B\sP)'$ denote the mapping cone of the map 
$(B\sX \sqcup B\sY)\times B \to B\sP$.
Then the diagram
\[
	B\sX \times B \longto (B\sP)' \longot B\sY \times B
\]
of spaces over $B$ is a family of h-graph cobordisms
$(B\sP)'/ B \colon B\sX \longhto B\sY$.
\end{lemma}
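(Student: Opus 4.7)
The plan is to verify in turn the three conditions in Definition~\ref{def:famcob}: that $(B\sP)'\to B$ is a Hurewicz fibration with h-graph fibres, that the combined inclusion $(B\sX\sqcup B\sY)\times B\to (B\sP)'$ is a closed fibrewise cofibration, and that on every fibre the right-hand map $B\sY\to (B\sP_b)'$ is an h-embedding. The first two conditions I would dispose of quickly. By the local triviality of fibrewise finite free groupoids and the paracompactness of the good base space $B$, the projection $B\sP\to B$ is a Hurewicz fibration as in Example~\ref{ex:universal-family}; another application of \cite[Proposition~1.3]{Clapp} shows that replacing $B\sP$ by the mapping cylinder $(B\sP)'$ preserves the fibration property, while by construction it turns the combined map from $(B\sX\sqcup B\sY)\times B$ into a closed fibrewise cofibration. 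Fibrewise, $(B\sP_b)'$ is homotopy equivalent to $B\sP_b$ relative to $B\sY$, and since $\sP_b$ is a finite free groupoid, $B\sP_b$ has the homotopy type of a graph $X_\Gamma$ as in Lemma~\ref{lm:graphmodel}; in particular it is an h-graph.

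The heart of the proof is the h-embedding condition, which after the above reductions amounts to showing that the map $j_\ast\colon B\sY\to B\sP_b$ induced by $j$ is an h-embedding on every fibre $b\in B$. Here I would exploit the basis extension property of $j$ to choose a basis $\Gamma=\{\gamma_i\}_{i\in I}$ of $\sY$ together with an extension $\{j(\gamma_i)\}_{i\in I}\cup\{\delta_k\}_{k\in K}$ to a basis of $\sP_b$, and then construct an auxiliary finite free groupoid $\sZ$ with object set $\Ob\sY\sqcup(\Ob\sP_b\setminus j(\Ob\sY))$ and basis the extension generators $\{\delta_k\}$ (their endpoints in $j(\Ob\sY)$ lifted, once and for all, to chosen preimages in $\Ob\sY$) together with one extra morphism $c_e$ for each edge $e$ of a chosen spanning forest on $\Ob\sY$ for the $j$-equivalence relation. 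Letting $\sA$ denote the discrete subgroupoid of $\sZ$ on $\Ob\sY$ and using the canonical maps to $\sY$ and to $\sP_b$, I obtain a commutative square of finite free groupoids whose image under the classifying space functor is the pushout square of spaces
\[
\xymatrix{
\Ob\sY \ar[r]\ar[d] & B\sY \ar[d]^{j_\ast} \\
B\sZ \ar[r] & B(\sY\cup_{\sA}\sZ)
}
\]
The left-hand vertical arrow is the inclusion of $0$-cells into a finite CW complex and hence a closed cofibration, so this is a homotopy cofibre square.

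To conclude I would verify that the canonical map $B(\sY\cup_{\sA}\sZ)\to B\sP_b$ is a homotopy equivalence under $B\sY$: by the construction of $\sZ$ and $\sA$, it collapses the contractible spanning-forest subcomplex formed by the $c_e$'s and identifies the resulting CW structure with that of $B\sP_b$. The square therefore exhibits $j_\ast\colon B\sY\to B\sP_b$ as an h-embedding, with attaching finite set $\Ob\sY$ and h-graph $B\sZ$, completing the proof. The main obstacle I anticipate is the combinatorial bookkeeping in the definition of $\sZ$: one must simultaneously add new objects (coming from $\Ob\sP_b\setminus j(\Ob\sY)$), new morphisms $\delta_k$ (with a choice of preimage for each endpoint lying in $j(\Ob\sY)$), and spanning-forest morphisms $c_e$ to resolve the object-identifications performed by $j$, in such a way that the resulting classifying space collapses back to $B\sP_b$ without distorting the structure inherited from $B\sY$. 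The essential-surjectivity hypothesis on $i$ plays no role in this verification and is instead needed for positivity of the resulting cobordism, which will matter for the applications but not for the conclusion of the lemma itself.
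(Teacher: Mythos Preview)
Your treatment of the fibration, cofibration, and h-graph-fibre conditions is correct and matches the paper. The gap is in the h-embedding step: the classifying-space functor does \emph{not} send your groupoid pushout $\sY\cup_{\sA}\sZ$ to a strict pushout of spaces. Already at the level of nerves, $N(\sY\cup_{\sA}\sZ)_1$ contains composites such as $\gamma_i\cdot\delta_k$ that are absent from $N\sY_1\cup_{N\sA_1}N\sZ_1$, so your square is not a pushout and your deduction ``pushout along a cofibration, hence homotopy cofibre square'' does not go through as stated. The conclusion is nevertheless true: the groupoid square is a pushout along a functor injective on objects, hence a homotopy pushout in groupoids, and $B$ preserves homotopy pushouts; but this is a different (and less elementary) argument than the one you gave.

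The paper avoids this issue entirely, and in doing so also eliminates your spanning-forest bookkeeping. It invokes Lemma~\ref{lm:graphmodel} to replace $B\sY$ and $B\sP_b$ by the explicit graph models $X_\Gamma$ and $X_{j(\Gamma)\cup\Delta}$, where $\Gamma$ is a basis of $\sY$ and $j(\Gamma)\cup\Delta$ is its extension to a basis of $\sP_b$ furnished by the basis extension property. The map $X_\Gamma\to X_{j(\Gamma)\cup\Delta}$ is then a map of finite graphs, and one writes down the homotopy cofibre square directly: take $A=\Ob\sY$ mapping into $X_\Gamma$ as the vertex set and into the graph with vertices $\Ob\sP_b$ and edges $\Delta$ via $j$ on objects (which need not be injective); the ordinary pushout of graphs is literally $X_{j(\Gamma)\cup\Delta}$. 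No auxiliary groupoid $\sZ$, no spanning forest, and no appeal to preservation of colimits by $B$ is needed. Your remark that essential surjectivity of $i$ is used only for positivity is correct; the paper's proof does record positivity, since it is needed downstream.
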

\begin{proof}
For any finite free groupoid $\sQ$, the classifying space $B\sQ$
is an h-graph. The map $B\sP \to B$ is a fibre bundle over 
a paracompact space and hence a fibration; 
by \cite[Proposition~1.3]{Clapp}, the map $(B\sP)' \to B$ 
is then also a fibration. The assumptions on $i$ and $j$ 
ensure that the maps from $B\sX \times B$ and $B\sY \times B$
into $B\sP$ are a positive map and an h-embedding, respectively; 
to prove the latter claim, one can make use of Lemma~\ref{lm:graphmodel}. 
The analogous statements are then true for the maps from these spaces
into $(B\sP)'$.  Finally, our replacement of $B\sP$ by $(B\sP)'$ 
ensures that the map $(B\sX \sqcup B\sY)\times B \to (B\sP)'$ 
is a closed fibrewise cofibration as required.
\end{proof}

Let $G$ be a compact Lie group. 
In the situation of Lemma~\ref{lm:cobs-from-groupoids}, let us assume 
further that the map $i$ is injective on objects in each fibre.
We can then form the composite map

\begin{equation}
\label{eq:ffgpdop}
\newcommand{\ffgpdopdomain}{H_{\ast+\mathrm{shift}}(B) \tensor H_\ast\left(BG^{B\sX}\right)}
\vcenter{\xymatrix@R=1ex@C=-4em{
 *+[l]	{\ffgpdopdomain}
 \ar[r]^-{\mathrm{(a)}}_\isom
 &
 *+[r]{H_{\ast+\mathrm{shift}}(B) \tensor H_\ast\left(B(G^{\sX})\right)}
 \\
 *+[l]{\phantom{\ffgpdopdomain}}
 \ar[r]^\times_\isom
 &
 *+[r]{H_{\ast+\mathrm{shift}}\left(B \times B(G^{\sX})\right)}
 \\
 *+[l]{\phantom{\ffgpdopdomain}}
 \ar[r]^{\mathrm{(b)}}_\isom
 &
 *+[r]{H_{\ast+\mathrm{shift}}\left(B(G^{\sX\times B})\right)}
 \\
 *+[l]{\phantom{\ffgpdopdomain}}
 \ar[r]^{\mathrm{!}}
 &
 *+[r]{H_\ast\left(B(G^{\sP|{\Ob \sX}})\right)}
 \\
 *+[l]{\phantom{\ffgpdopdomain}}
 \ar[r]^{\mathrm{(c)}}_\isom
 &
 *+[r]{H_\ast\left(B(G^{\sP|({\Ob\sX \cup \Ob\sY)}})\right)}
 \\
 *+[l]{\phantom{\ffgpdopdomain}}
 \ar[r]^{\mathrm{(d)}}
 &
 *+[r]{H_\ast\left(B(G^{\sY \times B})\right)}
 \\
 *+[l]{\phantom{\ffgpdopdomain}}
 \ar[r]^{\mathrm{(e)}}_\isom
 &
 *+[r]{H_\ast\left(B \times B(G^{\sY})\right)}
 \\
 *+[l]{\phantom{\ffgpdopdomain}}
 \ar[r]^{\mathrm{\pr_\ast}}
 &
 *+[r]{H_\ast\left(B(G^{\sY})\right)}
 \\
 *+[l]{\phantom{\ffgpdopdomain}}
 \ar[r]^{\mathrm{(f)}}_\isom
 &
 *+[r]{H_\ast\left(BG^{B\sY}\right).}
}}
\end{equation}
Here `$\mathrm{shift}$'
denotes $\dim(G)\cdot\chi((B\sP)',B\sX)$, while
$\sP|\Ob X$ (resp.~$\sP|(\Ob X \cup \Ob Y)$) 
denotes the full fibrewise subgroupoid of $\sP$ 
spanned by the objects in the image of $i$ 
(resp.~the objects in the image of $i$ or $j$);
(a) and (f) are the maps induced by the natural homotopy equivalence of 
Proposition~\ref{prop:funmapcmp}; 
(b) and (e) are the canonical isomorphisms;
the map labelled `$!$' is the umkehr map of 
subsection~\ref{subsec:umkehr-maps-for-fibrewise-manifolds}
associated to the map of fibrewise manifolds
\[\xymatrix{
	B(G^{\sP|\Ob X}) 
	\ar[rr]
	\ar[dr]
	&&
	B(G^{\sX \times B})
	\ar[dl]
	\\
	&
	B \times B(G^{\Ob X})
}\]
induced by $i$; 
(c) is the inverse of the isomorphism induced
by the inclusion of $\sP| \Ob \sX$ into $\sP|(\Ob\sX \cup \Ob\sY)$,
which is an equivalence; 
and (d) is the map induced by the inclusion of 
$\sY \times B$ into $\sP|(\Ob\sX \cup \Ob\sY)$.
We now have the following lemma.

\begin{lemma}
\label{lm:gpdopcalc}
Suppose that in the situation of Lemma~\ref{lm:cobs-from-groupoids},
the map $i$ is injective on objects in each fibre. Then the 
operation 
\[
	\Phi^G((B\sP)'/B)
	\colon 
	H_{\ast + \mathrm{shift}}(B) 
			\tensor 
	H_\ast(BG^{B\sX})
	\to
	H_\ast(BG^{B\sY})
\]
agrees with the composite~\eqref{eq:ffgpdop}.
\end{lemma}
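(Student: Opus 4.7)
The plan is to prove the lemma by unwinding the definition of $\Phi^G((B\sP)'/B)$ in terms of the explicit construction of $U^G$ built up in section~\ref{sec:umkehr-maps}, and then identifying each step of the resulting composite with the corresponding arrow in \eqref{eq:ffgpdop}. By Definition~\ref{phisb:definition}, $\Phi^G((B\sP)'/B)$ is assembled from the external cross product, the monoidality isomorphism $U^G_\otimes$, and the values of $U^G$ on the three structure morphisms $\alpha_{(B\sP)'}$, $\beta_{(B\sP)'}$, $\gamma_{(B\sP)'}$ associated to the family $(B\sP)'/B \colon B\sX \hto B\sY$ of h-graph cobordisms. Since $U^G$ arises by factoring the composite $U_\mfld \circ \bbS(B(G^{(-)})) \circ \bbS(\Pi_1) \circ \widetilde{(-)}$ through $\bbS(\forget)$, the task reduces to computing each of these values after choosing a convenient set of basepoints and then translating back through the homotopy equivalences of diagram~\eqref{diagram-one}.

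The first step is to lift $\alpha_{(B\sP)'}$, $\beta_{(B\sP)'}$, $\gamma_{(B\sP)'}$ to $\bbS^d(\calHbp^\fop)$ by equipping the relevant families with the basepoints $P = \Ob\sX \sqcup \Ob\sY$, where $\Ob\sX$ sits inside $(B\sP)'$ via the map coming from $i$ and $\Ob\sY$ via $j$ (using the injectivity of $i$ on objects). Then I would apply $\bbS(\Pi_1)$: the central observation, following the pattern of Lemma~\ref{lem:iota}, is that the canonical functor
\[
\sP|(\Ob\sX \cup \Ob\sY) \longto \Pi_1((B\sP)', \Ob\sX\sqcup\Ob\sY)
\]
is an isomorphism of fibrewise finite free groupoids over $B$, and similarly $\sX \isom \Pi_1(B\sX,\Ob\sX)$ and $\sY \isom \Pi_1(B\sY,\Ob\sY)$. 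Under these isomorphisms, $\bbS(\Pi_1)$ sends $\alpha_{(B\sP)'}$ to the external product with the empty groupoid over $B$, sends $\beta_{(B\sP)'}$ to (the $\calG^\fop$-morphism represented by) the inclusion $\sX\times B \to \sP|\Ob\sX$ coming from $i$, and sends $\gamma_{(B\sP)'}$ to the composition of the inverse of the equivalence $\sP|\Ob\sX \hookrightarrow \sP|(\Ob\sX \cup \Ob\sY)$ with the morphism induced by $j$.

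Next, applying $\bbS(B(G^{(-)}))$ and then $U_\mfld$ produces precisely the umkehr map labelled `$!$' in \eqref{eq:ffgpdop} (for $\beta_{(B\sP)'}$), the isomorphism (c) (for the invertible equivalence part of $\gamma_{(B\sP)'}$), and the induced map (d) (for the remaining part of $\gamma_{(B\sP)'}$); the external product with the empty groupoid becomes, after monoidality, the cross product with $H_{\ast+\mathrm{shift}}(B)$ appearing at the top of \eqref{eq:ffgpdop}. To pass from $\widetilde U^G$ to $U^G$ and identify its vertical part with $H_\ast(BG^{(-)})$, I invoke the symmetric monoidal natural isomorphism constructed in subsection~\ref{subsec:UG}, which is built from the zig-zag $U \xleftarrow{\homot} WQ \xrightarrow{\homot} B\Pi_1$ of Proposition~\ref{prop:pi1cmp} and the transformation $\Theta$ of Proposition~\ref{prop:funmapcmp}; this transformation, evaluated on our chosen basepoints, supplies exactly the isomorphisms (a), (b), (e), (f) of \eqref{eq:ffgpdop}. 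Finally, the trailing projection $\pr_\ast$ comes from the standard reduction of the vertical morphism $\alpha$ (involving $\emptyset_B$) in Definition~\ref{phisb:definition}, combined with the fact that $U^G$ is homological.

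The main technical obstacle will be keeping the bookkeeping of basepoints, homotopy equivalences, and fibrewise identifications consistent as one passes through the three symmetric monoidal double functors $\bbS(\Pi_1)$, $\bbS(B(G^{(-)}))$, $U_\mfld$ and then corrects by the natural isomorphism $\widetilde U^G \isom U^G \circ \bbS(\forget)$. In particular, verifying that the concrete zig-zag $BG^{B\sX} \xleftarrow{\homot} W Q(B\sX, \Ob\sX) \xrightarrow{\homot} B(G^{\Pi_1(Q(B\sX,\Ob\sX),\Ob\sX)})$ composed with the comparison map $\theta$ agrees \emph{up to the naturality square required by $U^G_\otimes$} with the canonical homotopy equivalence $BG^{B\sX} \simeq B(G^{\sX})$ implicit in arrow (a) of \eqref{eq:ffgpdop} is the delicate point; everything else is naturality and the explicit description of $U_\mfld$ on the fibrewise bundle $B(G^{\sP|\Ob\sX}) \to B(G^{\sX\times B}) \simeq B\times B(G^{\sX})$ induced by the basis-extending map $\sX \hookrightarrow \sP|\Ob\sX$.
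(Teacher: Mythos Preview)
Your approach coincides with the paper's: the paper's proof is the two-line observation that the argument of Lemma~\ref{lem:iota} yields a natural isomorphism $\sQ \xrightarrow{\isom} \Pi_1(B\sQ,\Ob\sQ)$ for any finite free groupoid $\sQ$, after which ``the claim now follows by tracing through the definition of the operation $\Phi^G((B\sP)'/B)$''---which is exactly the unwinding you outline.

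There is one bookkeeping slip worth correcting. A horizontal morphism in $\bbS^d(\calHbp^\fop)$ must carry the \emph{same} basepoint set on source and target (Definition~\ref{def:dc-special-squares-h-graphs}), and there is no canonical way to place $\Ob\sY$ inside $B\sX\times B$; so $\beta_{(B\sP)'}$ cannot be lifted using $P=\Ob\sX\sqcup\Ob\sY$. The correct choice---and the one dictated by the construction of $U^G$ from $\widetilde U^G$ in subsection~\ref{subsec:UG}---is $P=\Ob\sX$: then $\Pi_1((B\sP)',\Ob\sX)\isom\sP|\Ob\sX$ under the natural isomorphism above, and $\bbS(B(G^{(-)}))\circ\bbS(\Pi_1)$ applied to the lifted $\beta_{(B\sP)'}$ gives precisely the fibrewise-manifold map whose umkehr is the arrow `$!$' in \eqref{eq:ffgpdop}. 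The appearance of $\sP|(\Ob\sX\cup\Ob\sY)$ in step (c) does not come from the horizontal lift; it emerges when you compute $U^G_0(\gamma_{(B\sP)'})$ through the comparison isomorphisms of diagram~\eqref{diagram-one}, which are natural in the basepoints and therefore allow you to pass from $\Ob\sX$ to $\Ob\sX\cup\Ob\sY$ before restricting along $j$. With this adjustment the rest of your outline goes through, and indeed you have supplied the details that the paper leaves implicit.
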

\begin{proof}
The argument of Lemma~\ref{lem:iota} shows that 
for any finite free groupoid $\sQ$, there is a natural 
isomorphism
$\sQ \to \Pi_1(B\sQ,\Ob \sQ)$
which is given by the identity map on objects and 
which sends a morphism to the 
homotopy class of paths represented by the corresponding 1-simplex 
in $B\sQ$. The claim now follows by tracing 
through the definition of the operation $\Phi^G((B\sP)'/B)$.
\end{proof}

Let $\sQ$ be a finite free groupoid. Recall from 
subsection~\ref{subsec:groupoids-to-manifolds} that 
$\fun(\sQ,G) = \Ob(G^\sQ)$
denotes the space of functors from $\sQ$ to $G$, and that 
the category $G^\sQ$ can be identified with the action groupoid
of the action of $G^{\Ob \sQ}$ on $\fun(\sQ,G)$ given by
\[
	(\delta\cdot f)(\alpha)
	=
	\delta(y)\cdot f(\alpha)\cdot\delta(x)^{-1}
\]
for $f\in \fun(\sQ,G)$, $\delta\in G^{\Ob(\sQ)}$, and 
$\alpha\colon x\to y$ a morphism of $\sQ$. We will also 
use the notation $\fun(\sQ,G)$ for $\Ob(G^\sQ)$ when $\sQ$ is a 
fibrewise finite free groupoid. Observe that the aforementioned
identification of $G^\sQ$ with the action groupoid of 
an action of $G^{\Ob\sQ}$ on $\fun(\sQ,G)$ generalizes 
to this fibrewise situation. Finally, 
recall that for a space $X$ equipped with an action
of a group $\Gamma$, we use $X \sslash \Gamma$ to denote 
the homotopy orbit space $E\Gamma\times_\Gamma X$.

\begin{lemma}
\label{lm:bgpiso}
Let $\sP_0$ be a finite free groupoid equipped with 
an action of a discrete group $\Gamma$ which fixes all objects of $\sP_0$.
Let $\sP$ denote the fibrewise finite free groupoid 
$E\Gamma \times_\Gamma \sP_0$ over $B\Gamma$. Then there is a natural
isomorphism of fibrewise manifolds
\[\xymatrix@!C@C-4em{
	B(G^\sP) 
	\ar[rr]^\isom
	\ar[dr]
	&&
	\fun(\sP_0,G)\sslash (\Gamma \times G^{\Ob \sP_0})
	\ar[dl]
	\\
	&
	B\Gamma \times BG^{\Ob \sP_0} 
}\]
where the action of $\Gamma \times G^{\Ob \sP_0}$ on 
$\fun(\sP_0,G)$ is given by 
\[
	((\gamma,\delta)\cdot f)(\alpha)
	 =
	\delta(y)\cdot f(\gamma^{-1}\cdot\alpha)\cdot\delta(x)^{-1}
\]
for $f\in \fun(\sP_0,G)$, $(\gamma,\delta)\in \Gamma \times G^{\Ob(\sP_0)}$, 
and $\alpha\colon x\to y$ a morphism of $\sP_0$.
\end{lemma}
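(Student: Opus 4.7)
The plan is to unwind the definition of $\sP = E\Gamma \times_\Gamma \sP_0$ and then trace it through the Borel construction, using the simplifying fact that $\Gamma$ fixes objects of $\sP_0$.

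First, I would spell out $\sP$ as a fibrewise category internal to spaces over $B\Gamma$. Since $\Gamma$ acts trivially on $\Ob \sP_0$, we have $\Ob \sP = B\Gamma \times \Ob \sP_0$ and $\Mor \sP = E\Gamma \times_\Gamma \Mor \sP_0$. Next, I would observe that the $\Gamma$-action on $\sP_0$ induces by precomposition with the inverse an action on the functor category $G^{\sP_0}$, and that the evident comparison map gives a natural isomorphism of fibrewise categories
\[
G^\sP \;\isom\; E\Gamma \times_\Gamma G^{\sP_0}
\]
over $B\Gamma$. This is a direct unwinding: a fibrewise functor $\sP \to G \times B\Gamma$ over $B\Gamma$ corresponds by the universal property of $E\Gamma \times_\Gamma (-)$ to a $\Gamma$-equivariant functor $E\Gamma \times \sP_0 \to G \times E\Gamma$, and since the action on $E\Gamma$ is free this datum is equivalent to a $\Gamma$-equivariant map from $E\Gamma$ into $\fun(\sP_0,G)$ together with the analogous assembly on morphisms.

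Second, I would invoke the identification from the proof of Proposition~\ref{prop:functors-manifolds} of $G^{\sP_0}$ with the action groupoid $\fun(\sP_0,G) / G^{\Ob \sP_0}$, whose space of morphisms is $G^{\Ob \sP_0} \times \fun(\sP_0,G)$ with source $(\delta,f) \mapsto f$ and target $(\delta,f) \mapsto \delta \cdot f$. Because $\Gamma$ fixes objects, it acts trivially on $G^{\Ob \sP_0}$; moreover a short direct computation with the two formulas shows that the $\Gamma$- and $G^{\Ob\sP_0}$-actions on $\fun(\sP_0,G)$ commute, precisely because the source and target of $\alpha$ and $\gamma^{-1}\!\cdot\!\alpha$ coincide. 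Consequently $\Gamma$ acts on the action groupoid $G^{\sP_0}$ by groupoid automorphisms, and $E\Gamma \times_\Gamma G^{\sP_0}$ is itself the action groupoid of $G^{\Ob \sP_0}$ acting on $E\Gamma \times_\Gamma \fun(\sP_0, G)$.

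Third, taking the classifying space of an action groupoid yields the Borel construction, so
\[
B(G^\sP) \;\isom\; EG^{\Ob\sP_0} \times_{G^{\Ob\sP_0}}\!\bigl(E\Gamma \times_\Gamma \fun(\sP_0,G)\bigr),
\]
and since the $\Gamma$- and $G^{\Ob\sP_0}$-actions commute this last space is homeomorphic to $\fun(\sP_0,G)\sslash(\Gamma \times G^{\Ob\sP_0})$ with the combined action; reading off the formulas one sees that this combined action is exactly the one displayed in the statement. The map to $B\Gamma \times BG^{\Ob\sP_0}$ is compatible with this identification by construction. Naturality in the evident sense is clear. The only thing requiring slight care is that the identification respects the fibrewise smooth structure of Proposition~\ref{prop:functors-manifolds}; this follows because the smooth structure on $\fun(\sP_0,G)$ is characterised by evaluation functors, the $\Gamma$-action permutes the generators of a basis of $\sP_0$ (up to the $G^{\Ob\sP_0}$-action, which is smooth), and the Borel construction of a smooth action of a compact Lie group yields a fibrewise smooth fibre bundle. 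This last verification is the only part that is not essentially formal; everything else is bookkeeping.
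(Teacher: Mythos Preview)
Your proof is correct and follows essentially the same route as the paper's: both identify $B(G^\sP)$ with an iterated Borel construction $EG^{\Ob\sP_0}\times_{G^{\Ob\sP_0}}(E\Gamma\times_\Gamma\fun(\sP_0,G))$ and then combine the two commuting actions into a single $(\Gamma\times G^{\Ob\sP_0})$-Borel construction, the only difference being the order in which the action-groupoid identification and the $E\Gamma\times_\Gamma(-)$ are unpacked. One small imprecision: $\Gamma$ need not permute basis elements of $\sP_0$ (in the paper's application $l\mapsto l^{-1}$, $c\mapsto cl^{-1}$); the correct reason the $\Gamma$-action on $\fun(\sP_0,G)$ is smooth is simply that $(\gamma\cdot f)(\alpha)=f(\gamma^{-1}\alpha)$ and $\gamma^{-1}\alpha$ is a word in basis elements, so smoothness follows from the characterisation in Lemma~\ref{lem:single-smooth-structure}.
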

\begin{proof}
The aforementioned identification of $G^\sP$ with an action groupoid
gives an isomorphism 
\begin{equation}
\label{eq:1stiso}
	B(G^\sP) 
	\isom 
	EG^{\Ob \sP_0} \times_{G^{\Ob \sP_0}} \fun(\sP,G)
\end{equation}
over $BG^{\Ob\sP_0} \times B\Gamma$.
By the construction of $\sP$, we have a $G^{\Ob\sP_0}$-equivariant
isomorphism 
\begin{equation}
\label{eq:2ndiso}
	\fun(\sP,G) \isom E\Gamma \times_\Gamma \fun(\sP_0,G)
 \end{equation}
over $B\Gamma$
where $\Gamma$ acts on $\fun(\sP_0,G)$ by 
\[
	(\gamma\cdot f)(\alpha) = f(\gamma^{-1}\cdot \alpha)
\]
for $f\in \fun(\sP_0,G)$, $\gamma \in \Gamma$, 
and $\alpha$ a morphism of $\sP_0$. Finally, we have an isomorphism
\begin{equation}
\label{eq:3rdiso}
	EG^{\Ob \sP_0} 
		\times_{G^{\Ob \sP_0}} 
	(E\Gamma \times_\Gamma \fun(\sP_0,G))
		\isom
	E(\Gamma \times G^{\Ob \sP_0})
		\times_{\Gamma \times G^{\Ob \sP_0}}
	\fun(\sP_0,G) 
\end{equation}
over $BG^{\Ob \sP_0}\times B\Gamma$. The desired isomorphism is now
given by combining 
\eqref{eq:1stiso}, \eqref{eq:2ndiso} and \eqref{eq:3rdiso}.
\end{proof}

Let us now construct the family of h-graph cobordisms 
$S/B(\Z/2) \colon \pt \hto \pt$.
Let $\sP_0 = \Pi_1(S_0,\{p,q\})$ where $S_0\colon \{p\} \to \{q\}$
is the h-graph cobordism of \eqref{eq:modelcob}. 
The paths $l$ and $c$ depicted in \eqref{eq:modelcob}  give
a basis for $\sP_0$, and the automorphism of $\sP_0$ sending $l$
to $l^{-1}$ and $c$ to $c\cdot l^{-1}$ defines an action of 
$\Z/2$ on $\sP_0$. Let $\sP = E(\Z/2)\times_{\Z/2} \sP_0$, and 
let $\sT_p$ denote the trivial finite free groupoid with one object $p$,
and likewise for $\sT_q$. Applying $E(\Z/2)\times_{\Z/2}(-)$
to the diagram of inclusions
\[
	\sT_p \longto \sP_0 \longot \sT_q
\]
gives the following diagram of fibrewise finite free groupoids 
over $B(\Z/2)$:
\begin{equation}
\label{diag:preszigzag}
	\sT_p \times B(\Z/2) \longto \sP \longot \sT_q \times B(\Z/2).
\end{equation}
\begin{definition}
\label{def:family-s}
We define
$S/B(\Z/2)$ to be the family $(B\sP)'/B(\Z/2)$
obtained from the zigzag~\eqref{diag:preszigzag}
by the procedure of Lemma~\ref{lm:cobs-from-groupoids}.
\end{definition}

That the fibres
of $S/B(\Z/2)$ are indeed modelled on the h-graph cobordism $S_0$
as claimed follows from Proposition~\ref{prop:pi1cmp}.

Let us now restrict to the case where $G$ is a finite group.
Interpret $\Z/2 = \{\pm 1\}$, and let $\Z/2 \times G$ 
act on $G$ by
\begin{equation}
\label{eq:action-formula}
	(\varepsilon,g_p)\cdot g_l = g_p g_l^\varepsilon g_p^{-1}
\end{equation}
for $(\varepsilon,g_p)\in \Z/2 \times G$ and $g_l\in G$.
Let $C^e_g$ denote the stabilizer of $g\in G$ in this action,
and for $\varepsilon \in \Z/2$, define 
\[
	[\varepsilon] = 
		\begin{cases}
		0 & \text{if } \varepsilon = 1\\
		1 & \text{if } \varepsilon = -1.
		\end{cases}
\]
Consider the  diagram
\begin{equation}
\label{diag:final-push-pull}
\vcenter{\xymatrix@!0@C=8em@R=8ex{
	&
	\bigsqcup_{[g]_e} \pt\sslash C_g^e
	\ar[dl]
	\ar[dr]
	\\
	\pt\sslash (\Z/2\times G)
	&&
	\pt\sslash G
}}
\end{equation}
where the disjoint union runs over a set of representatives for the 
orbits of $G$ in the above $(\Z/2\times G)$-action,
the left-hand map is given by the inclusions of $C_g^e$ into 
$\Z/2\times G$, and the right-hand map is given by the homomorphisms
\[
	C_g^e \longto G,\quad
	(\varepsilon,g_p) \longmapsto g_p g^{[\varepsilon]}.
\]
We then have the following proposition.
\begin{proposition}
\label{prop:sg-push-pull}
The operation 
\begin{equation}
\label{eq:sg-op}
	\Phi^G(S/B(\Z/2)) 
	\colon
	H_\ast(B\Z/2) \times H_\ast(BG) 
	\longto
	H_\ast(BG)
\end{equation}
agrees with the composite
\[
	\newcommand{\sopdomain}{H_\ast(B\Z/2) \tensor H_\ast(BG)}
	\xymatrix@R=1ex@C-1em{
	 *+[l]{\sopdomain}
	 \ar[r]^-\times
	 &
	 \rlap{$H_\ast(B\Z/2 \times BG)$}\phantom{H_\ast(B\Z/2 \times BG)}
	 \\
	 *+[l]{\phantom{\sopdomain}}
	 \ar[r]^-{!}
	 &
	 \rlap{$H_\ast\big(\bigsqcup_{[g]_e} BC_g^e\big)$}\phantom{H_\ast(B\Z/2 \times BG)}
	 \\
	 *+[l]{\phantom{\sopdomain}}
	 \ar[r]
	 &
	 \rlap{$H_\ast (BG)$}\phantom{H_\ast(B\Z/2 \times BG)}
	}
\]
where the middle map is the transfer map associated to the 
left-hand map in \eqref{diag:final-push-pull} and the last
map is the map induced by the right-hand map in 
\eqref{diag:final-push-pull}.
\end{proposition}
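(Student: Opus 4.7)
The plan is to apply Lemma~\ref{lm:gpdopcalc} to convert $\Phi^G(S/B(\Z/2))$ into the explicit composite~\eqref{eq:ffgpdop} for the data $\sX=\sT_p$, $\sY=\sT_q$ and the fibrewise finite free groupoid $\sP$ of Definition~\ref{def:family-s}. The hypotheses of Lemmas~\ref{lm:cobs-from-groupoids} and~\ref{lm:gpdopcalc} are easily verified on each fibre: the inclusion $\sT_p\to\sP_0$ is injective and essentially surjective on objects (since $c$ joins $p$ to $q$), and $\sT_q\to\sP_0$ has the basis extension property because $\sT_q$ has empty basis while $\sP_0$ admits the basis $\{l,c\}$. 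Since $G$ is finite the degree shift in~\eqref{eq:ffgpdop} vanishes.

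Next I would use Lemma~\ref{lm:bgpiso} with $\Gamma=\Z/2$ to identify the fibrewise manifolds in~\eqref{eq:ffgpdop} with homotopy orbit spaces. For $\sT_p$ and $\sT_q$ the space $\fun(\sT_\bullet,G)$ is a single point, so $B(G^{\sT_\bullet\times B(\Z/2)})\isom B(\Z/2\times G)$. For $\sP|\{p\}$ we have $\fun(\sP_0|\{p\},G)=G$ (recording $f(l)$), and feeding the $\Z/2$-action $l\mapsto l^{-1}$ into the formula of Lemma~\ref{lm:bgpiso} reproduces exactly the action~\eqref{eq:action-formula}; thus $B(G^{\sP|\{p\}})\isom G\sslash(\Z/2\times G)$. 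Similarly $B(G^{\sP|\{p,q\}})\isom G^2\sslash(\Z/2\times G^2)$, where $(g_l,g_c)\in G^2$ records $(f(l),f(c))$ and the nontrivial element $\varepsilon=-1$ of $\Z/2$ acts by $(\varepsilon,g_p,g_q)\cdot(g_l,g_c)=(g_p g_l^{-1}g_p^{-1},\, g_q g_c g_l^{-1}g_p^{-1})$, the shift on $g_c$ coming from $\varepsilon\cdot c=cl^{-1}$.

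Since $G$ is finite, $G\sslash(\Z/2\times G)\to B(\Z/2\times G)$ is a finite covering, and decomposing the fibre $G$ into orbits under~\eqref{eq:action-formula} yields $G\sslash(\Z/2\times G)\isom\bigsqcup_{[g]_e} BC_g^e$, in which the projection to $B(\Z/2\times G)$ is the left-hand map of~\eqref{diag:final-push-pull}. Lemma~\ref{lm:umkehrs-for-maps-bw-coverings} then identifies the umkehr map labelled `$!$' in~\eqref{eq:ffgpdop} with the transfer of this covering, matching the map labelled `$!$' in the target composite. What remains is to show that the combined effect of (c), (d), (e), $\pr_\ast$ and (f) on the component $BC_g^e$ agrees with the right-hand map of~\eqref{diag:final-push-pull}.

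The main obstacle is this final identification, since no functorial quasi-inverse to the equivalence $G^2\sslash(\Z/2\times G^2)\to G\sslash(\Z/2\times G)$ is globally available. I would resolve it pointwise on each orbit. Lifting the chosen representative $g\in G$ to $(g,e)\in G^2$ and solving the stabiliser equations under the action computed above shows that the stabiliser of $(g,e)$ in $\Z/2\times G^2$ is the image of the injective homomorphism
\[
	C_g^e\longto\Z/2\times G^2,
	\qquad
	(\varepsilon,g_p)\longmapsto(\varepsilon,g_p,\,g_p g^{[\varepsilon]}).
\]
Composing with the projection $(\varepsilon,g_p,g_q)\mapsto(\varepsilon,g_q)$ induced by (d) and the further projection to $G$ coming from $\pr_\ast$ (through the isomorphisms (e) and (f)) produces the homomorphism $(\varepsilon,g_p)\mapsto g_p g^{[\varepsilon]}$, completing the identification with the right-hand map of~\eqref{diag:final-push-pull}.
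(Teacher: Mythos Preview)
Your proposal is correct and follows essentially the same route as the paper's proof: both reduce via Lemmas~\ref{lm:gpdopcalc} and~\ref{lm:bgpiso} to a push-pull in homotopy orbit spaces, identify $\fun(\sP_0|\{p\},G)\isom G$ and $\fun(\sP_0,G)\isom G^2$ with the stated $(\Z/2\times G^{\{p\}})$- and $(\Z/2\times G^{\{p,q\}})$-actions, invoke Lemma~\ref{lm:umkehrs-for-maps-bw-coverings} for the transfer, and then decompose into orbits using the lift $g\mapsto(g,e)$ and the stabiliser computation $(\varepsilon,g_p)\mapsto(\varepsilon,g_p,g_p g^{[\varepsilon]})$ to recover the right-hand map of~\eqref{diag:final-push-pull}. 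The paper packages the final step into a single commutative diagram of homotopy equivalences, but the content is the same as your orbit-by-orbit verification.
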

\begin{proof}
By Lemmas~\ref{lm:gpdopcalc} and \ref{lm:bgpiso},
the operation \eqref{eq:sg-op}
can be computed by a push-pull construction in the diagram
\begin{equation}
\label{diag:pushpull1}
\vcenter{\xymatrix@!0@C=1.85em@R=5ex{
	&&
	\fun(\sP_0|\{p\},G)\sslash (\Z/2 \times G^{\{p\}})
	\ar[ddll]_{!}
	&&&&&&&&
	\ar[llllllll]_-\homot
	\fun(\sP_0,G)\sslash (\Z/2 \times G^{\{p,q\}})
	\ar[ddrr]
	\\
	\\
	\pt \sslash (\Z/2 \times G^{\{p\}})
	&&
	&&&&&&&&
	&&
	\pt \sslash G^{\{q\}}
}}
\end{equation}
Here $\sP_0|\{p\}$ denotes the full subgroupoid of $\sP_0$ spanned 
by the object $p$; the top horizontal map is induced by the inclusion
of $\sP_0|\{p\}$ into $\sP$ and the projection of $G^{\{p,q\}}$ onto
$G^{\{p\}}$; the oblique arrows are projection maps; and `$!$' 
denotes the map of which an umkehr map should be taken. 
By Lemma~\ref{lm:umkehrs-for-maps-bw-coverings},
in our context of a finite group $G$, this umkehr map is simply
the transfer map.
Evaluation against $l$ gives an isomorphism
\[
	\fun(\sP_0|\{p\},G) \xto{\ \isom\ } G
\]
under which the action of $\Z/2 \times G^{\{p\}}$ on 
$\fun(\sP_0|\{p\},G)$ corresponds to the action on $G$
given by the formula \eqref{eq:action-formula}.
Similarly, evaluation against $l$ and $c$ give
an isomorphism
\[
	\fun(\sP_0,G) \xto{\ \isom\ } G^2
\]
under which the action of  $\Z/2 \times G^{\{p,q\}}$ on 
$\fun(\sP_0,G)$ corresponds to the action on $G^2$ given by
\[
	(\varepsilon,g_p,g_q)\cdot(g_l,g_c)
	 = 
	(g_p g_l^\varepsilon g_p^{-1}, 
	   g_q g_c g_l^{-[\varepsilon]} g_p^{-1})
\]
for $(\varepsilon,g_p,g_q)\in \Z/2 \times G^{\{p,q\}}$ and 
$(g_l,g_c) \in G^2$.
Using the above formulas
to decompose $\fun(\sP_0|\{p\},G) \isom G$ and 
$\fun(\sP_0,G) \isom G^2$ into orbits under the respective
actions, we obtain the following commutative diagram of 
homeomorphisms and homotopy equivalences.
\begin{equation}
\label{diag:pushpullaux}
\vcenter{
\xymatrix@C-2em{
&
\bigsqcup_{[g]_e}\pt \sslash C_g^e
\ar[dl]_\homot
\ar[dr]^\homot
\\
\bigsqcup_{[g]_e} 
\calO(g) \sslash (\Z/2 \times G^{\{p\}})
\ar[d]_\isom
&&
\bigsqcup_{[g]_e} 
\calO(g,e) \sslash (\Z/2 \times G^{\{p,q\}})
\ar[d]^\isom
\\
G \sslash (\Z/2 \times G^{\{p\}})
&&
\ar[ll]_\homot
G^2\sslash (\Z/2 \times G^{\{p,q\}})
\\
\ar[u]^\isom
\fun(\sP_0|\{p\},G) \sslash (\Z/2 \times G^{\{p\}})
&&
\ar[u]_\isom
\ar[ll]_\homot
\fun(\sP_0,G) \sslash (\Z/2 \times G^{\{p,q\}})
}}
\end{equation}
Here the bottom horizontal arrow is the one appearing in 
diagram~\eqref{diag:pushpull1} and the upper horizontal 
arrow is given by the projection of $G^2$ onto its first coordinate
and the projection of $\Z/2\times G^{\{p,q\}}$ onto $\Z/2\times G^{\{p\}}$;
$\calO(g)$ denotes the orbit of the element $g\in G$ in the 
$\Z/2\times G^{\{p\}}$-action on $G$,
and likewise $\calO(g,e)$ denotes the 
orbit of the element $(g,e)\in G^2$ in the 
$\Z/2\times G^{\{p,q\}}$-action on $G^2$;
the disjoint unions in the top pentagon run over a set of 
representatives for the orbits of the 
$\Z/2\times G^{\{p\}}$-action on $G$;
the upper vertical maps are induced by the inclusions of $\calO(g)$
into $G$ and $\calO(g,e)$ into $G^2$;
the top oblique arrow on the left is given by the inclusions
of $C_g^e$ into $\Z/2\times G^{\{p\}}$ and the maps that send
$\pt$ to $g\in \calO(g)$; and finally the top oblique arrow on the 
right is given is given by the homomorphisms
\[
	C_g^e \longto \Z/2 \times G^{\{p,q\}},\quad
	(\varepsilon,g_p) \longmapsto (\varepsilon, g_p, g_p g^{[\varepsilon]})
\]
and the maps that send $\pt$ to the element $(g,e)\in \calO(g,e)$.
The claim now follows by combining diagrams 
\eqref{diag:pushpull1} and \eqref{diag:pushpullaux}.
\end{proof}

Theorem~\ref{thm:calc} now follows easily from 
Proposition~\ref{prop:sg-push-pull}.

\begin{proof}[Proof of Theorem~\ref{thm:calc}]
Specializing to $G=\Z/2$ in Proposition~\ref{prop:sg-push-pull},
we obtain the following diagram for computing
the operation $\Phi^{\Z/2}(S/B(\Z/2))$ where `!' denotes the map
of which a transfer map should be taken:
\begin{equation*}
\vcenter{\xymatrix@!0@C=8em@R=8ex{
	&
	\pt\sslash (\Z/2 \times \Z/2) \sqcup \pt\sslash (\Z/2 \times \Z/2)
	\ar[dl]_{!}
	\ar[dr]
	\\
	\pt\sslash (\Z/2\times \Z/2)
	&&
	\pt\sslash (\Z/2) 
}}
\end{equation*}
Here the left-hand map is the identity map on both summands
and the right-hand map is induced by the projection map
\[
	\Z/2\times\Z/2 \longto \Z/2,\quad
	 (\varepsilon,g) \longmapsto g
\]
and the multiplication map
\[
	 \Z/2\times\Z/2 \longto \Z/2,\quad
	 (\varepsilon,g) \longmapsto \varepsilon g.
\]
The claim follows.
\end{proof}

Having proved Theorem~\ref{thm:calc},
we now turn to the promised application to the homology of $\Hol(F_n)$.
Composing the family $S/B(\Z/2)\colon \pt \hto \pt$ with itself $n$ times,
we obtain a family $S^{\ucirc n}/B(\Z/2)^n \colon \pt \hto \pt$.
Its fibres are modelled on the $n$-fold composite of the h-graph
cobordism $S_0$ with itself, or what is the same, on the 
h-graph cobordism $T_0 \colon \{p\} \hto \{q\}$ 
pictured below:
\begin{equation}
\label{eq:t0cob}
\begin{tikzpicture}[
		baseline=(current  bounding  box.center),
		scale=0.03,
		->-/.style={	decoration={	markings,
		  	mark=at position 0.5 with {\arrow{>}}
			},postaction={decorate}},
		->--/.style={	decoration={	markings,
		  	mark=at position 0.46 with {\arrow{>}}
			},postaction={decorate}},
		-->-/.style={	decoration={	markings,
		  	mark=at position 0.58 with {\arrow{>}}
			},postaction={decorate}}
	]
	\clip (-50, -20) rectangle (130, 70);
	\path[ARC, fill=black] (0,0) circle (3);
	\path[ARC, fill=black] (120,0) circle (3);
	\path[ARC,->-] (0,0) -- (120,0) node [midway, above] {$c$};
	\node [below] at (0,0) {$p$};
	\node [below] at (120,0) {$q$}; 
	\path[ARC,->--] (0,0) .. controls (-20,80) and (-80,45) 
						.. (0,0) node [midway, above] {$l_1$};
	\node at (1.5,35) {$\cdots$};
	\path[ARC,-->-] (0,0) .. controls (80,45) and (20,80)
						.. (0,0)  node [midway, above] {$l_n$};
\end{tikzpicture}
\end{equation}
By the gluing axiom of HHGFTs, Theorem~\ref{thm:calc} has the following 
immediate corollary.
\begin{corollary}
\label{cor:compositeop}
The operation
\[
	\Phi^{\Z/2}(S^{\ucirc n}/B(\Z/2)^n)
	\colon 
	H_\ast (B(\Z/2)^n) \tensor H_\ast B(\Z/2) 
	\longto 
	H_\ast B(\Z/2)
\]
is the map given by
\[
	(a_1 \times \cdots \times a_n)\tensor b \longmapsto
		\begin{cases}
		 a_1 \cdot \ldots\cdot a_n \cdot b 
		 	&\text{if the degree of $a_i$ is positive for all $i$}\\
		 0 &\text{otherwise}
		\end{cases}
\]
for homogeneous elements $a_1,\ldots,a_n,b \in H_\ast B(\Z/2)$.\qed
\end{corollary}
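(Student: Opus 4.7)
The plan is to prove Corollary~\ref{cor:compositeop} by induction on $n$, with the base case $n=1$ furnished directly by Theorem~\ref{thm:calc}. For the inductive step, I would apply the gluing axiom of HHGFTs (Definition~\ref{def:HHGFT}) to the iterated composite $S^{\ucirc n} = S \ucirc S^{\ucirc(n-1)}$, regarded as a family of h-graph cobordisms over $B(\Z/2) \times B(\Z/2)^{n-1} \isom B(\Z/2)^n$. Since both $S$ and $S^{\ucirc(n-1)}$ have relative Euler characteristic $-1$ per composite factor, the relevant degree shifts combine correctly, and no further adjustments are required beyond unwinding the gluing square.

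More concretely, the gluing axiom yields the identity
\[
\Phi^{\Z/2}(S^{\ucirc n})\bigl((a_1 \times \cdots \times a_n) \tensor b\bigr)
= \Phi^{\Z/2}(S)\Bigl(a_1 \tensor \Phi^{\Z/2}(S^{\ucirc(n-1)})\bigl((a_2 \times \cdots \times a_n) \tensor b\bigr)\Bigr),
\]
after using associativity of the cross product to rewrite $a_1 \times \cdots \times a_n$ as $a_1 \times (a_2 \times \cdots \times a_n)$. By the induction hypothesis, the inner expression equals $a_2 \cdot a_3 \cdots a_n \cdot b$ when $\deg(a_i) > 0$ for every $i \geq 2$ and vanishes otherwise. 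Applying Theorem~\ref{thm:calc} to the outer operation then gives $a_1 \cdot (a_2 \cdots a_n \cdot b)$ when additionally $\deg(a_1) > 0$, which equals $a_1 \cdots a_n \cdot b$ by associativity of the Pontryagin product; if $\deg(a_1) = 0$ the outer operation kills its input; and if any $a_i$ with $i > 1$ has degree zero, the inner expression is already zero and the outer application preserves this.

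The only potentially delicate point is bookkeeping: one must check the convention in Definition~\ref{def:famcob} so that the $i$-th tensor factor $a_i \in H_\ast B(\Z/2)$ indeed corresponds to the $i$-th copy of $S$ in the iterated composite, with $a_n$ nearest to the input $b$ and $a_1$ outermost. This ordering is determined by the definition $(T \ucirc S)/(C \times B)$ with $C$ (the outgoing side) first, combined with the monoidality constraint identifying $B(\Z/2) \times B(\Z/2)^{n-1}$ with $B(\Z/2)^n$; base change along the evident associator closes the diagram. No substantive obstacle appears beyond this verification, so the argument is essentially a routine iteration of Theorem~\ref{thm:calc} via the gluing axiom.
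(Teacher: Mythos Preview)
Your proposal is correct and follows exactly the approach the paper indicates: the paper simply states that the corollary is an immediate consequence of the gluing axiom applied to Theorem~\ref{thm:calc}, and your induction spells out precisely that argument. The bookkeeping you flag (matching the ordering in $(T\ucirc S)/(C\times B)$ to the tensor factors) is the only thing to track, and you have it right.
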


By Corollary~\ref{cor:universal} and Remark~\ref{rk:whisker},
the family $S^{\ucirc n}/B(\Z/2)^n$ admits a 2-cell
\[
	\varphi\colon S^{\ucirc n}/B(\Z/2)^n
	\Longrightarrow 
	U\hAut(T_0) / B\hAut(T_0),
\]
and by Proposition~\ref{pr:cc}, the projection 
\[
	\pi\colon \hAut(T_0) \longto \pi_0(\hAut(T_0))
\] 
is a homotopy equivalence.
The component group
$\pi_0(\hAut(T_0))$ can be shown to be isomorphic to the group 
$F_n \rtimes \Aut(F_n) = \Hol(F_n)$.
(It also agrees with the group $A^2_{n,0}$ of 
Example~\ref{ex:free-groups-with-boundary}.)
Concretely, an isomorphism between these groups is given by the map
\[
	\alpha\colon\Hol(F_n) \longto \pi_0\hAut(T_0)
\]
sending an element $(w,\theta)\in F_n\rtimes \Aut(F_n)$
to the component of the map $T_0 \to T_0$ whose restriction 
to the edge $l_i$ in \eqref{eq:t0cob} is the path
$\theta(x_i)|_{l_1,\ldots,l_n}$
and whose restriction to the edge $c$ in \eqref{eq:t0cob} 
is the path  $c \cdot w|_{l_1,\ldots,l_n}$.
Here $x_i$ denotes the $i$-th basis element of $F_n$, and 
$v|_{l_1,\ldots,l_n}$ for $v\in F_n$ denotes the result of 
substituting $l_i$ for $x_i$ in $v$ for every $i=1,\ldots,n$.
The base change axiom of HHGFTs now implies the following 
consequence for Corollary~\ref{cor:compositeop}.
\begin{corollary}
\label{cor:hol-application}
Let $a_1,\ldots, a_n \in H_{\ast}B(\Z/2)$ be homogeneous elements
of positive degree such that the product $a_1\cdot \ldots \cdot a_n$
is non-zero. Then the composite map
\[\xymatrix@1@C+1em{
	H_\ast(B(\Z/2)^n) 
	\ar[r]^-{(\varphi_B)_\ast}
	&
	H_\ast B\hAut(T_0)
	\ar[r]^-{(B\pi)_\ast}_-\isom
	&
	H_\ast B\pi_0(\hAut(T_0))
	\ar[r]^-{(B\alpha^{-1})_\ast}_-\isom
	&
	H_\ast B\Hol(F_n)
}\]
sends $a_1\times \cdots \times a_n \in 	H_\ast(B(\Z/2)^n)$
to a non-zero element of $H_\ast B\Hol(F_n)$. \qed
\end{corollary}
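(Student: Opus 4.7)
The plan is to argue by contradiction, using the base change axiom of HHGFTs to transfer the non-vanishing statement from Corollary~\ref{cor:compositeop} to the universal family. Let me write $\bar{a} = a_1 \times \cdots \times a_n \in H_\ast(B(\Z/2)^n)$ and let $1 \in H_0 B(\Z/2) = \Phi_\ast^{\Z/2}(\pt)$ denote the canonical generator. Since $\Z/2$ is discrete, we have $d = -\dim(\Z/2) = 0$, so there is no degree shift to worry about. By Corollary~\ref{cor:compositeop} applied with $b = 1$, and using the hypothesis that each $a_i$ has positive degree and that $a_1 \cdots a_n \neq 0$, we obtain
\[
	\Phi^{\Z/2}\bigl(S^{\ucirc n}/B(\Z/2)^n\bigr)(\bar{a} \otimes 1) = a_1 \cdot \ldots \cdot a_n \cdot 1 = a_1 \cdots a_n \neq 0.
\]

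Next I would invoke the 2-cell $\varphi\colon S^{\ucirc n}/B(\Z/2)^n \Rightarrow U\hAut(T_0)/B\hAut(T_0)$, whose source and target edges $\varphi_X$ and $\varphi_Y$ are the identity on $\pt$. (Its existence follows from Corollary~\ref{cor:universal} together with Remark~\ref{rk:whisker}; the latter allows us to use $\hAut(T_0)$ rather than its whiskered version since every component of $T_0$ meets $\{p,q\}$, so by Proposition~\ref{pr:cc} the identity of $\hAut(T_0)$ is a strongly nondegenerate basepoint.) Applying the base change axiom to $\varphi$ yields the identity
\[
	\Phi^{\Z/2}\bigl(S^{\ucirc n}/B(\Z/2)^n\bigr)(\bar{a} \otimes 1)
	=
	\Phi^{\Z/2}\bigl(U\hAut(T_0)/B\hAut(T_0)\bigr)\bigl((\varphi_B)_\ast(\bar{a}) \otimes 1\bigr).
\]

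Combining these two displays shows that if $(\varphi_B)_\ast(\bar{a})$ were zero in $H_\ast B\hAut(T_0)$, then the right-hand side would vanish, contradicting the non-vanishing of $a_1 \cdots a_n$. Hence $(\varphi_B)_\ast(\bar{a}) \neq 0$. Since the remaining maps $(B\pi)_\ast$ and $(B\alpha^{-1})_\ast$ in the composite are isomorphisms (the former because $\pi\colon \hAut(T_0) \to \pi_0\hAut(T_0)$ is a homotopy equivalence by Proposition~\ref{pr:cc}, the latter because $\alpha$ is a group isomorphism), the image of $\bar{a}$ in $H_\ast B\Hol(F_n)$ is non-zero, as required.

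There is essentially no obstacle here beyond assembling the pieces, since all the real work has been done in Theorem~\ref{thm:calc} and Corollary~\ref{cor:compositeop}. The only point requiring minor care is verifying the identification $\pi_0\hAut(T_0) \isom \Hol(F_n)$ via $\alpha$ and checking that the 2-cell $\varphi$ really has $\varphi_X = \id_\pt$ and $\varphi_Y = \id_\pt$ so that base change reduces to the simple form used above; both are routine given the construction of $T_0$ from $S_0$ by $n$-fold external composition.
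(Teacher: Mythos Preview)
Your proof is correct and follows exactly the approach the paper intends: the paper states only that ``the base change axiom of HHGFTs now implies the following consequence for Corollary~\ref{cor:compositeop}'' and marks the result with a \qed, and you have spelled out precisely this argument. One minor quibble: the justification for dropping the whisker on $\hAut(T_0)$ comes not from Proposition~\ref{pr:cc} but from Remark~\ref{rk:whisker} applied directly to $T_0$ as a finite simplicial complex with simplicial subcomplex $\{p,q\}$.
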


\appendix


\section{Fibred spaces}\label{appendix:fibred-spaces}
This appendix recalls various notions
from the theory of fibred spaces, and gives proofs of some
results which we could not locate in the literature.
Our aim here is to provide the reader with background
for the work done elsewhere in the paper.
For fuller discussions we refer the reader to
the monographs \cite{CrabbJames} and \cite{MaySigurdsson}.
A very useful compilation of relevant results together with further pointers to the 
literature can be found in Chapter~IV, section~1 of \cite{Rudyak}.
We remind the reader that we work in the category of $k$-spaces throughout,
and that in addition all base spaces are assumed to be weak Hausdorff
(see subsection~\ref{subsec:notation-conventions}).

Let $B$ be a base space.  A \emph{space fibred over $B$}, or just
\emph{a space over $B$}, is simply a map $X\to B$, which we 
think of as a collection of spaces parameterized by the points of $B$ and
bound together by the topology of $X$. If $X\to B$ and $Y\to B$
are spaces over $B$, a \emph{fibrewise map} or a \emph{map over} $B$
from $X$ to $Y$ is a map $f\colon X \to Y$ such that the triangle
\[\xymatrix{
	X 
	\ar[rr]^f
	\ar[dr]
	&&
	Y
	\ar[dl]
	\\
	&
	B
}\]
commutes.
We write $X_b$ for the fibre of $X$ over a point $b\in B$.

If $B$ is a weak Hausdorff space, the category of spaces over $B$ is 
cartesian closed, so that there exist
\emph{fibrewise mapping spaces} $\Map_B(X,Y) \to B$ with
the property that fibrewise maps
\[
 X\times_B Y \longto Z
\]
are in natural bijection with fibrewise maps
\[
 X \longto \Map_B(Y,Z)
\]
for all spaces $X,Y$ and $Z$ over $B$. 
See \cite[section~1.3]{MaySigurdsson}.
The fibre of $\Map_B(X,Y)$ over a point $b \in B$ is simply
the space of maps $\Map(X_b,Y_b)$, and the fibrewise
mapping space $\Map_B(X,Y)$ for the trivial fibred spaces
$X = X_1 \times B$ and $Y = Y_1\times B$ is simply the trivial 
fibred space $\Map(X_1,Y_1) \times B$.

There are evident notions of \emph{fibrewise homotopies} of fibrewise maps,
and of \emph{fibrewise homotopy equivalences} between spaces over $B$.
There are also fibrewise analogues of fibrations and cofibrations.
A map $p\colon X\to Y$ of spaces fibred over $B$ is a \emph{fibrewise fibration}
if, given a space $A$ over $B$ and maps over $B$ making
the solid diagram on the left commute,
a dotted map can be found making the whole diagram commute.
\[\xymatrix{
	A\ar[r]\ar[d] & X\ar[d]^p\\
	A\times I\ar[r]\ar@{-->}[ur] & Y
}
\qquad\qquad\qquad
\xymatrix{
	U\ar[r]\ar[d]_i & \Map_B(B\times I,E)\ar[d]\\
	V\ar[r]\ar@{-->}[ur] & E
}\]
Similarly, a map $i\colon U\to V$ over $B$
is a \emph{fibrewise cofibration} if, given a space $E$
over $B$ and maps over $B$ making the solid diagram on the right commute,
there exists a dotted arrow making the whole diagram commute.
(The unlabelled vertical maps in the two diagrams are both
induced by the inclusion $\{0\}\hookrightarrow I$.)
A \emph{closed} fibrewise cofibration
is a fibrewise cofibration that is in addition a closed inclusion.

(The notions of fibrewise fibrations, fibrewise cofibrations and 
closed fibrewise cofibrations as defined above agree with what 
May and Sigurdsson call $f$-fibrations, $f$-cofibrations and 
$\bar{f}$-cofibrations, respectively. 
See Definition~1.3.2, Definition~5.1.7 and Theorem 5.2.8.(i) of \cite{MaySigurdsson}.
Notice that May and Sigurdsson write $A\times_B I$ and 
$\Map_B(I,E)$ in place of $A\times I$ and 
$\Map_B(B \times I, E)$, respectively.)

Most of the fibred spaces appearing in this paper are fibrations,
but at certain points we will require the following more general notion.
A map $p\colon X\to B$ is a \emph{Dold fibration} if,
given a homotopy $f\colon A\times I\to B$ and a map
$g_0\colon A\times\{0\}\to X$ satisfying $p\circ g_0=f|(A\times\{0\})$,
there is a homotopy $g\colon A\times I\to X$ such that $p\circ g=f$,
and such that $g|(A\times\{0\})$ is homotopic to $g_0$ through
maps over $B$.

In this paper we make significant use of results from Dold's classic paper~\cite{Dold}.
Many of these results state that if a certain fibrewise property
(of a space or map) holds over every subset in a numerable cover of the base,
then it also holds globally.
Recall that a cover $\{V_\lambda\}_{\lambda\in\Lambda}$ of $B$
is \emph{numerable} if there is a locally-finite partition of unity
on $B$ whose supports refine $\{V_\lambda\}_{\lambda\in\Lambda}$.
We refer to the results of \cite{Dold} explicitly whenever we use them.

The following result is of a similar flavour to those from \cite{Dold},
but we were unable to find a reference and so give the proof here.
It is used to show that an open-closed cobordism $\Sigma$
determines a family of h-graph cobordisms over $B\Diff(\Sigma)$.

\begin{proposition}\label{pr:locality}
Let $f\colon X\to Y$ be a fibrewise map between spaces over $B$.
If $f$ is a fibrewise (co)fibration when restricted to every set in a
numerable open cover of $B$, then it is a fibrewise (co)fibration.
\end{proposition}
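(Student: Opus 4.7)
The plan is to adapt Dold's classical local-to-global argument for Hurewicz fibrations and cofibrations to the fibrewise setting. Both halves of the proposition reduce to manipulating partitions of unity while keeping track of the map to $B$.

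For the fibration case, I would first reformulate fibrewise fibrations in terms of fibrewise lifting functions: $f \colon X \to Y$ over $B$ is a fibrewise fibration iff there exists a map over $B$
\[
\lambda \colon X \times_Y \Map_B(B\times I, Y) \longto \Map_B(B\times I, X)
\]
whose composition with evaluation-at-$0$ is the first projection and whose composition with $f_\ast$ is the second projection. The point is that this single global datum admits an obvious restriction to any open $V \subset B$, and is determined by its values fibrewise. Given a numerable open cover $\{V_\alpha\}$ of $B$ with subordinate partition of unity $\{u_\alpha\}$ and local fibrewise lifting functions $\lambda_\alpha$, I would construct a global $\lambda$ by Dold's time-slicing method. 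Namely, for a point $b \in B$ the finite set $F(b) = \{\alpha : u_\alpha(b) > 0\}$ is well-defined; ordering the $\alpha \in F(b)$ in some fixed way, I would partition the interval $[0,1]$ into sub-intervals whose lengths are the $u_\alpha(b)$, apply the corresponding local $\lambda_\alpha$ on each sub-interval, and concatenate. Because each $\lambda_\alpha$ is a map over $V_\alpha$, the resulting $\lambda$ is a map over $B$.

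For the cofibration case, I would use the characterization that a fibrewise map $i\colon U \to V$ over $B$ is a fibrewise cofibration iff the natural inclusion
\[
V \cup_U (U \times I) \longincl V \times I
\]
admits a retraction over $B$; equivalently (in the closed case) iff $(V,U)$ is a fibrewise NDR pair in the sense of \cite[Theorem~5.2.8]{MaySigurdsson}. Given local retractions $r_\alpha$ over $V_\alpha$ and the same partition of unity, the same time-slicing construction produces a global retraction: on the region $\{u_\alpha(b)>0 \iff \alpha \in F\}$ apply the $r_\alpha$ (for $\alpha \in F$) in turn, reparameterising the $I$ coordinate using the values $u_\alpha$. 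Again the fibrewise nature of each $r_\alpha$ guarantees that the global map is over $B$.

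The main obstacle will be verifying continuity of these patched maps across points where some $u_\alpha$ transitions between zero and nonzero values. As in Dold's argument, this is handled by observing that when $u_\alpha(b) = 0$ the $\alpha$-th sub-interval degenerates to a point, and the contribution of $\lambda_\alpha$ (or $r_\alpha$) reduces to a constant path, so it drops out continuously as $b$ leaves $\mathrm{supp}(u_\alpha)$. Translating Dold's explicit formula to the fibrewise context is then largely bookkeeping; the weak Hausdorff hypothesis on $B$ ensures that fibrewise mapping spaces behave as expected and that numerable covers exist in adequate supply.
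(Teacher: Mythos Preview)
Your approach differs substantially from the paper's, and the cofibration half has a real gap.

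For the fibration case, your plan of patching local lifting functions via a partition-of-unity--weighted concatenation is workable, but it is not actually the argument in Dold's paper: Dold's \cite[4.1--4.8]{Dold} proceeds by showing that $f$ is a fibrewise fibration if and only if a certain space over $B$ (the paper writes it as $B\times_{\Map_B(A,X)}\Map_B(A\times I,X)\times_{\Map_B(A\times I,Y)}B$) has the section extension property, and then invokes the Section Extension Theorem \cite[2.7]{Dold}.  The paper simply transports this characterisation to the fibrewise setting and cites Dold's theorem, rather than re-running the patching argument from scratch.  Your direct construction would work, but it forces you to reprove what Dold already packages.

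For the cofibration case your proposal is genuinely unclear.  A retraction $r_\alpha\colon V|_{V_\alpha}\times I \to (V\times\{0\}\cup U\times I)|_{V_\alpha}$ is a single map, not a homotopy, and there is no evident way to ``apply the $r_\alpha$ in turn'' or to concatenate them along a reparameterised $I$-coordinate: the target of one retraction is not the source of the next.  One can make a direct argument via the NDR-pair data $(u,h)$, but that is not the same construction as for lifting functions and needs its own verification.  The paper sidesteps all of this with a one-line adjunction: $U\to V$ is a fibrewise cofibration if and only if $\Map_B(V,E)\to\Map_B(U,E)$ is a fibrewise fibration for every $E$ over $B$, so the cofibration statement is an immediate corollary of the fibration statement.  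This is both shorter and avoids the ambiguity in your sketch.
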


\begin{proof}
Appropriate adjunctions show that $U\to V$ is a fibrewise cofibration
if and only if $\Map_B(V,E)\to \Map_B(U,E)$ 
is a fibrewise fibration for all spaces $E$ over $B$.
Thus the result for fibrewise cofibrations follows from the one for fibrewise fibrations.
To prove the result for fibrewise fibrations, one can modify the arguments of
\cite[4.1,4.2,4.5]{Dold} to show that $X\to Y$ is a fibrewise fibration
if and only if for each solid square on the left above, the space 
\[
	B\times_{\Map_B(A,X)}\Map_B(A\times I,X)\times_{\Map_B(A\times I,Y)}B
\]
over $B$
has the section extension property.  The result now follows from the
Section Extension Theorem~\cite[2.7]{Dold}.
\end{proof}

In the next appendix we discuss the classification of a certain
type of relative fibration.  The following definition and lemma
will be useful there.

\begin{definition}
\label{def:equivalence-over-and-under}
Suppose given spaces $W$, $X$ and $Y$ over a base $B$,
and fibrewise maps $W\to X$ and $W\to Y$.
A map
\[
	f\colon X\longto Y
\]
over $B$ and under $W$ is called a
\emph{homotopy equivalence over $B$ and under $W$}
if it admits a homotopy inverse that is again a map over $B$ and under $W$,
and where all the homotopies are through maps over $B$ and under $W$.
\end{definition}

\begin{lemma}
\label{lem:equivalence-over-and-under}
Suppose given spaces $W$, $X$ and $Y$ over a base space $B$,
maps $W\to X$ and $W\to Y$ over $B$,
and a map $f\colon X\to Y$ over $B$ and under $W$.
Suppose that:
\begin{itemize}
	\item
	For each $b\in B$, the restriction $f|\colon X_b\to Y_b$
	of $f$ to the fibres over $b$ is a homotopy equivalence.	
	\item
	The base $B$ admits a numerable cover $\{V_\lambda\}_\lambda$
	for which the inclusions $V_\lambda\hookrightarrow B$
	are all nullhomotopic.
	\item
	The maps $X\to B$ and $Y\to B$ are Dold fibrations.
	\item
	The maps $W\to X$ and $W\to Y$ are fibrewise cofibrations.
\end{itemize}
Then $f$ is a homotopy equivalence over $B$ and under $W$.
\end{lemma}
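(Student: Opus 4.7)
The plan is in two stages.  First, by applying Dold's weak equivalence theorem \cite[Theorem~6.3]{Dold}, I would deduce that $f$ is a fibrewise homotopy equivalence, temporarily ignoring the under-$W$ structure.  Second, by exploiting the fibrewise homotopy extension property afforded by the fibrewise cofibrations $\iota_X \colon W \to X$ and $\iota_Y \colon W \to Y$, I would upgrade a chosen fibrewise homotopy inverse of $f$, together with the accompanying fibrewise homotopies, so that all the data become additionally under $W$.

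The first stage is straightforward: the hypothesis that each inclusion $V_\lambda \hookrightarrow B$ is nullhomotopic implies, via the local triviality results of \cite[sections~5--6]{Dold}, that $X$ and $Y$ restrict over each $V_\lambda$ to fibrations fibrewise homotopy equivalent to the trivial fibrations with fibres $X_{b_\lambda}$ and $Y_{b_\lambda}$ respectively.  Combined with the fibrewise homotopy equivalence condition on fibres, this puts the situation squarely in the setting of Dold's theorem, which produces a fibrewise homotopy inverse $g \colon Y \to X$ together with fibrewise homotopies $H \colon gf \simeq \id_X$ and $K \colon fg \simeq \id_Y$.

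The second stage is the parameterized analogue of the classical argument that a homotopy equivalence between spaces under a common cofibrantly included subspace is automatically a homotopy equivalence under that subspace.  Concretely, I would first use the fibrewise HEP for $\iota_Y$ to extend the fibrewise homotopy $H \circ (\iota_X \times \id_I) \colon W \times I \to X$ (from $g \iota_Y$ to $\iota_X$) to a fibrewise homotopy of $g$ itself, producing $g' \colon Y \to X$ with $g' \iota_Y = \iota_X$.  Replacing $H$ and $K$ with the appropriate concatenations built from this deformation then yields fibrewise homotopies $g'f \simeq \id_X$ and $fg' \simeq \id_Y$ whose restrictions to $W \times I$ are loops which can be made null-homotopic rel endpoints by a further concatenation trick; one final application of the fibrewise HEP for $\iota_X$ and $\iota_Y$ absorbs these loops into the homotopies, yielding homotopies stationary on $W$.

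The main obstacle is this last step of making the homotopies stationary on $W$ in the parameterized setting: it requires verifying carefully that the standard mapping-cylinder and reparametrization arguments of the unparameterized theory go through fibrewise.  This verification is made possible by the fibrewise cofibration hypothesis on $\iota_X$ and $\iota_Y$, together with standard facts about fibrewise cofibrations from \cite[Chapter~5]{MaySigurdsson} (in particular, the fact that the pushout-products $X \times \partial I \cup_{W \times \partial I} W \times I \hookrightarrow X \times I$ and its counterpart for $Y$ are fibrewise cofibrations).
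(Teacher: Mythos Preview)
Your proposal is correct and follows essentially the same two-stage approach as the paper: first invoke \cite[Theorem~6.3]{Dold} to get a fibrewise homotopy equivalence, then upgrade to a homotopy equivalence over $B$ and under $W$ using the fibrewise cofibration hypotheses. The paper's proof is simply terser in the second stage, citing the ``evident fibrewise generalization of the first proposition in section 6.5 of \cite{MayConcise}'' rather than spelling out the HEP argument you sketch; your detailed outline is precisely how one would carry out that generalization.
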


The second condition holds whenever $B$ is locally contractible and 
paracompact.  In particular it holds if $B$ is a CW-complex or,
more generally, one of the good base spaces introduced in Definition~\ref{def:calB-calU}.

\begin{proof}
By~\cite[Theorem~6.3]{Dold} the map $f$ is a fibrewise
homotopy equivalence. The claim now follows from the
evident fibrewise generalization of the first proposition in section 6.5 of
\cite{MayConcise}.
\end{proof}


\section{$(F,\partial)$-fibrations}\label{app:Fd-fibrations}

Let $F$ be a space and let $\partial\subset F$ be a subspace such that
both $F$ and $\partial$ have the homotopy type of finite CW complexes
and such that the inclusion $\partial \incl F$ is a closed cofibration.
Intuitively, an $(F,\partial)$-fibration is a fibration where each
fibre is modeled by the pair $(F,\partial)$ and where it is 
required that the subspaces corresponding to $\partial$ 
stay constant as we move from fibre to fibre. The following
definition makes this idea precise. The examples we are
interested in are given by families of h-graph cobordisms,
with $\partial$ given by the disjoint union of the incoming and
outgoing boundaries.

\begin{definition}
Suppose $B$ is a base space.  An $(F,\partial)$-\emph{fibration} $E\to B$ 
over $B$ consists of a fibration $E\to B$ 
 together with a closed fibrewise cofibration $\partial \times B \to E$ 
 over $B$ such that for each point $b \in B$, there exists a homotopy 
 equivalence $F \to E_b$ making the diagram
 \[\xymatrix{
 	&
	\partial
	\ar[dl]
	\ar[dr]
	\\
 	F 
	\ar[rr]
	&&
	E_b
 }\]
commute.
\end{definition}

The main purpose of this section is to state a classification
theorem for $(F,\partial)$-fibrations. We will first define the 
notion up to which $(F,\partial)$-fibrations are classified.

\begin{definition}
Suppose $D\to B$ and $E \to B$ are $(F,\partial)$-fibrations over 
the same base space. An $(F,\partial)$-\emph{map} from $D\to B$
to $E \to B$ is a map $D\to E$ making the diagram
\[\xymatrix{
	& \partial \times B 
	\ar[dl]
	\ar[dr]
	\\
	D 
	\ar[rr]
	\ar[dr]
	&&
	E
	\ar[dl]
	\\
	&
	B
}\]
commutative and having the property that for each $b\in B$, the induced
map $D_b \to E_b$ between fibres is a homotopy equivalence.
Two $(F,\partial)$-fibrations over the same space are \emph{equivalent}
if they can be connected by a zigzag of $(F,\partial)$-maps.
We denote the collection of equivalence classes of 
$(F,\partial)$-fibrations over a space $B$ by 
$(F,\partial)$-$\mathrm{FIB}(B)$.
\end{definition}

\begin{remark}
\label{rk:fd-equiv-rel}
Observe that any homotopy equivalence over $B$ and under $\partial \times B$ (see Definition~\ref{def:equivalence-over-and-under})
between $(F,\partial)$-fibrations is an $(F,\partial)$-map. 
Conversely, if $B$ is well-behaved in the sense that
it satisfies the second condition of Lemma~\ref{lem:equivalence-over-and-under},
for example if it is paracompact and locally contractible,
then by Lemma~\ref{lem:equivalence-over-and-under}
every $(F,\partial)$-map is a homotopy equivalence over $B$ and under
$\partial \times B$.  It follows that over such well-behaved base spaces,
our equivalence relation on $(F,\partial)$-fibrations 
simply amounts to homotopy equivalence over $B$ and under $\partial \times B$.
\end{remark}

Let $\hAut(F,\partial)$ denote the topological 
monoid of homotopy equivalences from $F$ to itself fixing 
the subspace $\partial$ pointwise,
and let $\hAut^w(F,\partial) = \hAut(F,\partial) \cup_{\id} I$ be the topological monoid obtained from $\hAut(F,\partial)$ 
by growing a whisker at the identity element of the monoid
as in \cite[A.8]{MayGILS}.
The arguments presented in \cite{MayClassifying}
in the special case $\partial=\pt$ now generalize in 
a straightforward way
to prove the following classification theorem. 

\begin{theorem}
\label{thm:fd-classification}
There exists a \emph{universal} $(F,\partial)$-fibration 
\[U\hAut^w(F,\partial) \longto B\hAut^w(F,\partial)\]
such that the map
\[
	[B,B\hAut^w(F,\partial)] \longto (F,\partial)\text{-}\mathrm{FIB}(B),\qquad
	[f] \longmapsto [f^*U\hAut^w(F,\partial) ]
\]
is a bijection for every base space $B$ having the homotopy type 
of a CW complex.\qed
\end{theorem}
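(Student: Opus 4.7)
The plan is to adapt May's classification of fibrations with structure monoid (as in \cite{MayClassifying}) to the relative setting where the fibre pair $(F,\partial)$ must be preserved. Write $M=\hAut^w(F,\partial)$. The whiskering ensures that the identity element of $M$ is a nondegenerate basepoint, which is the technical hypothesis needed for the bar construction to have good homotopical behaviour. Since $\partial\subset F$ is preserved pointwise by $\hAut(F,\partial)$ and the whisker acts trivially on $F$, the monoid $M$ acts on $F$ fixing $\partial$ pointwise.

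First I would construct the universal $(F,\partial)$-fibration as a Borel-type bar construction:
\[
	U M \;=\; B(\ast,M,F) \longto B(\ast,M,\ast) \;=\; BM,
\]
with the constant section $\partial\times BM \hookrightarrow UM$ coming from the fixed subspace $\partial\subset F$. Using the standard properties of the two-sided bar construction (see~\cite[sections~7--9]{MayClassifying}), the projection $UM\to BM$ is a quasi-fibration, and in fact a Hurewicz fibration after the appropriate smallness checks on $M$; each fibre is homotopy equivalent to $F$ under $\partial$, and $\partial\times BM\hookrightarrow UM$ is a closed fibrewise cofibration since $\partial\hookrightarrow F$ is. Pullback along any map $f\colon B\to BM$ then produces an $(F,\partial)$-fibration $f^\ast UM\to B$, and a straightforward homotopy-extension argument over a CW base shows that homotopic maps give equivalent pullbacks, so the assignment $[f]\mapsto[f^\ast UM]$ is well-defined.

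For surjectivity, given an $(F,\partial)$-fibration $E\to B$, I would associate the ``principal'' parameter space
\[
	P(E) \;=\; \{(b,\phi)\,:\,b\in B,\ \phi\colon F\to E_b\ \text{a hty.\ equiv.\ under }\partial\},
\]
topologised as a subspace of $B\times\Map(F,E)$. The monoid $M$ acts on $P(E)$ freely by precomposition, with $E\simeq P(E)\times_M F$ over $B$ via evaluation. The essential point is to verify that $P(E)\to B$ is an $M$-principal quasi-fibration: this uses local triviality of $(F,\partial)$-fibrations on a CW base (established via the homotopy extension property for $\partial\times B\hookrightarrow E$ and inductive extension over cells), together with the fact that on a trivial piece $U\times F$ the space $P(E)|U$ is $U\times\hAut(F,\partial)$ up to weak equivalence. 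Once this is in place, May's recognition principle produces a classifying map $B\to BM$ whose pullback of $UM$ is equivalent to $E$.

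For injectivity, if two maps $f_0,f_1\colon B\to BM$ have equivalent pullbacks, then, viewing an $(F,\partial)$-equivalence over $B$ as a single $(F,\partial)$-fibration over $B\times I$ restricting to $f_i^\ast UM$ on $B\times\{i\}$, the surjectivity argument applied to $B\times I$ produces a classifying map which is a homotopy between $f_0$ and $f_1$. I expect the main obstacle to be the verification that $P(E)\to B$ really is an $M$-principal quasi-fibration in the relative setting, particularly that the local trivialisations respect the pointwise-fixed boundary $\partial$; this is where the whiskering of $\hAut(F,\partial)$ and the closed-cofibration hypothesis on $\partial\hookrightarrow F$ must be used to patch local trivialisations without disturbing the $\partial$-structure. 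Once this relative local triviality is secured, the remainder of May's argument transfers essentially verbatim.
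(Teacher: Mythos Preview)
Your approach is essentially the paper's: both say that May's classification theorem for fibrations with structure monoid~\cite{MayClassifying} generalizes directly from the case $\partial=\pt$ to the relative case, and both identify the whiskering as supplying the nondegenerate-basepoint hypothesis. The paper does not give a proof either, but it does spell out the construction of the universal object, and here your sketch is imprecise on two technical points that the paper addresses explicitly.

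First, the bar construction $B(\ast,M,F)\to BM$ is in general only a quasifibration, not a Hurewicz fibration; there are no ``smallness checks on $M$'' that promote it to one. The paper therefore replaces it by the Moore-path fibration $\Gamma B(\ast,M,F')\to BM$ as in \cite[Definition~3.2]{MayClassifying}. Second, after that replacement the map $\partial\times BM\to\Gamma B(\ast,M,F')$ need not be a closed fibrewise cofibration, contrary to your claim that this follows from $\partial\hookrightarrow F$ being one; the paper makes a further mapping-cylinder replacement $\Gamma'$ (as in \cite[Definition~5.3]{MayClassifying}) to restore it. The paper also works with the mapping cylinder $F'$ of $\partial\hookrightarrow F$ rather than $F$ itself. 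None of this affects the overall strategy, which you have correctly identified, but your two assertions about the bar construction being a fibration and about the cofibration condition being automatic are not right as stated and need these standard rectifications.
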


Explicitly, the universal $(F,\partial)$-fibration can be constructed
as follows. Let $F'$ denote the mapping cylinder of the inclusion 
$\partial \incl F$, and for brevity write $H$ and $H'$ for 
$\hAut(F,\partial)$ and $\hAut^w(F,\partial)$, respectively.
The monoid $H$ acts on $F'$
by acting on $F$ in the evident way while keeping the image of the 
cylinder $\partial \times I$ fixed. 
Via the monoid homomorphism 
$H' \to H$,
we obtain an action of $H'$ on $F'$,
making it possible to 
form the bar construction $B(\pt,H',F')$.
To rectify the defect that the map 
\[
	B(\pt,H',F') \longto BH'
\]
is in general only a quasifibration, we replace it by 
a fibration
\[
	\Gamma B(\pt,H',F')) \longto BH'
\]
in the usual way, except that, following May, we use
Moore paths in $BH'$ in place of 
standard paths. See \cite[Definition 3.2]{MayClassifying}.
To rectify the problem that the composite map
\[
	BH' \times \partial
  	= 
	B(\pt,H',\partial)
	\longto
	B(\pt,H', F')
	\longto
	\Gamma 	B(\pt,H', F')
\]
(where the first arrow is induced by the inclusion $\partial \incl F'$)
is not necessarily a closed fibrewise cofibration, we then replace 
$\Gamma B(\pt,H',F')$ by the mapping cylinder
$\Gamma' B(\pt,H',F')$ 
of this composite map. See \cite[Definition 5.3]{MayClassifying}.
The composite map
\begin{equation}
 \label{map:univproj}
	\Gamma' B(\pt,H',F')
	\longto
	\Gamma B(\pt,H',F')
	\longto
 	BH'
\end{equation}
together with the inclusion
\[
	BH' \times \partial 
	\longincl 
	\Gamma' B(\pt,H',F')
\]
then give the desired universal $(F,\partial)$-fibration.
The composite map \eqref{map:univproj} is indeed a fibration
by \cite[Proposition~1.3]{Clapp}.

\begin{remark}
\label{rk:whisker}
The only reason why Theorem~\ref{thm:fd-classification}
features $\hAut^w(F,\partial)$ instead of the simpler monoid
$\hAut(F,\partial)$ is the need to ensure that the 
identity element of the monoid used in the bar construction
is a strongly nondegenerate basepoint, in the sense of 
\cite[section 7]{MayClassifying}.
This technical requirement is needed for various constructions
in \cite{MayClassifying}. 
In the case where the identity element of $\hAut(F,\partial)$ 
is already a strongly non-degenerate basepoint 
(as happens for example when
$F$ is a finite simplicial complex and $\partial \subset F$
is a simplicial subcomplex), the  homomorphism
$\hAut^w(F,\partial) \to \hAut(F,\partial)$ induces
a homotopy equivalence 
$B\hAut^w(F,\partial) \to B\hAut(F,\partial)$
between classifying spaces, and the 
above construction with $H$ in place of $H'$  
provides a universal $(F,\partial)$-fibration 
\[
	U\hAut(F,\partial) \longto B\hAut(F,\partial).
\]
\end{remark}

\begin{proposition}\label{pr:cc}
Let $S_0\colon X\hto Y$ be an h-graph cobordism in which
$X\sqcup Y$ meets every path-component of $S_0$.
Then the components of the homotopy automorphism monoid 
$\hAut(S_0)=\hAut(S_0,X\sqcup Y)$
are contractible.
\end{proposition}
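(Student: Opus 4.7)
The plan is to prove that for each $f\in\hAut(S_0)=\hAut(S_0,W)$, where $W:=X\sqcup Y$, the component of $f$ is contractible. First observe that the component of $f$ in $\hAut(S_0,W)$ coincides with its component in the larger space $\Map_W(S_0,S_0)$ of self-maps fixing $W$ pointwise, because any map homotopic rel $W$ to the homotopy equivalence $f$ is itself a homotopy equivalence. It therefore suffices to show that $\Map_W(S_0,S_0)_f$ is contractible. By CW approximation of pairs we may replace $(S_0,W)$ by a homotopy-equivalent finite CW pair in which $W$ is a subcomplex---$\hAut(S_0,W)$ being a homotopy-invariant concept---so that Milnor's theorem gives the mapping space a CW homotopy type, reducing the task to proving weak contractibility. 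Moreover, since $W$ meets every component of $S_0$ and every map in $\Map_W(S_0,S_0)$ fixes $W$ pointwise, each such map preserves components of $S_0$; hence $\Map_W(S_0,S_0)$ factors as a product indexed by the components of $S_0$, and we may assume $S_0$ is connected.

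With $S_0$ a connected finite graph, $S_0$ is an Eilenberg--MacLane space $K(\pi,1)$ where $\pi=\pi_1(S_0)$ is a finitely generated free group. The key tool will be the long exact sequence associated to the restriction fibration
\[
	\Map_W(S_0,S_0)\longto\Map(S_0,S_0)\xto{\ r\ }\Map(W,S_0),
\]
which is a Hurewicz fibration because $W\hookrightarrow S_0$ is a closed cofibration. The plan is to invoke the standard identification, for a based connected CW complex $(A,a_0)$ and a map $g\colon A\to K(\pi,1)$,
\[
	\pi_k(\Map(A,K(\pi,1))_g,g)\isom
		\begin{cases} Z_\pi(g_*\pi_1(A,a_0)) & k=1,\\ 0 & k\geq 2, \end{cases}
\]
obtained from the long exact sequence of the evaluation fibration $\Map_*(A,Y)\to\Map(A,Y)\to Y$ at $a_0$ combined with $\pi_j(\Map_*(A,K(\pi,1)))=0$ for $j\geq 1$ (a consequence of $S^j\wedge A$ being simply connected); for disconnected $A$ one takes the product over components. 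Applied in our setting, using that $f_*\pi=\pi$ (as $f$ is a homotopy equivalence), this gives $\pi_k(\Map(S_0,S_0)_f)=0$ for $k\geq 2$ and $\pi_1(\Map(S_0,S_0)_f)=Z(\pi)$, while $\pi_k(\Map(W,S_0)_\iota)=0$ for $k\geq 2$ and $\pi_1(\Map(W,S_0)_\iota)=\prod_j Z_\pi(\iota_*\pi_1(W_j,w_j))$, the product ranging over the components $W_j$ of $W$ with chosen basepoints $w_j$.

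The vanishing of $\pi_k(\Map_W(S_0,S_0)_f)$ for $k\geq 2$ is then immediate from the long exact sequence. For $k=1$ it reduces to verifying that the restriction-induced map on the $\pi_1$'s is injective; by naturality of the evaluation fibrations with respect to the inclusions $W_j\hookrightarrow S_0$, this map becomes the natural inclusion of subgroups induced by $Z(\pi)=Z_\pi(\pi)\subseteq Z_\pi(\iota_*\pi_1 W_j)$, and it is already injective on the single factor corresponding to the component $W_j$ containing the chosen basepoint. The main obstacle will be carefully tracking the naturality of the identification $\pi_1(\Map(A,K(\pi,1))_g)\isom Z_\pi(g_*\pi_1 A)$ under the restriction map, and handling basepoint transport across the different components of $W$.
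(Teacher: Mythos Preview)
Your proof is correct and takes a genuinely different route from the paper's. The paper proves the slightly more general statement that the relative mapping space $M(Z,Z_0,f_0)=\{f\colon Z\to W : f|_{Z_0}=f_0\}$ has contractible components whenever $Z_0\hookrightarrow Z$ is a cofibration of h-graphs with $Z_0$ meeting every component and $W$ is any h-graph, and does so by an elementary reduction: first the case $(Z,Z_0)=(I,\partial I)$, where the space is a loop space of an h-graph and hence has contractible components; then a finite graph relative to its vertex set (a product of interval cases); then arbitrary finite $Z_0$ by modelling $Z$ on a graph; and finally general $Z_0$ via the fibration $M(Z,Z_0,f_0)\to M(Z,P,f_P)\to M(Z_0,P,f_P)$ for a finite $P\subset Z_0$. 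Your approach instead invokes the structure theorem that each component of $\Map(A,K(\pi,1))$ is itself a $K(Z_\pi(g_*\pi_1 A),1)$, and reads off the result from the long exact sequence of the restriction fibration $\Map_W(S_0,S_0)\to\Map(S_0,S_0)\to\Map(W,S_0)$. This is slicker if one is willing to quote that theorem, and it makes the role of the hypothesis that $W$ meets every component of $S_0$ transparent: it is exactly what allows the basepoint for the evaluation fibration on $\Map(S_0,S_0)$ to be chosen in $W$, so that the $\pi_1$-map becomes a literal inclusion of centralizers. The paper's argument is more self-contained and yields the stronger statement with an arbitrary h-graph target. One small caveat: your justification ``$S^j\wedge A$ being simply connected'' for $\pi_j(\Map_*(A,K(\pi,1)),g)=0$ is only literally the argument when $g$ is nullhomotopic; for general $g$ one should phrase it as obstruction theory on the pair $(S^j\times A,\,S^j\vee A)$, whose relative cells (taking $A$ with a single $0$-cell) all lie in dimensions $\geq 2$---but the conclusion is of course correct and standard.
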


\begin{proof}
Let $Z_0 \incl Z$ be a cofibration between h-graphs 
such that $Z_0$ meets every path-component of $Z$,
let $f_0\colon Z_0\to W$ be a fixed map into a third h-graph,
and write $M(Z,Z_0,f_0)$ for the space of maps $f\colon Z\to W$
extending $f_0$.
We will prove that the components of $M(Z,Z_0,f_0)$ are contractible.
The theorem then follows by taking $Z_0= X\sqcup Y$, $Z=W=S_0$, 
and $f_0\colon X\sqcup Y\hookrightarrow S_0$.
The claim will be proved in increasingly general special cases.
\begin{enumerate}
\item\label{pr:cc:case-one}
$(Z,Z_0)=(I,\partial I)$.
Here $M(Z,Z_0,f_0)$ is either empty or is homotopy equivalent to
$\Omega_cW$ for some $c\in W$.  The components of the latter are
contractible since $W$ is an h-graph.

\item\label{pr:cc:case-two}
$(Z,Z_0)=(\Gamma,V)$ where $\Gamma$ is a finite graph with vertex
set $V$.
Here $M(Z,Z_0,f_0)$ is a product of spaces of the kind considered
in case \eqref{pr:cc:case-one}.

\item\label{pr:cc:case-three}
$Z_0$ is finite.
Here we may find a finite graph $\Gamma$ with vertex set $V=Z_0$
and a homotopy equivalence $\Gamma\to Z$ respecting $Z_0$.
Then $M(Z,Z_0,f_0)\simeq M(\Gamma,V,f_0)$ and the claim
follows from case \eqref{pr:cc:case-two}.

\item
The general case.
Here we may find a finite $P\subset Z_0$ meeting every component.
Without loss the inclusion is a cofibration.
Writing $f_P=f_0|P$ we have a fibration sequence
\[
M(Z,Z_0,f_0)\longto M(Z,P,f_P)\longto M(Z_0,P,f_P)
\]
in which, by case \eqref{pr:cc:case-three},
the components of the second and third spaces are contractible.
The same therefore holds for the fibre.
\qedhere
\end{enumerate}
\end{proof}

\bibliographystyle{plain}
\bibliography{master-document.bib}

\end{document}